\def\l@subsection{\@tocline{2}{0pt}{2.5pc}{5pc}{}}
\newcommand{\stoptocwriting}{%
	\addtocontents{toc}{\protect\setcounter{tocdepth}{-5}}}
\newcommand{\resumetocwriting}{%
	\addtocontents{toc}{\protect\setcounter{tocdepth}{\arabic{tocdepth}}}}
\newtheorem*{rep@theorem}{\rep@title}
\newcommand{\newreptheorem}[2]{%
\newenvironment{rep#1}[1]{%
 \def\rep@title{#2 \ref{##1}}%
 \begin{rep@theorem}}%
 {\end{rep@theorem}}}
\newcommand{\pgfextractangle}[3]{%
	\pgfmathanglebetweenpoints{\pgfpointanchor{#2}{center}}
	{\pgfpointanchor{#3}{center}}
	\global\let#1\pgfmathresult  
}
\definecolor{winered}{rgb}{0.5,0,0}
\tikzset{
	edge node/.code={%
		\expandafter\def\expandafter\tikz@tonodes\expandafter{\tikz@tonodes #1}}}
\tikzset{
	subseteq/.style={
		draw=none,
		edge node={node [sloped, allow upside down, auto=false]{$\subseteq$}}},
	Subseteq/.style={
		draw=none,
		every to/.append style={
			edge node={node [sloped, allow upside down, auto=false]{$\subseteq$}}}
	}
}
\def\on{\operatorname}
\def\CY{{\on{CY}}}
\def\op{{\on{op}}}
\def\CC{\EuScript{C}}
\def\DD{\EuScript{D}}
\def\Fin{\on{Fin}}
\def\Ass{\mathcal{A}\!\on{ss}}
\def\bFin{\mathbb{F}\!\!\on{in}}
\def\bfLambda{\boldsymbol{\Lambda}}
\def\Cat{\on{Cat}}
\def\Hom{\on{Hom}}
\def\Set{\on{Set}}
\def\Tw{\on{Tw}}
\def\Span{\on{Span}}
\def\id{{\on{id}}}
\def\Fun{\on{Fun}}
\def\SAlg{\on{Alg}_{\on{Sp}}}
\def\res{\on{res}}
\def\tSeg{2\on{-Seg}}
\theoremstyle{theorem}
\newtheorem{prop}{Proposition}[section]
\newtheorem{cor}[prop]{Corollary}
\newtheorem{lem}[prop]{Lemma}
\newtheorem{thm}[prop]{Theorem}
\newtheorem*{thm*}{Theorem}
\newtheorem{theorem}{Theorem}
\theoremstyle{definition}
\newtheorem{deft}[prop]{Definition}
\newtheorem{rem}[prop]{Remark}
\newtheorem{cnstr}[prop]{Construction}
\newenvironment{defn}
{%
	\pushQED{\qed}\begin{deft}}
	{\popQED\end{deft}}
\newenvironment{rmk}
{%
	\pushQED{\qed}\begin{rem}}
	{\popQED\end{rem}}
\newenvironment{const}
{%
	\pushQED{\qed}\begin{cnstr}}
	{\popQED\end{cnstr}}
\DeclareMathSymbol\bbDelta  \mathord{bbold}{"01}
\DeclareMathSymbol\bbLambda \mathord{bbold}{"03}
\DeclareMathSymbol\bbGamma \mathord{bbold}{"00}
\DeclareMathSymbol\bbOne \mathord{bbold}{'061}
\DeclareRobustCommand{\bbNabla}{\text{\raisebox{\depth}{\scalebox{1}[-1]{$\bbDelta$}}}}
\title{2-Segal objects and algebras in spans}
\author{Walker H. Stern}
\begin{document}
\maketitle

\begin{abstract}
We define a category parameterizing Calabi-Yau algebra objects in an infinity category of spans. Using this category, we prove that there are equivalences of infinity categories relating, firstly: 2-Segal simplicial objects in C to algebra objects in Span(C); and secondly:  2-Segal cyclic objects in C to Calabi-Yau algebra objects in Span(C). 
\end{abstract}
\tableofcontents

\section*{Introduction}

\stoptocwriting
\subsection*{2-Segal objects and associativity}
A familiar concept in higher category theory is that of \emph{Segal objects} in an $\infty$-category $\CC$, that is, simplicial objects $X:\Delta^\op\to \CC$ such that the natural map 
\[
X_n\to \overbrace{X_1\times_{X_0}X_1\times_{X_0}\cdots \times_{X_0} X_1}^{\times n}.
\]
is an equivalence. Introduced by Rezk in \cite{Rezk}, Segal objects show up in a variety of guises, from monoidal $\infty$-categories (cf. \cite{LurieDAGII}) to the nerves of 1-categories. Of particular interest is the algebraic content of the Segal condition. Given a Segal set $X$, the span 
\begin{equation}\label{eq:SegalCondition}
X_1\times_{X_0}X_1\leftarrow X_2\to X_1
\end{equation}
can be read as a multiplication law, owing to the invertibility of the left hand morphism. Moreover, the Segal condition on higher simplices also expresses the associativity of this multiplication. 


The Segal condition on a simplicial set was generalized to the `higher-dimensional' \emph{2-Segal condition} by Dyckerhoff and Kapranov \cite{DKHSSI} and Gálvez-Carrillo, Kock, and Tonks \cite{GKT} (2-Segal spaces are called \emph{decomposition spaces} in the latter). In a sense, the 2-Segal condition no longer requires that the span \eqref{eq:SegalCondition} define a multiplication, but retains the higher associativity conditions encoded in the higher simplices. More precisely, the 2-Segal condition on a simplicial object $X:\Delta^\op\to \CC$ requires that the diagrams 
\[
\begin{tikzcd}
X_n\arrow[r]\arrow[d] & X_{i,\ldots,j\arrow[d]}\\
X_{1,\ldots i,j,\ldots}\arrow[r] & X_{i,j}
\end{tikzcd}
\] 
all be pullback diagrams in $\CC$. The 2-Segal condition is indeed a generalization of the Segal condition, insofar as every Segal simplicial object is 2-Segal. 

A 2-Segal object $X$ is said to be \emph{unital} if, additionally, the diagrams 
\[
\begin{tikzcd}
X_{n-1}\arrow[r]\arrow[d] & \arrow[d]X_n\\
X_{i}\arrow[r] & X_{i,i+1}
\end{tikzcd}
\] 
are pullback in $\CC$. Throughout this paper, we will use the term `2-Segal object' to refer to a \emph{unital} 2-Segal object in the terminology of \cite{DKHSSI}. 

The sense in which such structures encode associativity relies on thinking of spans 
\[
\begin{tikzcd}
X_1\times X_1\times \cdots \times X_1 & \arrow[l] X_n\arrow[r] & X_1 
\end{tikzcd}
\]
as `$n$-fold multiplications', regardless of whether the left-hand morphisms are equivalences. We then compose by concatenating spans and taking a pullback, thinking of the result as a `space of compositions'. In this language, the 2-Segal condition says that the space of compositions of $n$-fold multiplications with $m$-fold multiplications is precisely the space of $n+m-1$-fold multiplications.  

There are a number of ways to make this intuitive picture rigorous (see, for example, the relation to Hall algebras presented in \cite{DKHSSI}, and the connection with operads from \cite{Walde}). The present paper concerns itself with one such perspective, namely, considering the relation between 2-Segal objects in an $\infty$-category $\CC$ and algebra objects in an $\infty$-category $\Span(\CC)$ whose morphisms are spans in $\CC$. Several results in this direction have already appeared in the literature. In the original Dyckerhoff-Kapranov paper \cite{DKHSSI}, monads and algebra objects in $(\infty,2)$-categories of spans were constructed from 2-Segal objects. More recently, Penney \cite{Penney} defined lax algebras in spans coming from simplicial objects, and showed that the associativity of these lax algebras was equivalent to the 2-Segal condition. In this paper, we restrict to $\infty$-categories of spans, and prove:
\begin{theorem}
Let $\CC$ be an $\infty$-category with small limits. There is an equivalence of $\infty$-categories 
\[
\left\lbrace\substack{\text{Algebra objects}\\ \text{in }\Span(\CC)} \right\rbrace\simeq \left\lbrace \substack{\text{2-Segal simplical} \\ \text{objects in } \CC}\right\rbrace.
\]
\end{theorem}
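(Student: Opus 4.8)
The plan is to realise both sides of the asserted equivalence as full subcategories of a single functor category $\Fun(\mathcal{A},\CC)$, where $\mathcal{A}$ is an explicit $1$-category extracted from $\Delta$, and then to match the conditions that cut out each side.

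\textbf{Diagrams into span categories.} The first ingredient is the translation between $\Span(\CC)$-valued and $\CC$-valued diagrams: for an $\infty$-category $K$, a functor $K\to\Span(\CC)$ is the same datum as a functor $\Tw(K)\to\CC$ out of the twisted arrow category (in a suitable variance), subject to the requirement that every square of $\Tw(K)$ coming from a composable pair of arrows in $K$ is sent to a pullback in $\CC$. I would either cite this or prove it by induction over the simplices of $K$, the cases $K=\Delta^0,\Delta^1,\Delta^2$ being precisely the statements ``an object of $\CC$'', ``a span in $\CC$'', and ``the composite of two spans is computed by a pullback''. Only the existence of pullbacks in $\CC$ is needed, and the equivalence is natural in $K$ and in $\CC$.

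\textbf{The parameterizing category.} Next I would unwind ``algebra object in $\Span(\CC)$''. Since $\Span(-)$ preserves finite products, the external product of spans equips $\Span(\CC)$ with a symmetric monoidal structure, and an algebra object is a section of the fibration $\Span(\CC)^{\otimes}\to\Delta^{\op}$ presenting it that carries inert maps to coCartesian edges. Feeding this through a fibered version of the twisted-arrow description and discarding the data rendered redundant by the inert/coCartesian condition (which collapses the ``product coordinates'' and the based-set bookkeeping) leaves a functor out of an explicit $1$-category $\mathcal{A}$ --- a ``staircase''/twisted-interval category attached to the active part of $\Delta^{\op}$ --- subject to a pullback condition $(\dagger)$ imposed on a specified class of squares. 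This is the category announced in the abstract; the real content here is combinatorial: identifying $\mathcal{A}$ and pinning down exactly which squares lie in $(\dagger)$.

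\textbf{Comparison with $2$-Segal objects, and the obstacle.} There is a functor $\iota\colon\Delta^{\op}\to\mathcal{A}$ selecting the ``long edges'', whose values will be the objects $X_n$ of the simplicial object. I would show that restriction along $\iota$ carries $(\dagger)$-functors to $2$-Segal simplicial objects, and that conversely every $2$-Segal object is the restriction of an essentially unique $(\dagger)$-functor, obtained by right Kan extension along $\iota$; the hypothesis that $\CC$ has small limits ensures this Kan extension exists, and unitality together with the $2$-Segal condition is exactly what forces the extension to satisfy $(\dagger)$ --- the polygon-triangulation pullbacks of the $2$-Segal condition matching the composition squares of $\mathcal{A}$, and the unitality pullbacks the degeneracy/identity squares. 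Mutual inverseness of restriction and Kan extension then yields an equivalence between the category of $(\dagger)$-functors and that of $2$-Segal simplicial objects, which composed with the equivalences of the previous two steps proves the theorem. I expect the crux to be precisely the bookkeeping in these last two steps: extracting the correct $\mathcal{A}$ and the correct class $(\dagger)$ from the operadic twisted-arrow data, verifying that $(\dagger)$ translates into the $2$-Segal and unitality conditions \emph{exactly} --- neither stronger nor weaker --- and carrying this out coherently at the level of $\infty$-categories rather than merely on homotopy categories, so that one genuinely obtains an equivalence of $\infty$-categories and not just a bijection on equivalence classes. Keeping the variance conventions and the precise notion of ``composition square'' consistent across all three steps is where I would expect the subtleties to accumulate.
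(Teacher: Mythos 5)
Your outline follows the same architecture as the paper's proof: convert $\Span(\CC)$-valued diagrams into $\CC$-valued diagrams on twisted arrows subject to pullback conditions (\cref{prop:Deltapullbackcond}), reduce to an explicit auxiliary $1$-category, and compare with simplicial objects by restriction and right Kan extension along an embedding of $\Delta^{\op}$. Your $\mathcal{A}$ is the paper's category $\Delta^\star$ of tuples of objects of $\Delta$, your $\iota$ is the inclusion of singleton tuples, and \cref{prop:DeltaKE} together with \cref{prop:DeltaTwoSegal} carries out exactly the restriction/Kan-extension comparison you describe: the product condition is absorbed by the Kan extension, and the remaining pullback condition is reduced via the pasting law to squares indexed by $2$-simplices of $\Delta$, which are precisely the unitality and $2$-Segal squares.

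The gap is that the step you describe as ``discarding the data rendered redundant by the inert/coCartesian condition'' is not a formal consequence of that condition, and it is where most of the work lives. Concretely, the inert-edge conditions force a functor $G\colon\Theta=\Tw(\Delta)\times_\Delta\Delta^\amalg\to\CC$ to invert an explicitly described class $E$ of morphisms (\cref{prop:pullbacksofequivs}, \cref{lem:classE}), and one must then \emph{prove} that the comparison functor $\mathcal{L}\colon\Omega\to\Delta^\star$ is an $\infty$-categorical localization at $E$. In the paper this is done by exhibiting initial objects in the weak fibers $\Omega_M^E$ and in the relevant comma categories over each morphism of $\Delta^\star$ (\cref{lem:contractibleEfibers}, \cref{lem:cofinalinclusion}) and invoking a cofinality criterion; this is a genuine combinatorial argument, not bookkeeping, and without it there is no equivalence $\SAlg(\CC)\simeq\Fun^{alg}(\Delta^\star,\CC)$ to feed into your final step. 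A second point your plan leaves implicit: the asserted equivalence of $\infty$-categories only holds with the convention that morphisms of algebra objects in $\Span(\CC)$ are natural transformations of the adjoint diagrams $\Theta\to\CC$, i.e.\ that the left-hand side is \emph{defined} as a full subcategory of $\Fun(\Theta,\CC)$; your naturality claim for the twisted-arrow correspondence identifies objects but not morphism spaces, so if one instead took natural transformations internal to $\Span(\CC)$ the first step of your plan would not produce an equivalence of $\infty$-categories.
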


This theorem appears in full detail in the text as \cref{thm:AlgInSpan}. The functoriality of \cref{thm:AlgInSpan} is somewhat unusual. The $\infty$-category $\Span(\CC)$ is defined via an adjunction as is \cite{DKHSSI}. Morphisms of algebra objects in $\Span(\CC)$ are then defined to be natural transformations of the corresponding adjoint diagram in $\CC$, rather than natural transformations in $\Span(\CC)$.

\subsection*{Polygons, surfaces, and topological field theories}

There is an additional geometric intuition underlying the 2-Segal condition. Fix a standard $n+1$-gon $P_n$, and a simplicial object $X:\Delta^\op\to \CC$.  The set of vertices $V_n$ of $P_n$ defines a simplicial set $\Delta^{V_n}$. For any triangulation $\mathcal{T}$ of $P_n$ with vertices in $P_n$, one can define a simplicial subset $\Delta^{\mathcal{T}}\subset \Delta^{V_n}$ whose 2-simplices  correspond to the triangles in $\mathcal{T}$. Taking limits of the simplicial object $X$ over the corresponding categories of simplices,  the inclusion $\Delta^{\mathcal{T}}\subset \Delta^{V_n}$  yields a morphism
\[
X_n\to \lim_{\Delta_{/\Delta^{\mathcal{T}}}} X_k.
\]
By \cite[Proposition 2.3.2]{DKHSSI}, the 2-Segal condition is equivalent to the condition that this morphism be an equivalence for every $n\geq 2$ and every such triangulation $\mathcal{T}$ of $P_n$. Intuitively, this means that the 2-Segal condition allows one to glue together the $X_n$ to get invariants of 2-dimensional simplicial complexes.

The connection of 2-Segal spaces to 2-dimensional geometry can be extended further with recourse to 2-Segal \emph{cyclic} objects, that is cyclic objects in $\CC$ whose underlying simplicial objects are 2-Segal. In \cite[Section V.2]{DKCSGSS}, Dyckerhoff and Kapranov construct invariants $X(S,M)$ of stable marked surface $(S,M)$ with boundary, associated to a  2-Segal cyclic object $X:\Lambda^\op\to \CC$. For the subset $N\subset M$ of marked points on the boundary of $S$, this invariant comes equipped with a projection $X(S,M)\to X_1^{|M|}$. More suggestively, if we label some of these marked points as `incoming' and the rest as `outgoing', we can read the invariant $X(S,M)$ as a span 
\[
X_1^{|N_{\on{in}}|}\leftarrow X(S,M)\rightarrow X_1^{|N_{\on{out}}|}.
\]
Moreover, the $X(S,M)$ come equipped with coherent actions of the mapping class group. It is therefore natural to ask whether the invariants $X(S,M)$ form an open, oriented, $\infty$-categorical topological field theory in $\Span(\CC)$. 

Such open, oriented theories have attracted some attention in the literature already. In \cite{Costello}, Costello considers open oriented theories equipped with a set of D-branes and valued in the (dg-)category of chain complexes. He shows that such field theories are equivalent to Calabi-Yau $A_\infty$ categories --- a generalization of the Calabi-Yau algebras in chain complexes. A similar classification which has more bearing on the situation detailed above, is that of Lurie: 
\begin{thm*}[{\cite[Theorem 4.2.11]{LurieTFT}}]
Let $\CC$ be a symmetric monoidal $\infty$-category. The following types of data are equivalent:
\begin{enumerate}
\item Open oriented topological field theories in $\CC$.
\item Calabi-Yau algebra objects in $\CC$.
\end{enumerate}
\end{thm*}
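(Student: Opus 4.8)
The plan is to derive the equivalence from a single universal property: that the symmetric monoidal $\infty$-category $\on{Bord}$ of oriented $1$-manifolds-with-boundary and oriented bordisms-with-corners between them is the \emph{free} symmetric monoidal $\infty$-category on a Calabi-Yau algebra object. Granting this, the $\infty$-category of open oriented topological field theories valued in $\CC$ --- by definition the $\infty$-category $\Fun^{\otimes}(\on{Bord},\CC)$ of symmetric monoidal functors --- is, by the defining universal property of a free object, equivalent to the $\infty$-category of Calabi-Yau algebra objects of $\CC$, naturally in $\CC$; in particular morphisms of field theories (monoidal natural transformations) correspond to morphisms of the associated Calabi-Yau algebras, so the statement is an equivalence of $\infty$-categories and not merely a bijection on equivalence classes.

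To establish the universal property one first reduces to a bounded amount of combinatorial input. Every object of $\on{Bord}$ is a finite disjoint union of intervals, so the interval $I$ generates the objects under $\sqcup = \otimes$; it remains to identify the morphisms and the relations among them. On homotopy $1$-categories this is the classical Moore--Segal presentation of the open sector of $2$-dimensional oriented TFT (see also Lauda--Pfeiffer, and Costello): the generating $1$-morphisms are the multiplication (open pair of pants) $\mu\colon I\sqcup I\to I$, the unit $\eta\colon\varnothing\to I$, and the trace $\epsilon\colon I\to\varnothing$, with nondegeneracy of the induced pairing supplying the self-duality (coevaluation) data; the relations assert exactly that $(I,\mu,\eta)$ is an associative unital algebra, that $\epsilon$ is a map of $I$-bimodules, and that the resulting pairing $(a,b)\mapsto\epsilon(ab)$ is nondegenerate and symmetric. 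These are precisely the axioms defining a Calabi-Yau algebra, so $\pi_{\leq 1}$ of the two sides agree.

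The substantive step is to promote this to the level of mapping \emph{spaces}. Each $\on{Map}_{\on{Bord}}(I^{\sqcup m}, I^{\sqcup n})$ is a disjoint union, indexed by diffeomorphism classes of oriented surfaces $\Sigma$ with suitably partitioned boundary, of classifying spaces of the relevant diffeomorphism groups of $\Sigma$. By the topology of surfaces --- Smale's theorem for the disk, Earle--Eells and Gramain in general --- the identity component of such a diffeomorphism group is contractible, so each of these classifying spaces is aspherical with fundamental group a mapping class group. Feeding this into Lurie's analysis of the structure of the bordism category (the genericity/``unfolding'' arguments of \cite[Section 3]{LurieTFT}) shows that the space-level $\on{Bord}$ carries no homotopy-coherent data beyond that already forced by the generators and relations above, i.e. it is indeed the free symmetric monoidal $\infty$-category on a Calabi-Yau algebra.

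The main obstacle is precisely this last promotion. Checking the Calabi-Yau axioms on $\pi_{\leq 1}$ is the classical, essentially formal part; the real work --- and the source of the nontrivial input from surface topology and mapping class groups --- is controlling the higher homotopy of the bordism category's mapping spaces and assembling the diffeomorphism-group computations into a clean universal property, so that one genuinely obtains an equivalence of $\infty$-categories rather than merely a statement about their homotopy categories.
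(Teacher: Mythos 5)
This theorem is not proven in the paper: it is imported verbatim from \cite[Theorem 4.2.11]{LurieTFT} and used only as a black box, in order to interpret the classification of Calabi-Yau algebras in $\Span(\CC)$ by 2-Segal cyclic objects as a classification of open oriented field theories. So there is no internal proof to compare yours against; I can only assess your outline on its own terms.

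Your outline follows the standard (and essentially Lurie's) strategy: exhibit the open oriented bordism category as the free symmetric monoidal $\infty$-category on a Calabi-Yau algebra object, verify the presentation on homotopy categories via the Moore--Segal generators and relations, and control the higher homotopy of the mapping spaces using the contractibility of the identity components of diffeomorphism groups of compact surfaces with non-empty boundary (Smale, Earle--Eells, Gramain). Two caveats. First, the entire $\infty$-categorical content is concentrated in the step you defer to ``Lurie's analysis'': knowing that each mapping space is a disjoint union of aspherical pieces does not by itself yield a universal property; one must actually construct the comparison functor from the free object and check it is an equivalence on each mapping space, and that is where the work lives, so as a self-contained argument this is a genuine gap rather than a routine verification. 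Second, ``nondegenerate and symmetric'' undersells the required structure: the trace of a Calabi-Yau algebra must be \emph{cyclically} invariant in a homotopy-coherent way --- in the language of this paper, a map of cyclic objects $B^\ast(X)\to \bbOne$, i.e.\ a functor out of $\bbLambda_\diamond$ --- and the corresponding topological input is the family of rotations of the disk with $n$ incoming boundary intervals, whose mapping class groups are exactly the cyclic groups assembling into Connes' category $\Lambda$. A symmetric Frobenius pairing on the homotopy category is strictly weaker data, and eliding that difference is the most common way this argument goes wrong.
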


Based on this theorem, the latter half of this paper seeks to relate cyclic 2-Segal objects to Calabi-Yau algebras. Such a relation is realized by:
\begin{theorem}
Let $\CC$ be an $\infty$-category with small limits. There is an equivalence of $\infty$-categories 
\[
\left\lbrace\substack{\text{Calabi-Yau}\\\text{Algebra objects}\\ \text{in }\Span(\CC)} \right\rbrace\simeq \left\lbrace \substack{\text{2-Segal cyclic} \\ \text{objects in } \CC}\right\rbrace.
\]
\end{theorem}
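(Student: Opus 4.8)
The plan is to deduce the theorem from \cref{thm:AlgInSpan} by a base-change argument. Write $\SAlg(\CC)$ for the $\infty$-category of algebra objects in $\Span(\CC)$, $\SAlg^{\CY}(\CC)$ for the $\infty$-category of Calabi-Yau algebra objects, $\tSeg(\CC)$ for the $\infty$-category of $2$-Segal simplicial objects, and $\tSeg^{\mathrm{cyc}}(\CC)$ for the $\infty$-category of $2$-Segal cyclic objects, i.e.\ those cyclic objects $\bfLambda^\op\to\CC$ whose underlying simplicial object is $2$-Segal. There is a square
\[
\begin{tikzcd}
\SAlg^{\CY}(\CC)\arrow[r]\arrow[d] & \SAlg(\CC)\arrow[d,"\simeq"]\\
\tSeg^{\mathrm{cyc}}(\CC)\arrow[r] & \tSeg(\CC)
\end{tikzcd}
\]
in which the horizontal functors forget the Calabi-Yau, resp.\ cyclic, structure, the right-hand vertical is the equivalence of \cref{thm:AlgInSpan}, and the diagram commutes up to a canonical natural equivalence. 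If this square is a homotopy pullback of $\infty$-categories, then the left-hand vertical functor is the base change of the right-hand equivalence along the bottom functor, hence is itself an equivalence --- which is exactly the assertion of the theorem. So the real work is to show that the square is a pullback.

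The content of that statement is that a Calabi-Yau structure on an algebra object $A$ in $\Span(\CC)$ is the same datum, naturally in $A$, as a lift of the $2$-Segal object $X_A$ associated to $A$ by \cref{thm:AlgInSpan} to a cyclic object. I would prove this by a combinatorial comparison of indexing data. The category $\CY$ parameterizing Calabi-Yau algebra objects in spans, constructed earlier in the paper, contains the indexing category for ordinary algebra objects in $\Span(\CC)$, the extra morphisms encoding the trace together with its nondegeneracy; dually, Connes' cyclic category $\bfLambda$ contains $\Delta$, with extra morphisms generated by the rotations $\tau_n\colon[n]\to[n]$. The crucial step is to produce a comparison functor (or a short zig-zag of functors) between $\CY$ and $\bfLambda$ sitting compatibly over the comparison already underlying \cref{thm:AlgInSpan}, and to verify that, after passing through the adjunction defining $\Span(\CC)$ and through the description of morphisms as natural transformations of adjoint diagrams in $\CC$, restriction along it carries the Calabi-Yau data on the $\Span(\CC)$-side precisely onto the cyclic-rotation data on the $\CC$-side. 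Unitality of the $2$-Segal objects in play enters here just as it controls which squares are required to be pullbacks: it is what forces the rotations to act in the way needed for the levelwise data to assemble into a genuine cyclic object.

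I expect the main obstacle to be exactly this identification of extra structure together with its naturality --- that is, checking that the square above is a pullback on mapping spaces and not merely on objects. The delicate point is the unusual functoriality already flagged for \cref{thm:AlgInSpan}: a morphism of Calabi-Yau algebra objects is by definition a natural transformation of the associated adjoint diagrams in $\CC$, so one must show that any such transformation automatically intertwines the cyclic rotations, which is a diagram chase in $\CC$ organized by the combinatorics of $\CY$ and $\bfLambda$. A secondary point requiring care is the nondegeneracy clause in the definition of a Calabi-Yau algebra: one must confirm that, under the dictionary, it corresponds to no condition on the cyclic object beyond $2$-Segality of its underlying simplicial object, so that every cyclic lift of a $2$-Segal object does come from a Calabi-Yau structure. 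Once the object-level identification, its naturality, and this last point are in place, the pullback-square argument closes the proof.
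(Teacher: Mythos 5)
Your overall strategy --- deduce the cyclic statement from \cref{thm:AlgInSpan} by exhibiting the square of forgetful functors as a pullback --- is genuinely different from what the paper does, but as written it is not a proof: the assertion that the square is a pullback \emph{is} the theorem, and your treatment of it ("a combinatorial comparison of indexing data," "produce a comparison functor \dots and verify") is a description of the task rather than an argument. The paper does not reduce to the simplicial case at all; it reruns the entire construction in the cyclic setting. Concretely, it introduces the indexing category $\Theta=\Tw(\Ass_\CY^\op)\times_{\bbGamma}\bbGamma^\amalg$, identifies the conditions on a functor $\Theta\to\CC$ that make it a Calabi--Yau coalgebra (\cref{cor:CYalgconds}), constructs a new target category $\Lambda^\star$ (with two kinds of objects, collections $\{[m_i]\}_{i\in S}$ and cyclic sets $\langle n\rangle$) and proves that $\mathcal{L}:\Omega\to\Lambda^\star$ is an $\infty$-categorical localization by exhibiting initial objects in the weak fibers (\cref{prop:LambdaWeakFibers}, \cref{prop:LambdaLaxFibOverAss}, \cref{prop:LambdaLaxFibOverLambda}), and then Kan-extends along $\Lambda_\Delta^\op\subset\Lambda^\star$ and localizes $\Lambda_\Delta\to\Lambda$. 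None of this is subsumed by \cref{thm:AlgInSpan}; the interaction of the terminal object $\diamond$ of $\Ass_\CY$ with the span/limit conditions is new combinatorial work that your plan would still have to do in order to compare mapping spaces in the two fibers.

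A second, more specific problem is your handling of nondegeneracy. You propose to check that nondegeneracy "corresponds to no condition on the cyclic object beyond $2$-Segality," but this inverts its actual role. In the paper, a trace algebra \emph{without} the nondegeneracy clause corresponds not to a cyclic object but to a diagram on the larger category $\Lambda_\Delta$ (a simplicial object together with a compatible $\Lambda$-shaped shadow and comparison maps $[n]\to\langle n\rangle$); nondegeneracy is translated, via \cref{lem:nondegistwoequivs} and \cref{const:nondegequivs}, into the requirement that these comparison maps go to equivalences, which is exactly the condition for the diagram to descend along the reflective localization $\Lambda_\Delta\to\Lambda$ and hence to \emph{be} a cyclic object. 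So in your pullback square the fiber of $\SAlg^{\CY}(\CC)\to\SAlg(\CC)$ over an algebra $A$ is not \emph{a priori} the space of cyclic lifts of $X_A$; establishing that identification requires precisely the nondegeneracy-to-localization translation, together with a verification that the forgetful functors have enough fibration structure for a fiberwise comparison to imply the square is a pullback. Neither point is addressed, so the argument as proposed does not close.
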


This appears in the text in full detail as \cref{thm:CYAlgsSpan}. As a consequence of \cref{thm:CYAlgsSpan}, we see that 2-Segal cyclic objects in $\CC$ are equivalent to open oriented topological field theories in $\CC$.

\subsection*{Examples and consequences}

Once the correspondence of \cref{thm:CYAlgsSpan} is established, a wealth of avenues to construct topological field theories open up. A number of examples of interest have already been explored in the literature.

\begin{itemize}
	\item Per \cite{DKTSTC}, the Waldhausen S-construction also gives rise to many cyclic 2-segal spaces.  An interesting special case is discussed in \cite{DKTSTC,DyckerhoffTopFuk, DKCSGSS}, where various versions of topological Fukaya categories are constructed as invariants $X(S,M)$ associated to 2-Segal objects arising from the Waldhausen S-construction.
	\item 1-Segal cyclic objects also provide a zoo of interesting examples. As a particular example, consider a morphism $f:A\to B$ in the $\infty$-category of spaces $\EuScript{S}$. The \v{C}ech nerve of this morphism is the 1-Segal simplicial space 
	\[
	\begin{tikzcd}
	\cdots & A\times_B A\times_B A\arrow[r,shift left=0.75ex]\arrow[r,shift right=0.75ex]\arrow[r] & A\times_B A\arrow[r,shift left=0.75ex]\arrow[r,shift right=0.75ex] & A 
	\end{tikzcd}
	\]
	which realizes to $B$. An appropriately chosen circle action on $B$ equips the \v{C}ech nerve with a canonical cyclic structure, and similarly, a cyclic structure on the \v{C}ech nerve equips its realization with a coherent $S^1$-action. Loosely speaking, the surface invariant $X(S,M)$ associated to this cyclic \v{C}ech nerve of $f$ is the space of `$S^1$-equivariant $B$-local systems on the circle bundle of a twisted tangent bundle of $(S,M)$ equipped with reduction of structure group to $A$ over the marked points'. When $B$ is $B SL_2(\mathbb{R})$ and $A$ is $BU$, where $U$ is the subgroup of upper unitriangular matrices, this construction can be related to the higher Teichm\"uller spaces constructed by Fock and Goncharov in \cite{FG}. 
	\item Another interesting incarnation of the cyclic \v{C}ech nerve construction is its application to a morphism $f:\ast \to X$
	into a connected space $X$. In this context, the \v{C}ech nerve has the loop space $\Omega X$ based at $f(\ast)$ as its space of $1$-simplices, and we expect the resulting surface invariants to relate to string topology.
\end{itemize}


\cref{thm:CYAlgsSpan} and \cref{thm:AlgInSpan} also bear an interesting relation to another construction in the literature. Following Cisinski and Moerdijk (cf. \cite{CM}), Walde defines a notion of a cyclic $\infty$-operad in \cite{Walde}, and shows that there are equivalences of $\infty$-categories
\[
\left\lbrace \substack{\text{invertible cyclic}\\
\infty\text{-operads}}\right\rbrace\simeq \left\lbrace \substack{\text{2-Segal cyclic} \\ \text{objects in } \EuScript{S}}\right\rbrace
\]
and
\[
\left\lbrace \substack{\text{invertible}\\
\infty\text{-operads}}\right\rbrace\simeq \left\lbrace \substack{\text{2-Segal simplicial} \\ \text{objects in } \EuScript{S}}\right\rbrace.
\]
Which now has the immediate implication of relating invertible (cyclic) $\infty$-operads to (Calabi-Yau) algebras in $\Span(\EuScript{S})$. 

There are also a number of possible generalizations of Theorems \ref{thm:AlgInSpan} and \ref{thm:CYAlgsSpan}. For instance, the cyclic category $\Lambda$ is one example of a \emph{crossed simplicial group}, a notion defined by Fiedorowicz and Loday \cite{FL} and Krasauskas \cite{Kras}. In \cite{DKCSGSS}, invariants analogous to the $X(S,M)$ were constructed for functors $X:\Delta\mathfrak{G}^\op\to \CC$ satisfying the 2-Segal condition, where $\Delta\mathfrak{G}$ is a crossed simplicial group. We expect that the relation between open topological field theories in spans and 2-Segal cyclic objects generalizes to this additional structure, which will be the basis for some future work on the subject.

\subsection*{Acknowledgements}

I thank my doctoral advisor, Tobias Dyckerhoff for his advice and guidance. I am also grateful to the Max Planck Institute for Mathematics in Bonn and the Universit\"at Hamburg for supporting my studies.

\resumetocwriting

\section{The menagerie: notations, conventions, and constructions}

In this section, we will lay out the fundamental definitions and constructions that will be used in the proof of the main result. Along the way, we will also prove basic relations between these definitions, to alleviate the density of later arguments.
\subsection{Linear and cyclic orders}
\begin{defn}\label{defn:SimplexCats}
The \emph{simplex category} $\Delta$ has objects the standard linearly ordered sets $[n]=\{0,1,\ldots, n\}$ for $n\geq 0$ and morphisms the order-preserving maps. The \emph{enlarged simplex category} $\bbDelta$ has objects finite non-empty linearly ordered sets, and morphisms order-preserving maps. 

The \emph{augmented simplex category} $\Delta_+$ (resp. the \emph{augmented simplex category} $\bbDelta_+$) is obtained from  $\Delta$ (resp. $\bbDelta$) by appending an initial object $\emptyset$, which will also sometimes be denoted by $[-1]$.

The \emph{interval category} $\nabla$ is the subcategory of $\Delta$ on the objects $[n]$ for $n\geq 1$, the morphisms of which preserve maximal and minimal elements. The \emph{enlarged interval category} $\bbNabla$ is the subcategory of $\bbDelta$ on those sets of cardinality $\geq 2$, whose morphisms preserve maximal and minimal elements.

The \emph{augmented interval category} $\nabla_+$ (resp. the \emph{augmented extended interval category} $\bbNabla_+$) is the subcategory of $\Delta$ (resp. $\bbDelta$) whose objects have cardinality $\geq 1$ and whose morphisms preserve the maximal and minimal elements. 
\end{defn}

\begin{defn}\label{defn:SetCats}
	The category of the standard finite sets $\underline{n}:=\{1,2,\ldots, n\}$ for $n\geq 0$ will be denoted $\Fin$. The category of the standard finite pointed sets $\langle n\rangle:=\underline{n}\amalg \{\ast\}$ will be denoted $\Fin_\ast$. The category of all finite sets (resp. the category of all finite points sets) will be denoted by $\bFin$ (resp. by $\bFin_\ast$). When convenient, we will denote by $\bbGamma$ (resp. by $\Gamma$) the opposites of the categories $\bFin_\ast$ (resp. $\Fin_\ast$). Given a pointed set $S\in \bFin_\ast$, we denote by $S^\circ$ the set $S\setminus \{\ast\}$, where $\ast$ denotes the basepoint of $S$. 
	
	We additionally denote by $\Ass$ the \emph{associative operad}, i.e. the category whose objects are objects of $\bFin_\ast$, and whose morphisms $\phi:S\to T$ are morphisms in $\bFin_\ast$ equipped with a chosen linear order on the fiber $\phi^{-1}(i)$ for each $i\in T^\circ$. Composition is defined by composition in $\bFin_\ast$, together with the lexicographic orders. Note that there is a forgetful functor $\Ass\to \bFin_\ast$, which equips $N(\Ass)$ with the structure of an $\infty$-operad in the sense of \cite{LurieHA}. 
\end{defn}

\begin{const}[Linear interstices]\label{const:LinInterstice}
	Given a linearly ordered set $S\in \bbDelta$ we define an \emph{inner interstice} of $S$ to be an ordered pair $(k,k+1)\in S\times S$, where $k+1$ denotes the successor to $k$. The set of inner interstices of $S$ is, itself, a linearly ordered set, with the order
	\[
	(k,k+1)\leq (j,j+1)\Leftrightarrow k\leq j
	\]
	We will denote the linearly ordered set of inner interstices of $S$ by $\mathbb{I}(S)$. Note that $\mathbb{I}([0])=\emptyset$. 
	
	Given a linearly ordered set $S\in \bbDelta_+$, let $\hat{S}$ be the set $\{a\}\amalg S\amalg \{b\}$, where $b$ is taken to be maximal and $a$ minimal. We define an \emph{outer interstice} of $S$ to be an inner interstice of $\hat{S}$. We will denote the linearly ordered set of outer interstices of $S$ by $\mathbb{O}(S)$. Note that $\mathbb{O}(\emptyset)=\{(a,b)\}$. 
	
	We define functors 
	\[
	\mathbb{O}:\bbDelta_+^{\op}\to \bbNabla_+;\quad S\mapsto \mathbb{O}(S)
	\]
	and 
	\[
	\mathbb{I}: \bbNabla_+^\op\to \bbDelta;\quad S\mapsto \mathbb{I}(S)
	\]
	as follows (we will define $\mathbb{O}$ explicitly, the definition of $\mathbb{I}$ is similar). Given a morphism $f:S\to T$ in $\bbDelta_+$, we define a morphism $\mathbb{O}(f):\mathbb{O}(S)\to \mathbb{O}(T)$ by setting 
	\[
	\mathbb{O}(f)(j,j+1)=\begin{cases}
	(k,k+1) & f(k)\leq j\leq j+1\leq f(k+1)\\
	(a,a+1) & j\leq f(k)\; \forall k\in S\\
	(b-1,b) & j\geq f(k)\; \forall k\in S. 
	\end{cases}
	\]
	Pictorially, we can represent the morphism $\mathbb{O}(f)$ as a forest as in \cref{fig:DualForest}, thinking leaves $j\in \mathbb{O}(T)$ as being attached to the root $k\in \mathbb{O}(S)$ if $\mathbb{O}(f)(j)=k$. 
	\begin{figure}[htb]
		\begin{center}
			\begin{tikzpicture}
			\foreach \x/\lab/\nom in {0.5/a_0/a0,1.5/a_1/a1,2.5/a_2/a2,3.5/a_3/a3}{
				\path (0,-\x) node (\nom) {$\lab$}; 
			};
			\begin{scope}[yshift=12]
			\foreach \x/\lab/\nom in {0.5/b_0/b0,1.5/b_1/b1,2.5/b_2/b2,3.5/b_3/b3,4.5/b_4/b4}{
				\path (4,-\x) node (\nom) {$\lab$}; 
			};
			\end{scope}
			\draw[->] (a0) to (b0);
			\draw[->] (a1) to (b2);
			\draw[->] (a2) to (b3);
			\draw[->] (a3) to (b3);
			\end{tikzpicture}
			\qquad
			\begin{tikzpicture}
			\foreach \x/\lab/\nom in {0.5/a_0/a0,1.5/a_1/a1,2.5/a_2/a2,3.5/a_3/a3}{
				\path (0,-\x) node (\nom) {$\lab$}; 
			};
			\foreach \y in {0,1,2,3,4}{
				\draw[blue](-0.2,-\y) to (0.2,-\y); 
			};
			\path (2,-4) node (blue1){};
			\path[fill=blue] (blue1.center) circle (0.05); 
			\path (2,-2.3) node (blue2){};
			\path[fill=blue] (blue2.center) circle (0.05); 
			\path (2,-1) node (blue3){};
			\path[fill=blue] (blue3.center) circle (0.05); 
			\path (2,0.3) node (blue4) {};
			\path[fill=blue] (blue4.center) circle (0.05); 
			
			\begin{scope}[yshift=12]
			\foreach \x/\lab/\nom in {0.5/b_0/b0,1.5/b_1/b1,2.5/b_2/b2,3.5/b_3/b3,4.5/b_4/b4}{
				\path (4,-\x) node (\nom) {$\lab$}; 
			};
			\foreach \y in {0,1,2,3,4,5}{
				\draw[blue](4.2,-\y) to (3.8,-\y); 
			}
			\end{scope}
			\draw[->] (a0) to (b0);
			\draw[->] (a1) to (b2);
			\draw[->] (a2) to (b3);
			\draw[->] (a3) to (b3);
			\draw[blue] (3.8,0.42) to (blue4.center);
			\draw[->,blue] (blue4.center) to (0.2,0);
			\draw[blue] (3.8,-0.58) to (blue3.center);
			\draw[blue] (3.8,-1.58) to (blue3.center);
			\draw[->,blue] (blue3.center) to (0.2,-1);
			\draw[blue] (3.8,-2.58) to (blue2.center);
			\draw[->,blue] (blue2.center) to (0.2,-2);
			\draw[blue] (3.8,-3.58) to (blue1.center);
			\draw[blue] (3.8,-4.58) to (blue1.center);
			\draw[->,blue] (blue1.center) to (0.2,-4);
			\end{tikzpicture}
		\end{center}
		\caption{Left: a morphism $f$ of linearly ordered sets. Right: the morphism $\mathbb{O}(f)$, visualized as a forest (blue).}\label{fig:DualForest}
	\end{figure}
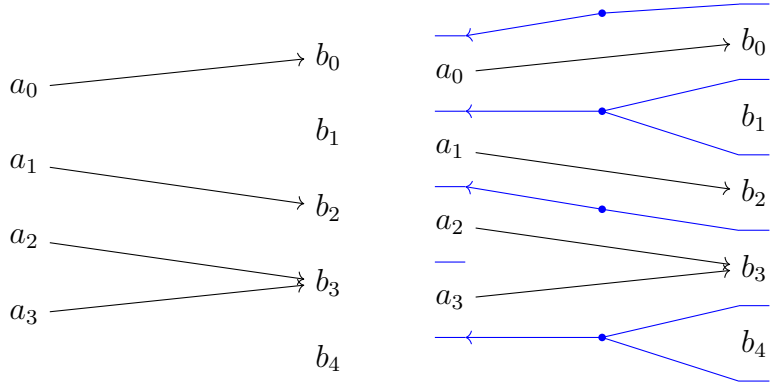  
	
	Note that the functors $\mathbb{I}$ and $\mathbb{O}$ define an equivalence of categories. Since $\Delta_+$ (resp. $\nabla_+$) is the skeletal version of $\bbDelta_+$ (resp. $\bbNabla_+$), all isomorphisms in these categories are identities, we see that we get an induced \emph{isomorphism} of categories
	\[
	O:\Delta_+^\op\overset{\cong}{\longleftrightarrow }\nabla_+:I
	\] 
	Moreover, we can define a functor $\bbNabla_+\to \Fin_\ast$ by 
	\[
	S\mapsto (S\amalg \{\ast\})_{/\on{max}(S)\sim\on{min}(s)\sim \ast}
	\]
	We then find that the induced functor 
	\[
	\Delta_+^{\op}\hookrightarrow \Delta_+^\op\overset{O}{\to} \nabla_+\to \Fin_\ast
	\]
	is precisely the functor $\on{cut}:\Delta^\op\to \Fin_\ast$ defined in \cite[4.1.2.9]{LurieHA}.
\end{const}

\begin{defn}
	Given two linearly ordered sets $S,T\in \bbDelta_+$ define the \emph{ordinal sum} $S\oplus T$ to be the set $S\amalg T$ equipped with the linear order defined by the orders on $S$ and $T$ and the proscription that for all $s\in S$ and $t\in T$, $s\leq t$. The ordinal sum defines a monoidal structure on $\bbDelta_+$. 
	
	Given two linearly ordered sets $S,T\in \bbNabla_+$, with $b$ the maximum of $S$ and $a$ the minimum of $T$, define the \emph{imbrication} $S\star T$ to be the linearly ordered set $(S\oplus T)_{/a\sim b}$ (note that since $a$ is the successor to $b$ in $S\oplus T$, there is a canonical linear order on $S\star T$ compatible with the quotient map). 
\end{defn}

\begin{lem}
	The functor $\mathbb{O}$ is a monoidal functor sending the ordinal sum to the imbrication. 
\end{lem}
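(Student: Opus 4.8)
The plan is to write down explicit structure maps and then observe that the coherences are essentially automatic. A (strong) monoidal structure on $\mathbb{O}:(\bbDelta_+^{\op},\oplus)\to(\bbNabla_+,\star)$ consists of an isomorphism from $\mathbb{O}(\emptyset)$, the value on the $\oplus$-unit, to the $\star$-unit, together with a natural isomorphism $\mu_{S,T}:\mathbb{O}(S)\star\mathbb{O}(T)\xrightarrow{\sim}\mathbb{O}(S\oplus T)$, subject to the usual unit and associativity constraints; since every $\mu_{S,T}$ we produce will be invertible, this makes $\mathbb{O}$ strong monoidal. The unit comparison is immediate: by \cref{const:LinInterstice} we have $\mathbb{O}(\emptyset)=\{(a,b)\}$, a one-element linearly ordered set, and any one-element set serves as a unit for the imbrication, since imbricating it onto $T$ on either side merely identifies its unique element with $\min T$ or $\max T$ and leaves $T$ unchanged up to canonical isomorphism. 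In particular $\emptyset=[-1]$ is sent to $[0]$ by the skeletal functor $O$.

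For the binary comparison I would unwind both sides as sets of interstices. Writing $S\oplus T=S\amalg T$ with the concatenated order, one has $\widehat{S\oplus T}=\{a\}\amalg S\amalg T\amalg\{b\}$, whose inner interstices are those interior to $S$, those interior to $T$, and the boundary interstices $(a,\min S)$, $(\max S,\min T)$, $(\max T,b)$ (with the evident degenerations when $S$ or $T$ is empty). On the other side, $\mathbb{O}(S)\star\mathbb{O}(T)$ is obtained from $\mathbb{O}(S)\oplus\mathbb{O}(T)$ by identifying $\max\mathbb{O}(S)$ — which is the last interstice $(\max S,b_S)$ of $\hat S$ — with $\min\mathbb{O}(T)$ — which is the first interstice $(a_T,\min T)$ of $\hat T$. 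Both sides are thus linearly ordered sets of cardinality $|S|+|T|+1$, and the assignment $\mu_{S,T}$ sending an interstice interior to $S$ or to $T$ to the same pair, sending $(a_S,\min S)\mapsto(a,\min S)$ and $(\max T,b_T)\mapsto(\max T,b)$, and sending the single identified class $[(\max S,b_S)]=[(a_T,\min T)]$ to the straddling interstice $(\max S,\min T)$, is an order isomorphism, hence an isomorphism in $\bbNabla_+$. When $S$ or $T$ is empty it reduces, after absorbing the $\star$-unit, to the identity of $\mathbb{O}(T)$, resp.\ $\mathbb{O}(S)$.

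It then remains to check naturality of $\mu$ and the coherence diagrams. For naturality, given $f:S'\to S$ and $g:T'\to T$ in $\bbDelta_+$, one compares the two composites $\mathbb{O}(S)\star\mathbb{O}(T)\to\mathbb{O}(S'\oplus T')$ interstice by interstice: on the blocks interior to $S$ and to $T$, and on the boundary/straddling interstices, the three-branch formula of \cref{const:LinInterstice} for $\mathbb{O}(f\oplus g)$ reduces, under the identifications above, exactly to the formula for $\mathbb{O}(f)$, resp.\ $\mathbb{O}(g)$, on the corresponding block. For the associativity constraint, both $\mathbb{O}((S\oplus T)\oplus U)$ and $\mathbb{O}(S\oplus(T\oplus U))$ are identified through iterated $\mu$'s with the same triple imbrication $\mathbb{O}(S)\star\mathbb{O}(T)\star\mathbb{O}(U)$, and the associators of $\oplus$ and of $\star$ are just the canonical reassociations of (quotients of) disjoint unions, so the pentagon and the two unit triangles commute. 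Restricting to the skeleta $\Delta_+^{\op}$ and $\nabla_+$, where all isomorphisms are identities, this shows that $O$ is a \emph{strictly} monoidal isomorphism for $[m]\oplus[n]=[m+n+1]$ and $[m]\star[n]=[m+n]$.

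The only place that takes care is the bookkeeping: matching the three cases in the morphism formula for $\mathbb{O}(f\oplus g)$ against those for $\mathbb{O}(f)$ and $\mathbb{O}(g)$, and tracking the empty-set degenerations — where a tensor factor becomes the one-element unit and a boundary interstice disappears — consistently through every step. None of this is conceptually hard. Alternatively, one can sidestep the case analysis: the dual interstice count shows that $\mathbb{I}$ carries $\star$ to $\oplus$ — here there is no hat, so no straddling interstice appears and the glued element of an imbrication is simply absorbed into its neighbour, making the comparison even more transparent — and then transport this monoidal structure back along the equivalence between $\mathbb{O}$ and $\mathbb{I}$ recorded in \cref{const:LinInterstice}.
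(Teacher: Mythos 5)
Your argument is correct. The paper states this lemma without proof, treating it as a routine verification, and your writeup supplies exactly the standard check: the unit comparison $\mathbb{O}(\emptyset)=\{(a,b)\}$, the order isomorphism $\mathbb{O}(S)\star\mathbb{O}(T)\cong\mathbb{O}(S\oplus T)$ sending the glued class $[(\max S,b_S)]=[(a_T,\min T)]$ to the straddling interstice $(\max S,\min T)$, and the coherences, which become strict on the skeleta where $[m]\oplus[n]=[m+n+1]$ and $[m]\star[n]=[m+n]$. The one point genuinely requiring care is one you already handled implicitly: \cref{const:LinInterstice} types $\mathbb{O}(f)$ as a map $\mathbb{O}(S)\to\mathbb{O}(T)$ for $f:S\to T$, but the displayed three-branch formula (and the contravariance of $\mathbb{O}$) in fact defines a map $\mathbb{O}(T)\to\mathbb{O}(S)$, which is the convention your naturality square correctly uses; your closing observation that $\mathbb{I}$ more transparently carries $\star$ to $\oplus$ is also accurate and gives a clean alternative.
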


\begin{defn}
A \emph{cyclic order} on a finite set $S$ is a transitive $\mathbb{Z}$-action on $S$. Equivalently, this is simply transitive action of $\mathbb{Z}/|S|$ on $S$. 
\end{defn}

\begin{defn}
	Given a cyclic set $S$, and a collection $\{[n_i]\}_{i\in S}$ of objects in $\Delta_+$, we define a cyclic set $\bigcup^S([n_i])$ as follows. The underlying set is $\coprod_{i\in S} [n_i]$, and the cyclic order is given by the $\mathbb{Z}/n$-action (where $n:=\sum_{i\in S} (n_i+1)$) that sends 
	\[
	j\in[n_i]\mapsto \begin{cases}
	j+1 & j<n_i\\
	0\in [n_{i+1}] & j=n_i
	\end{cases}
	\] 
	where $i+1$ denotes the successor of $i$ in the cyclic order on $S$. We call this order on $\coprod_{i\in S} [n_i]$ the \emph{lexicographic (cyclic) order}.
\end{defn}

\begin{defn}\label{defn:CyclicCats}
A \emph{morphism of cyclically ordered sets} $S\to T$  consists of a map of sets $\phi:S\to T$, and a linear order on each fiber such that the lexicographic cyclic order on $S$ agrees with the predefined cyclic order on $S$.
	
The \emph{cyclic category} has as its objects the standard cyclicly ordered sets $\langle n\rangle$ for $n\geq 0$, and as its morphisms the maps of finite sets respecting the cyclic order. The \emph{enlarged cyclic category} $\bbLambda$ has as its objects all finite, non-empty, cyclically ordered sets, and as its morphisms the maps which respect the cyclic order.
\end{defn}

\begin{const}[Cyclic Duality]\label{const:CyclicDuality}
In analogy to the construction of the linear interstice functors, we define a duality  
\[
\mathbb{D}:\bbLambda^{\op}\to \bbLambda
\]
on the cyclic category. Let $S\in \bbLambda$ be a cyclicly ordered set. We define a \emph{cyclic interstice} of $S$ to be an ordered pair $(a,a+1)\in S\times S$, where $a+1$ denotes the successor of $a$ under the cyclic order. We denote the set of cyclic interstices of $S$ by $\mathbb{D}(S)$. The set $\mathbb{D}(S)$ inherits a canonical cyclic order from $S$, which can be visualized as in \cref{fig:CyclicInterstices}. 
\begin{figure}[htb]
\begin{tikzpicture}
\draw (0,0) circle (1);
\foreach \th in {0,60,120,180,240,300}{
\draw[fill=black] (\th:1) circle (0.05);
};
\foreach \th in {30,90,150,210,270,330}{
\path[blue] (\th:1) node {$\times$};
};
\end{tikzpicture}
\caption{A cyclic set with its cyclic order visualized via an embedding into the oriented circle (black), together with its set of cyclic interstices (blue crosses).}\label{fig:CyclicInterstices}
\end{figure}
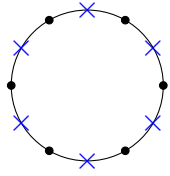
The functor $\mathbb{D}$ is specified on morphisms by an analogue of \cref{const:LinInterstice}, namely, for $f:S\to T$ in $\bbLambda$, we set
\[
\mathbb{D}(f)(j,j+1):=k\quad \text{where }  
\]
This functor is an equivalence of categories.
Since $\Lambda$ is the skeletal version of $\bbLambda$, $\mathbb{D}$ descends to an equivalence $D:\Lambda^\op\to \Lambda$
\end{const}

\begin{const}[Cyclic closures]\label{const:CyclicClosures}
We define a functor $\mathbb{K}:\bbDelta\to \bbLambda$ in the following way. Given a linearly ordered set $S$ of cardinality $n+1$, there is a unique order-preserving bijection $\phi:S\to [n]$. We define a bijection 
\[
S\to r(n); \quad j\mapsto \exp\left(\frac{2\pi i \phi(j)}{n+1}\right) 
\]
to the $n^{\on{th}}$ roots of unity in $S^1$. The orientation on $S^1$ then yields a canonical cyclic order on $S$. Passing to skeletal versions yields the well-known functor $\kappa: \Delta\to \Lambda$. 

Via the equivalences $\mathbb{O}$ and $\mathbb{D}$ we can then define a functor $\mathbb{C}:\bbNabla\to \Lambda$ such that the diagram 
\[
\begin{tikzcd}
\bbDelta^\op\arrow[d,"\mathbb{K}"']\arrow[r,"\mathbb{O}"] & \bbNabla\arrow[d,"\mathbb{C}"]\\
\bbLambda^\op\arrow[r,"\mathbb{D}"'] & \bbLambda
\end{tikzcd}
\]
commutes up to natural isomorphism. The functor $\mathbb{C}$ admits the following explicit description on objects. Let $S\in \bbNabla$ with maximal element $b$ and minimal element $a$. Then $\mathbb{C}(S)$ can be identified with with quotient of $\mathbb{K}(S)$ by the identification $a\sim b$. Once again, we have that $\mathbb{C}$ descends to a functor $C:\nabla\to \Lambda$. 
\end{const}

\begin{defn}\label{defn:CyclicCatChosenTriv}
Given an object $S\in \bbLambda$, a \emph{linear order on $S$ compatible with the cyclic order} consists of a pair $([n],\phi)$ consisting of an object $[n]\in \Delta$, and an isomorphism $\phi:\mathbb{K}([n])\cong S$. 

We introduce one more equivalent variant of $\bbLambda$, which we will denote $\bfLambda$. The objects of $\bfLambda$ consist of pairs $(S,\phi)$ where $S\in \bbLambda$, and $\phi:\mathbb{K}([n])\cong S$  is a compatible linear order on $S$. The morphisms of $S$ are simply the morphisms of $\bbLambda$.  It is clear that the forgetful functor $\bfLambda\to\bbLambda$ is an equivalence. 
\end{defn}

\begin{const} 
The functor $\mathbb{K}$ clearly extends to a functor $\mathbf{K}: \bbDelta \to \bfLambda$ by choosing the identity as the compatible linear order. We can then define functors $\mathbf{D}: \bfLambda^\op\to \bfLambda$ and $\mathbf{C}:\bbNabla\to \bfLambda$ such that the diagram 
\[
\begin{tikzcd}
\bbDelta^\op\arrow[d,"\mathbb{K}"']\arrow[r,"\mathbf{O}"] & \bbNabla\arrow[d,"\mathbf{C}"]\\
\bfLambda^\op\arrow[r,"\mathbf{D}"'] & \bfLambda
\end{tikzcd}
\]
commutes \emph{strictly}. 
\end{const}

\begin{lem}\label{lem:LinAndCycOrdSum}
Let $S\in \bbLambda$, a set $\{[n_i]\}_{i\in S}$ of elements in $\Delta_+$, and a compatible linear order $\phi:\mathbb{K}([m])\cong S$, there is a canonical isomorphism 
\[
\mathbb{K}\left(\bigoplus_{i\in [m]}[n_{\phi(i)}]\right)\cong \bigcup\nolimits^S[n_i] 
\] 
which acts as the identity on underlying sets. 
\end{lem}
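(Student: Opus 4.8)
The plan is to exhibit the claimed isomorphism as the evident identification of underlying sets, and then to verify that this identification intertwines the two $\mathbb{Z}$-actions. Since a cyclic order is by definition a transitive $\mathbb{Z}$-action on a finite set, and such an action is completely determined by the image of $1\in\mathbb{Z}$, i.e. by the successor function, it suffices to compare successor functions on the two sides.

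First I would fix notation: set $N=\sum_{i\in S}(n_i+1)$, which is simultaneously the cardinality of the ordinal sum $\bigoplus_{i\in[m]}[n_{\phi(i)}]$ and the integer called $n$ in the definition of $\bigcup\nolimits^{S}[n_i]$, so both sides carry a $\mathbb{Z}/N$-action. On underlying sets $\mathbb{K}$ is the identity, so $\mathbb{K}\big(\bigoplus_{i\in[m]}[n_{\phi(i)}]\big)=\coprod_{i\in[m]}[n_{\phi(i)}]$ while $\bigcup\nolimits^{S}[n_i]=\coprod_{j\in S}[n_j]$; the bijection $[m]\to S$ underlying the compatible linear order $\phi$ identifies these two coproducts, and this is the map I claim is an isomorphism in $\bbLambda$.

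Next I would write out both successor functions explicitly. On $\bigcup\nolimits^{S}[n_i]$ the successor sends $j\in[n_i]$ to $j+1$ when $j<n_i$ and to $0\in[n_{i+1}]$ when $j=n_i$, where $i+1$ is the cyclic successor of $i$ in $S$. On the ordinal sum, the cyclic successor coming from $\mathbb{K}$ is the successor in the total order on $\coprod_{i\in[m]}[n_{\phi(i)}]$, wrapped from the maximum to the minimum: it sends $j\in[n_{\phi(i)}]$ to $j+1$ when $j<n_{\phi(i)}$, to $0\in[n_{\phi(i+1)}]$ when $j=n_{\phi(i)}$ and $i<m$, and to $0\in[n_{\phi(0)}]$ when $j=n_{\phi(m)}$. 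The one nontrivial input is that $\phi$, being an isomorphism $\mathbb{K}([m])\cong S$, carries the cyclic successor of $\mathbb{K}([m])$ — namely $i\mapsto i+1$ for $i<m$ and $m\mapsto 0$ — to the cyclic successor of $S$; substituting this into the formula on $\bigcup\nolimits^{S}[n_i]$ transforms it, under the identification via $\phi$, into exactly the formula just described on $\bigoplus_{i\in[m]}[n_{\phi(i)}]$. Hence the two successor functions agree, the two $\mathbb{Z}/N$-actions agree, and the identity-on-underlying-sets map is the desired isomorphism; canonicity is clear because no choices were made beyond the data in the hypothesis.

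I do not expect a genuine obstacle here: the content is bookkeeping, and the argument is robust even in the degenerate case where some $[n_i]=\emptyset=[-1]$, since empty blocks simply do not appear in either coproduct (one only needs $N\ge 1$ so that both sides lie in $\bbLambda$). The only point demanding a moment's care is keeping straight that "$i+1$" denotes the cyclic successor in $S$ on the right-hand side but the cyclic successor in $\mathbb{K}([m])$ — which differs from the linear successor only at $i=m$ — on the left, and that $\phi$ is precisely what reconciles the two.
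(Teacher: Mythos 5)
Your proof is correct and follows essentially the same route as the paper: both compare the two $\mathbb{Z}/n$-actions via their successor functions, noting that they agree away from the maximal element and that the maximal element is sent to $0\in[n_{\phi(0)}]$ on both sides. Your write-up simply spells out the bookkeeping (and the role of $\phi$ at the wrap-around) in more detail than the paper does.
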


\begin{proof}
We compare the $\mathbb{Z}/n$-actions. When $j\in \bigoplus_{i\in[m]} [n_i]$ is not maximal, the successor function for the ordinal sum agrees with the $\mathbb{Z}/n$-action on $\bigcup^S[n_i]$. If $j$ is maximal, we have that the action on the left sends $j$ to $0\in n_{\phi(0)}$, which agrees with the definition of the cyclic order on the right. 
\end{proof}
\subsection{Calabi-Yau algebras}
Throughout the following section, we take $\CC^\otimes\to \bFin_\ast$ to be a symmetric monoidal $\infty$-category with monoidal unit $\bbOne$ and tensor product $\otimes$.

\begin{const}
There is a functor $B:\bbLambda\to\Ass$ defined as follows. On objects, send each $S\in\bbLambda$ to $S\amalg \{\ast\}$, forgetting the cyclic order. On morphisms, send $f:S\to T$ to its underlying map of sets. Define a linear order on the fibers of $f$ by choosing embeddings of $S$ and $T$ into $S^1$ compatible with the cyclic order, and representing $f$ as a commutative diagram 
\[
\begin{tikzcd}
S^1\arrow[r, "\tilde{f}"] & S^1 \\
S\arrow[u,"\alpha"]\arrow[r, "f"] & T\arrow[u,"\beta"']
\end{tikzcd}
\] 
where $\tilde{f}$ is monotone of degree 1. For $i\in T$, the preimage of $\beta(i)$ under $\tilde{f}$ is an interval, and $\beta(f^{-1}(i))\subset \tilde{f}^{-1}(\beta(i))$. The orientation of $S^1$ induces an orientation of $\tilde{f}^{-1}(\beta(i))$, and hence a linear order on $f^{-1}(i)$. 
\end{const}

\begin{defn}
The \emph{cyclic bar object} of an algebra object $X:\Ass\to \CC^\otimes$ is the composition $B^\ast(X)$. A \emph{cyclic trace} on $X$ is a natural transformation $\eta$ from $B^\ast(X)$ to the constant cyclic object on $\bbOne\in\CC$. We call a pair $(X,\eta)$ consisting of an algebra object in $\CC^\otimes$ and a cyclic trace a \emph{trace algebra}. 
\end{defn}

\begin{rmk}
A natural transformation to a constant cyclic object may be modeled as a functor from the category $\bbLambda_\diamond$ obtained from $\bbLambda$ by formally adjoining a terminal object. We denote the terminal object of $\bbLambda_\diamond$ by $\diamond$. 
\end{rmk}

\begin{defn}\label{defn:nondegenerate}
A morphism $\gamma:X\otimes X\to \bbOne$ in $\CC$ is called \emph{non-degenerate} if there exists a morphism $\eta:\bbOne\to X\otimes X$ such that 
\begin{itemize}
\item The composite
\[
X \overset{\simeq}{\to} X\otimes \bbOne \overset{\eta\otimes\id_{\bbOne}}{\longrightarrow} X\otimes X\otimes X \overset{\id_{\bbOne}\otimes\gamma}{\longrightarrow} \bbOne\otimes X\overset{\simeq}{\to} X
\]
is homotopic to the identity.
\item The composite 
\[
X \overset{\simeq}{\to} \bbOne\otimes X \overset{\id_{\bbOne}\otimes\eta}{\longrightarrow} X\otimes X\otimes X \overset{\gamma\otimes\id_{\bbOne}}{\longrightarrow} X\otimes \bbOne\overset{\simeq}{\to} X
\]
is homotopic to the identity.
\end{itemize}
\end{defn}

\begin{defn}
Let $(X,\eta)$ be a trace algebra in $\CC$, and let $\eta_2:X\otimes X\to \bbOne$ be the map induced by $\langle 2\rangle \to \diamond$ in $\bbLambda_\diamond$ under $\eta$. We call $(X,\eta)$ a \emph{Calabi-Yau algebra} in $\CC$ if $\eta_2$ is non-degenerate.
\end{defn}

\begin{rmk}
The definition above is precisely that of \cite[Example 4.2.8]{LurieTFT}. When Hochschild homology is defined, the map $\eta:B^\ast(X)\to \bbOne$ is equivalently an $S^1$-equivariant trace 
\[
\int_{S^1} X\to \bbOne.
\]
\end{rmk}

\begin{defn}
Let $\Ass_\CY$ be the category with
\begin{itemize}
\item Objects $\on{ob}(\Ass)\amalg \{\diamond\}$.
\item Morphisms between $S,T\in \Ass$ 
\[
\Hom_{\Ass_\CY}(S,T):=\Hom_{\Ass}(S,T).
\]
\item For $S\in \Ass$,
\[
\Hom_{\Ass_\CY}(\diamond,S):= \emptyset
\]
and a morphism $S\to \diamond$ is a choice of a subset $T\subset S^\circ$ and a cyclic order on $T$. 
\item For $S,T\in \Ass$, and morphisms $\phi:S\to T$ and $\psi:T\to \diamond$, the composite $\psi\circ \phi$ is given by the induced cyclic order
\end{itemize}
Note that $\Ass_\CY$ comes equipped with a functor $\Ass_\CY\to \bFin_\ast$ sending $\diamond\mapsto \langle 1\rangle$. 
\end{defn}

\begin{const}
Let $\bbLambda\to \bbLambda_\diamond$ and $\Ass\to \Ass_\CY$ be the inclusions. Define a functor $F: \bbLambda_\diamond\to \Ass_\CY$ by setting $F=B$ on $\bbLambda\subset \bbLambda_\diamond$, and sending $\diamond\mapsto \diamond$.  By definition, the diagram
\begin{equation}\label{eq:AssCYDiagram}
\begin{tikzcd}
\bbLambda\arrow[r]\arrow[d, "B"'] & \bbLambda_\diamond\arrow[d,"F"]\\
\Ass\arrow[r] & \Ass_CY
\end{tikzcd}
\end{equation}
commutes.
\end{const}

\begin{defn}
	We take $\mathfrak{P}$ to be the categorical pattern of \cite[Proposition 2.1.4.6]{LurieHA}. In the following proof, we will freely make reference to this proposition, and Appendix B from the same. 
\end{defn}

\begin{lem}
The diagram 
\[
\begin{tikzcd}
N(\bbLambda)\arrow[r]\arrow[d, "B"'] & N(\bbLambda_\diamond)\arrow[d,"F"]\\
N(\Ass)\arrow[r] & N(\Ass_CY)
\end{tikzcd}
\]
induces an $\mathfrak{P}$-anodyne morphism of $\infty$-categories  
\[
\theta:N(\Ass)\coprod_{N(\bbLambda)}N(\bbLambda_\diamond)\to N(\Ass_{\CY}) 
\]
over $\bFin_\ast$, where the non-degenerate marked simplices are precisely the inert morphisms of $\Ass$. 
\end{lem}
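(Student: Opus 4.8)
The plan is to exhibit $\theta$ as a countable composition of pushouts of (marked) inner-horn inclusions, which are among the generators of the weakly saturated class of $\mathfrak{P}$-anodyne morphisms in \cite[Appendix B]{LurieHA}, so that weak saturation completes the proof; to set this up I would first dispatch the formal points. In the pushout defining the source of $\theta$, the objects of $N(\bbLambda)$ are glued along $B$ to those of $N(\Ass)$, so the source has object set $\on{ob}(\Ass)\amalg\{\diamond\}$; hence $\theta$ is bijective on objects and restricts to the identity on $N(\Ass)\subseteq N(\Ass_{\CY})$. It is also a monomorphism: a nondegenerate simplex of the source is either a chain in $\Ass$ or a chain $c_0\to\cdots\to c_{m-1}\to\diamond$ in $\bbLambda_\diamond$, and in the latter case its image records $c_{m-1}$ in its terminal edge $B(c_{m-1})\to\diamond$ (a morphism into $\diamond$ being exactly a cyclically ordered subset), whence the preceding edges determine $c_{m-2},\dots,c_0$ and the maps between them. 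Finally, both $\Ass\subseteq\Ass_{\CY}$ and $\bbLambda\subseteq\bbLambda_\diamond$ are sieves, since $\diamond$ has no nonidentity outgoing morphism; one then checks that $\Ass\amalg_{\bbLambda}\bbLambda_\diamond\cong\Ass_{\CY}$ as categories, the only nontrivial point being the identification $\Hom_{\Ass_{\CY}}(S,\diamond)\cong\on{colim}_{c\in\bbLambda}\Hom_{\Ass}\bigl(S,B(c)\bigr)$ under the rule sending $\phi$ to the subset $\phi^{-1}\!\bigl(B(c)^\circ\bigr)\cap S^\circ$ equipped with its induced cyclic order (surjectivity via the tautological map of a cyclically ordered subset, injectivity by factoring any such $\phi$ through $B$ of that subset). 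The assertion thus becomes that the comparison map from the pushout of nerves to the nerve of the pushout of the sieve $N(\bbLambda)\hookrightarrow N(\bbLambda_\diamond)$ along $B$ is $\mathfrak{P}$-anodyne.

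The core is a filtration of $P\hookrightarrow N(\Ass_{\CY})$ with $P=\on{im}(\theta)$. A nondegenerate simplex outside $P$ is precisely a chain $S_0\to\cdots\to S_{n-1}\xrightarrow{\psi}\diamond$ (the terminal vertex must be $\diamond$) that does not lift to $\bbLambda_\diamond$; that is, $\psi$ is not the tautological morphism of $S_{n-1}$, or some edge $S_{i-1}\to S_i$ is not strict at the basepoint. The essential device is the canonical factorization $S\xrightarrow{\widetilde\psi}B(c_\psi)\xrightarrow{\on{taut}}\diamond$ of a morphism $\psi\colon S\to\diamond$: here $c_\psi$ is the cyclically ordered subset of $S^\circ$ underlying $\psi$ (a fixed one-element cyclic set when that subset is empty), $\widetilde\psi$ carries it identically into $B(c_\psi)^\circ$ with an arbitrary linear order on the complementary fibre, and $\on{taut}=F(c_\psi\to\diamond)$ lies in $P$. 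Inserting the vertex $B(c_\psi)$ just before $\diamond$ exhibits $\sigma=(S_0\to\cdots\to S_{n-1}\xrightarrow{\psi}\diamond)$ as the inner face $d_n$ of a canonical $(n{+}1)$-simplex whose face $d_{n+1}$ is a chain in $\Ass$ (hence in $N(\Ass)\subseteq P$) and whose other faces have tautological terminal edge; an interior non-basepoint-strict edge, by contrast, is handled by attaching $\sigma$ along the inner horn $\Lambda^n_k$ at the offending edge, with $\sigma$ matched to $d_k\sigma$. I would attach the missing simplices in the order dictated by a complexity function --- the dimension together with a measure of the location, read from the $\diamond$-end, of the first defect --- with each missing simplex matched, discrete-Morse-style, to a designated inner face of its canonical extension or of itself, in such a way that every other face of the attaching simplex has strictly smaller complexity. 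Then $\theta$ is a transfinite composite of pushouts of inner horns.

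For the marking: the only nondegenerate marked edges are the inert morphisms of $\Ass$, all lying in $N(\Ass)\subseteq P$ and hence, in each horn used, inside the face that is the $\Ass$-chain part of the attaching simplex, which is present from the start; so the attaching maps are precisely the inner-horn generators of \cite[Appendix B]{LurieHA} (flat, or carrying exactly those marked edges), and every pushout is $\mathfrak{P}$-anodyne.

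The main obstacle is designing the complexity function and the matching and verifying that the matching is well-defined and acyclic --- in particular that simplices with an interior non-basepoint-strict edge, which cannot be repaired by inserting a single factorizing vertex, are each matched to an unambiguous inner face whose complement is of strictly lower complexity. This is the point at which the sieve structure of $\bbLambda\subseteq\bbLambda_\diamond$ and the compatibility of $\psi=\on{taut}\circ\widetilde\psi$ with composition in $\Ass$ are used essentially; the marked-simplex bookkeeping and the category-level comparison $\Ass\amalg_{\bbLambda}\bbLambda_\diamond\cong\Ass_{\CY}$ are routine by contrast.
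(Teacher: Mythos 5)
Your overall strategy is the same as the paper's: note that $\theta$ is injective and bijective on vertices, observe that every simplex of $N(\Ass_\CY)$ outside the image is a chain terminating in $\diamond$, and repair such chains by inserting the pointed set on the cyclically ordered subset attached to a morphism into $\diamond$ (your $B(c_\psi)$ is the paper's $T_f$), so that $\theta$ becomes a transfinite composite of pushouts of inner-horn inclusions of type ($C_1$) and hence $\mathfrak{P}$-anodyne. The preliminary identification $\Ass\amalg_{\bbLambda}\bbLambda_\diamond\cong\Ass_\CY$ is extra relative to the paper, which compares the simplicial pushout to $N(\Ass_\CY)$ directly, but it is harmless; your description of the marking is likewise consistent with the paper's.

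The gap is that the decisive combinatorial step --- specifying the order of attachment and checking that every face of each attaching simplex, other than the one being freely added, is already present --- is exactly what you defer, and the one concrete device you offer for the problematic case does not work as stated. For a simplex $\sigma=(S_0\to\cdots\to S_{n-1}\to\diamond)$ whose terminal edge is tautological but which has a defective interior edge $S_{k-1}\to S_k$, you propose attaching $\sigma$ along $\Lambda^n_k$ with $\sigma$ ``matched to $d_k\sigma$''; but $d_k\sigma$ has dimension $n-1$, so in any filtration compatible with dimension it is present before $\sigma$ is reached, and the pushout along $\Lambda^n_k\hookrightarrow\Delta^n$ then fails to exhibit $\sigma$ as freely attached. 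Abandoning the dimension ordering is precisely what makes the acyclicity of your Morse-style matching nontrivial, and you acknowledge without resolving this. The paper's mechanism for closing exactly this gap is a strengthened inductive hypothesis: before dimension $n$ is processed, all $n$-simplices whose terminal edge lies in the image of $\bbLambda_\diamond$ are required to be present, so that the only simplices needing direct repair are those with a non-tautological terminal edge, and each of these is added by a single $\Lambda^{n+1}_n$-horn on the $(n+1)$-simplex obtained by inserting $T_{\phi_n}$ before $\diamond$, whose remaining faces lie either in $N(\Ass)$ or among the simplices with tautological terminal edge. A completed version of your argument must either reproduce this bookkeeping or genuinely construct and verify the complexity function and matching; as written, the proposal is a plan for the paper's proof rather than a proof.
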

\begin{proof}
An $n$-simplex of $N(\Ass)\coprod_{N(\bbLambda)}N(\bbLambda_\diamond)$ is an equivalence class in $N(\Ass)\amalg N(\bbLambda_\diamond)$ under the relation that 
\[
\underbrace{(S_0\to S_1\to\cdots \to S_n)}_{\in N(\Ass)_n}\sim \underbrace{(T_0\to T_1\to\cdots \to T_n)}_{\in N(\bbLambda)_n}
\]
if and only if
\[
B(T_0\to T_1\to\cdots \to T_n)=(S_0\to S_1\to\cdots \to S_n).
\]
In particular, $\theta$ is injective, and a bijection on 0-simplices.

We proceed by induction. For ease of notation, we set $Q=N(\Ass)\coprod_{N(\bbLambda)}N(\bbLambda_\diamond)$.
\begin{enumerate}
\item Suppose $f:S\to \diamond$ is a 1-simplex not contained in the image of $\theta$. Then $S$ is determined by $T\subsetneq S^\circ $ and a cyclic order on $S$.  Adding a basepoint to $T$ to get $T_f\in \Ass_\CY$ we get a factorization of $f$ as 
\[
\begin{tikzcd}
 & T_f\arrow[dr,"\alpha"] & \\
 S\arrow[rr,"f"]\arrow[ur, "\beta"] & & \diamond
\end{tikzcd}
\]
in $\Ass_\CY$. Taking such a 2-simplex $\sigma_f$ for every such $f$, we can form the pushout 
\[
\begin{tikzcd}
\coprod_{\{f\}}(\Lambda^2_1)^\flat\arrow[r]\arrow[d] & Q_0\arrow[d]\\
\coprod_{\{f\}} (\Delta^2)^\flat\arrow[r] & Q_1
\end{tikzcd}
\]
The morphism on the left is of type ($C_1$) from \cite[B.1.1]{LurieHA}, so we get a factorization 
\[
\begin{tikzcd}
Q_0\arrow[r,"\tau_1"]\arrow[rr, out=30,in=150, "\theta"] & Q_1\arrow[r,"\theta_1"] & N(\Ass_\CY)
\end{tikzcd}
\]
where $\tau_1$ is $\mathfrak{P}$-anodyne, and $\theta_1$ is bijective on $1$-simplices.
\item Now suppose that $\sigma:\Delta^2\to \Ass_\CY$ is a 2-simplex not in the image of $\theta_1$. Then $\sigma$ must be given by a sequence 
\[
\begin{tikzcd}
S_1\arrow[r, "g"] & S_2\arrow[r,"f"] & \diamond
\end{tikzcd}
\]
(if $\sigma$ does not contain $\diamond$, it is the image of a simplex in $\Ass$, if it contains two copies of $\diamond$, it is degenerate). Consequently, we get two 2-simplices, $\sigma_{f\circ g}$ and $\sigma_g$ in the image of $\theta_1$. Moreover, $g$ restricts to a morphism 
\[
g:T_{f\circ g}\to T_f,
\]
and we get a 2-simplex $S_1\to S_2 \to T_f$. We then note that the $\Lambda^3_1$ horn 
\[
\begin{tikzcd} 
  & T_{f}\arrow[ddr]& \\
  & T_{f\circ g}\arrow[u]\arrow[dr]& \\
  S_1\arrow[uur]\arrow[rr]\arrow[ur]& &\diamond
\end{tikzcd}
\]
can be filled to a 2-simplex $S_1\to T_f\to \diamond$ via a horn of type ($C_1$). Finally, we get a $\Lambda^3_2$-horn 
\[
\begin{tikzcd} 
  & S_2\arrow[ddr,"f"]\arrow[d]& \\
  & T_{f}\arrow[dr]& \\
  S_1\arrow[uur,"g"]\arrow[rr,"f\circ g"']\arrow[ur]& &\diamond
\end{tikzcd}
\]
of type ($C_1$). This gives us a factorization of $\theta$ as $Q_0\overset{\tau_2}{\to} Q_2\overset{\theta_2}{\to} N(\Ass_\CY)$
where $\tau_1$ is $\mathfrak{P}$-anodyne and $\theta_2$ is bijective on simplices of dimension $\leq 2$.  
\item Now suppose inductively that we have obtained a factorization through $\theta_{n-1}:Q_n\to N(\Ass_\CY)$ such that
\begin{itemize}
\item $\theta_{n-1}$ is bijective on $k$-simplices for $k\leq n-1$.
\item The image of $\theta_{n-1}$ contains all $n$-simplices of the form 
\[
S_0\to S_1\to\cdots \to S_{n-1}\to \diamond
\]
where $S_{n-1}\to \diamond$ is a 1-simplex in the image of $\Lambda_\diamond$. 
\end{itemize}
Suppose given an $n$-simplex $\sigma$ not in the image of $\theta_{n-1}$. Then, by similar reasoning to that above, $\sigma$ must be of the form
\[
S_0\overset{\phi_1}{\to} S_1\overset{\phi_2}{\to}\cdots \to S_{n-1}\overset{\phi_n}{\to}\diamond
\]
with $S_{n+1}\to \diamond$ not in the image of $\Lambda_\diamond$. Define $\psi_k:=\phi_n\circ\phi_{n-1}\circ\cdots\circ\phi_{n-k}$, we then get $n$-simplices in the image of $\theta_{n-1}$ 
\[
S_0\overset{\phi_1}{\to} S_1\overset{\phi_2}{\to}\cdots\to \widehat{S_k} \to S_{n-1}\to T_{\phi_n}{\to}\diamond
\]
and an $n$-simplex in the image of $\theta_{n-1}$
\[
S_0\to S_1\to \cdots \to S_{n-1}\to T_f.
\]
These $n$ $n$-simplices form a $\Lambda^{n+1}_n$-horn in $N(\Ass_\CY)$ which, once again, can be filled by a pushout of type ($C_1$).
\end{enumerate}
We therefore get a factorization
\[
Q_0\to Q_1\to \cdots \to N(\Ass_\CY)
\]
which exhausts $N(\Ass_\CY)$. Each morphism in this sequence is $\mathfrak{P}$-anodyne, and so the transfinite composition $Q_0\to N(\Ass_\CY)$ is $\mathfrak{P}$-anodyne.
\end{proof}

\begin{cor}
The $\infty$-category of trace algebras in $\CC$ is equivalent to the full subcategory of $\on{Map}^\sharp_{\bFin_\ast}(N(\Ass_\CY),\CC^\otimes)$ sending $\diamond$ to $\bbOne$.  
\end{cor}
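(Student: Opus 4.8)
The plan is to read off both sides of the asserted equivalence as mapping $\infty$-categories out of the two ends of the $\mathfrak{P}$-anodyne morphism $\theta$ furnished by the previous lemma, and then to invoke the basic orthogonality property of the categorical pattern $\mathfrak{P}$: a $\mathfrak{P}$-anodyne morphism induces a trivial fibration upon mapping into any $\mathfrak{P}$-fibered object.

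First I would identify the $\infty$-category of trace algebras with a full subcategory of $\on{Map}^\sharp_{\bFin_\ast}(Q,\CC^\otimes)$, where $Q = N(\Ass)\coprod_{N(\bbLambda)}N(\bbLambda_\diamond)$ is the pushout appearing in the lemma, marked (as there) by the inert morphisms of $\Ass$. Unwinding the definitions: an algebra object in $\CC^\otimes$ is exactly a marking-respecting functor $N(\Ass)\to\CC^\otimes$ over $\bFin_\ast$; a cyclic trace on it is, by the remark following the definition of trace algebra, an extension of the composite $N(\bbLambda)\xrightarrow{B}N(\Ass)\to\CC^\otimes$ to a functor $N(\bbLambda_\diamond)\to\CC^\otimes$ over $\bFin_\ast$ carrying $\diamond$ to $\bbOne$. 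Since $\on{Map}^\sharp_{\bFin_\ast}(-,\CC^\otimes)$ sends pushouts of marked simplicial sets over $\bFin_\ast$ to pullbacks of $\infty$-categories (and one leg of the relevant square is a monomorphism, so this pullback is also a homotopy pullback), the universal property of $Q$ packages precisely this data, and the trace algebras are then cut out as the full subcategory on those functors sending $\diamond$ to $\bbOne$.

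Next I would apply the lemma. As $\CC^\otimes\to\bFin_\ast$ is a symmetric monoidal $\infty$-category it is $\mathfrak{P}$-fibered in the sense of \cite[Proposition 2.1.4.6]{LurieHA}, and $\theta:Q\to N(\Ass_\CY)$ is $\mathfrak{P}$-anodyne by the lemma. Hence the restriction functor
\[
\theta^\ast: \on{Map}^\sharp_{\bFin_\ast}(N(\Ass_\CY),\CC^\otimes)\longrightarrow \on{Map}^\sharp_{\bFin_\ast}(Q,\CC^\otimes)
\]
is a trivial fibration, in particular an equivalence; this is exactly the orthogonality between $\mathfrak{P}$-anodyne maps and $\mathfrak{P}$-fibered objects recorded in \cite[Appendix B]{LurieHA}, and is the reason the lemma was phrased in this form. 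Since $\theta$ fixes the object $\diamond$ and lies over $\bFin_\ast$, the equivalence $\theta^\ast$ carries the full subcategory of functors sending $\diamond$ to $\bbOne$ on one side onto the corresponding full subcategory on the other; combined with the identification of the previous paragraph this proves the corollary.

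The argument is essentially bookkeeping, and I anticipate no genuine mathematical obstacle---the substantive work was already done in the lemma. The two points that require care are (i) checking that the marked-simplicial-set structures on $N(\bbLambda)$, $N(\bbLambda_\diamond)$ and $N(\Ass)$ are the ones for which the pushout carries the marking declared in the lemma, so that $\on{Map}^\sharp$ genuinely converts the pushout into a pullback; and (ii) pinning down the precise form of the orthogonality statement for $\mathfrak{P}$-anodyne morphisms and confirming that a symmetric monoidal $\infty$-category is $\mathfrak{P}$-fibered---both supplied by \cite[Appendix B]{LurieHA} and \cite[Proposition 2.1.4.6]{LurieHA}.
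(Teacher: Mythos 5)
Your proposal is correct and is exactly the deduction the paper intends: the paper states this corollary without proof precisely because it follows from the preceding lemma by the standard orthogonality of $\mathfrak{P}$-anodyne maps against $\mathfrak{P}$-fibered objects, combined with the identification of $\on{Map}^\sharp_{\bFin_\ast}(Q,\CC^\otimes)$ (for $Q$ the pushout) with pairs consisting of an algebra object and a cyclic trace. Your two points of care, the marking on the pushout and the $\mathfrak{P}$-fibered property of $\CC^\otimes\to\bFin_\ast$, are the right ones and are both supplied by the cited parts of \cite{LurieHA}.
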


\begin{defn}
We define the \emph{$\infty$-category of Calabi-Yau algebras in $\CC$} to be the full subcategory of $\on{Map}^\sharp_{\bFin_\ast}(N(\Ass_\CY),\CC^\otimes)$ on those objects which 
\begin{enumerate}
\item send $\diamond$ to $\bbOne$, and 
\item send the morphism $\langle 2\rangle \to \diamond$ in $\bbLambda_\diamond$ to a non-degenerate morphism $X\times X\to \bbOne$. 
\end{enumerate}
\end{defn}

\subsection{Cartesian monoidal structures}

Throughout this paper, we will model (symmetric) monoidal structions by Cartesian fibrations, rather than the coCartesian fibrations used in \cite{LurieHA}. These fibrations will be defined via adjunctions with the following. Throughout this section, $\CC$ will denote an $\infty$-category which admits finite products. 

\begin{defn}
The category $\Delta^{\amalg}$ has as its objects  pairs $([n],\{i,j\})$, where $[n]\in \Delta$ and $i\leq j$ are elements in $[n]$. The morphisms $ ([n],\{i,j\})\to ([m],\{k,\ell\})$ consist of a morphism $\phi:[n]\to [m]$ such that $\phi(i)\leq k\leq \ell\leq \phi(j)$.  We will, in general, think of $\{i,j\}$ as an interval inside $[n]$, and denote by $\{i\leq j\}$ the linearly ordered set 
\[
\{i\leq j\}:= \{i,i+1,\ldots,j\}\subset [n].
\]

The category $\bFin_\ast^\amalg$ has as its objects pairs $(S, T)$ where $S\in \bFin_\ast$ and $T\subset S^\circ$. A morphism $(S,T)\to (P,Q)$ consists of a morphism $\phi:S\to P$ in $\bFin_\ast$ such that $\phi(T)\subset Q$. We will sometimes denote by $\bbGamma^\amalg$ the category $(\bFin_\ast^\amalg)^\op$.   
\end{defn}

\begin{rmk}
We can provide an alternate characterization of $\Delta^\amalg$ and $\bFin_\ast^\amalg$. The functor $\Delta^\amalg\to \Delta$  is the coCartesian fibration defined as a Grothendieck construction of the functors
\[
\Delta\to \Cat;\quad [n] \mapsto I_{[n]}^\op.
\]
The functor $\bFin_\ast^\amalg\to \bFin$ is the Cartesian fibration defined as a Grothendieck construction of the (contravariant) power set functor 
\[
\bFin_\ast^{\op}\to \Cat;\quad S\mapsto \mathcal{P}(S^\circ).
\]

Note that, as in \cite[Remark 10.3.2]{DKHSSI}, these constructions relate to the constructions $\Delta^\times \to \Delta$ and $\Gamma^\times\to \Fin_\ast$ from \cite[Proposition 1.2.8]{LurieDAGII} and \cite[Proposition 2.4.1.5]{LurieHA} respectively. In particular, the functor $\Gamma^\times\to \Fin_\ast$ is the Cartesian fibration arising as the Grothendieck construction of 
\[
\bFin^\ast \to \Cat;\quad S\mapsto \mathcal{P}(S^\circ)^\op.
\]
For an $\infty$-category $\DD$ with enough colimits, the functor $\bFin_\ast^\amalg\to \bFin_\ast$ can therefore be used to construct a coCartesian fibration $\DD^\amalg\to \bFin_\ast$ modeling the coCartesian symmetric monoidal structure on $\DD$. 
\end{rmk}

\begin{const}
The functor $\on{cut}:\Delta\to \bFin_\ast^\op$ yields a functor $\Delta^\amalg\to (\bFin_\ast^\amalg)^\op$. To see this, we first note that for $\{i,j\}\subset [n]$ in $\Delta^\amalg$, we have $\mathbb{O}(\{i\leq j\})\subset \mathbb{O}([n])$. On objects we therefore define $\{i,j\}\subset [n]\mapsto (\mathbb{O}([n]),\mathbb{O}(\{i\leq j\}))$ 

Given a morphism $f:([n],\{i,j\})$ to $([m],\{k,\ell\})$ in $\Delta^\amalg$, we get a morphism $\mathbb{O}(f): \mathbb{O}([m])\to \mathbb{O}([n])$. Moreover, the condition that $f(i)\leq k\leq \ell\leq f(j)$ ensures that $\mathbb{O}(f)\left( \mathbb{O}(\{k\leq \ell\})\right) \subset \mathbb{O}(\{i\leq j\})$. 
\end{const}

\begin{const}[Cartesian monoidal structures]
Given an $\infty$-category $\CC$ with finite products, we can associate two Cartesian fibrations to $\CC$ as follows. 

We define a functor of $\infty$-categories $\overline{\CC^\boxtimes} \to \Delta$ via the universal property 
\[
\Hom_\Delta(K, \overline{\CC^\boxtimes})\cong \Hom_{\Set_\Delta} (K\times_\Delta \Delta^\amalg, \CC).
\]
Similarly, we define a functor $\overline{\CC^\times}\to \bbGamma$ via the universal property 
\[
\Hom_{\bbGamma}(K, \overline{\CC^\boxtimes})\cong \Hom_{\Set_\Delta} (K\times_{\bbGamma} \bbGamma^\amalg, \CC).
\]
Both of these are Cartesian fibrations by dint of \cite[3.2.2.13]{LurieHTT}.

We now let $\CC^\boxtimes\subset \overline{\CC^\boxtimes}$ be the full subcategory on those objects $G:I_{[n]}^\op\to \CC$ for which $G$ displays $G(\{i\leq j\})$ as a product over $G(\{k\leq k+1\})$ for $i\leq k<j$.

Similarly, we let $\CC^\times\subset \overline{\CC^\times}$ be the full subcategory on those objects $G:\mathcal{P}(S^\circ)^{\op}\to \CC$ for which  $G$ displays $G(S)$ as a product over $G(i)$ for $i\in S$.
\end{const}

\begin{prop}
The functor $\CC^\boxtimes\to \Delta$ is a Cartesian fibration exhibiting the Cartesian monoidal structure on $\CC$.
\end{prop}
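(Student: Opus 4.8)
The plan is to deduce the statement from the already-noted fact that $\overline{\CC^\boxtimes}\to\Delta$ is a Cartesian fibration (\cite[3.2.2.13]{LurieHTT}), by checking that the full subcategory $\CC^\boxtimes\subseteq\overline{\CC^\boxtimes}$ is stable under Cartesian morphisms and then identifying the resulting fibration. It is convenient to pass to straightenings: $\overline{\CC^\boxtimes}\to\Delta$ classifies the functor $\Delta^\op\to\Cat_\infty$ sending $[n]$ to $\Fun(I_{[n]}^\op,\CC)$, with a morphism $\phi\colon[m]\to[n]$ acting by restriction along the functor $I_\phi\colon I_{[m]}^\op\to I_{[n]}^\op$, $\{i,j\}\mapsto\{\phi(i),\phi(j)\}$, coming from the coCartesian fibration $\Delta^\amalg\to\Delta$. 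Equivalently, at the level of fibrations, the Cartesian lift of $\phi$ with target $G\colon I_{[n]}^\op\to\CC$ is the morphism $G\circ I_\phi\to G$.

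The first substantive step is the stability computation: if $G$ lies in $\CC^\boxtimes$, then so does $\phi^{\ast}G:=G\circ I_\phi$. This reduces to the observation that, for $i\le j$ in $[m]$, the intervals $\{\phi(k)\le\phi(k+1)\}$ with $i\le k<j$ tile $\{\phi(i)\le\phi(j)\}$; since the defining condition forces $G$ to send a degenerate interval $\{p,p\}$ to a terminal object (the empty product) and to exhibit $G(\{p\le q\})$ as the product of the $G(\{r\le r+1\})$, $p\le r<q$, concatenating these product decompositions exhibits $\phi^{\ast}G(\{i\le j\})$ as the product of the $\phi^{\ast}G(\{k\le k+1\})$, $i\le k<j$. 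Hence every edge of $\Delta$ admits a Cartesian lift with both endpoints in $\CC^\boxtimes$, and since $\CC^\boxtimes$ is a full subcategory of $\overline{\CC^\boxtimes}$ this gives that $\CC^\boxtimes\to\Delta$ is a Cartesian fibration with the inclusion preserving Cartesian edges.

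The second step is to identify this fibration with the Cartesian monoidal structure. The inert morphisms $\{i\le i+1\}\colon[1]\to[n]$, for $0\le i<n$, induce a functor $\CC^\boxtimes_{[n]}\to(\CC^\boxtimes_{[1]})^{n}$ which concretely records the values of $G\in\CC^\boxtimes_{[n]}$ on the unit intervals; because $\CC$ has finite products the Segal-type condition defining $\CC^\boxtimes$ makes this an equivalence, with inverse sending $(C_0,\dots,C_{n-1})$ to the functor $\{i\le j\}\mapsto\prod_{i\le k<j}C_k$. In particular $\CC^\boxtimes_{[0]}\simeq\ast$ and $\CC^\boxtimes_{[1]}\simeq\CC$, so the functor $\Delta^\op\to\Cat_\infty$ classified by $\CC^\boxtimes\to\Delta$ is a monoid object, that is, a monoidal structure on $\CC$. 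Chasing the active map $d^{1}\colon[1]\to[2]$ through these identifications shows its Cartesian pushforward is the product functor $\CC\times\CC\to\CC$, while the unique map $[1]\to[0]$ exhibits the terminal object as the unit; comparing with the construction recalled in the remark above (equivalently, with Lurie's $\Delta^\times\to\Delta$ from \cite[Proposition 1.2.8]{LurieDAGII}) then identifies $\CC^\boxtimes\to\Delta$ with the Cartesian monoidal structure on $\CC$.

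I expect the only real difficulty to be bookkeeping rather than mathematics: keeping track of the various opposite categories, of the Cartesian-versus-coCartesian dictionary (here a monoidal $\infty$-category is a Cartesian fibration over $\Delta$, not a coCartesian fibration over $\Delta^\op$), and of the precise matching between our universal-property definition of $\overline{\CC^\boxtimes}$ and Lurie's model; once those are pinned down, the stability computation and the fiberwise identification follow formally from $\CC$ having finite products.
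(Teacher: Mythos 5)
Your proposal is correct, but note that the paper does not actually argue this proposition at all: its entire proof is the citation ``This is [DKHSSI, Prop.\ 10.3.8].'' What you have written is essentially the self-contained argument that citation stands in for, and it is the right one. The two substantive points are exactly where you put them: (i) stability of the full subcategory $\CC^\boxtimes\subseteq\overline{\CC^\boxtimes}$ under the Cartesian edges supplied by \cite[3.2.2.13]{LurieHTT}, which comes down to the tiling of $\{\phi(i)\le\phi(j)\}$ by the intervals $\{\phi(k)\le\phi(k+1)\}$ together with the convention that a degenerate interval carries the empty product (you handle this, and it is the one place a careless argument would break, since $\phi$ need not be injective); and (ii) the identification of the fibers $\CC^\boxtimes_{[n]}\simeq\CC^{\times n}$ and of the active pushforward $[1]\to[2]$ with the binary product, after which the uniqueness of monoidal structures whose tensor is the product and whose unit is terminal (\cite[Proposition 1.2.8]{LurieDAGII}, \cite[Proposition 2.4.1.5]{LurieHA}) pins down that this is \emph{the} Cartesian monoidal structure. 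The only thing I would ask you to make explicit in a final write-up is that last uniqueness step, which you currently leave at the level of ``comparing with the construction recalled in the remark''; everything else is complete. In short: the paper buys brevity by outsourcing the proof, and your version buys self-containedness at the cost of the bookkeeping you already flagged.
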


\begin{proof}
This is \cite[Prop. 10.3.8]{DKHSSI}.
\end{proof}

\begin{prop}
The functor $\CC^\times\to \bbGamma$ is a Cartesian fibration exhibiting the Cartesian symmetric monoidal structure on $\CC$. 
\end{prop}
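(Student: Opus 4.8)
The plan is to run the argument of the preceding proposition --- that is, of \cite[Prop. 10.3.8]{DKHSSI} for $\CC^\boxtimes\to\Delta$ --- with the linear interval posets $I_{[n]}$ replaced throughout by the power-set posets $\mathcal{P}(S^\circ)$, and then to identify the resulting fibration with the standard Cartesian symmetric monoidal $\infty$-category. Recall from the discussion above that $\overline{\CC^\times}\to\bbGamma$ is a Cartesian fibration by \cite[3.2.2.13]{LurieHTT}; unwinding the defining universal property, an object lying over $S\in\bbGamma$ is a functor $G\colon\mathcal{P}(S^\circ)^\op\to\CC$, and (exactly as for $\CC^\boxtimes$) a Cartesian edge lying over the morphism of $\bbGamma$ corresponding to $f\colon S'\to S$ in $\bFin_\ast$ is obtained by restricting the target functor along the induced poset map $\mathcal{P}(S^\circ)\to\mathcal{P}((S')^\circ)$, $V\mapsto f^{-1}(V)$. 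First I would check that $\CC^\times\subset\overline{\CC^\times}$ is again a Cartesian fibration over $\bbGamma$ and that the inclusion preserves Cartesian edges: since $\CC^\times$ is a full subcategory it is enough to see that a Cartesian lift in $\overline{\CC^\times}$ with target in $\CC^\times$ has source in $\CC^\times$, and this holds because $f^{-1}(V)=\coprod_{i\in V}f^{-1}(\{i\})$, so the decomposition $G'(U)\simeq\prod_{i\in U}G'(\{i\})$ is inherited by the restriction. Nonemptiness of the fibers of $\CC^\times$ is clear: for a family $(c_i)_{i\in S^\circ}$ in $\CC$ the rule $V\mapsto\prod_{i\in V}c_i$ --- which exists since $\CC$ has finite products --- defines an object over $S$.

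Next I would read off the symmetric monoidal data. The fiber of $\CC^\times$ over $\langle 1\rangle$ consists of functors $\mathcal{P}(\{\ast\})^\op\to\CC$ whose value at $\emptyset$ is terminal, so evaluation at the singleton gives an equivalence of this fiber with $\CC$; thus the underlying $\infty$-category of $\CC^\times$ is $\CC$. For $|S^\circ|=n$, evaluation at the singleton subsets identifies the fiber over $S$ with $\CC^n$ (a product-preserving functor out of $\mathcal{P}(S^\circ)^\op$ is freely determined by its values on atoms), and under this identification Cartesian transport along the morphism of $\bbGamma$ corresponding to the inert map $\rho^i\colon\langle n\rangle\to\langle 1\rangle$ becomes the $i$-th projection $\CC^n\to\CC$. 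This is precisely the Segal condition, in the Cartesian-fibration convention dual to \cite[2.0.0.7]{LurieHA}, so $\CC^\times\to\bbGamma$ exhibits a symmetric monoidal structure on $\CC$. Transporting along the unique active map $\langle 2\rangle\to\langle 1\rangle$ produces the functor $\CC\times\CC\to\CC$ sending $(c_1,c_2)$ to the value at $\{1,2\}$ of the object with singleton values $c_1,c_2$, namely $c_1\times c_2$; hence the induced tensor product is the Cartesian product. To see that the entire structure is the Cartesian one, I would exhibit an equivalence, over the relevant base, between $\CC^\times$ and (the appropriate opposite of) the Cartesian symmetric monoidal $\infty$-category $\Gamma^\times\to\Fin_\ast$ of \cite[Prop. 2.4.1.5]{LurieHA} recalled in the remark above; the passage between the enlarged and skeletal categories of finite pointed sets is an equivalence and causes no trouble.

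The main obstacle will be bookkeeping with variance: this paper models symmetric monoidal $\infty$-categories as Cartesian fibrations over $\bbGamma=\bFin_\ast^\op$ using the covariant power-set functor $S\mapsto\mathcal{P}(S^\circ)$, while the references work with coCartesian fibrations over $\bFin_\ast$ and $S\mapsto\mathcal{P}(S^\circ)^\op$, so the real care lies in confirming that Cartesian transport in $\overline{\CC^\times}$ is computed by the restriction maps above and that the inert maps $\rho^i$ transport to the projections. Everything else is formal, given that $\CC$ admits finite products.
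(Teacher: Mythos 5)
Your argument is correct and is essentially the paper's own proof written out in detail: the paper simply states that the claim follows \emph{mutatis mutandis} from the proof of \cite[Proposition 2.4.1.5]{LurieHA}, which is exactly the adaptation (power-set posets in place of interval posets, Cartesian rather than coCartesian variance) that you carry out. The steps you flag as requiring care --- closure of $\CC^\times$ under Cartesian transport, identification of the fibers via evaluation at singletons, and the variance bookkeeping against Lurie's conventions --- are precisely the ``mutanda,'' and you handle them correctly.
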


\begin{proof}
The proof of this statement is, {\itshape mutatis mutandis}, the same as the proof of \cite[Proposition 2.4.1.5]{LurieHA}. 
\end{proof}
\subsection{$\infty$-Categories of Spans}
We will briefly recall here the requisite constructions and definitions for $\infty$-categories of spans. For a fuller exposition, see \cite[Chapter 10]{DKHSSI}. Throughout this section, we will assume that $\CC$ is now an $\infty$-category with small limits. 

\begin{defn}
	Let $S$ be a linearly ordered set. We define $I_{S}$ to be the poset of non-empty sub-intervals $\{i\leq j\}\subset S$. 
	
	Let $\Delta^n$ be the standard $n$-simplex. We define the \emph{spine} $\EuScript{J}^n\subset\Delta^n$ to be
	\[
	\EuScript{J}^n:= \Delta^{\{0,1\}}\coprod_{\Delta^{\{1\}}} \Delta^{\{1,2\}}\cdots \coprod_{\Delta^{\{n-1\}}} \Delta^{\{n-1,n\}}.
	\] 
\end{defn}

\begin{const}[Categories of Spans]\label{const:TwSpan}
We define the functor $\Tw:\Delta\to \Set_\Delta$ by 
\[
[n]\mapsto N(I_{[n]})^\op.
\]
By left Kan extension along the Yoneda embedding and restriction, we get an adjunction, which we will also denote by
\begin{equation}\label{eq:TwSpanKanAdjunction}
\Tw:\Set_\Delta\leftrightarrow \Set_\Delta: \overline{\Span}.
\end{equation}
For an $\infty$-category $\DD$, the simplicial set $\Tw(\DD)$ is an $\infty$-category, which we will call the \emph{twisted arrow $\infty$-category} of $\DD$. Note that $\Tw(\DD)$ comes with a canonical projection $\eta_\DD:\Tw(\DD)\to \DD$. If $\DD$ is the nerve of a 1-category $D$, $\Tw(\DD)$ can be identified with the nerve of the 1-category $\Tw(D)$ whose objects are morphisms $f:a\to b$ in $\DD$ and whose morphisms $f\to g$ are commutative diagrams  
\[
 \begin{tikzcd}
 a\arrow[r, "f"]\arrow[d] & b \\
 c\arrow[r, "g"'] & d\arrow[u]
 \end{tikzcd}
\]
in $D$, i.e. factorizations $f=h\circ g\circ \ell$. 

Given $X\in \Set_\Delta$, we can extend the adjunction \ref{eq:TwSpanKanAdjunction} to an adjunction 
\[
\Tw_X:(\Set_\Delta)_{/X} \leftrightarrow (\Set_\Delta)_{/X}: \overline{\Span}_X
\]
by setting $\Tw_X(S\to X)$ to be the composite 
\[
\Tw(S)\to \Tw(X)\overset{\eta_X}{\to} X
\]
and by setting $\overline{Span}_X(S\to X)$ to be the left-hand column of the pullback
\[
\begin{tikzcd}
\overline{\Span}_X(S)\arrow[r]\arrow[d] & \overline{\Span}(S)\arrow[d]\\
X\arrow[r] & \overline{\Span}(X)
\end{tikzcd}
\]
in $\Set_\Delta$.

Let $p:S\to X$ be a map of simplicial sets. We call an $n$-simplex in $\overline{\Span}_X(S)$ represented by a map $\sigma:\Tw(\Delta^n)\to S$ a \emph{Segal simplex} if, for every $\Delta^k\subset \Delta^n$, the composite diagram 
\[
\{0,k\}\star\Tw(\EuScript{J}^k)\subset \Tw(\Delta^k)\subset \Tw(\Delta^n)\overset{\sigma}{\to} S
\]
is a $p$-limit diagram. We denote by $\Span_X(S)\subset \overline{\Span}_X(S)$ the simplicial subset consisting of the Segal simplices. 
\end{const}

\begin{prop}[{\cite[10.2.31]{DKHSSI}}]
	Let $p:\CC^\otimes\to N(\Delta)$ be a Cartesian fibration exhibiting a monoidal structure on $\CC^\otimes_[1]$ such that $p$ admits relative pullbacks. Then $\Span_\Delta(\CC^\otimes)\to N(\Delta)$ is a Cartesian fibration exhibiting a monoidal structure on $\Span_\ast(\CC^\otimes_{[1]})$.    
\end{prop}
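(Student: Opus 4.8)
The statement is \cite[10.2.31]{DKHSSI}, so the proof is in principle a citation; for orientation I describe the shape of the argument. In the Cartesian-fibration conventions used here, a Cartesian fibration $q : \EuScript{E} \to N(\Delta)$ exhibits a monoidal structure on $\EuScript{E}_{[1]}$ exactly when, for every $n\geq 0$, the inert inclusions $\{i,i+1\}\hookrightarrow[n]$ induce an equivalence $\EuScript{E}_{[n]}\xrightarrow{\sim}(\EuScript{E}_{[1]})^{\times n}$, the tensor product being recovered as Cartesian transport along the active map $[1]\to[2]$. Accordingly the proof splits into two tasks: (i) that $\Span_\Delta(\CC^\otimes)\to N(\Delta)$ is a Cartesian fibration, and (ii) that it satisfies this Segal condition on fibres.

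For (i) I would work directly from \cref{const:TwSpan}. An $n$-simplex of $\overline{\Span}_\Delta(\CC^\otimes)$ is a map $\Tw(\Delta^n)=N(I_{[n]})^\op\to\CC^\otimes$ lying over the tautological diagram $\Tw(\Delta^n)\to N(\Delta)$, and $\Span_\Delta(\CC^\otimes)$ picks out the simplices that restrict to $p$-limit diagrams over each spine $\{0,k\}\star\Tw(\EuScript{J}^k)$. Given an object over $[n]$ and an edge $f:[m]\to[n]$ of $\Delta$, the hypothesis that $p$ admits relative pullbacks is precisely what produces a lift of $f$: one forms relative pullbacks along the functors of interval posets induced by $f$, and the stability and composability of relative limits show both that the resulting lift is $q$-Cartesian and that the resulting simplex is again Segal. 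This simultaneously yields the explicit description of Cartesian edges, hence of the monoidal product, as concatenation of spans followed by a pullback --- the intuitive picture of the introduction.

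For (ii), restrict a Segal simplex $\Tw(\Delta^n)\to\CC^\otimes$ along the inclusions $\Tw(\Delta^{\{i,i+1\}})\hookrightarrow\Tw(\Delta^n)$. Since $\overline{\Span}$ is a right adjoint and hence preserves products, and since $p$ already exhibits a monoidal structure, so that $\CC^\otimes_{[n]}\simeq(\CC^\otimes_{[1]})^{\times n}$ compatibly with inerts, the fibre of $\Span_\Delta(\CC^\otimes)$ over $[n]$ is identified with $\Span_\ast(\CC^\otimes_{[1]})^{\times n}$; the $p$-limit condition built into the definition of $\Span$ is exactly what promotes this comparison functor to an equivalence. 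The main obstacle is task (i): verifying that the Cartesian lifts of the ambient fibration $\overline{\Span}_\Delta(\CC^\otimes)\to N(\Delta)$ restrict to the Segal subcomplex, i.e. that Cartesian transport along a morphism of $\Delta$ sends $p$-limit diagrams over the relevant spines to $p$-limit diagrams. This is a bookkeeping argument with relative limits organized along the interval posets $I_{[n]}$; the remaining points are formal or are imported from \cite{DKHSSI} and \cite{LurieHA}.
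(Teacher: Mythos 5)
The paper gives no proof of this proposition beyond the citation to \cite[10.2.31]{DKHSSI}, so there is nothing internal to compare against; your decision to treat it as a citation is exactly what the paper does. Your accompanying sketch --- Cartesian lifts built from relative pullbacks organized along the interval posets $I_{[n]}$, the check that Cartesian transport preserves the Segal simplices, and the identification of fibres $\Span_\ast(\CC^\otimes_{[n]})\simeq \Span_\ast(\CC^\otimes_{[1]})^{\times n}$ using that $\overline{\Span}$ is a right adjoint --- accurately reflects the structure of the Dyckerhoff--Kapranov argument being cited.
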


\begin{cor}
	Let $p:\CC^\otimes \to N(\Gamma)$ be a Cartesian fibration exhibiting a symmetric monoidal structure on $\CC^\otimes_{\langle 1\rangle}$ such that $p$ admits relative pullbacks. Then $\Span_\Gamma(\CC^\otimes)\to N(\Gamma)$ is a Cartesian fibration exhibiting a symmetric monoidal structure on $\Span_\ast(\CC^\otimes)$.
\end{cor}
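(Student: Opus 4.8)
The plan is to deduce this from the non-symmetric counterpart \cite[10.2.31]{DKHSSI} quoted just above. The key observation is that the construction $\Span_X$ of \cref{const:TwSpan} makes sense for an arbitrary base $X\in\Set_\Delta$, in particular for $X=N(\Gamma)$, so that $\Span_\Gamma(\CC^\otimes)=\Span_{N(\Gamma)}(\CC^\otimes)$, and the two things to be checked --- that $\Span_\Gamma(\CC^\otimes)\to N(\Gamma)$ is a Cartesian fibration, and that it satisfies the Segal condition --- are each established by arguments that never refer to the internal combinatorics of $\Delta$.

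For the first point, I would transcribe the proof of \cite[10.2.31]{DKHSSI} with $N(\Delta)$ replaced by $N(\Gamma)$ throughout. That argument produces Cartesian lifts in $\Span_\Gamma(\CC^\otimes)$ from the hypothesis that $p$ admits relative pullbacks, using only the adjunction $\Tw\dashv\overline{\Span}$, the pullback square defining $\overline{\Span}_X$, and the description of the Segal simplices of $\overline{\Span}_X(S)$ as those maps $\Tw(\Delta^n)\to S$ whose restrictions along the spines $\{0,k\}\star\Tw(\EuScript{J}^k)$ are $p$-limit diagrams. All of these ingredients are available verbatim over $N(\Gamma)$; the only things one must confirm are that the lift assembled out of relative pullbacks is again a Segal simplex and that it is $q$-Cartesian, and these verifications carry over unchanged.

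For the Segal condition I would argue directly by computing the fibers. A $k$-simplex of $\Span_\Gamma(\CC^\otimes)$ lying over the constant $k$-simplex at $\langle n\rangle$ is a map $\sigma\colon\Tw(\Delta^k)\to\CC^\otimes$ that factors through the fiber $\CC^\otimes_{\langle n\rangle}$ and whose spine restrictions are $p$-limit diagrams; since a diagram valued in a single fiber of a Cartesian fibration is a $p$-limit diagram exactly when it is a limit diagram in that fiber --- and such pullbacks exist in $\CC^\otimes_{\langle n\rangle}$ by the relative-pullback hypothesis --- this identifies $\Span_\Gamma(\CC^\otimes)_{\langle n\rangle}$ with the $\infty$-category $\Span(\CC^\otimes_{\langle n\rangle})$ of spans in the fiber. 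Since $p$ exhibits a symmetric monoidal structure we have $\CC^\otimes_{\langle n\rangle}\simeq(\CC^\otimes_{\langle 1\rangle})^{n}$, and since limits in a product $\infty$-category are computed componentwise the functor $\Span(-)$ preserves finite products; combining these, the inert morphisms induce an equivalence
\[
\Span_\Gamma(\CC^\otimes)_{\langle n\rangle}\;\simeq\;\Span(\CC^\otimes_{\langle 1\rangle})^{n}\;=\;\big(\Span_\Gamma(\CC^\otimes)_{\langle 1\rangle}\big)^{n},
\]
which is exactly the Segal condition exhibiting $\Span_\Gamma(\CC^\otimes)\to N(\Gamma)$ as a symmetric monoidal structure on $\Span_\ast(\CC^\otimes)$.

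The main obstacle is the bookkeeping in the second paragraph: one must make sure that the Cartesian-lift construction of \cite[10.2.31]{DKHSSI} does not secretly use features of $\Delta$ absent from $\Gamma$, and, as is usual in such arguments, that the relative pullbacks used to build a lift are compatible with the spine-limit conditions, so that the lift stays inside the subcategory of Segal simplices. One could instead try to reduce formally, exploiting the compatibility of the $\Span_X$ construction with base change in $X$ together with \cite[10.2.31]{DKHSSI}; but this only controls the restriction of $\Span_\Gamma(\CC^\otimes)$ along a functor $\Delta\to\Gamma$, so one would still need the direct fiber computation above to obtain the full statement over $N(\Gamma)$. Everything else is standard relative-limit theory in the style of \cite[Section 4.3]{LurieHTT}.
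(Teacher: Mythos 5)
The paper gives no argument for this corollary at all: it is recorded as the evident symmetric analogue of the quoted \cite[Proposition 10.2.31]{DKHSSI}, just as the neighboring statement about $\CC^\times$ is dispatched with a ``\emph{mutatis mutandis}'' reference. Your plan --- rerun the span-fibration argument of \cite{DKHSSI} over $N(\Gamma)$, where none of the ingredients (the adjunction $\Tw\dashv\overline{\Span}$, the pullback defining $\overline{\Span}_X$, the Segal-simplex condition) depends on the combinatorics of the base, and then check the Segal condition on fibers --- is exactly the proof the paper leaves implicit, so in that sense you and the paper agree.

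One step of your fiber computation is asserted too strongly. For a general Cartesian fibration $p$, a cone contained in a single fiber which is a limit cone in that fiber need \emph{not} be a $p$-limit cone: a $p$-limit over a constant base diagram at $d$ must remain a limit cone after applying the transport functor $e^\ast$ for every morphism $e\colon d'\to d$ of the base, and for a (symmetric) monoidal fibration these transports include the tensor-product functors, which do not preserve pullbacks in general. Your ``exactly when'' is therefore only automatic in the direction ($p$-limit $\Rightarrow$ fiberwise limit). The repair comes from the hypothesis you already have on the table rather than from generalities about Cartesian fibrations: because $p$ admits relative pullbacks, each spine cospan admits a $p$-limit cone; that cone is in particular a fiberwise limit cone, and fiberwise limit cones are essentially unique, so the fiberwise pullbacks you use are automatically $p$-limits. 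With that justification substituted, the identification $\Span_\Gamma(\CC^\otimes)_{\langle n\rangle}\simeq\Span_\ast(\CC^\otimes_{\langle n\rangle})$ and the remainder of your argument go through.
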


\begin{cor}\label{cor:MonoidalCatsOfSpans}
	Let $\CC$ be an $\infty$-category that admits small limits. Then the functors 
	\begin{align*}
	\Span_\Delta(\CC^\boxtimes) & \to N(\Delta)\\
	\Span_{\bbGamma}(\CC^\times) & \to N(\bbGamma)
	\end{align*}
	are Cartesian fibrations exhibiting a monoidal or a symmetric monoidal structure on $\Span_\ast(\CC)$ respectively. 
\end{cor}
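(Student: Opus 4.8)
The plan is to obtain this as a direct application of the preceding Proposition (\cite[10.2.31]{DKHSSI}) and its symmetric-monoidal Corollary to the two Cartesian fibrations $\CC^\boxtimes\to N(\Delta)$ and $\CC^\times\to N(\bbGamma)$ constructed above. First I would invoke the two earlier Propositions: $\CC^\boxtimes\to N(\Delta)$ is a Cartesian fibration exhibiting the Cartesian monoidal structure on $\CC$, and $\CC^\times\to N(\bbGamma)$ is a Cartesian fibration exhibiting the Cartesian symmetric monoidal structure on $\CC$. In particular the fibre $\CC^\boxtimes_{[1]}$ (resp. $\CC^\times_{\langle 1\rangle}$) is canonically equivalent to $\CC$, and more generally $\CC^\boxtimes_{[n]}\simeq \CC^n$ and $\CC^\times_{\langle n\rangle}\simeq \CC^n$, since on objects the Segal/product condition defining these subcategories forces a diagram to be determined by its values on the length-one subintervals (resp. on the singletons).

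The only hypothesis of \cite[10.2.31]{DKHSSI} and its corollary that is not yet on record is that $\CC^\boxtimes\to N(\Delta)$ and $\CC^\times\to N(\bbGamma)$ admit relative pullbacks. Here I would apply the standard criterion that a Cartesian fibration admits relative limits of a given (finite) diagram shape whenever every fibre admits limits of that shape and every pushforward functor preserves them. Since $\CC$ admits small limits, in particular pullbacks, each of the fibres $\CC^n$ above admits pullbacks; and the pushforward functors of a Cartesian (symmetric) monoidal structure are assembled out of projections and finite products, both of which preserve all limits, hence pullbacks. Therefore both fibrations admit relative pullbacks. For the symmetric case one should also note that \cite[10.2.31]{DKHSSI}'s corollary was phrased over $N(\Gamma)$; one transports along the equivalence $N(\Gamma)\simeq N(\bbGamma)$, or observes that the cited argument goes through verbatim over $\bbGamma$.

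With the hypotheses in place, applying \cite[10.2.31]{DKHSSI} to $\CC^\boxtimes\to N(\Delta)$ shows that $\Span_\Delta(\CC^\boxtimes)\to N(\Delta)$ is a Cartesian fibration exhibiting a monoidal structure on $\Span_\ast(\CC^\boxtimes_{[1]})\simeq\Span_\ast(\CC)$, and applying the symmetric-monoidal Corollary to $\CC^\times\to N(\bbGamma)$ shows that $\Span_{\bbGamma}(\CC^\times)\to N(\bbGamma)$ is a Cartesian fibration exhibiting a symmetric monoidal structure on $\Span_\ast(\CC^\times_{\langle 1\rangle})\simeq\Span_\ast(\CC)$, which is exactly the assertion. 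I expect the only point genuinely requiring care — the ``hard part'', such as it is — to be the verification that the Cartesian monoidal fibrations admit relative pullbacks: this amounts to unwinding the description of their pushforward functors (given only implicitly, via the universal property defining $\overline{\CC^\boxtimes}$, $\overline{\CC^\times}$ together with the product subcategories) to confirm that they are built from products and projections. Everything else is bookkeeping with the fibre identifications recorded in the earlier Propositions.
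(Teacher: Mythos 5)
Your argument is correct and is precisely the (implicit) argument of the paper, which states this corollary without proof as an immediate application of \cite[10.2.31]{DKHSSI} and its symmetric-monoidal corollary to the Cartesian fibrations $\CC^\boxtimes\to N(\Delta)$ and $\CC^\times\to N(\bbGamma)$. Your verification of the relative-pullback hypothesis --- fibers $\CC^n$ admit pullbacks since $\CC$ has small limits, and the transport functors of the Cartesian monoidal structures are built from products and projections, hence preserve them --- is exactly the point being silently used.
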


\begin{rmk}
	The monoidal structures from \cref{cor:MonoidalCatsOfSpans} can be seen as `pointwise cartesian' monoidal structure, with monoidal product given by the product in $\CC$. 
\end{rmk}

\section{Algebras in Spans}
Throughout this section, we set $\Theta:=\Tw(\Delta)\times_\Delta \Delta^\amalg$. Morphisms in $\Theta$ will be represented as diagrams 
\[
\begin{tikzcd}
 \{i,j\}\arrow[Subseteq]{r}{}& {{[n]}}\arrow[d, "g"']\arrow[r, "f"] & {{[m]}} \\
 \{i^\prime,j^\prime\}\arrow[Subseteq]{r}{}& {[n^\prime]}\arrow[r,"f^\prime"'] & {[m^\prime]}\arrow[u, "\overline{g}"']
\end{tikzcd}
\]
in $\Delta$. In this section and the next, $\CC$ will denote an $\infty$-category with small limits. We will, on occasion, denote an object $\{i,j\}\subset[n]\overset{f}{\to}[m]$ in $\Theta$ by the pair $(f, \{i,j\})$.

\subsection{Conditions on functors}
Suppose we are given a functor $G: \Theta\to  \CC$, which corresponds to a functor
\[
\tilde{G}: \Tw(\Delta)\to \CC^{\boxtimes}
\]
over $\Delta$. 

\begin{prop}\label{prop:Deltapullbackcond}
The functor $G$ defines a functor 
$\overline{G}: \Delta\to \Span_\Delta (\CC^{\boxtimes})$
if and only if, for every simplex
$[n_0] \overset{\phi_1}{\to}[n_1]\overset{\phi_2}{\to}\cdots \overset{\phi_k}{\to} [n_k]$
in $\Delta$ and every interval $\{i,j\}\subset [n_0]$, the corresponding diagram 
\begin{equation}\label{diag:segalsimpsinHSpan}
\begin{tikzcd}[column sep=-40pt]
 & & & G(\phi_n\circ \cdots \circ \phi_1,\{i,j\})\arrow[dll]\arrow[drr] &[-20pt] &[-30pt] &[-20pt] \\
 & G(\phi_1,\{i,j\})\arrow[dr]\arrow[dl] & &  \cdots\arrow[dr]\arrow[dl] &  & G(\phi_k,\{\psi_{k-1}(i),\psi_{k-1}(j)\})\arrow[dr]\arrow[dl] & \\
 G ([n_0], \{i,j\}) & & G ([n_1],\{\psi_1(i),\psi_1(j)\})  & \cdots & G([n_{k-1}],\{\psi_{k-1}(i),\psi_{k-1}(j)\}) & & G ([n_k],\{\psi_{k}(i),\psi_{k}(j)\})  
\end{tikzcd}
\end{equation}
where $\psi_i:=\phi_i\circ \phi_{i-1} \circ \cdots \circ \phi_1$, is a limit diagram in $ \CC$. 
\end{prop}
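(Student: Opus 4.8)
The plan is to peel back the definition of $\Span_\Delta(\CC^\boxtimes)$ using the adjunction $\Tw_{N(\Delta)}\dashv\overline{\Span}_{N(\Delta)}$ of \cref{const:TwSpan}. A functor $\overline{G}\colon N(\Delta)\to\Span_\Delta(\CC^\boxtimes)$ over $N(\Delta)$ is, by that adjunction, the same datum as a map $\Tw(N(\Delta))=N(\Tw(\Delta))\to\CC^\boxtimes$ over $N(\Delta)$ which factors through the sub-simplicial set $\Span_\Delta(\CC^\boxtimes)\subseteq\overline{\Span}_\Delta(\CC^\boxtimes)$; and by the defining universal property of $\overline{\CC^\boxtimes}$ (together with the standing hypothesis that $\tilde{G}$ lands in $\CC^\boxtimes$ rather than merely $\overline{\CC^\boxtimes}$), a map $N(\Tw(\Delta))\to\CC^\boxtimes$ over $N(\Delta)$ is precisely the datum of the functor $G\colon\Theta\to\CC$. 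Thus the assertion to prove is exactly: the map $N(\Tw(\Delta))\to\CC^\boxtimes$ determined by $G$ lands in the Segal-simplex subobject if and only if the diagrams \eqref{diag:segalsimpsinHSpan} are limit diagrams. Both implications will then come out of this single equivalence.

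First I would reduce the quantifier ``for every $\Delta^k\subseteq\Delta^n$'' appearing in the definition of a Segal simplex to a quantifier over all simplices of $N(\Delta)$: a map into $\overline{\Span}_\Delta(\CC^\boxtimes)$ lands in $\Span_\Delta(\CC^\boxtimes)$ iff each of its simplices is a Segal simplex, and since every face of a simplex $[n_0]\to\cdots\to[n_k]$ of $N(\Delta)$ is itself such a simplex on which $\overline{G}$ restricts compatibly, it is enough to impose, for each chain $[n_0]\overset{\phi_1}{\to}\cdots\overset{\phi_k}{\to}[n_k]$, that the composite
\[
\{0,k\}\star\Tw(\EuScript{J}^k)\subseteq\Tw(\Delta^k)\xrightarrow{\ \sigma\ }\CC^\boxtimes
\]
be a $p$-limit diagram, where $\sigma$ is the simplex of $\overline{G}$ on this chain and $p\colon\CC^\boxtimes\to N(\Delta)$. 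The main work is to make this explicit, and for that I would use the combinatorial description $\Tw(\Delta^k)=N(I_{[k]})^\op$: the maximal interval $[k]\in I_{[k]}^\op$ is initial, the spine $\EuScript{J}^k$ has $\Tw(\EuScript{J}^k)$ the ``fence'' $\{0\}\leftarrow\{0,1\}\to\{1\}\leftarrow\cdots\to\{k\}$, and under the functor $\Tw(\Delta^k)\to\Tw(\Delta)$ induced by the chain these objects and morphisms are carried to the arrows $\psi_k\colon[n_0]\to[n_k]$, the $\phi_\ell$, the identities $\id_{[n_\ell]}$, and the evident factorization morphisms, which under the source projection $\Tw(\Delta)\to\Delta$ project to the maps $\psi_\ell$ and $\psi_{\ell-1}$. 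Pulling back along $\Delta^\amalg\to N(\Delta)$, an object over $[n_\ell]$ carries an interval, and the coCartesian transport pushes a fixed interval $\{i,j\}\subseteq[n_0]$ forward to $\{\psi_\ell(i),\psi_\ell(j)\}\subseteq[n_\ell]$; one then recognizes the resulting $\Theta$-shaped diagram as exactly the indexing diagram of \eqref{diag:segalsimpsinHSpan}.

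Finally I would convert the relative ($p$-)limit condition in $\CC^\boxtimes$ into an ordinary limit condition in $\CC$. Since $\CC^\boxtimes\to N(\Delta)$ exhibits the Cartesian monoidal structure, its fiber over $[n]$ is the full subcategory of $\Fun(I_{[n]}^\op,\CC)$ on functors displaying the value at $\{a,b\}$ as the product of the values at $\{c,c+1\}$ for $a\le c<b$; as the cone point of $\{0,k\}\star\Tw(\EuScript{J}^k)$ lies over $[n_0]$, a $p$-limit diagram over our shape decomposes componentwise over the intervals of $[n_0]$ into a family of genuine limit diagrams in $\CC$, one for each $\{i,j\}\subseteq[n_0]$, and since limits commute with the defining products it suffices to test this for each interval $\{i,j\}$. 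These diagrams are precisely \eqref{diag:segalsimpsinHSpan}, giving both directions at once. I expect the main obstacle to be the middle step: carefully matching the combinatorics of the fence $\{0,k\}\star\Tw(\EuScript{J}^k)$, its image in $\Tw(\Delta)$, and the fibered interval data against the explicit shape of \eqref{diag:segalsimpsinHSpan}, and verifying that the relative limit in $\CC^\boxtimes$ genuinely splits up over the intervals of $[n_0]$.
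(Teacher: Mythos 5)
Your plan is correct and follows essentially the same route as the paper: unwind the adjunction to reduce to the Segal-simplex condition on each chain $[n_0]\to\cdots\to[n_k]$, transport the cone into the fibre over $[n_0]$, and check the resulting limit componentwise over the intervals of $[n_0]$, recovering the diagrams \eqref{diag:segalsimpsinHSpan}. The one step you flag as the ``main obstacle''---that the $p$-limit condition in $\CC^\boxtimes$ splits into ordinary limits in $\CC$ indexed by the intervals of $[n_0]$---is exactly where the paper invokes the Cartesian homotopy of \cite[Lemma 10.2.13]{DKHSSI}, so you should cite or reprove that lemma rather than assert the splitting directly.
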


\begin{proof}
By definition, $G$ defines a functor 
\[
\overline{G}: \Delta\to \Span_\Delta (\CC^{\boxtimes})
\]
if and only if every restriction of $\tilde{G}$ to $\Tw(\Delta^n)\subset \Tw(\Delta)$ is a Segal simplex in $\CC^{\boxtimes}$. 

Let $\Delta^k\hookrightarrow \Delta$ be the simplex 
\[
[n_0] \overset{\phi_1}{\to}[n_1]\overset{\phi_2}{\to}\cdots \overset{\phi_k}{\to} [n_k].
\]
Then by \cite[Lemma 10.2.13]{DKHSSI}, there is a functor 
\[
H: \left( \Delta^1\times \Tw(\Delta^k)\right) \times_\Delta \Delta^{\amalg}\to  \CC
\]
representing a homotopy 
\[
\tilde{H}: \Delta^1\times \Tw(\Delta^k)\to \CC^{\boxtimes}.
\]
This homotopy has components that are Cartesian morphisms, and the component $\tilde{G}_0:=\tilde{H}|_{\{0\}\times \Tw(\Delta^k)}$ has image contained in $\CC^{\boxtimes}_{[n_0]}$. Since this is the case, the condition that $\tilde{G}$ is a $p$-limit diagram when restricted to the Segal cone is equivalent to the condition that $\tilde{G}_0$ is a limit diagram in $\CC^{\boxtimes}_{[n_0]}$ when restricted to the Segal cone. This can be checked componentwise, using one component for each subinterval of $[n_0]$. 

Fix one such subinterval, $\{i,j\}$. Then the coresponding Segal cone diagram in $ \CC$ will be 
\[
\begin{tikzcd}[column sep= -20pt]
 & & & G_0(\phi_n\circ \cdots \circ \phi_1,\{i,j\})\arrow[dll]\arrow[drr] & & & \\
 & G_0(\phi_1,\{i,j\})\arrow[dr]\arrow[dl] & &  \cdots\arrow[dr]\arrow[dl] &  & G_0(\phi_k,\{i,j\})\arrow[dr]\arrow[dl] & \\
 G_0 ([n_0], \{i,j\}) & & G_0 ([n_1],\{i,j\})  & \cdots & G_0([n_{k-1}],\{i,j\}) & & G_0 ([n_k],\{i,j\})  
\end{tikzcd}
\]
Since the homotopy has Cartesian components, $H$ will restrict to a natural equivalence between this diagram and the diagram $(1)$. Therefore, a simplex is Segal if and only if all such diagrams are limit diagrams. 
\end{proof}

\subsubsection{Cartesian morphisms and equivalences}
Suppose $G$ represents a coalgebra object. Given an inert morphism $\Delta^1\overset{\{\phi\}}{\to} \Delta$ ($\phi:{[n]}\to {[m]}$), $G$ must send $\phi$ to a Cartesian morphism in $\Span_\Delta(\CC^{\boxtimes})$. This means that the adjoint map 
\[
\Tw(\Delta^1)\to \CC^{\boxtimes} 
\]
is comprised only of Cartesian morphisms. Therefore:
\begin{itemize}
\item For the source map $\phi\to {[n]}$ in $\Tw(\Delta)$, and for any $\{i,j\}\in {[n]}$, the induced morphism 
\[
G(\phi,\{i,j\})\to G({[n]},\{i,j\})
\]
is an equivalence.
\item For the target map $\phi\to {[m]}$ in $\Tw(\Delta)$, and for any $\{i,j\}\in {[n]}$
The induced morphism 
\[
G(\phi,\{i,j\})\to G({[m]}, \{\phi(i),\phi(j)\})
\]
is an equivalence. 
\end{itemize}

We will write $\phi_{i,j}:[i,\ldots,j]\to {[n]}$ for the inert morphism which includes the interval $[i,\ldots, j]$. 

\begin{prop}\label{prop:pullbacksofequivs}
Suppose $G$ represents a coalgebra object. Let $f:{[n]}\to{[m]}$ be a morphism in $\Delta$, viewed as an object in $\Tw(\Delta)$. 
\begin{enumerate}
\item Let $f|_{\{i,j\}}:[i,\ldots, j]\to {[m]}$ be the restriction of $f$ to $[i,\ldots, j]\subset {[n]}$. Then the induced morphism 
\[
G(f|_{\{i,j\}},\{i,j\}) \to G(f,\{i,j\})
\]
is an equivalence. 
\item Let $\tilde{f}:{[n]}\to [i,\ldots, j] \subset {[m]}$ be a morphism such that composing with the inert morphism $\phi_{i,j}:[i,\ldots,j]\to {[m]}$ yields $f$. Then the induced morphism 
\[
G(f,\{i,j\}) \to G(\tilde{f},\{i,j\})
\]
is an equivalence.  
\end{enumerate} 
\end{prop}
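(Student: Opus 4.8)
The plan is to reduce both statements to the characterization of Cartesian morphisms recalled just above, namely that $G$ sends inert morphisms of $\Delta$ to Cartesian morphisms of $\Span_\Delta(\CC^\boxtimes)$, which in adjoint terms says that $\tilde G$ carries the source- and target-legs in $\Tw(\Delta)$ over an inert $\phi$ to Cartesian morphisms, i.e. the componentwise maps $G(\phi,\{i,j\})\to G([n],\{i,j\})$ and $G(\phi,\{i,j\})\to G([m],\{\phi(i),\phi(j)\})$ are equivalences. Both parts are instances of this principle applied to suitably chosen inert maps, together with the observation that the relevant subintervals are unchanged or transported correctly.

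For part (1): factor $f$ as $[n]\xrightarrow{\tilde f}[i,\ldots,j]\hookrightarrow[m]$? No --- rather, observe that the inert inclusion $\phi_{i,j}\colon[i,\ldots,j]\hookrightarrow[n]$ gives a morphism in $\Tw(\Delta)$ from $(f|_{\{i,j\}}\colon[i,\ldots,j]\to[m])$ to $(f\colon[n]\to[m])$, sitting over the source leg of $\phi_{i,j}$ on the left and the identity of $[m]$ on the right. Since $G$ represents a coalgebra object, $\phi_{i,j}$ is inert, so this morphism is Cartesian, and reading off the component indexed by the subinterval $\{i,j\}\subset[i,\ldots,j]$ (which maps to the subinterval $\{i,j\}\subset[n]$ under the source leg) gives exactly that $G(f|_{\{i,j\}},\{i,j\})\to G(f,\{i,j\})$ is an equivalence. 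Here I use the first bulleted consequence above (source maps of inert morphisms induce equivalences on each component).

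For part (2): the map $\phi_{i,j}\colon[i,\ldots,j]\hookrightarrow[m]$ together with $\tilde f\colon[n]\to[i,\ldots,j]$ and $f=\phi_{i,j}\circ\tilde f$ exhibits a morphism in $\Tw(\Delta)$ from $(f\colon[n]\to[m])$ to $(\tilde f\colon[n]\to[i,\ldots,j])$ lying over $\id_{[n]}$ on the source and over the target leg of the inert map $\phi_{i,j}$ on the target. Again invoking that $G$ is a coalgebra object and that $\phi_{i,j}$ is inert, this morphism is Cartesian, hence componentwise an equivalence; reading off the component for the subinterval $\{i,j\}\subset[m]$, which pulls back along the target leg to $\{i,j\}\subset[i,\ldots,j]=[m']$ with $\tilde f$ restricting accordingly, yields that $G(f,\{i,j\})\to G(\tilde f,\{i,j\})$ is an equivalence. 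This uses the second bulleted consequence (target maps of inert morphisms induce equivalences on components), together with the compatibility of the interval assignment with the maps in $\Theta$ established in the construction of the functor $\Delta^\amalg\to(\bFin_\ast^\amalg)^\op$.

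The only genuinely fiddly point --- and the place I expect to spend the most care --- is bookkeeping the subinterval labels: one must check that under the source leg of $\phi_{i,j}$ in $\Tw(\Delta)$ the interval $\{i,j\}$ of $[i,\ldots,j]$ really corresponds to $\{i,j\}\subset[n]$, and that the construction of $\Theta=\Tw(\Delta)\times_\Delta\Delta^\amalg$ makes these compatible on the nose, so that \cref{prop:Deltapullbackcond}'s componentwise description applies cleanly. Once the indexing is set up correctly, each part is a one-line consequence of the Cartesian-morphism description; no further limit computations are needed.
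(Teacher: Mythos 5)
The reduction you propose skips the essential step. The coalgebra condition only controls the image under $\tilde{G}$ of the two legs of $\Tw(\Delta^{\{\phi_{i,j}\}})$, i.e.\ the morphisms $\phi_{i,j}\to\id_{[i,\ldots,j]}$ and $\phi_{i,j}\to\id_{[n]}$ in $\Tw(\Delta)$; those are exactly the two bulleted equivalences recalled before the proposition. The edges you want to control, $(f|_{\{i,j\}})\to(f)$ and $(f)\to(\tilde f)$, are \emph{different} morphisms of $\Tw(\Delta)$: each has one component given by an inert map of $\Delta$, but neither is a leg of the $1$-simplex $\overline{G}(\phi_{i,j})$, so the inference ``$\phi_{i,j}$ is inert, hence this morphism is Cartesian'' is a non sequitur. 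Indeed, the assertion that such edges go to equivalences is precisely the content of the proposition being proved, not a formal instance of the Cartesian condition; for a non-inert $\phi$ in place of $\phi_{i,j}$ the analogous edge would generally fail to be an equivalence, which shows the conclusion cannot follow from the shape of the edge alone.

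The missing ingredient is \cref{prop:Deltapullbackcond} applied to the $2$-simplices $[i,\ldots,j]\xrightarrow{\phi_{i,j}}[n]\xrightarrow{f}[m]$ (for part (1)) and $[n]\xrightarrow{\tilde f}[i,\ldots,j]\xrightarrow{\phi_{i,j}}[m]$ (for part (2)). In each case the Segal condition yields a pullback square in $\CC$ in which the edge you want is parallel to one of the bulleted equivalences; for (1) this is the square on $G(f|_{\{i,j\}},\{i,j\})$, $G(\phi_{i,j},\{i,j\})$, $G(f,\{i,j\})$, $G(\id_{[n]},\{i,j\})$, whose bottom edge $G(\phi_{i,j},\{i,j\})\to G(\id_{[n]},\{i,j\})$ is an equivalence by inertness. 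Since equivalences are stable under pullback, the parallel edge is an equivalence, and similarly for (2). So, contrary to your closing remark, a limit computation is exactly what is needed here; the subinterval bookkeeping you flag as the delicate point is not the real issue.
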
 

\begin{proof}
Applying our conclusion from above, we find that in case (1), the diagram
\[
\begin{tikzcd}
  & {G(f|_{\{i,j\}},\{i,j\})}\arrow[dl]\arrow[dr] &  \\
 G(\phi_{i,j},\{i,j\})\arrow[dr] & & G(f,\{i,j\})\arrow[dl]  \\
 & G(id_{{[n]}},\{i,j\}) &
\end{tikzcd}
\]
Must be pullback. Therefore, since  $G(\phi_{i,j},\{i,j\})\to G(\id_{{[n]}},\{i,j\})$ must be an equivalence, so must $G(f|_{\{i,j\}},\{i,j\}) \to G(f,\{i,j\})$.

Similarly, in case (2), the diagram
\[
\begin{tikzcd}
  & G(f,\{i,j\})\arrow[dl]\arrow[dr] &  \\
 G(\tilde{f},\{i,j\})\arrow[dr] & & G(\phi_{i,j},\{i,j\})\arrow[dl]  \\
 & G(id_{[i,\ldots,j]},\{i,j\})
\end{tikzcd}
\]
must be pullback. Therefore, since $G(\phi_{i,j},\{i,j\})\to G(\id_{[i,\ldots,j]},\{i,j\})$ must be an equivalence, so must $G(f,\{i,j\}) \to G(\tilde{f},\{i,j\})$. 
\end{proof}

\begin{lem}\label{lem:classE}
Suppose $G$ sends the morphisms from \cref{prop:pullbacksofequivs} to equivalences. Let
\[
\mu:=\left\lbrace \vcenter{\vbox{\begin{tikzcd}
{{{[k]}}}\cong\{i,j\}\arrow[Subseteq]{r}{} & {[n]}\arrow[r,"f"]\arrow[d,"g"] & {[m]} \\
{{[k]}}\cong\{i^\prime,j^\prime\}\arrow[Subseteq]{r}{} & {[n^\prime]}\arrow[r,"f^\prime"] & {[m^\prime]}\arrow[u,"\overline{g}"]
\end{tikzcd}}}
\mkern-300mu\right\rbrace
\]
be a morphism such that $g$ restricts to an isomorphism $[i,\ldots,j]\overset{\cong}{\to} [i^\prime,\ldots, j^\prime]$ and $\overline{g}$ restricts to an isomorphism $[f^\prime(i^\prime), f^\prime(i^\prime)+1,\ldots, f^\prime(j^\prime)]\overset{\cong}{\to} [f(i), f(i)+1,\ldots, f(j)]$. Then $G$ sends $\mu$ to an equivalence.
\end{lem}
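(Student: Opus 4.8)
The plan is to reduce, by means of two commutative squares in $\Theta$, the claim for $\mu$ to the corresponding claim for an \emph{identity} morphism, exploiting the skeletality of $\Delta$. Write $A=(f,\{i,j\})$ and $A'=(f',\{i',j'\})$ for the source and target of $\mu$, and record at the outset that the class of morphisms of $\Theta$ that $G$ carries to equivalences is closed under composition and enjoys the two‑out‑of‑three property (because equivalences of $\CC$ do), and that—by hypothesis—it contains the two families of morphisms identified in \cref{prop:pullbacksofequivs}: (i) the morphism restricting the source of an object $(h\colon[p]\to[q],\{a,b\})$ to the subinterval $\{a,\dots,b\}$, now carrying its whole underlying set as marked interval; and (ii) whenever the map underlying an object factors through a subinterval of its target, the corresponding corestriction morphism. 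The order of (i) and (ii) matters: after (i) has shrunk the source to equal the marked interval, any order‑preserving map out of $\{a,\dots,b\}$ automatically factors through the interval $\{h(a),\dots,h(b)\}$, so the hypothesis of (ii) then becomes automatic.

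Concretely, set $I=\{i,\dots,j\}\subseteq[n]$, $I'=\{i',\dots,j'\}\subseteq[n']$, $J=\{f(i),\dots,f(j)\}\subseteq[m]$, $J'=\{f'(i'),\dots,f'(j')\}\subseteq[m']$, and let $\widetilde f\colon I\to J$ and $\widetilde f'\colon I'\to J'$ be the corestrictions of $f|_I$ and $f'|_{I'}$. These assemble into objects $C=(f|_I\colon I\to[m])$, $C'=(f'|_{I'}\colon I'\to[m'])$, $D=(\widetilde f\colon I\to J)$ and $D'=(\widetilde f'\colon I'\to J')$ of $\Theta$, each with its whole source as marked interval. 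By hypothesis $g$ restricts to an isomorphism $I\xrightarrow{\cong}I'$ and $\overline g$ to an isomorphism $J'\xrightarrow{\cong}J$; since $\Delta$ is skeletal these isomorphisms are identities, so $I=I'$ and $J=J'$, and—using $f=\overline g\circ f'\circ g$—one checks $\widetilde f=\widetilde f'$, whence $D=D'$.

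It then remains to produce two commutative squares in $\Theta$,
\[
\begin{tikzcd}
C\arrow[r,"\beta"]\arrow[d,"\alpha"'] & C'\arrow[d,"\alpha'"]\\
A\arrow[r,"\mu"'] & A'
\end{tikzcd}
\qquad\text{and}\qquad
\begin{tikzcd}
C\arrow[r,"\beta"]\arrow[d,"\gamma"'] & C'\arrow[d,"\gamma'"]\\
D\arrow[r,"\delta"'] & D'
\end{tikzcd}
\]
in which $\alpha\colon C\to A$ and $\alpha'\colon C'\to A'$ are the source‑restrictions of \cref{prop:pullbacksofequivs}(1); $\gamma\colon C\to D$ and $\gamma'\colon C'\to D'$ are the target‑corestrictions of \cref{prop:pullbacksofequivs}(2), legitimate by the remark above; $\beta\colon C\to C'$ is the morphism of $\Theta$ whose underlying pair of maps is $(g|_I,\overline g)$; and $\delta\colon D\to D'$ is the one whose underlying pair is $(g|_I,\overline g|_{J'})=(\id,\id)$, so that $\delta=\id_D$. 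Given these, $G(\delta)=\id$ is an equivalence; two‑out‑of‑three applied to the $G$‑image of the right‑hand square makes $G(\beta)$ an equivalence; and two‑out‑of‑three applied to the $G$‑image of the left‑hand square makes $G(\mu)$ an equivalence, as required. I expect the one genuine piece of work to be the verification that $\alpha,\alpha',\gamma,\gamma',\beta,\delta$ are bona fide morphisms of $\Theta$ and that the two squares commute there: this is a direct but bookkeeping‑heavy unwinding of the definition $\Theta=\Tw(\Delta)\times_\Delta\Delta^\amalg$, in which one must track the mixed variance in $\Tw(\Delta)$ and the marked‑interval coordinate supplied by $\Delta^\amalg$—and it is precisely this that forces the source‑restriction to precede the target‑corestriction.
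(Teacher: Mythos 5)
Your argument is correct and is, at its core, the same reduction the paper performs: both proofs funnel everything through the doubly restricted object $\widetilde f\colon [k]\to[\ell]$ (your $D$) and conclude by two-out-of-three using the morphisms of \cref{prop:pullbacksofequivs}. The packaging differs in one respect. The paper works with morphisms pointing \emph{from} $D$ \emph{into} $C$ and $C'$ (the class it calls $\nu$, whose second component is a retraction $[m]\to[\ell]$ rather than the inclusion $[\ell]\to[m]$); since these are not literally of type (1) or (2), the paper must first show they are sent to equivalences, which it does by exhibiting a type-(2) morphism whose composite with $\nu$ is an identity. You avoid that auxiliary step by using the type-(2) corestrictions $\gamma,\gamma'$ in their given direction and instead invoking skeletality to identify $D$ with $D'$ and $\delta$ with $\id_D$; the right-hand square then does the same work as the paper's $\nu$-lemma, and both routes finish with two applications of two-out-of-three. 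Your deferred verification that $\alpha,\alpha',\gamma,\gamma',\beta$ are morphisms of $\Theta$ and that the two squares commute is indeed routine; the only point worth making explicit is that the equality $D=D'$ (and $\delta=\id_D$) holds only after the standard identifications $I\cong[k]\cong I'$ and $J\cong[\ell]\cong J'$, exactly as in the paper's notation $[k]\cong\{i,j\}$.
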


\begin{proof}
We first note that, under the given hypotheses, $G$ will send morphisms of the form 
\[
\nu:=\left\lbrace \vcenter{\vbox{\begin{tikzcd}
\{0,k\}\arrow[Subseteq]{r}{} & {{[k]}}\arrow[r,"s"]\arrow[d,"\id_{{{[k]}}}"'] & {[m]} \\
 \{0,k\}\arrow[Subseteq]{r}{} &{{[k]}}\arrow[r,"s^\prime"'] & {[m^\prime]}\arrow[u,"h"']
\end{tikzcd} }}
\mkern-350mu\right\rbrace
\]
to equivalences, where $h$ sends $[s^\prime(i^\prime), s^\prime(i^\prime)+1,\ldots, s^\prime(j^\prime)]$ isomorphically to ${[m]}$. This follows from composing 
\[
\begin{tikzcd}
\{0,k\}\arrow[Subseteq]{r}{} & {{[k]}}\arrow[r, "s"]\arrow[d,"\id_{{{[k]}}}"'] & {[m]} \\
 \{0,k\}\arrow[Subseteq]{r}{} &{{[k]}}\arrow[r,"s^\prime"]\arrow[d,"\id_{{{[k]}}}"'] & {[m^\prime]}\arrow[u,"h"']\\
 \{0,k\}\arrow[Subseteq]{r}{} & {{[k]}}\arrow[r,"s"'] & {[m]}\arrow[u,"\psi"'] 
\end{tikzcd}
\]
Where $\psi$ is the inclusion of the interval $[f(0),\ldots, f(k)]$. The lower morphism is then one of the morphisms of type (2) from \cref{prop:pullbacksofequivs} and the two morphisms compose to the identity. So, by 2-out-of-3, $\nu$ must be sent to an equivalence.

Now write $[\ell]:=[f(i), f(i)+1,\ldots, f(j)]$, and consider the composition 
\[
\begin{tikzcd}
\{0,k\}\arrow[Subseteq]{r}{} & {{[k]}}\arrow[r,"s"]\arrow[d,"\phi_{i,j}"] & {[\ell]} \\
\{i,j\}\arrow[Subseteq]{r}{} & {[n]}\arrow[r,"f"]\arrow[d,"g"] & {[m]}\arrow[u,"h"] \\
{{[k]}}\cong\{i^\prime,j^\prime\}\arrow[Subseteq]{r}{} & {[n^\prime]}\arrow[r,"f^\prime"] & {[m^\prime]}\arrow[u,"\overline{g}"]
\end{tikzcd} 
\]
Where $h$ sends $[\ell]$ isomorphically to itself. The upper morphism is the composite of a morphism of type (1) from \cref{prop:pullbacksofequivs} and a morphism of the same kind as $\nu$. Moreover, the composite
\[
\begin{tikzcd}
\{0,k\}\arrow[Subseteq]{r}{} & {{[k]}}\arrow[r,"s"]\arrow[d,"\phi_{i^\prime,j^\prime}"'] & {[\ell]} \\
{[k]}\cong\{i^\prime,j^\prime\}\arrow[Subseteq]{r}{} & {[n^\prime]}\arrow[r,"f^\prime"'] & {[m^\prime]}\arrow[u,"h^\prime"']
\end{tikzcd}
\]
is also the composite of a morphism of type (1) from  \cref{prop:pullbacksofequivs} and a morphism of the same kind as $\nu$. Therefore, by the 2-out-of-3 property, $\mu$ must be sent to an equivalence. 
\end{proof}

\begin{defn}
We define $E$ to be the set of all morphisms of the form from \cref{lem:classE}. Note that $E$ is closed under composition. 
\end{defn}

\begin{cor}\label{cor:algconds}
A functor $G:\Theta\to  \CC$ defines a coalgebra object if and only if 
\begin{enumerate}
\item\label{algconds:degint} $G$ sends degenerate intervals to the terminal object.
\item\label{algconds:product} $G$ sends $(\{i,j\}\subset {[n]}\overset{f}{\to}{[m]})$  together with its projections to sub-intervals to a product diagram.
\item\label{algconds:ELoc} $G$ sends the morphisms in $E$ to equivalences.
\item\label{algconds:Span} $G$ sends all diagrams of the form \cref{diag:segalsimpsinHSpan} to limit diagrams. 
\end{enumerate}
\end{cor}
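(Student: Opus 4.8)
The plan is to unpack the phrase ``$G$ defines a coalgebra object'' into three successive requirements and to pair each with the appropriate condition(s). A coalgebra object is a section $\overline{G}\colon \Delta \to \Span_\Delta(\CC^\boxtimes)$ of the Cartesian fibration of \cref{cor:MonoidalCatsOfSpans} that carries inert morphisms to Cartesian morphisms, and chasing this through the adjunctions in play shows that producing such an $\overline{G}$ from $G$ is equivalent to the conjunction of: \textbf{(i)} the functor $\tilde G\colon \Tw(\Delta)\to \overline{\CC^\boxtimes}$ adjoint to $G$ factors through $\CC^\boxtimes$; \textbf{(ii)} the section adjoint to $\tilde G$, which a priori lands only in $\overline{\Span}_\Delta(\CC^\boxtimes)$, in fact lands in the subcategory $\Span_\Delta(\CC^\boxtimes)$ of Segal simplices; and \textbf{(iii)} $\overline{G}$ carries the inert morphisms of $\Delta$ to Cartesian morphisms. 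For \textbf{(i)}: the fibre of $\Theta\to\Tw(\Delta)$ over an object $f\colon [n]\to [m]$ is the poset $I_{[n]}^\op$, and $\tilde G$ factors through $\CC^\boxtimes\subset\overline{\CC^\boxtimes}$ exactly when, for every such $f$, the restriction of $G$ to this fibre displays $G(f,\{i\le j\})$ as the product of the $G(f,\{k\le k+1\})$ for $i\le k<j$; the case $i=j$ is condition \eqref{algconds:degint} (the empty product being terminal) and the case $i<j$ is condition \eqref{algconds:product}. Granting \textbf{(i)}, requirement \textbf{(ii)} is precisely the assertion of \cref{prop:Deltapullbackcond}, whose hypothesis is condition \eqref{algconds:Span}. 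Thus everything reduces to showing that, granting \textbf{(i)} and \textbf{(ii)}, requirement \textbf{(iii)} is equivalent to condition \eqref{algconds:ELoc}.

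The implication \textbf{(iii)}$\Rightarrow$\eqref{algconds:ELoc} is assembled from the results already established: the inert morphisms of $\Delta$ are the subinterval inclusions $\phi_{i,j}$, and a morphism of $\Span_\Delta(\CC^\boxtimes)$ lying over $\phi_{i,j}$ is Cartesian precisely when its adjoint span $\Tw(\Delta^1)\to\CC^\boxtimes$ is comprised of Cartesian edges of $\CC^\boxtimes\to\Delta$; read off componentwise over each subinterval, this is exactly the statement displayed before \cref{prop:pullbacksofequivs} that $G$ inverts the source and target maps of each $\phi_{i,j}$. \cref{prop:pullbacksofequivs} then promotes this---using condition \eqref{algconds:Span} via the pullback diagrams \cref{diag:segalsimpsinHSpan}---to inversion of all morphisms of types (1) and (2), and \cref{lem:classE} promotes that to inversion of every morphism of $E$, i.e.\ condition \eqref{algconds:ELoc}. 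For the converse \textbf{(iii)}$\Leftarrow$\eqref{algconds:ELoc}: granting \textbf{(i)} and \textbf{(ii)}, condition \eqref{algconds:product} lets one check that $\overline{G}(\phi_{i,j})$ is Cartesian one subinterval at a time, hence on the single intervals $\{k,k+1\}$; there the two maps to be inverted, namely $(\phi_{i,j},\{k,k+1\})\to (\id,\{k,k+1\})$ and $(\phi_{i,j},\{k,k+1\})\to ([m],\{\phi_{i,j}(k),\phi_{i,j}(k+1)\})$, are both of the shape occurring in \cref{lem:classE} (take, respectively, the left-hand or the right-hand vertical map of the defining square to be an identity), so they lie in $E$ and are inverted by \eqref{algconds:ELoc}.

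The technical heart, and the step I expect to demand the most care, is requirement \textbf{(iii)}: on one side one must pin down the Cartesian morphisms of $\Span_\Delta(\CC^\boxtimes)\to\Delta$, leaning on the description of Cartesian edges from \cite[\S 10.2]{DKHSSI} and the discussion preceding \cref{prop:pullbacksofequivs}; on the other side one must know that $E$ is calibrated exactly right---large enough that a genuine coalgebra inverts all of it (which is the whole point of \cref{prop:pullbacksofequivs} and \cref{lem:classE}, and the only place the Segal condition is genuinely used) and small enough that inverting it suffices (the explicit identification above of the source and target maps with elements of $E$). Everything else---the passage among $G$, $\tilde G$, and $\overline{G}$, and the reduction to single intervals---is formal, resting on the adjunction $\Tw\dashv\overline{\Span}$ and on \cref{prop:Deltapullbackcond}.
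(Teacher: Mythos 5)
Your proposal is correct and follows essentially the same route as the paper, which states this corollary as a summary of the immediately preceding results: conditions \eqref{algconds:degint} and \eqref{algconds:product} unpack the factorization of $\tilde G$ through $\CC^\boxtimes$, condition \eqref{algconds:Span} is \cref{prop:Deltapullbackcond}, and condition \eqref{algconds:ELoc} is assembled from the discussion of Cartesian edges together with \cref{prop:pullbacksofequivs} and \cref{lem:classE}. Your explicit verification of the converse for \eqref{algconds:ELoc} --- that the source and target maps of an inert morphism, restricted to single intervals, themselves lie in $E$ --- is a point the paper leaves tacit, and you have checked it correctly.
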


\subsubsection{Forgetting degenerate intervals}

\begin{defn}
We denote by $\SAlg(\CC)$ the full sub-$\infty$-category of $\Fun (\Theta,  \CC)$ on those functors satisfying conditions (\ref{algconds:degint})-(\ref{algconds:Span}) from the corollary. We denote by $\Fun^\ast (\Theta,  \CC)$ the full sub-$\infty$-category of functors sending every degenerate interval to a terminal object in $ \CC$ (i.e., those functors satisfying condition (\ref{algconds:degint}) from the corollary).

Let $\Omega $ be the full subcategory of $\Theta$ on those objects $\{i,j\}\subset {[n]}\overset{f}{\to}{[m]}$ such that the interval $\{i,j\}$ is not degenerate (i.e. $i\neq j$). Pulling back along the inclusion $\Omega \to\Theta$ induces a functor $S: \Fun^\ast (\Theta,  \CC) \to \Fun (\Omega , \CC)$. 
\end{defn}

\begin{defn}
Given a 1-category $D$, call an object $d\in D$ \emph{attracting} if, for all $a\in D$, 
\[
\Hom_{D} (a,d)\neq \emptyset,\quad \text{and}\quad \Hom_{D} (d,a)=\emptyset.
\]
\end{defn}

\begin{lem}\label{lem:ForgetAttracting}
Let $d\in D$ be an attracting object, denote by $\Fun^\ast(D, \CC)$ the full sub-$\infty$-category on those functors sending $d$ to the terminal object, and denote by $D^\circ$ the full subcategory on all objects other than $d$. Then the functor 
\[
\Fun^\ast (D, \CC) \to \Fun (D^\circ,  \CC)
\]  
is an equivalence.
\end{lem}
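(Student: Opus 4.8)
The plan is to exhibit the restriction functor $\Fun^\ast(D,\CC)\to\Fun(D^\circ,\CC)$ as an equivalence by producing an explicit quasi-inverse given by right Kan extension along the inclusion $\iota\colon D^\circ\hookrightarrow D$, and checking that the hypothesis that $d$ is attracting makes this Kan extension trivial to compute. First I would recall that since $\CC$ has small limits, the right Kan extension $\iota_\ast\colon\Fun(D^\circ,\CC)\to\Fun(D,\CC)$ exists and is right adjoint to restriction $\iota^\ast$. The key point is the pointwise formula: for $F\colon D^\circ\to\CC$ and any object $c\in D$, $(\iota_\ast F)(c)\simeq \lim_{(c\downarrow\iota)} F\circ\pi$, where the limit is over the comma category $c\downarrow\iota$ whose objects are pairs $(a\in D^\circ, c\to a)$.

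The crucial computation is to analyze this comma category in the two cases. If $c\in D^\circ$ (i.e. $c\neq d$), then $c\downarrow\iota$ has an initial object, namely $(c,\id_c)$, so the limit is just $F(c)$; hence $\iota^\ast\iota_\ast F\simeq F$ naturally, and the counit of the adjunction is an equivalence on all of $\Fun(D^\circ,\CC)$. If $c=d$, then because $d$ is attracting, $\Hom_D(d,a)=\emptyset$ for every $a\in D^\circ$ as well (indeed for all $a\neq d$, and $d\notin D^\circ$), so the comma category $d\downarrow\iota$ is \emph{empty}, and its limit in $\CC$ is the terminal object. Therefore $\iota_\ast F$ automatically sends $d$ to the terminal object, i.e. $\iota_\ast$ factors through $\Fun^\ast(D,\CC)\subset\Fun(D,\CC)$. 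So we have an adjunction $\iota^\ast\colon\Fun^\ast(D,\CC)\leftrightarrows\Fun(D^\circ,\CC)\colon\iota_\ast$ with invertible counit.

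It remains to show the unit $G\to\iota_\ast\iota^\ast G$ is an equivalence for $G\in\Fun^\ast(D,\CC)$. On objects $c\neq d$ this is the identity by the initial-object argument above (the unit restricted to $D^\circ$ is the identity transformation, being adjoint to the identity). On the object $d$ it is the canonical map $G(d)\to(\iota_\ast\iota^\ast G)(d)\simeq\ast$; but $G\in\Fun^\ast(D,\CC)$ means precisely that $G(d)$ is itself terminal, so this map is an equivalence. Hence the unit is an equivalence everywhere, and $\iota^\ast\colon\Fun^\ast(D,\CC)\to\Fun(D^\circ,\CC)$ is an equivalence of $\infty$-categories with inverse $\iota_\ast$.

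The only genuinely delicate point, and the one I would spell out with care, is the identification of $d\downarrow\iota$ as empty and the resulting claim that the pointwise right Kan extension at $d$ is terminal — this uses both halves of the attracting hypothesis ($\Hom_D(a,d)\neq\emptyset$ is not needed, but $\Hom_D(d,a)=\emptyset$ for $a\neq d$ is essential) — together with the fact that $d$ is not an object of $D^\circ$, so there is no stray morphism $d\to d$ in the comma category either. Everything else is a formal consequence of the universal property of Kan extensions and the pointwise formula, valid since $\CC$ admits all small limits and $D$, $D^\circ$ are (nerves of) small $1$-categories. I would also note that no cofinality subtleties arise precisely because the relevant comma categories are either empty or have an initial object.
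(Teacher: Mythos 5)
Your proof is correct, but it takes a genuinely different route from the paper's. You construct an explicit quasi-inverse by right Kan extension along $D^\circ\hookrightarrow D$ and compute it pointwise: the comma category over $c\neq d$ has $(c,\id_c)$ as an initial object, while the comma category over $d$ is empty because $d$ is attracting and $d\notin D^\circ$, so the extension automatically sends $d$ to the terminal object; the unit and counit are then objectwise equivalences. The paper instead argues at the level of simplicial sets: it replaces $\CC$ by the subcategory $\CC^\prime$ having no morphisms out of the terminal object, identifies $\CC^\prime$ with the right cone on the non-terminal objects $\CC^\circ$, and shows that restriction is a \emph{trivial Kan fibration} by solving the lifting problems against $\left(\partial\Delta^n\times D\right)\coprod_{\partial\Delta^n\times D^\circ}\Delta^n\times D^\circ\hookrightarrow\Delta^n\times D$ directly, using that a map sending everything over $d$ to the cone point factors through the cone on $\Delta^n\times D^\circ$. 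Your argument is more formal and portable --- it isolates exactly which half of the attracting hypothesis is used (namely $\Hom_D(d,a)=\emptyset$ for $a\neq d$) and would generalize to any inclusion whose comma categories are empty or have initial objects --- at the cost of invoking the machinery of pointwise Kan extensions; the paper's argument uses only the existence of a terminal object in $\CC$ and yields the stronger, model-level conclusion that the restriction map is a trivial fibration rather than merely an equivalence. Both proofs are complete.
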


\begin{proof}
Without loss of generality, we assume that $ \CC$ has a unique terminal object. 
when $f$ sends  $d$ to the terminal object. Denote by $ \CC^\prime\subset  \CC$ the largest subcategory not containing morphisms from the terminal object to any other object, and denote by $ \CC^\circ$ the full subcategory on non-terminal objects. Then we have an equivalence $ \CC^\prime\simeq ( \CC^\circ)^{\!\rotatebox{30}{$\mathsmaller{\triangle}$}}$ since the hom-spaces to the terminal object are all contractible. Any simplex in $\Fun^\ast (D, \CC)$ factors through $\Fun^\ast (D, \CC^\prime)$, so it will suffice to show that 
\[
\Fun^\ast(D,( \CC^\circ)^{\!\rotatebox{30}{$\mathsmaller{\triangle}$}}) \to (D^\circ,  \CC) 
\]
is a trivial Kan fibration. 

Unwinding the definitions, this amounts to solving the extension problem 
\[
\begin{tikzcd}
\left( \partial \Delta^n\times D\right) \coprod_{\partial \Delta^n\times D^\circ} \Delta^n \times D^\circ\arrow[dd]\arrow[dr,"f"] & \\
 & ( \CC^\circ)^{\!\rotatebox{30}{$\mathsmaller{\triangle}$}}\\  
 \Delta^n\times D\arrow[ur,dashed] &  
 \end{tikzcd}
\]
where $f$ sends $\partial \Delta^n\times D$ to the cone point. However, this implies that $f$ factors through $(\Delta^n\times D^\circ)^{\!\rotatebox{30}{$\mathsmaller{\triangle}$}}$. Pulling back along $\Delta^n\times D\to(\Delta^n\times D^\circ)^{\!\rotatebox{30}{$\mathsmaller{\triangle}$}}$ then gives the desired extension.  

\end{proof}

\begin{cor}
The functor $S: \Fun^\ast (\Theta,  \CC) \to \Fun (\Omega , \CC)$ is an equivalence of $\infty$-categories.
\end{cor}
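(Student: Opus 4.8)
The plan is to construct a quasi-inverse to $S$ by right Kan extension along the fully faithful inclusion $\iota\colon\Omega\hookrightarrow\Theta$, exploiting that the degenerate intervals form a cosieve in $\Theta$.

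The one piece of genuine content is a combinatorial observation: the full subcategory $\Theta\setminus\Omega$ spanned by the degenerate intervals is a cosieve. Indeed, a morphism in $\Theta$ from $(\{i,j\}\subseteq[n]\xrightarrow{f}[m])$ to $(\{i',j'\}\subseteq[n']\xrightarrow{f'}[m'])$ has an underlying map $g\colon[n]\to[n']$ which, by the definition of $\Delta^\amalg$, satisfies $g(i)\leq i'\leq j'\leq g(j)$; when $i=j$ this forces $i'=j'=g(i)$, so the target is again a degenerate interval. Equivalently, $\Omega$ is a sieve in $\Theta$, so for every $X\in\Theta\setminus\Omega$ there is no morphism out of $X$ whose target lies in $\Omega$; that is, the comma category $X\downarrow\iota$ is empty.

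Since $\CC$ has small limits, the right Kan extension $\iota_\ast\colon\Fun(\Omega,\CC)\to\Fun(\Theta,\CC)$ exists and is right adjoint to restriction. The pointwise formula gives $(\iota_\ast F)(X)\simeq F(X)$ for $X\in\Omega$ (as $\iota$ is fully faithful), while for $X\in\Theta\setminus\Omega$ it presents $(\iota_\ast F)(X)$ as the limit of the empty diagram, i.e.\ the terminal object of $\CC$, by the previous step. Hence $\iota_\ast$ factors through $\Fun^\ast(\Theta,\CC)$, and since the latter is a full subcategory of $\Fun(\Theta,\CC)$ the induced functor $\iota_\ast\colon\Fun(\Omega,\CC)\to\Fun^\ast(\Theta,\CC)$ is right adjoint to $S$. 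The counit $S\iota_\ast\Rightarrow\id$ is an equivalence since $\iota$ is fully faithful; and the unit $\id\Rightarrow\iota_\ast S$, evaluated at $G\in\Fun^\ast(\Theta,\CC)$, is an equivalence on objects of $\Omega$ for the same reason, while on any $X\in\Theta\setminus\Omega$ it is a map from $G(X)$ — which is terminal because $G\in\Fun^\ast$ — to the terminal object, hence an equivalence. Therefore $S$ is an equivalence of $\infty$-categories.

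The main obstacle is thus nothing more than pinning down the defining conditions of $\Theta=\Tw(\Delta)\times_\Delta\Delta^\amalg$ carefully enough to see the cosieve property; everything afterwards is formal. One could equally well bypass the right Kan extension formula and rerun the trivial-Kan-fibration argument of \cref{lem:ForgetAttracting} verbatim, since that argument in fact only uses that the removed objects form a cosieve, not the full strength of the attracting hypothesis.
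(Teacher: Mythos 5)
Your proof is correct, and it reaches the conclusion by a genuinely different formal route than the paper, although both hinge on the same combinatorial observation: a morphism of $\Theta$ from $(\{i,j\}\subseteq[n]\to[m])$ to $(\{i',j'\}\subseteq[n']\to[m'])$ carries a map $g\colon[n]\to[n']$ with $g(i)\leq i'\leq j'\leq g(j)$, so degenerate intervals can only map to degenerate intervals. The paper packages this by first identifying $\Fun^\ast(\Theta,\CC)$ with $\Fun^\ast(\Theta\coprod_{\Theta^{deg}}\ast,\CC)$ via a comparison of pullback squares, observing that the collapsed point is \emph{attracting} (every object maps to it and nothing maps out of it), and then proving \cref{lem:ForgetAttracting} by exhibiting the restriction as a trivial Kan fibration --- an explicit lifting argument through the cone on $\CC^\circ$. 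You instead construct a right adjoint to $S$ directly as the pointwise right Kan extension $\iota_\ast$ along the full inclusion $\Omega\hookrightarrow\Theta$: the cosieve property makes the relevant comma categories over degenerate objects empty, so $\iota_\ast$ takes terminal values there and lands in $\Fun^\ast(\Theta,\CC)$, after which the unit and counit are checked to be equivalences objectwise. Your route avoids both the pushout identification and the simplicial lifting argument, at the mild cost of invoking the existence and pointwise formula for $\infty$-categorical right Kan extensions (available since $\CC$ has small limits). Your closing remark is also accurate and worth noting: the argument isolates the weaker hypothesis actually needed --- that the discarded objects form a cosieve --- whereas the paper's attracting condition additionally requires every object to admit a map into the discarded ones, which holds here but plays no essential role.
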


\begin{proof}
We again assume that $ \CC$ has a unique terminal object. Let $\Theta^{deg}$ be the full subcategory on only the degenerate intervals. We can write $\Fun^\ast(\Theta,  \CC)$ as a pullback in $\Set_\Delta$
\[
\begin{tikzcd}
\Fun^\ast(\Theta,  \CC)\arrow[r]\arrow[d] & \Fun(\Theta,  \CC)\arrow[d]\\
\Fun(\Theta^{deg},\ast)\arrow[r] & \Fun(\Theta^{deg}, \CC)
\end{tikzcd}
\]
There is a natural transformation of diagrams to the pullback diagram
\[
\begin{tikzcd}
\Fun^\ast(\Theta\coprod_{\Theta^{deg}}\ast,  \CC)\arrow[r]\arrow[d] & \Fun(\Theta,  \CC)\arrow[d]\\
\Fun^\ast(\ast, \CC)\arrow[r] & \Fun(\Theta^{deg}, \CC)
\end{tikzcd}
\]
Since this natural transformation is an isomorphism on the bottom three objects, the universal property of the pullback gives us an isomorphism $\Fun^\ast(\Theta, \CC)\cong \Fun^\ast (\Theta\coprod_{\Theta^{deg}}\ast,  \CC)$. $\ast\in \Theta\coprod_{\Theta^{deg}}\ast$ is an attracting object, and so \cref{lem:ForgetAttracting} yields the desired result. 
\end{proof}
\subsection{The localization}

\begin{const}
Let $\phi: ([n],\{i,j\})\to ([m],\{k,\ell\})$ be a morphism in $\Delta^\amalg$. and write $\{i\leq j\}$ for the linearly ordered set $\{i,i+1,\ldots, j\}$. Applying $\mathbb{O}$ to $\phi$, we obtain a diagram  
\[
\begin{tikzcd}
\mathbb{O}([m])\arrow[r,"\mathbb{O}(\phi)"] & \mathbb{O}([n])\\
\mathbb{O}(\{k\leq\ell\})\arrow[Subseteq]{u}{} & \mathbb{O}(\{i\leq j\})\arrow[Subseteq]{u}{}\\
\mathbb{I}(\{k\leq\ell\})\arrow[Subseteq]{u}{} & \mathbb{I}(\{i\leq j\})\arrow[Subseteq]{u}{}
\end{tikzcd}
\]
Since, $\phi(i)\leq k\leq \ell\leq \phi(j)$, we see that, for every $a\in \{k\leq \ell\}$, there exists a $b\in \{i\leq j\}$ such that $\phi(b)\leq a\leq a+1\leq \phi(b+1)$. That is, $\mathbb{O}(\phi)$ descends uniquely to a map 
\[
\res(\phi):\mathbb{I}(\{k\leq\ell\})\to \mathbb{I}(\{i\leq j\}).
\]
Note that we here apply the convention that $\mathbb{I}([0])=\emptyset$. We therefore obtain a functor 
\[
\res: \Delta^\amalg \to \Delta_+^\op
\]
which sends all non-degenerate intervals into $\Delta\subset \Delta_+$. 
\end{const}

\begin{defn}
Define a category $\Delta^\star$ to have objects finite (non-empty) ordered tuples of elements in $\Delta$. The morphisms of $\Delta^\star$ from $([n_0],\ldots,[n_k])\to ([m_0],\ldots, [m_\ell])$ consist of 
\begin{enumerate}
\item A morphism $\phi:[\ell] \to [k]$ in $\Delta$. 
\item For each $i\in \{0,1,\ldots k\}$, with $\phi^{-1}(i)=(j_1,\ldots, j_r)$, a morphism 
\[
f_i: [m_{j_1}]\star [m_{j_2}]\star\cdots \star [m_{j_r}]\to [n_i]
\]
in $\Delta$.
\end{enumerate}
Satisfying the conditions that
\begin{enumerate}
\item If there is a $p\in \langle \ell\rangle^\circ$ with $r>\max_{j\in\phi^{-1}(i)}(j)$, then $f_i$ hits $n_i\in [n_i]$.
\item If there is a $p\in \langle \ell\rangle^\circ$ with $r<\min_{j\in\phi^{-1}(i)}(j)$, then $f_i$ hits $0\in [n_i]$.
\end{enumerate}
\end{defn}

\begin{rmk}
We could equivalently define the morphisms to be 
\begin{enumerate}
\item A morphism $\phi:[\ell] \to [k]$ in $\Delta$. 
\item A morphism 
\[
f: [m_1]\star [m_2]\star\cdots \star [m_\ell]\to [n_1]\star [n_2]\star \cdots \star [n_k]
\]
in $\Delta$.
\end{enumerate}
Satisfying the condition that, for any $i\in [k]$ with $\phi^{-1}(i)=(j_1,\ldots, j_r)$, the restriction 
\[
f_i: [m_{j_1}]\star [m_{j_2}]\star\cdots \star [m_{j_r}]\to [n_1]\star [n_2]\star \cdots \star [n_k]
\]
has image contained in $[n_i]$. 
\end{rmk}

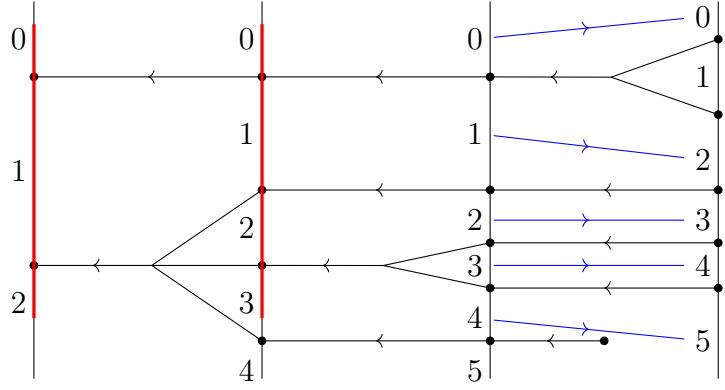
\begin{figure}
	\begin{tikzpicture}[decoration={
		markings,
		mark=at position 0.5 with {\arrow{>}}}
	]
	\foreach \x in {0, 3, 6, 9}{
		\draw (\x,-0.5) -- (\x, 4.5);
	};
	\path (0,1) coordinate (l2);
	\path (0,3.5) coordinate (l1);
	
	\path (3,3.5) coordinate (lm1);
	\path (3,2) coordinate (lm2);
	\path (3,1) coordinate (lm3);
	\path (3,0) coordinate (lm4);
	
	\path (6,3.5) coordinate (rm1);
	\path (6,2) coordinate (rm2);
	\path (6,1.3) coordinate (rm3);
	\path (6,0.7) coordinate (rm4);
	\path (6,0) coordinate (rm5);
	
	\path (9,4) coordinate (r11);
	\path (9,3) coordinate (r12);
	\path (9,2) coordinate (r2);
	\path (9,1.3) coordinate (r3);
	\path (9,0.7) coordinate (r4);
	
	\foreach \lab in {l1,l2,lm1,lm2,lm3,lm4,rm1,rm2,rm3,rm4,rm5,r11,r12,r2,r3,r4}{
		\draw[fill=black] (\lab) circle (0.05);
	};
	\draw[postaction={decorate}] (rm1) to (lm1); 
	\draw[postaction={decorate}] (lm1) to (l1);
	
	\draw[postaction={decorate}] (rm2) to (lm2);
	\draw[postaction={decorate}] (rm5) to (lm4);
	
	\draw[postaction={decorate}] (r2) to (rm2);
	\draw[postaction={decorate}] (r3) to (rm3);
	\draw[postaction={decorate}] (r4) to (rm4);
	\draw[postaction={decorate}] ($(1.5,0)+(rm5)$) to (rm5);
	\draw[fill=black] ($(1.5,0)+(rm5)$) circle (0.05);
	
	\coordinate (vert) at ($ (-0.7,0)+0.25*(lm2)+0.25*(lm3)+0.25*(lm4)+0.25*(l2)$);
	\draw[postaction={decorate}] (vert) to (l2); 
	\draw (lm2)--(vert) (lm3)--(vert) (lm4)--(vert); 
	
	\coordinate (vert) at ($ (-0.4,0)+0.333*(rm3)+0.333*(rm4)+0.333*(lm3)$);
	\draw[postaction={decorate}] (vert) to (lm3); 
	\draw (rm4)--(vert) (rm3)--(vert); 
	
	\coordinate (vert) at ($ (-0.4,0)+0.333*(r11)+0.333*(r12)+0.333*(rm1)$);
	\draw[postaction={decorate}] (vert) to (rm1); 
	\draw (r11)--(vert) (r12)--(vert);
	
	\path (-0.2,4) node (rint1) {0};
	\path (-0.2,2.25) node (rint2) {1};
	\path (-0.2,0.5) node (rint3) {2};
	\draw[very thick, red] ($(rint1)+(0.2,0.2)$) to ($(rint3)+(0.2,-0.2)$);
	
	\path (2.8,4) node (rmint1) {0};
	\path (2.8,2.75) node (rmint2) {1};
	\path (2.8,1.5) node (rmint3) {2};
	\path (2.8,0.5) node (rmint4) {3};
	\path (2.8,-0.4) node (rmint5) {4};
	\draw[very thick, red] ($(rmint1)+(0.2,0.2)$) to ($(rmint4)+(0.2,-0.2)$);
	
	\path (5.8,4) node (lmint1) {0};
	\path (5.8,2.75) node (lmint2) {1};
	\path (5.8,1.6) node (lmint3) {2};
	\path (5.8,1) node (lmint4) {3};
	\path (5.8,0.3) node (lmint5) {4};
	\path (5.8,-0.4) node (lmint6) {5};
	
	\path (8.8,4.3) node (lint1) {0};
	\path (8.8,3.5) node (lint2) {1};
	\path (8.8,2.4) node (lint3) {2};
	\path (8.8,1.6) node (lint4) {3};
	\path (8.8,1) node (lint5) {4};
	\path (8.8,0) node (lint6) {5};
	
	\begin{scope}[blue]
	\draw[postaction={decorate}] (lmint1) to (lint1);
	\draw[postaction={decorate}] (lmint2) to (lint3);
	\draw[postaction={decorate}] (lmint3) to (lint4);
	\draw[postaction={decorate}] (lmint4) to (lint5);
	\draw[postaction={decorate}] (lmint5) to (lint6);
	\end{scope}
	\end{tikzpicture}
	\caption{A pictorial representation of a morphism $\mu$ in $\Omega$, viewed as a triple of composable morphisms $[n]\overset{g}{\to}[n^\prime] \overset{f^\prime}{\to} [m^\prime] \overset{\overline{g}}{\to} [m]$ in $\Delta$. The dual forest is drawn in black, the chosen subintervals of $[n]$ and $[n^\prime]$ marked in red, and the induced morphism $\mathcal{L}(\mu)$ is drawn in blue. Note that that source of $\mathcal{L}(\mu)$ is the imbrication of the sets $\{f^\prime (i),f^\prime (i)+1,\ldots f^\prime (i+1)\}$. } \label{fig:DeltaLocMorphs}
\end{figure}

\begin{const}
We now define a functor $\mathcal{L}:\Omega\to \Delta^\star$. On objects it is given by 
\[
\{i,j\}\subset[n]\overset{f}{\to} [m]\mapsto \left( \{f(i)\leq f(i+1)\},\ldots, \{f(j-1)\leq f(j)\}\right)  
\]
where  $\{f(k)\leq f(k+1)\}:=\{f(k),f(k)+1),\ldots, f(k+1)\}$ are considered to be ordered via the order on $[m]$. Note that the indexing set of $\mathcal{L}(\{i,j\},f)$ is precisely $\mathbb{I}(\{i\leq j\})$

On morphisms, $\mathcal{L}$ is more complicated. A morphism in $\Omega$ is given by a commutative diagram of the form 
\[
\mu= \left\lbrace \vcenter{\vbox{\xymatrix{
[k]:=\{i,j\}\ar@{}[r]|-*[@]{\subset} & [n]\ar[r]^{f}\ar[d]_{g} & [m] \\
[k^\prime]:=\{i^\prime,j^\prime\}\ar@{}[r]|-*[@]{\subset} & [n^\prime]\ar[r]_{f^\prime} & [m^\prime]\ar[u]_{\overline{g}}
} }}\right\rbrace 
\]
where $g(i)\leq i^\prime \leq j^\prime \leq g(j)$. We define $\mathcal{L}(\mu)$ to be a pair $(\phi_f,\psi_f)$. We then write $\phi_f:=\res (g):\mathbb{I}(\{i^\prime\leq j^\prime\})\to \mathbb{I}(\{i\leq j\})$.  

Since the diagram commutes, for each pair $\{p,p+1\}\subset\{i,j\}\subset [n]$, we have that $\overline{g} (f(g(p)))= p$ and $\overline{g} (f(g(p+1)))= p+1$, so that $\overline{g}$ descends to a map of ordered sets
\[
\overline{g}_p:\{f^\prime (g(p)\leq f^\prime (g(p)+1)\}\star \cdots \star \{ f^\prime (g(p+1)-1)\leq f^\prime (g(p+1))\}\to \{f(p) \leq f(p+1)\}
\]
It is easy to verify that conditions (1) and (2) from the definition of $\Delta^\star$ are satisfied by the $\overline{g}_p$. On morphisms, therefore, we define 
\[
\mathcal{L} (\mu):= \left(\phi(g), \left\lbrace\overline{g}_p \right\rbrace_{i\leq p<j} \right).
\]
This is functorial via the functoriality of $\res$ and the restriction of $\overline{g}$. 
\end{const}

\subsubsection{Decomposing morphisms}
\begin{const}
Given a morphism 
\[
f:[m]\to[n]
\]
in $\Delta$, we can uniquely decompose it as follows: Let $[1]=:[1_i]\subset [m]$ be the interval $\{i-1\leq i\}$, and let $[n_i]\subset [n]$ be the interval $\{f(i-1)\leq f(i)\}$. Moreover, let $[n_{left}]$ and $[n_{right}]$ be the intervals $\{0\leq f(0)\}$ and $\{f(m)\leq n\}$ in $[n]$ respectively. Then $f$  is completely determined by the decomposition of $[n]$, since, given such a decomposition, we can reconstruct $f$ by defining $f_i:[1_i]\to [n_i]$ to be the unique map preserving maximal and minimal elements, so that $f$ is the composition 
\[
f=f_1\star\cdots \star f_m:[1_1]\star\cdots \star[1_m] \to [n_1]\star\cdots [n_m]\hookrightarrow [n_{left}]\star [n_1]\star\cdots [n_m]\star[n_{right}].
\]
We can clarify the indexing of the decomposition of $[n]$ by noting that the pairs $(i-1,i)$ considered above are precisely the inner interstices of $[m]$. Hence, we have decomposed $f$ as a morphism 
\[
\bigstar_{(i-1,i)\in \mathbb{I}([m])} \{i-1,i\} \to \bigstar_{(i-1,i)\in \mathbb{I}([m])} [n_i].
\] 
\end{const}

\begin{defn}
Given a morphism $\gamma:[n] \to [m]$ in $\Delta$, we can uniquely factor $\gamma$ as 
\[
[n]\overset{\gamma_1}{\to} [m_\gamma]\overset{\gamma_2}{\hookrightarrow} [m]
\]
where $[m]=[k]\oplus[m_\gamma]\oplus[\ell].$ Applying $O$, we get 
\[
O([m])\to O([m_\gamma])\to O)[n].
\] 
Where $O([m])\to O([m_\gamma])$ acts as projection onto a sub-interval. We call $O([m_\gamma])$ the \emph{minimal interval} of $\gamma$.
\end{defn}

\begin{lem}\label{lem:endpointambiguity}
Given an interval $\{i,j\}\subset [n]$ and a morphism $\eta:(\{i,j\}\subset[n])\to (\{r,r+k\} \subset [m])$  in $\Delta^{\amalg}$, let $[p,\ldots, q]$ be the minimal interval of $\gamma:=\res(\eta)$. Then $\eta|_{[p+1,\ldots,q-1]}= O(\gamma)|_{[p+1,\ldots,q-1]}$. 
\end{lem}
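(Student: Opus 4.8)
The plan is to turn everything into explicit combinatorics: write down a closed formula for $\gamma=\res(\eta)$, read off its minimal interval from that formula, and then compare $\eta$ with $O(\gamma)$ one element at a time on $\{p+1,\ldots,q-1\}$. We may assume the target interval $\{r,r+k\}$ is nondegenerate (i.e.\ $k\ge 1$) and that $q\ge p+2$; otherwise $\{p+1,\ldots,q-1\}$ is empty and there is nothing to prove.

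First I would unwind \cref{const:LinInterstice} and the definition of $\res$: the map $\gamma\colon \mathbb{I}(\{r\le r+k\})\to\mathbb{I}(\{i\le j\})$ sends an inner interstice $(c,c+1)$ to $(b,b+1)$, where $b$ is the largest element of $\{i,\ldots,j-1\}$ with $\eta(b)\le c$ (this is exactly the index singled out by the recipe for $\mathbb{O}$ on morphisms). Put $b_0:=\max\{b:\eta(b)\le r\}$ and $b_1:=\max\{b:\eta(b)\le r+k-1\}$; both lie in $\{i,\ldots,j-1\}$ because $\eta(i)\le r\le r+k\le\eta(j)$. Since $\gamma$ is monotone, the convex hull of its image runs from $\gamma(r,r+1)$ to $\gamma(r+k-1,r+k)$, which is the sub-interval $\mathbb{I}(\{b_0\le b_1+1\})\subseteq\mathbb{I}(\{i\le j\})$. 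Hence, using the skeletal isomorphism $O\colon\Delta_+^{\op}\cong\nabla_+$ (so $O\circ I=\id$), the minimal interval of $\gamma$ is $O\bigl(\mathbb{I}(\{b_0\le b_1+1\})\bigr)=\{b_0\le b_1+1\}$; that is, $p=b_0$ and $q=b_1+1$, so $\{p+1,\ldots,q-1\}=\{b_0+1,\ldots,b_1\}$.

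Next, factor $\gamma=\gamma_2\circ\gamma_1$ with $\gamma_1\colon\mathbb{I}(\{r\le r+k\})\to\mathbb{I}(\{p\le q\})$ the corestriction of $\gamma$ onto the convex hull of its image and $\gamma_2$ the sub-interval inclusion. By the definition of the minimal interval, $O(\gamma)$ factors as $\{i\le j\}\xrightarrow{\pi}\{p\le q\}\xrightarrow{O(\gamma_1)}\{r\le r+k\}$ with $\pi$ the projection onto the sub-interval $\{p\le q\}$, and $\pi$ restricts to the identity on $\{p+1,\ldots,q-1\}$; so it suffices to show that $O(\gamma_1)$ agrees with $\eta$ there. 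Now $O(\gamma_1)$ is the unique morphism $\beta\colon\{p,\ldots,q\}\to\{r,\ldots,r+k\}$ in $\nabla_+$ with $\mathbb{I}(\beta)=\gamma_1$, and I claim this is the map $\beta'$ given by $\beta'(p):=r$, $\beta'(q):=r+k$, and $\beta'(d):=\eta(d)$ for $p<d<q$. Well-definedness of $\beta'$ (monotonicity and preservation of endpoints) uses only $r<\eta(p+1)$ and $\eta(q-1)<r+k$, which are precisely the maximality conditions defining $b_0=p$ and $b_1=q-1$; and $\mathbb{I}(\beta')=\gamma_1$ follows by comparing the explicit formula for $\mathbb{I}$ on morphisms with the formula for $\gamma$ above: for each inner interstice $(c,c+1)$ of $\{r\le r+k\}$, both $\mathbb{I}(\beta')(c,c+1)$ and $\gamma_1(c,c+1)$ equal $(d,d+1)$ with $d=\max\{b\in\{p,\ldots,q-1\}:\eta(b)\le c\}$ — here one uses that $\beta'$ and $\eta$ coincide on $\{p+1,\ldots,q-1\}$ while $p=b_0$ always satisfies both $\beta'(p)=r\le c$ and $\eta(p)\le r\le c$, so that changing the value at $p$ does not affect this maximum. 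By uniqueness $O(\gamma_1)=\beta'$, and therefore $O(\gamma)(d)=O(\gamma_1)(d)=\beta'(d)=\eta(d)$ for every $d\in\{p+1,\ldots,q-1\}$.

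The actual content of the lemma is only that applying $\res$ and then $O$ reconstructs $\eta$ everywhere except possibly at the two endpoints of the minimal interval, where $O$ forcibly normalizes the values to $r$ and to $r+k$; so I expect no conceptual difficulty. The main obstacle is bookkeeping: pinning down that the endpoints of the minimal interval are exactly $b_0$ and $b_1+1$, and keeping the variance-sensitive functors $\mathbb{O},\mathbb{I},\res,O$ — together with the dichotomy between sub-interval inclusions and collapse maps under the duality — consistently oriented. Once the explicit description of $\gamma$ is in hand, everything else is a finite verification.
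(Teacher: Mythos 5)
Your proof is correct and is in essence the same argument as the paper's: both unwind the explicit interstice formula to see that for interior points of the minimal interval the value of $\eta$ is forced by $\gamma$ (the paper phrases this as the sandwich $\gamma(\eta(s))\le s<\gamma(\eta(s)+1)$ determining $\eta(s)$ uniquely, while you invert the same max-characterization of $\gamma$). Your version just makes explicit what the paper leaves implicit — the identification $p=b_0$, $q=b_1+1$ and the factorization of $O(\gamma)$ through the projection onto the minimal interval.
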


\begin{proof}
If $[p+1,\ldots,q-1]$ is empty, the statement is vacuously true. Otherwise, note that for $s\in[p+1,\ldots,q-1]$, the requirement that $\res(\eta)=\gamma$  means that $\gamma(\eta(s))\leq s<\gamma(\eta(s)+1)$. Such an $\eta(s)$ always exists, and this inequality uniquely determines $\eta(s)$. (Note that, for $p$ or $q$ in $[p,\ldots,q]$, we only have one-half of the inequality so that uniqueness need not hold.)  
\end{proof}

\subsubsection{Constructing morphisms}

In what follows, we will be interested in the weak fibers of the functor $\mathcal{L}:\Omega \to\Delta^\star$. We first note that, given an object $M=([m_1],\ldots,[m_k])\in\Delta^\star$, the fiber $\Omega_M$ is non-empty. We can explicitly build an object
\[
 \{0,k\} \subset [k]\overset{f_M}{\to} [m_1]\star [m_2]\star\cdots \star[m_k]=:[m]
\] 
in the fiber over $M$, given by 
\[
f_M(i)= \begin{cases}
0\in[m_{i+1}] & i<k\\ 
m_k \in [m_k] & i=k.
\end{cases}
\]

\begin{defn}
For  $M=([m_1],\ldots,[m_k])\in\Delta^\star$, we define a subcategory $\Omega_M^E\subset \Omega_M $ as follows. The objects of $\Omega_M^E$ are the same as those of $\Omega_M$, but the morphisms are only those in $E$. 
\end{defn}

\begin{lem}\label{lem:contractibleEfibers}
The object $\{0,k\}\subset [k]\overset{f_M}{\to} [m]$ is an initial object in $\Omega_M^{E}$. 
\end{lem}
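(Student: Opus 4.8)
The goal is to show that the explicitly constructed object $\xi_M := \bigl(\{0,k\}\subset[k]\overset{f_M}{\to}[m]\bigr)$ is initial in $\Omega_M^E$, i.e. that for every object $\zeta = \bigl(\{i,j\}\subset[n]\overset{f}{\to}[m']\bigr)$ of $\Omega_M$ there is a \emph{unique} morphism $\xi_M \to \zeta$ lying in the class $E$. First I would record what $\mathcal{L}(\zeta) = M$ forces: the interstice set $\mathbb{I}(\{i\leq j\})$ must have exactly $k$ elements, so $j - i = k$, and the restriction of $f$ to each $\{p,p+1\}\subset\{i,j\}$ is (up to the canonical order-preserving identification) the interval $\{f(p)\leq f(p+1)\}$, which as an abstract linearly ordered set is $[m_{p-i+1}]$. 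In particular $[m'] \supseteq \{f(i)\leq f(j)\} \cong [m_1]\star\cdots\star[m_k] = [m]$, and on that subinterval $f$ is already determined.

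Next I would analyze which morphisms $\mu\colon \xi_M \to \zeta$ in $\Omega$ land in $E$. A morphism in $\Omega$ is a commuting square as in \cref{lem:classE}, and membership in $E$ requires (a) the left vertical $g\colon[k]\to[n]$ to restrict to an isomorphism $[0,\ldots,k] \overset{\cong}{\to} [i,\ldots,j]$ — since both intervals have $k+1$ elements this forces $g$ on $\{0,k\}$ to be the unique order isomorphism onto $\{i,j\}$, hence $g(\ell) = i+\ell$; and (b) the right vertical $\overline{g}\colon[m']\to[m]$ to restrict to an isomorphism on the relevant end-to-end interval, which by the discussion above pins down $\overline{g}$ on $\{f(i)\leq f(j)\} = [m]$ to be the identity. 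Commutativity of the square, $f\circ g = \overline{g}\circ f_M$ together with the required behaviour on interstices, then leaves no freedom: on each $\{\ell,\ell+1\}\subset[k]$ the composite $\overline g \circ f_M$ sends $\ell$ to the minimum and $\ell+1$ to the maximum of $\{f(i+\ell)\leq f(i+\ell+1)\}$, and this is exactly $f\circ g$ restricted there. Thus a morphism $\xi_M\to\zeta$ in $E$, \emph{if it exists}, is unique. I would be careful here with the two endpoint interstices $p$ and $q$ of the minimal interval, where — as \cref{lem:endpointambiguity} warns — one only has one half of an inequality; but in the present situation the endpoints of $[k]$ map to $\{i,j\}$ which sit at the extreme positions, so the ambiguity does not arise.

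For existence, I would simply exhibit $\mu$: take $g\colon[k]\to[n]$ to be $\ell\mapsto i+\ell$, take $\overline g\colon[m']\to[m]$ to be the retraction collapsing the left summand $[k']$ of $[m']=[k']\oplus[m]\oplus[\ell']$ to the minimum and the right summand to the maximum (equivalently $O$ applied to the inclusion $[m]\hookrightarrow[m']$), and check the square commutes and that $\mathcal{L}(\mu)$ is the identity of $M$, so that $\mu$ is a morphism in the fiber $\Omega_M$; then verify directly against the definition of $E$ in \cref{lem:classE} that $g$ and $\overline g$ have the required restriction-to-isomorphism properties. Finally I would note $E$ is closed under composition (as already observed), so the composite of any morphism in $\Omega_M^E$ with $\mu$ stays in $E$, which is what makes the construction internal to $\Omega_M^E$ rather than merely $\Omega_M$. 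The main obstacle I anticipate is purely bookkeeping: matching the interstice indexing of $\mathcal{L}$ with the star-decomposition of $f_M$ and tracking the endpoint conventions ($\mathbb{I}([0])=\emptyset$, the $a\sim b$ identifications) carefully enough that "no freedom remains" is genuinely airtight rather than merely plausible.
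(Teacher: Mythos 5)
Your argument is correct and follows essentially the same route as the paper's: membership in $E$ forces $g$ to be the unique order isomorphism $[k]\cong [i,\ldots,j]$ and forces $\overline{g}$ to be the unique order isomorphism on $[f(i),\ldots,f(j)]$, with monotonicity pinning down $\overline{g}$ on the complementary pieces; the paper records only this uniqueness, while you also spell out existence. One cosmetic slip: the commutativity condition for a morphism in $\Tw(\Delta)$ reads $f_M=\overline{g}\circ f\circ g$ (your equation $f\circ g=\overline{g}\circ f_M$ does not typecheck with $\overline{g}\colon [m']\to[m]$), and the decomposition of $[m']$ around the subinterval $[f(i),\ldots,f(j)]$ should use the imbrication $\star$ rather than the ordinal sum $\oplus$ --- but the endpoint check you actually perform is the right one.
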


\begin{proof}
Given another object
\[
\{i,j\}\subset [n]\overset{f}{\to} [m^\prime] 
\]
in $\Omega_M^E$, and a morphism
\[
\xymatrix{
\{0,k\}\ar@{}[r]|-*[@]{\subset} & [k]\ar[r]^{f_M}\ar[d]_{\phi} & [m] \\
\{i,j\}\ar@{}[r]|-*[@]{\subset} & [n]\ar[r]_{f} & [m^\prime]\ar[u]_{h}
}
\]
$\phi$ must be the inclusion of $[i,\ldots, j]$, since any such morphism in $E$ will induce an isomorphism $[k]\to [i,\ldots, j]$. Moreover, $h$ is clearly uniquely determined by the condition that it maps $[f(i),f(i+1), \ldots, f(j)]$ isomorphically to $[m]$. 
\end{proof}

Suppose given an object 
\[
Z:=\left\lbrace \{i,j\}\subset[n]\overset{f}{\to} [\ell]\right\rbrace
\]
in $\Omega $ whose image under $\mathcal{L}$ is $([\ell_{i+1}],\ldots,[\ell_j])$, and a morphism 
\[
g: ([\ell_{i+1}],\ldots,[\ell_j])\to ([m_0],\ldots, [m_{k-1}])
\]
in $\Delta^\star$. Write $\gamma:[k-1]\to [i+1,\ldots, j] \in \Delta$ and $\overline{g}:[m_0]\star\cdots\star[m_{k-1}]\to[\ell_{i+1}]\star\cdots\star[\ell_j]$ for the morphisms defining $g$. Denote by $[n_c]:=[p,\ldots,q]\subset\{i,j\}\subset [n]$ the minimal interval of $\gamma$ and by $\psi: [i,\ldots,j]\to[n_c]$ the projection as above, and let $\{0,k\}\subset [k]\overset{f_M}{\to} [m]:=[m_0]\star\cdots\star[m_{k-1}]$ be the minimal object in $\Omega $ representing the target. 

Note that, by definition, the morphism $\overline{g}$ has image contained in $[\ell_{p+1}]\star\cdots \star[\ell_q]=:[\ell_c]$.  We introduce some notation for specific decompositions:
\[
[n]  =  [n_\ell]\star[n_c]\star[n_r] 
\]
\[
[\ell]  =  [{\ell_\ell}]\star[{\ell_c}]\star[{\ell_r}]
\]

\begin{lem}\label{lem:cofinalinclusion}
There is a morphism in $\Omega $ 
\[
\xymatrix{
\{p,q\}\ar@{}[r]|-*[@]{\subset} & [p,\ldots,q]\ar[r]^{f|_{\{p,q\}}}\ar[d]_{\nu} & [\ell_c] \\
\{0,k\}\ar@{}[r]|-*[@]{\subset} & [1]\star[k]\star[1]\ar[r]_{f_M^\prime} & [\ell^1]\star[m]\star[\ell^2]\ar[u]_{\overline{g}^\prime}
}
\]
which extends to a morphism $\mu_{Z,M}$ in $\Omega $ covering $g$
\[
\mu_{Z,M}:=\left\lbrace\vcenter{\vbox{\xymatrix{
Z\ar@{}[r]|-*[@]{=}&\{i,j\}\ar@{}[r]|-*[@]{\subset} & [n_\ell]\star[n_c]\star[n_r] \ar[r]\ar[d] & [{\ell_\ell}]\star[{\ell_c}]\star[{\ell_r}] \\
Z_M\ar@{}[r]|-*[@]{:=}&\{0,k\}\ar@{}[r]|-*[@]{\subset} & [n_\ell]\star[1]\star[k]\star[1]\star[n_r] \ar[r] & [{\ell_\ell}]\star[\ell^1]\star[m]\star[\ell^2]\star[{\ell_r}]\ar[u]
}}}\right\rbrace
\]
Moreover, given any other morphism $Z\to X$ covering $g$, there is a unique morphism $Z_M\to X$ in $E$ such that the diagram 
\[
\xymatrix{ 
& Z\ar[dr]\ar[dl] & \\
Z_M\ar[rr] & & X \\
}
\]
commutes.
\end{lem}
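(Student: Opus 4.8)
The plan is to establish the two clauses in order: first exhibit the morphism $\mu_{Z,M}$ explicitly, and then prove the universal property, which says precisely that $(Z_M,\mu_{Z,M})$ is initial among morphisms out of $Z$ that cover $g$, with the comparison maps landing in $E$.

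\textbf{Construction of $\mu_{Z,M}$.} I would first build the ``core'' square on the minimal interval and then graft on the outer pieces. Writing $\gamma$ and $\overline{g}$ for the two pieces of data defining $g$, and $[n_c]=[p,\ldots,q]$ for the minimal interval of $\gamma$ inside $\{i,j\}\subset[n]$, the core target $\{0,k\}\subset[1]\star[k]\star[1]\xrightarrow{f_M'}[\ell^1]\star[m]\star[\ell^2]$ is obtained from the minimal object $\{0,k\}\subset[k]\xrightarrow{f_M}[m]$ of $\Omega_M$ by grafting a copy of $[1]$ onto each end of $[k]$ — these are what will absorb the endpoint ambiguity of \cref{lem:endpointambiguity} — and grafting onto each end of $[m]$ suitable sub-intervals $[\ell^1],[\ell^2]$ of $[\ell_c]$, determined by $f|_{\{p,q\}}$ and $\overline{g}$. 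The downward leg $\nu\colon[p,\ldots,q]\to[1]\star[k]\star[1]$ sends $p$ and $q$ to the extreme vertices and on the interior $[p+1,\ldots,q-1]$ is dictated, via $\mathbb{O}$, by the requirement $\res(\nu)=\gamma$; the upward leg $\overline{g}'$ restricts to $\overline{g}$ on $[m]$ and to inclusions on $[\ell^1],[\ell^2]$, and one checks the square commutes directly. One then extends across the decompositions $[n]=[n_\ell]\star[n_c]\star[n_r]$ and $[\ell]=[\ell_\ell]\star[\ell_c]\star[\ell_r]$ by the identity on the outer pieces, exactly as depicted in \cref{fig:DeltaLocMorphs}. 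The result is a well-defined morphism of $\Omega$ because $k\geq 1$ guarantees that the chosen interval of $Z_M$ is non-degenerate, and $\mathcal{L}(\mu_{Z,M})=g$ holds by construction, since $\res$ of its downward leg returns $\gamma$ and the induced restrictions of its upward leg return $\overline{g}$.

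\textbf{Universal property.} Suppose given a second morphism $Z\to X$ covering $g$, with $X=\{\{i_X,j_X\}\subset[n_X]\xrightarrow{f_X}[m_X]\}$ and legs $a\colon[n]\to[n_X]$, $\overline{a}\colon[m_X]\to[\ell]$; covering $g$ forces $\res(a)=\gamma$ and forces the restrictions of $\overline{a}$ to coincide with those of $\overline{g}$. The crucial input is \cref{lem:endpointambiguity}: applied to the minimal interval $[p,\ldots,q]$, it shows $a$ is completely determined on the interior $[p+1,\ldots,q-1]$ and hence factors through the downward leg of $\mu_{Z,M}$, the two grafted copies of $[1]$ supplying exactly the freedom needed to accommodate the unconstrained values $a(p)$ and $a(q)$; dually $\overline{a}$ factors through $\overline{g}'$ because $\overline{g}$ factors through $[\ell^1]\star[m]\star[\ell^2]$. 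This produces the two legs of a morphism $b\colon Z_M\to X$ with $b\circ\mu_{Z,M}$ equal to the given map. That $b$ belongs to $E$ follows from \cref{lem:contractibleEfibers}: on the core, $b$ is a morphism out of the initial object of $\Omega_M^E$, hence is the unique $E$-morphism to the relevant object of that category, and in particular restricts to isomorphisms on the intervals appearing in \cref{lem:classE}; on the outer pieces $b$ is an identity. Uniqueness of $b$ comes from the same rigidity — the interior of $[p,\ldots,q]$ is pinned down by \cref{lem:endpointambiguity}, the endpoint values are pinned down by the factorization through $[1]\star[k]\star[1]$ together with the minimality of $f_M$, and the outer behaviour is forced by the equation $b\circ\mu_{Z,M}=(Z\to X)$.

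The step I expect to be the main obstacle is the endpoint bookkeeping: pinning down the precise definitions of $[\ell^1]$, $[\ell^2]$ and the two copies of $[1]$, then verifying both that an arbitrary $a$ really does factor through the downward leg of $\mu_{Z,M}$ and that the induced $b$ is forced into $E$. The latter is where one must use \cref{lem:contractibleEfibers} in its relative form (initiality of $f_M$ in $\Omega_M^E$) rather than a naive fibrewise argument, and maintaining the dictionary between $\mathbb{O}$, $\mathbb{I}$ and $\res$ throughout these reductions is the delicate part; by contrast, once the core square is correctly set up, extending it over $[n_\ell]$ and $[n_r]$ and checking $\mathcal{L}(\mu_{Z,M})=g$ are routine.
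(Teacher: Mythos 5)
Your proposal is correct and follows essentially the same route as the paper's proof: the same core square built by grafting a copy of $[1]$ onto each end of $[k]$ and decomposing $[\ell_c]=[\ell^1]\star[\ell_c^m]\star[\ell^2]$, the same appeal to \cref{lem:endpointambiguity} to pin down the interior of the minimal interval, the same star-product extension over $[n_\ell],[n_r],[\ell_\ell],[\ell_r]$, and the same rigidity argument for uniqueness. The only (cosmetic) divergence is that you route the $E$-membership of the comparison map through the initiality statement of \cref{lem:contractibleEfibers}, whereas the paper extracts the forced identity restriction $[k]\to[k]$, $[m]\to[m]$ directly from the definition of $E$; these amount to the same computation.
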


\begin{proof}
In the first diagram, we define the map $\nu$ on $[p+1,\ldots, q-1]$ to be the unique map from \cref{lem:endpointambiguity} dual to $\gamma$ under $\res$, and send the endpoints to the endpoints of $[1]\star[k]\star[1]$. Then we write 
\[
[\ell_c]=[\ell^1]\star[\ell_c^m]\star[\ell^2],
\]
where $[\ell_c^m]$ is the minimal interval containing the image of $\overline{g}:[m]\to[\ell]$. Note that $\overline{g}:[m]\to [\ell_c^m]$ hits both endpoints. We then define 
\[
\overline{g}^\prime:=\id_{[\ell^1]}\star \overline{g}\star\id_{[\ell^2]}:[\ell^1]\star[m]\star[\ell^2]\to [\ell_c]
\]
(which then, by definition, hits both endpoints), and 
\[
f_M^\prime: [1]\star[k]\star[1]\to [\ell^1]\star[m]\star[\ell^2]
\]
to be $f_M$ on $[k]$, and to send endpoints to endpoints. Then we can decompose the diagram as 
\[
\xymatrix{
\{p,q\}\ar@{}[r]|-*[@]{\subset} & [1_{p+1}]\star\cdots\star[1_q]\ar[r]^{f|_{\{p,q\}}}\ar[d]_{\nu} & [\ell_{p+1}]\star\cdots \star[\ell_q] \\
\{0,k\}\ar@{}[r]|-*[@]{\subset} & [1]\star[k_{p+1}]\star\cdots\star[k_q]\star[1]\ar[r]_{f_M^\prime} & [\ell^1]\star[m_{p+1}]\star\cdots\star[m_q]\star[\ell^2]\ar[u]_{\overline{g}^\prime}
}
\]
by decomposing the morphisms $\nu$, $f|_{\{p,q\}}$, and $f_M^\prime\circ \nu$. The condition that the diagram commute is then equivalent to the conditions that, (1) for each $r\in\{p+2,\ldots, q-1\}$, the endpoints of $[m_r]$ are sent to the endpoints of $[\ell_r]$ by $\overline{g}$, and (2) that $\overline{g}$ sends the endpoints of $[\ell^1]\star[m_{p+1}]$ and $[m_q]\star[\ell^2]$ to the endpoints of $[\ell_{p+1}]$ and $[\ell_q]$, respectively. Since
\[
[m_r]=\bigstar_{a\in\mathbb{I}([k_r])} [f_M^\prime (a-1),f_M^\prime (a-1)+1,\ldots, f_M^\prime (a)]
\]
we see that case (1) is true by the definition of $\Delta^\star$.  Case (2) is true by construction.

This diagram is defined so that the maps $\nu$, $f_M^\prime$, $\overline{g}^\prime$, and $f|_{\{p,q\}}$ preserve endpoints. Therefore, we can take the appropriate star products with the morphisms $\id_{[n_\ell]}$, $\id_{[n_r]}$, $\id_{[\ell_\ell]}$, $\id_{[\ell_r]}$, $f|_{[n_\ell]}:[n_\ell]\to[\ell_\ell]$, and $f|_{[n_r]}:[n_r]\to[\ell_r]$ to get a commutative diagram
\[
\xymatrix{
Z\ar@{}[r]|-*[@]{=}&\{i,j\}\ar@{}[r]|-*[@]{\subset} & [n_\ell]\star[n_c]\star[n_r] \ar[r]\ar[d] & [{\ell_\ell}]\star[{\ell_c}]\star[{\ell_r}] \\
Z_M\ar@{}[r]|-*[@]{:=}&\{0,k\}\ar@{}[r]|-*[@]{\subset} & [n_\ell]\star[1]\star[k]\star[1]\star[n_r] \ar[r] & [{\ell_\ell}]\star[\ell^1]\star[m]\star[\ell^2]\star[{\ell_r}]\ar[u]
}
\]
By construction, the morphism $\res(\nu):[k-1] \to \langle i+1,\ldots, j\rangle$ is $\gamma$, and the morphism $\overline{g}^\prime$ restricts to $\overline{g}$ on $[m]$, so this diagram determines a morphism in $\Omega $ covering $g$. Call this morphism $\mu_{Z,M}:Z\to Z_M$.

Now suppose we are given a morphism 
\[
\xymatrix{
Z\ar@{}[r]|-*[@]{=}&\{i,j\}\ar@{}[r]|-*[@]{\subset} & [n]\ar[r]^{f}\ar[d]_{\rho} & [\ell] \\
X\ar@{}[r]|-*[@]{:=}&\{0,k\}\ar@{}[r]|-*[@]{\subset} & [a]\ar[r]_{h} & [b]\ar[u]_{w}
}
\]
covering $g$. We can decompose this into
\[
\xymatrix{
Z\ar@{}[r]|-*[@]{=}&\{i,j\}\ar@{}[r]|-*[@]{\subset} & [n_\ell]\star[n_c]\star[n_r] \ar[r]^{f}\ar[d]_{\rho} & [{\ell_\ell}]\star[{\ell_c}]\star[{\ell_r}] \\
Z_M\ar@{}[r]|-*[@]{:=}&\{0,k\}\ar@{}[r]|-*[@]{\subset} & [a_\ell]\star[a_c]\star[a_r] \ar[r]_{h} & [{b_\ell}]\star[b_c]\star[{b_r}]\ar[u]_{w}
}
\]
Where $\{0,k\}\subset [a_c]$. By  \cref{lem:endpointambiguity}, we know that $\rho$ is uniquely determined on all of $[n_c]$ except the endpoints. This allows us to further decompose the diagram 
\[
\xymatrix{
[n_c] \ar[r]^{f}\ar[d]_{\rho} & [{\ell_c}] \\
[a_c] \ar[r]_{h} & [b_c]\ar[u]_{w}
}
\]
as a diagram where the bottom map is a star product with $f_M$. 
\[
\xymatrix{
[n_c] \ar[r]^{f}\ar[d]_{\rho} & [{\ell_c}] \\
[a_c^1]\star[k]\star[a_c^2] \ar[r]_{h} & [b_c^1]\star[m]\star[b_c^2]\ar[u]_{w}
}
\]
If there is  morphism $Z_M\to X$ in $E$ commuting with the morphisms $Z\to X$ and $\mu_{Z,M}$, it must, in particular, restrict to a commutative diagram
\[
\xymatrix{
[n_c] \ar[r]\ar[d]\ar[ddr] & [{\ell_c}] &\\
[1]\star[k]\star[1]\ar[r]\ar[dr] & [\ell^1]\star[m]\star[\ell^2]\ar[u] &\\
 & [a_c^1]\star[k]\star[a_c^2] \ar[r]_{h} & [b_c^1]\star[m]\star[b_c^2]\ar[uul]\ar[ul]
}
\]
Moreover, since the morphism is in $E$, the bottom square must restrict to the commutative diagram
\[
\xymatrix{
[k]\ar[r]^{f_M}\ar[d]_{\id} & [m]\\
[k]\ar[r]^{f_M} & [m]\ar[u]^{\id}
}
\]
As a result, the component morphism $[1]\star[k]\star[1]\to[a_c^1]\star[k]\star[a_c^2]$ is uniquely determined by the commutativity of the left-hand triangle. Additionally, since $w:[b]\to [\ell]$ must restrict to $\overline{g}$ on $[m]$, we can decompose $w$ as a star product
\[
w=w^1 \star\overline{g}\star w^2: [b_c^1]\star[m]\star[b_c^2]\to [\ell^1]\star[\ell_c^m]\star[\ell^2]
\]
Therefore, the component morphism 
\[
[b_c^1]\star[m]\star[b_c^2]\to [\ell^1]\star[m]\star[\ell^2]
\]
is uniquely determined, and must be $w^1\star \id_{[m]}\star w^2$. 

We now extend back to the full diagram
\[
\xymatrix{
[n_\ell]\star[n_c]\star[n_r] \ar[r]\ar[d]\ar[ddr] &[\ell_\ell]\star [{\ell_c}]\star[\ell_r] &\\
[n_\ell]\star[1]\star[k]\star[1]\star[n_r]\ar[r]\ar[dr] & [\ell_\ell]\star[\ell^1]\star[m]\star[\ell^2]\star[\ell_r]\ar[u] &\\
 & [a_\ell]\star [a_c^1]\star[k]\star[a_c^2]\star[a_r]  & [b_\ell]\star[b_c^1]\star[m]\star[b_c^2]\star[b_r]\ar[uul]\ar[ul]
}
\]
and note that, since the vertical components of the back square restrict to identities on $[n_\ell]$, $[n_r]$, $[\ell_\ell]$, and $[\ell_r]$, the bottom square is uniquely determined by the morphisms $[n_{\ell}]\to[a_{\ell}]$, $[n_r]\to [a_r]$, $b_\ell]\to [\ell_\ell]$, and $[b_{r}]\to [\ell_{r}]$. So there is a unique morphism $Z_M\to X$ in $\Omega $ with the desired properties. 
\end{proof}

\begin{prop}
The functor $\mathcal{L}:\Omega \to\Delta^\star$ is an $\infty$-categorical localization at the morphisms in $E$.
\end{prop}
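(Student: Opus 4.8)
The plan is to verify the universal property of the localization directly: for every $\infty$-category $\EE$ I will show that the restriction functor
\[
\mathcal{L}^\ast\colon\Fun(\Delta^\star,\EE)\longrightarrow\Fun_E(\Omega,\EE)
\]
is an equivalence, where $\Fun_E(\Omega,\EE)\subseteq\Fun(\Omega,\EE)$ is the full subcategory of functors carrying every morphism of $E$ to an equivalence. Inspecting the definition of $\mathcal{L}$ on morphisms shows that if $g$ and $\overline{g}$ restrict to isomorphisms on the relevant subintervals, as in \cref{lem:classE}, then $\res(g)$ and the maps $\overline{g}_p$ are isomorphisms, so $\mathcal{L}$ carries $E$ into the isomorphisms of $\Delta^\star$; hence $\mathcal{L}^\ast$ does take values in $\Fun_E(\Omega,\EE)$, and the displayed statement is precisely the assertion that $\mathcal{L}$ exhibits $\Delta^\star$ as $\Omega[E^{-1}]$.

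For the quasi-inverse I will use the left Kan extension $\mathcal{L}_!=\operatorname{Lan}_{\mathcal{L}}$, left adjoint to $\mathcal{L}^\ast$ whenever it exists pointwise. Write $\Omega_{/M}:=\Omega\times_{\Delta^\star}(\Delta^\star)_{/M}$. The main point is that for $F\in\Fun_E(\Omega,\EE)$ and $M\in\Delta^\star$ the colimit
\[
(\mathcal{L}_!F)(M)\simeq\operatorname*{colim}_{\Omega_{/M}}F
\]
exists and is equivalent to $F(f_M)$, where $f_M$ is the initial object of $\Omega_M^E$ from \cref{lem:contractibleEfibers}. I would compute it by restricting along the functor $\Omega_M^E\hookrightarrow\Omega_{/M}$, $X\mapsto(X,\id_M)$, which I claim is cofinal. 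By Quillen's Theorem A this reduces to showing that for each object $(Z,g\colon\mathcal{L}Z\to M)$ of $\Omega_{/M}$ the comma category $\Omega_M^E\times_{\Omega_{/M}}(\Omega_{/M})_{(Z,g)/}$ is weakly contractible; unwinding the definitions, this is exactly the category whose objects are lifts of $g$ to morphisms out of $Z$ and whose morphisms are the $E$-morphisms under $Z$ making the evident triangle commute, which by \cref{lem:cofinalinclusion} has the initial object $\mu_{Z,M}$. Since $F$ inverts $E$ and every morphism of $\Omega_M^E$ lies in $E$, the restriction $F|_{\Omega_M^E}$ inverts all morphisms, hence factors through the $\infty$-groupoid completion of $\Omega_M^E$, which is contractible because $\Omega_M^E$ has the initial object $f_M$; therefore $F|_{\Omega_M^E}$ is equivalent to the constant functor with value $F(f_M)$, and its colimit over the (weakly contractible) $\Omega_M^E$ is $F(f_M)$. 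In particular all the colimits defining $\mathcal{L}_!$ exist, for any target $\EE$.

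It then remains to check that the unit and counit of $\mathcal{L}_!\dashv\mathcal{L}^\ast$ are equivalences. For the counit $\mathcal{L}_!\mathcal{L}^\ast G\to G$: the functor $\mathcal{L}$ sends every object of $\Omega_M^E$ to $M$ and every morphism to $\id_M$, so $(\mathcal{L}^\ast G)|_{\Omega_M^E}$ is constant with value $G(M)$, whence $(\mathcal{L}_!\mathcal{L}^\ast G)(M)\simeq\operatorname*{colim}_{\Omega_M^E}\mathrm{const}_{G(M)}\simeq G(M)$ with the counit the collapse map. For the unit $F\to\mathcal{L}^\ast\mathcal{L}_!F$: when $\mathcal{L}Z=M$ the object $Z$ itself lies in $\Omega_M^E$, and the component $F(Z)\to(\mathcal{L}_!F)(M)$ is the colimit leg at $Z$, which is an equivalence since $F$ sends the unique morphism $f_M\to Z$ of $\Omega_M^E$ to an equivalence. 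Thus $\mathcal{L}_!\dashv\mathcal{L}^\ast$ is an adjoint equivalence for every $\EE$, which is the claim.

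The genuinely intricate combinatorics --- producing the universal lifts $\mu_{Z,M}$ and the initial objects $f_M$ --- has already been dealt with in \cref{lem:cofinalinclusion} and \cref{lem:contractibleEfibers}, so what is left is mostly formal. I expect the step needing the most care to be the cofinality argument: one must match the comma categories coming out of Quillen's Theorem A with the lift-categories of \cref{lem:cofinalinclusion}, and in doing so keep track of the fact that $\Omega_M^E\hookrightarrow\Omega_{/M}$ is a \emph{non-full} subcategory inclusion --- it retains only the $E$-morphisms between objects of the strict fibre, not every morphism lying over $\id_M$ --- and one must check that the colimit legs are indeed the expected maps.
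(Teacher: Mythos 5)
Your argument is correct and rests on exactly the same two inputs as the paper's own proof: the initial object $f_M$ of each weak fibre $\Omega_M^E$ (\cref{lem:contractibleEfibers}) and the initial object $\mu_{Z,M}$ of each comma category $\Omega_M^E\times_{\Omega_{/M}}(\Omega_{/M})_{Z/}$ (\cref{lem:cofinalinclusion}). The only difference is that the paper feeds these two facts directly into the localization criterion of \cite[Lemma 3.1.1]{Walde}, whereas you reprove that criterion by hand via the pointwise left Kan extension $\mathcal{L}_!$, the cofinality of $\Omega_M^E\hookrightarrow\Omega_{/M}$, and the unit/counit check; the mathematical content is the same.
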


\begin{proof}
Consider the inclusion $\iota_M:\Omega_M^E\subset \Omega_M\hookrightarrow \Omega_{/M}$, and  
\[
Z:= \begin{cases}
 \{i,j\}\subset [n] \overset{f_Z}{\to} [\ell] & +\\
 g_Z: ([\ell_i],\ldots,[\ell_j]) \to ([m_1],\ldots,[m_k]) & \text{ in } \Delta^\star  
\end{cases}
\]
in $\Omega_{/M}$.  Denote the overcategory $\left( \Omega^{E}_{M}\right)_{Z/}:=\Omega^{E}_M\times_{\Omega_{/M}} \left( \Omega_{/M}\right)_{Z/}$.  \cref{lem:cofinalinclusion} tells us that $\left( \Omega^{E}_{M}\right)_{Z/}$ is non-empty, and that the object $Z_M$ constructed in the lemma is an initial object. Moreover, by  \cref{lem:contractibleEfibers}, $\Omega_M^E$ has an initial object. Therefore, by \cite[Lemma 3.1.1]{Walde}, $\mathcal{L}$ is a localization at the morphisms of $E$. 
\end{proof}

\subsubsection{Algebra conditions}
Denote by $\Fun^{alg} (\Delta^\star,\mathcal{C})$ the full sub-$\infty$-category of functors  $f$ which 
\begin{enumerate}[label=(\Alph*)]
\item \label{Deltaalgconds:span} send the diagrams 
\[
\xymatrix@!0@C=7em@R=5em{
 & (\bigstar_{j\in[n_1]}[m_j],\ldots, \bigstar_{j\in[n_\ell]}[m_j])\ar[dr]\ar[dl] & \\
([n_1],\ldots,[n_\ell]) \ar[dr] & & ([m_1],\ldots,[m_k])\ar[dl] \\
 & (\underbrace{[1],\ldots,[1]}_{\times k}) & 
}
\]
opposite the diagrams 
\[
\xymatrix@!0@C=7em@R=3em{
 & \bigstar [m_i] & \\
[n_1]\star\cdots\star[n_\ell]\ar[ur]  & & [m_1]\star\cdots\star[m_k] \ar[ul] \\
 & [1]\star\cdots\star[1] \ar[ul]^{\operatorname{id}} \ar[ur]_{\{0,m_1\},\ldots \{0,m_k\}} & 
}
\] 
to pullback diagrams. 
\item\label{Deltaalgconds:prod} send the diagrams 
\[
\xymatrix{
 & & ([m_1],\ldots,[m_k])\ar[drr]\ar[dr] \ar[dl]\ar[dll] & & \\
 [m_1] & [m_2] & \cdots & [m_{k-1}] & [m_k]
}
\]
to product diagrams. 
\end{enumerate} 

\begin{prop}\label{prop:DeltaAlgCondsinStar}
There is an equivalence of $\infty$-categories
\[
\SAlg(\mathcal{C})\simeq \Fun^{alg}(\Delta^\star,\mathcal{C}).
\]
\end{prop}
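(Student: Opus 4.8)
The plan is to assemble the equivalence from two identifications already in hand. First, restriction along $\Omega\hookrightarrow\Theta$ gives an equivalence $S\colon\Fun^\ast(\Theta,\CC)\xrightarrow{\ \sim\ }\Fun(\Omega,\CC)$. Second, since $\mathcal{L}\colon\Omega\to\Delta^\star$ is a localization at $E$, restriction along $\mathcal{L}$ is a fully faithful functor $\mathcal{L}^\ast\colon\Fun(\Delta^\star,\CC)\hookrightarrow\Fun(\Omega,\CC)$ whose essential image is exactly the full subcategory of functors inverting every morphism of $E$. Composing these, the proof reduces to matching the conditions of \cref{cor:algconds} defining $\SAlg(\CC)$ with conditions \ref{Deltaalgconds:span} and \ref{Deltaalgconds:prod} defining $\Fun^{alg}(\Delta^\star,\CC)$.

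The first point is that $\SAlg(\CC)\subset\Fun^\ast(\Theta,\CC)$ and that $S$ restricts to an equivalence of $\SAlg(\CC)$ with the full subcategory of $\Fun(\Omega,\CC)$ on functors satisfying the restrictions of conditions \ref{algconds:product}, \ref{algconds:ELoc}, and \ref{algconds:Span}. Indeed, condition \ref{algconds:degint} is precisely the defining condition of $\Fun^\ast$, while on any object or diagram carrying a degenerate interval each of \ref{algconds:product}, \ref{algconds:ELoc}, \ref{algconds:Span} is either vacuous or forced by \ref{algconds:degint} (an empty product of terminal objects is terminal; an $E$-morphism between degenerate intervals goes to a map of terminal objects; a diagram \cref{diag:segalsimpsinHSpan} all of whose vertices carry degenerate intervals has all vertices terminal), so only the restriction to $\Omega$ carries information; and these three conditions are invariant under equivalence, so $S$ restricts as claimed. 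Since \ref{algconds:ELoc} says exactly that the functor inverts $E$, transporting across $\mathcal{L}^\ast$ then identifies this subcategory with the full subcategory of $\Fun(\Delta^\star,\CC)$ on those $\bar G$ for which $\bar G\circ\mathcal{L}$ satisfies \ref{algconds:product} and \ref{algconds:Span}. It remains to translate these two conditions into \ref{Deltaalgconds:prod} and \ref{Deltaalgconds:span} respectively.

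The product condition translates immediately once one unwinds $\mathcal{L}$ on objects: $\mathcal{L}$ sends $(\{i,j\}\subset[n]\xrightarrow{f}[m])$ to the tuple $\bigl(\{f(k)\le f(k+1)\}\bigr)_{i\le k<j}$ indexed by $\mathbb{I}(\{i\le j\})$, and sends the projections onto the length-one subintervals $\{k,k+1\}$ to the coordinate projections of that tuple; every object of $\Delta^\star$ is of this form. Hence the assertion that $\bar G\circ\mathcal{L}$ carries every interval-with-subinterval-projections to a product diagram is exactly the assertion that $\bar G$ carries every tuple together with its coordinate projections to a product, i.e.\ condition \ref{Deltaalgconds:prod}.

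The substantive step, which I expect to be the main obstacle, is showing that \ref{algconds:Span} for $\bar G\circ\mathcal{L}$ is equivalent to \ref{Deltaalgconds:span} for $\bar G$. The difficulty is that a diagram \cref{diag:segalsimpsinHSpan} in $\Omega$ is built from objects $(\{i,j\}\subset[n]\to[m])$ that need not be ``minimal'', so $\mathcal{L}$ does not carry it on the nose onto one of the span diagrams of \ref{Deltaalgconds:span}; one must first compare it with the diagram of minimal objects $Z_M$ representing the same tuples. This comparison is exactly what \cref{lem:cofinalinclusion} provides (with \cref{lem:contractibleEfibers} handling the terminal vertex): each object of \cref{diag:segalsimpsinHSpan} receives a canonical morphism $\mu_{Z,M}$ from a minimal object, the uniqueness statements in those lemmas assemble these into a morphism of diagrams, and since $\bar G\circ\mathcal{L}$ inverts $E$ and $\mathcal{L}$ is the localization at $E$, the value of $\bar G\circ\mathcal{L}$ on \cref{diag:segalsimpsinHSpan} agrees up to equivalence with its value on the resulting diagram of minimal objects, which $\mathcal{L}$ carries precisely onto the span diagram of \ref{Deltaalgconds:span} determined by the underlying simplex --- here one also uses the componentwise form of the Segal cone from the proof of \cref{prop:Deltapullbackcond}, together with \ref{algconds:product}/\ref{Deltaalgconds:prod}, to split everything into one inner interstice at a time. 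It follows that $\bar G\circ\mathcal{L}$ sends \cref{diag:segalsimpsinHSpan} to a limit diagram exactly when $\bar G$ sends the associated span diagram to a pullback, and every span diagram of \ref{Deltaalgconds:span} arises this way; so \ref{algconds:Span} matches \ref{Deltaalgconds:span}, and combining with the previous paragraph yields $\SAlg(\CC)\simeq\Fun^{alg}(\Delta^\star,\CC)$. The genuinely delicate aspect throughout is the combinatorial bookkeeping: one must keep the indexing by inner interstices, the $\star$-decompositions of the component morphisms, and the ``minimal interval'' of \cref{lem:endpointambiguity} aligned carefully enough that the cone one produces is literally a span diagram of \ref{Deltaalgconds:span} and not merely an isomorphic one.
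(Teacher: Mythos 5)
Your overall architecture --- passing through $\Fun^\ast(\Theta,\CC)\simeq\Fun(\Omega,\CC)$, identifying $E$-inverting functors with $\Fun(\Delta^\star,\CC)$ via the localization $\mathcal{L}$, and then matching conditions --- is the same skeleton the paper uses, and your translation of condition (\ref{algconds:product}) into \ref{Deltaalgconds:prod} is fine. But there is a genuine gap in the ``substantive step.'' Condition (\ref{algconds:Span}) requires \emph{every} diagram of the form \cref{diag:segalsimpsinHSpan}, i.e.\ the Segal cone attached to an arbitrary $k$-simplex $[n_0]\to\cdots\to[n_k]$ of $\Delta$, to be a limit; for $k\geq 3$ this is a long zig-zag, not a square. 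Condition \ref{Deltaalgconds:span}, by contrast, only constrains four-object pullback squares, which correspond to $2$-simplices. Your sentence ``$\bar G\circ\mathcal{L}$ sends \cref{diag:segalsimpsinHSpan} to a limit diagram exactly when $\bar G$ sends the associated span diagram to a pullback'' silently identifies a $k$-fold zig-zag limit condition with a single pullback condition; as stated there is no ``associated span diagram of \ref{Deltaalgconds:span}'' for $k\geq 3$. One direction (all cones are limits $\Rightarrow$ the squares are pullbacks) is immediate by specializing to $2$-simplices, but the converse is exactly what needs an argument, and your proposal never supplies it.

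The missing idea is the paper's actual content: one decomposes the Segal-cone limit for a $k$-simplex using (the dual of) \cite[Proposition 4.4.2.2]{LurieHTT}, which says the zig-zag is a limit if and only if a certain induced square over the apex is a pullback, and then pastes that square against the squares coming from the sub-$2$-simplices; iterating reduces condition (\ref{algconds:Span}) to its restriction to $2$-simplices, whose image under $\mathcal{L}$ is precisely \ref{Deltaalgconds:span}. By contrast, the issue you flag as the main obstacle --- that the vertices of \cref{diag:segalsimpsinHSpan} need not be minimal objects of their $\mathcal{L}$-fibers, so one must invoke \cref{lem:contractibleEfibers} and \cref{lem:cofinalinclusion} to compare with the diagram of minimal objects --- is a legitimate point of care (the paper elides it with ``condition \ref{Deltaalgconds:span} is the image of this $2$-simplex condition under $\mathcal{L}$''), but it is secondary: since your functor already inverts $E$, that comparison only changes the diagram by equivalences. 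Repair the proof by inserting the $k\to 2$ reduction via the decomposition-of-limits/pasting argument; without it you have only shown that $\SAlg(\CC)$ embeds into the subcategory cut out by \ref{Deltaalgconds:prod} and \ref{Deltaalgconds:span}, not that the two subcategories coincide.
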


\begin{proof}
It is clear that condition \ref{Deltaalgconds:prod} corresponds to condition (\ref{algconds:product}) from \cref{cor:algconds}. For condition \ref{Deltaalgconds:span}, first consider a 3-simplex $[n_0]\overset{\phi_1}{\to} [n_1]\overset{\phi_2}{\to} [n_2]\overset{\phi_3}{\to}[n_3]$ in $\Delta$. The corresponding limit diagram \cref{diag:segalsimpsinHSpan} can be written as 
\begin{equation*}
\vcenter{\vbox{\xymatrix@!0@C=7em@R=3em{
 & & G(\phi_2\circ \phi_1,\{i,j\})\ar[dll]\ar[drr]\ar[d] & & \\
 G(\phi_1,\{i,j\})\ar[dr] & &  G(\phi_2, \{\phi_1(i),\phi_1(j)\}) \ar[dr]\ar[dl] &  & G(\phi_3,\{\psi_{2}(i),\psi_{2}(j)\})\ar[dl] \\
 & G ([n_1],\{\psi_1(i),\psi_1(j)\})  &  & G([n_{2}],\{\psi_{2}(i),\psi_{2}(j)\}) & 
}}}
\end{equation*}
However, by (the dual of) \cite[Proposition 4.4.2.2]{LurieHTT}, this diagram is a limit if and only if the induced diagram 
\begin{equation*}
\vcenter{\vbox{\xymatrix@!0@C=7em@R=3em{
 & & G(\phi_2\circ \phi_1,\{i,j\})\ar[dl]\ar[dr] & & \\
 & G ([n_1],\{\psi_1(i),\psi_1(j)\})\ar[dr]   &  & G([n_{2}],\{\psi_{2}(i),\psi_{2}(j)\})\ar[dl]  & \\
 & &  G(\phi_3, \{\phi_1(i),\phi_1(j)\})  &  &  
}}}
\end{equation*}
is pullback. However, combining these two diagrams, we get
\begin{equation*}
\vcenter{\vbox{\xymatrix@!0@C=7em@R=3em{
 & & G(\phi_2\circ \phi_1,\{i,j\})\ar[dl]\ar[dr] & & \\
 & G ([n_1],\{\psi_1(i),\psi_1(j)\})\ar[dl]\ar[dr]   &  & G([n_{2}],\{\psi_{2}(i),\psi_{2}(j)\})\ar[dl]\ar[dr]  & \\
 G(\phi_1,\{i,j\})\ar[dr] & &  G(\phi_3, \{\phi_1(i),\phi_1(j)\}) \ar[dr]\ar[dl] &  & G(\phi_k,\{\psi_{2}(i),\psi_{2}(j)\})\ar[dl] \\
 & G ([n_1],\{\psi_1(i),\psi_1(j)\})  &  & G([n_{2}],\{\psi_{2}(i),\psi_{2}(j)\}) & 
}}}
\end{equation*}
By the pasting property for pullback diagrams, we thus see that it is sufficient to require that each of the diagrams corresponding to the sub-2-simplices of our simplex is pullback. Iterating this argument, we find that property (\ref{algconds:Span}) of corollary \ref{cor:algconds} is satisfied if and only if it is satisfied on 2-simplices. Since condition \ref{Deltaalgconds:span} is the image of this 2-simplex condition under $\mathcal{L}$, this proves the proposition.
\end{proof}

\begin{lem}
A functor $f\in \Fun(\Delta^\star,\mathcal{C})$ satisfies condition \ref{Deltaalgconds:span} if and only if it satisfies condition \ref{Deltaalgconds:span} for collections where all but one of the $[m_i]$ are equal to $[1]$.
\end{lem}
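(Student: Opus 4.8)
The plan is to reduce condition \ref{Deltaalgconds:span} for a general target $([m_1],\ldots,[m_k])$ to the special case in which all but one of the $[m_i]$ equal $[1]$, using a factorization of the relevant span into a composite of spans of the special form together with an iterated application of the pasting lemma for pullback squares. The key observation is that the source object $(\bigstar_{j\in[n_1]}[m_j],\ldots,\bigstar_{j\in[n_\ell]}[m_j])$ together with the span appearing in \ref{Deltaalgconds:span} can be realized as the composite in $\Delta^\star$ of spans in which we replace one $[m_i]$ at a time. Concretely, introduce intermediate objects $([1],\ldots,[1],[m_i],[1],\ldots,[1])$ — i.e. tuples in which only the $i$-th entry is nontrivial — and factor the morphism $([1],\ldots,[1])\to([m_1],\ldots,[m_k])$ (the right leg of the span in \ref{Deltaalgconds:span}) through these.

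First I would make precise the factorization: the morphism in $\Delta^\star$ underlying the right-hand leg in \ref{Deltaalgconds:span} is built from the identity on $[\ell]$-component data and the collapse maps $[1]\to[m_i]$; I would decompose this as a composite of $k$ morphisms, the $p$-th of which only alters the $p$-th coordinate (sending $[1]\mapsto[m_p]$ while leaving the others fixed). Each such one-step morphism, when completed to a span of the form in \ref{Deltaalgconds:span}, is exactly a span of the special shape (all but one $[m_i]$ equal to $[1]$). Then I would invoke \cref{prop:DeltaAlgCondsinStar}, or rather the argument inside its proof — the translation of condition \ref{Deltaalgconds:span} back to the Segal/limit condition \eqref{diag:segalsimpsinHSpan} via $\mathcal{L}$, and the pasting argument for pullbacks used there — to conclude that the big span is a pullback iff each of the one-step spans is. This is the same "iterate the pasting property" mechanism already deployed to reduce to $2$-simplices, now applied along the coordinates of the target tuple rather than along a chain of composable maps.

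The main technical step, and the one I expect to be the genuine obstacle, is checking that the intermediate objects $([1],\ldots,[m_p],\ldots,[1])$ and the chosen one-step morphisms actually assemble into a diagram in $\Delta^\star$ whose total composite is the original span, \emph{and} that each successive square in the resulting "tower" is of the form \ref{Deltaalgconds:span} with a single nontrivial target coordinate. This requires unwinding the definition of composition in $\Delta^\star$ (the $\phi:[\ell]\to[k]$ data together with the imbrication maps $f_i$), verifying that the imbrication $[m_1]\star\cdots\star[m_k]$ is computed step-by-step correctly, and confirming that the source objects $(\bigstar_{j}[m_j],\ldots)$ of the intermediate spans are exactly the pullbacks one gets by pasting; the endpoint-preservation bookkeeping from \cref{lem:endpointambiguity} and \cref{lem:cofinalinclusion} is what guarantees this works without ambiguity. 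Once the diagram is set up, the pullback-pasting argument is formal.

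The remaining direction — that condition \ref{Deltaalgconds:span} for the special shape implies it for arbitrary targets — is then immediate from the same factorization read in the other order: given that every one-step square is a pullback, pasting them yields the general square as a pullback. Conversely the restriction to the special shape is a special case of the general condition, so the two are equivalent, which is the claim.
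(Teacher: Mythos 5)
Your proposal takes essentially the same route as the paper: the paper factors the map $([1],\ldots,[1])\to([m_1],\ldots,[m_k])$ through the cumulative tuples $([m_1],\ldots,[m_p],[1],\ldots,[1])$ and iterates the pasting law for pullbacks, exactly as you describe. The one imprecision is your claim that each one-step square is \emph{itself} of the special shape --- the cumulative tuple $([m_1],\ldots,[m_p],[1],\ldots,[1])$ has several nontrivial entries --- but this is repaired precisely by the singly-nontrivial intermediates $([1],\ldots,[1],[m_p],[1],\ldots,[1])$ you mention: the paper pastes each one-step square against the projection onto that tuple, uses the special hypothesis on both the resulting rectangle and the lower square, and cancels to conclude the one-step square is a pullback, after which the horizontal pasting proceeds as you say.
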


\begin{proof}
This follows from applying the pasting law to diagrams of the form
\[
\xymatrix{
([n_1],\ldots,[n_\ell])\ar[d] & ([m_1]\star [1]\star\cdots\star[1], [1],\ldots,[1])\ar[d]\ar[l] & ([m_1]\star[m_2]\star\cdots \star[1],[1],\ldots, [1]) \ar[d]\ar[l] & \cdots\ar[l]\\
([1],\ldots, [1]) & ([m_1],[1],\ldots ,[1])\ar[d]\ar[l] & ([m_1], [m_2],\ldots, [1])\ar[d]\ar[l] & \cdots \ar[l]\\
 & ([1],\ldots, [1]) & ([1],\ldots,[1],[m_2], [1],\ldots, [1])\ar[l] & \cdots\ar[l]
}
\]
If condition \ref{Deltaalgconds:span} is satisfied for squares where all but one of the $[m_i]$ are equal to $[1]$, then the bottom right square and the right-hand rectangle are all pullback. Therefore, the top right square is pullback. Since our restricted version of condition \ref{Deltaalgconds:span} also implies that the top left square is pullback, the top rectangle is pullback. Iterating this argument then yields the lemma. 
\end{proof}

\subsection{Extension and restriction}

Considering the full subcategory of $\Delta^\star$ on the objects $([n])$ for $n\geq 0$ we get 
\[
\iota: \Delta^{op}\to \Delta^\star. 
\]
Taking restriction and right Kan extension gives us an adjunction of infinity categories
\[
\iota_\ast: \Fun(\Delta^\star,\CC) \leftrightarrow  \Fun(\Delta^{op},\CC): \iota_!
\]
Denote by $\Fun^\times(\Delta^\star,\CC)$ the full sub-$\infty$-category that sends each diagram 
\[
\xymatrix{
 & & ([m_1],\ldots,[m_k])\ar[drr]\ar[dr] \ar[dl]\ar[dll] & & \\
 [m_1] & [m_2] & \cdots & [m_{k-1}] & [m_k]
}
\]
to a limit diagram. 
\begin{prop}\label{prop:DeltaKE}
The adjunction $\iota_\ast: \Fun(\Delta^\star,\CC) \leftrightarrow  \Fun(\Delta^{op},\CC): \iota_!$ 
descends to an equivalence of $\infty$-categories 
\[
\Fun^\times(\Delta^\star,\CC) \simeq  \Fun(\Delta^{op},\CC).
\]
\end{prop}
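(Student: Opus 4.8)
The strategy is to compute the right Kan extension $\iota_!$ pointwise and then check that both triangle identities of the adjunction degenerate on the relevant subcategories. Observe first that $\iota$ is fully faithful: its essential image is precisely the full subcategory of $\Delta^\star$ on the one-element tuples $([n])$, since $\Hom_{\Delta^\star}(([n]),([n'])) = \Hom_\Delta([n'],[n]) = \Hom_{\Delta^{op}}([n],[n'])$. Consequently the counit $\iota_\ast\iota_!\to\id$ is an equivalence on all of $\Fun(\Delta^{op},\CC)$ --- Kan extension along a fully faithful functor followed by restriction is the identity (equivalently, this can be read off the explicit formula below with $k=1$). So the proposition reduces to two claims: (a) $\iota_!$ carries $\Fun(\Delta^{op},\CC)$ into $\Fun^\times(\Delta^\star,\CC)$, and (b) for $F\in\Fun^\times(\Delta^\star,\CC)$ the unit $F\to\iota_!\iota_\ast F$ is an equivalence. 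Granting (a) and (b), $\iota_\ast$ and $\iota_!$ restrict to mutually inverse equivalences between $\Fun^\times(\Delta^\star,\CC)$ and $\Fun(\Delta^{op},\CC)$.

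To establish (a) I would use the pointwise formula for the right Kan extension, available since $\CC$ has small limits: for $M=([m_1],\dots,[m_k])\in\Delta^\star$ and $G\in\Fun(\Delta^{op},\CC)$, the value $(\iota_!G)(M)$ is the limit of $G$ over the comma category $M\downarrow\iota$ of pairs $([n],\,M\to\iota([n]))$. Unwinding the definition of morphisms in $\Delta^\star$ --- in particular the endpoint conditions, which force every component $f_i$ of a morphism $M\to([n])$ whose source piece $i$ is not the distinguished ``position'' index to be constant at a boundary point of $[m_i]$ --- one sees that such a morphism is exactly the data of an index $i_0\in\{1,\dots,k\}$ together with a morphism $[n]\to[m_{i_0}]$ in $\Delta$, and that a morphism of $M\downarrow\iota$ can only connect objects with the same $i_0$. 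Therefore $M\downarrow\iota$ is the disjoint union $\coprod_{i=1}^{k}\bigl(\Delta_{/[m_i]}\bigr)^{op}$, the projection to $\Delta^{op}$ restricting on the $i$-th summand to $([n]\to[m_i])\mapsto[n]$. Each summand has an initial object, namely $([m_i],\id)$; as a limit over a category with an initial object is the value there, and a limit over a disjoint union is the product of the limits over the pieces, we get
\[
(\iota_!G)(M)\;\simeq\;\prod_{i=1}^{k}G([m_i]),
\]
with structure maps those induced by the $\Delta^\star$-projections $M\to([m_i])$. This is exactly the assertion that $\iota_!G$ sends the diagrams occurring in the definition of $\Fun^\times(\Delta^\star,\CC)$ to product diagrams.

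For (b), the unit $F\to\iota_!\iota_\ast F$ evaluated at $M=([m_1],\dots,[m_k])$ is, by the same description of the pointwise Kan extension, the canonical map $F(M)\to\lim_{M\downarrow\iota}(F\circ\iota)\simeq\prod_{i=1}^{k}F([m_i])$ induced by $F$ applied to the projections $M\to([m_i])$ --- precisely the comparison map whose invertibility defines membership in $\Fun^\times(\Delta^\star,\CC)$. Hence the unit is an equivalence on $\Fun^\times$. Combined with the counit statement above, this yields the equivalence $\Fun^\times(\Delta^\star,\CC)\simeq\Fun(\Delta^{op},\CC)$.

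The only step that needs genuine care --- and the one I expect to be the main obstacle --- is the explicit identification of $M\downarrow\iota$ with $\coprod_i(\Delta_{/[m_i]})^{op}$: one must verify against the composition law of $\Delta^\star$ that the position index $i_0$ is invariant under morphisms, and that conditions (1) and (2) of the definition really do pin down all components $f_i$ with $i\neq i_0$, so that the $i_0$-summand is the bare slice $\Delta_{/[m_{i_0}]}$ (rather than a larger category) whose terminal object $([m_{i_0}],\id)$ computes the limit. Everything downstream of this identification is formal manipulation of Kan extensions along a fully faithful functor.
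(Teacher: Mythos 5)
Your proposal is correct and follows essentially the same route as the paper: both identify the comma category $M\downarrow\iota$ (the paper's $(\Delta^{op})_{([m_1],\ldots,[m_k])/}$) as a disjoint union of slices indexed by $i\in\{1,\ldots,k\}$, conclude that $\iota_! G(M)\simeq\prod_i G([m_i])$, and then observe that the counit is an equivalence because $\iota$ is fully faithful while the unit at $M$ is exactly the product-comparison map defining $\Fun^\times$.
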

\begin{proof}
We compute the overcategory $(\Delta^{op})_{([m_1],\ldots,[m_k])/}$. An object in the overcategory will consist of  a choice of $i\in \{1,2,\ldots ,k\}$ and a morphism $[n]\to [m_i]$. A morphism $(i,[n]\to [m_i])\to(j,[\ell]\to[m_j])$ only exists if $i=j$, and in this case is given by a commutative diagram 
\[
\xymatrix{ 
[\ell]\ar[rr]\ar[dr] & & \ar[dl][n]\\
 & [m_i] & 
}
\]
consequently, we find that the induced diagram 
\[
\xymatrix{
 & &(\Delta^{op})_{([m_1],\ldots,[m_k])/} & & \\
 (\Delta^{op})_{([m_1])/}\ar[urr] & (\Delta^{op})_{([m_2 ])/}\ar[ur]& \cdots & (\Delta^{op})_{([m_{k-1}])/}\ar[ul] &  (\Delta^{op})_{([m_k])/}\ar[ull]
}
\]
displays $(\Delta^{op})_{([m_1],\ldots,[m_k])/}$ as a coproduct, and, hence, for any $f\in \Fun(\Delta^{op},\CC)$, the diagram 
\begin{equation}\label{diag:liminKan}
\xymatrix{
 & & \iota_!f\left( ([m_1],\ldots,[m_k])\right)\ar[drr]\ar[dr] \ar[dl]\ar[dll] & & \\
\iota_!f\left(  [m_1]\right) & \iota_!f\left( [m_2]\right) & \cdots & \iota_!f\left( [m_{k-1}]\right) & \iota_!f\left( [m_k]\right)
}
\end{equation}
displays $\iota_!f\left( ([m_1],\ldots,[m_k])\right)$ as a product. Consequently, the adjunction descends to an adjunction $\iota_\ast: \Fun^\times(\Delta^\star,\CC) \leftrightarrow  \Fun(\Delta^{op},\CC): \iota_!$.

Since this is a right Kan extension from a full subcategory, the counit is an equivalence. Moreover, the components of the unit are equivalences on the objects of $\Delta^{op}$. However, for every object $([m_1],\ldots, [m_k])$, the unit induces a natural transformation of limit diagrams of the form in diagram (\ref{diag:liminKan}). Therefore, we see that the components of the unit are equivalences for all objects, and thus, the unit is also an equivalence.  
\end{proof}


\begin{prop}\label{prop:DeltaTwoSegal}
Denote by $\tSeg_\Delta(\CC)$ the full subcategory of $\Fun(\Delta^\op, \CC)$ on unital 2-Segal objects. Then the equivalence of the previous proposition descends to an equivalence of $\infty$-categories
\[
\Fun^{alg}(\Delta^\star,\CC)\simeq \tSeg_\Delta(\CC).
\] 
\end{prop}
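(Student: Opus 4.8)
The plan is to upgrade the equivalence $\iota_\ast : \Fun^\times(\Delta^\star,\CC)\leftrightarrow\Fun(\Delta^{\op},\CC):\iota_!$ of \cref{prop:DeltaKE}. Note first that $\Fun^{alg}(\Delta^\star,\CC)$ is the full subcategory of $\Fun^\times(\Delta^\star,\CC)$ carved out by the single additional condition \ref{Deltaalgconds:span}, since condition \ref{Deltaalgconds:prod} is precisely the condition defining $\Fun^\times(\Delta^\star,\CC)$. Because a condition of the form ``$g$ sends a fixed diagram to a limit diagram'' is invariant under equivalence of functors, the equivalence $\iota_\ast$ restricts to an equivalence of $\Fun^{alg}(\Delta^\star,\CC)$ with the full subcategory of $\Fun(\Delta^{\op},\CC)$ on those $X$ for which $\iota_! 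X$ satisfies \ref{Deltaalgconds:span}. Thus it suffices to prove: for $X:\Delta^{\op}\to\CC$, the right Kan extension $\iota_! X$ satisfies \ref{Deltaalgconds:span} if and only if $X$ is a unital $2$-Segal object.

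To prove this I would first make $\iota_! X$ explicit on the diagrams at issue. The slice-category computation in the proof of \cref{prop:DeltaKE} already gives $\iota_! X\bigl(([m_1],\ldots,[m_k])\bigr)\simeq\prod_i X_{m_i}$; the same coproduct decomposition of slices shows $\iota_! X$ sends a morphism $(\phi,\{f_i\})$ of $\Delta^\star$ to the map of products whose $i$-th coordinate is $X$ applied to the component map $f_i:[m_{j_1}]\star\cdots\star[m_{j_r}]\to[n_i]$, under the identification of imbrications with objects of $\Delta$. Substituting this into the two cospans occurring in condition \ref{Deltaalgconds:span} exhibits each such diagram as an explicit square of products of values of $X$.

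Next I would identify these squares. Invoking the reduction lemma proved above — condition \ref{Deltaalgconds:span} need only be checked on those collections in which all but one $[m_i]$ equals $[1]$ — and then discarding the identity factors contributed by the $[1]$-entries, each square reduces to an essential square of values of $X$; as the remaining data varies, these essential squares run exactly over the defining $2$-Segal squares $X_n\to X_{\{0,\ldots,i,j,\ldots,n\}}\times_{X_{\{i,j\}}}X_{\{i,\ldots,j\}}$ (from an exceptional entry $[p]$ with $p\geq 2$), the unitality squares $X_{n-1}\to X_n\times_{X_{\{i,i+1\}}}X_{\{i\}}$ (from an exceptional entry $[0]$), and squares of equivalences (from an exceptional entry $[1]$); carrying this out requires passing through the reindexing relating the arity of a $\Delta^\star$-tuple to the simplicial degree, exactly as built into $\mathcal{L}$ and the interstice functors. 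Given this identification, both implications follow: if $\iota_! X$ satisfies \ref{Deltaalgconds:span}, then in particular the squares above are pullbacks, so $X$ is unital $2$-Segal; conversely, if $X$ is unital $2$-Segal, every reduced square is a product of a pullback square with identity squares (hence a pullback), and the reduction lemma, together with the pasting law for pullbacks used exactly as in the proof of \cref{prop:DeltaAlgCondsinStar}, promotes this to all of condition \ref{Deltaalgconds:span}.

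The main obstacle is this last identification: one must carefully trace the $\star$-product (imbrication) combinatorics encoded in the morphisms of $\Delta^\star$ — and hence in the morphisms of $\Omega$ via the localization $\mathcal{L}$ — in order to check that the reduced squares are \emph{precisely} the $2$-Segal and unitality squares of $X$, with nothing omitted and nothing extra. In particular one must verify that the tuples containing a $[0]$-entry account exactly for the unitality conditions (and see where the ``unital'' hypothesis is forced), while all tuples with entries of size $\geq 1$ account exactly for the $2$-Segal conditions. Once this dictionary is fixed, the remainder is formal.
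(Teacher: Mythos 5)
Your proposal is correct and takes essentially the same route as the paper: reduce via \cref{prop:DeltaKE} to showing that condition \ref{Deltaalgconds:span} on $\iota_!X$ is equivalent to the unital $2$-Segal condition on $X$, restrict to tuples with a single exceptional entry $[m]$, and match the resulting squares with the $2$-Segal squares (for $[m]\neq[0]$) and the unitality squares (for $[m]=[0]$). The identification you flag as the main remaining obstacle is exactly what the paper's proof executes, by the short trick of composing the condition-\ref{Deltaalgconds:span} square with the projections onto the exceptional factor $\{j-1,j\}$ and $[m]$ (a pullback by the product condition) and recognizing the resulting outer square as opposite to the square in $\Delta$ spanned by $[1]\to[n]$, $[1]\to[m]$, and $[n],[m]\to[n+m-1]$.
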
 

\begin{proof}
Let $G\in \Fun^{alg}(\Delta^\star,\CC)$, and consider the diagram
\[
\xymatrix{
[n]\ar[d] & \ar[l]\ar[d][n+m-1] \\
(\{0,1\},\{1,2\},\ldots ,\{n-1,n\}) & ([1],\ldots,\overbrace{[m]}^{j^{th}},\ldots,[1])\ar[l]
}
\]
in $\Delta^\star$. We can expand this diagram to 
\[
\xymatrix{
[n]\ar[d] & \ar[l]\ar[d][n+m-1] \\
(\{0,1\},\{1,2\},\ldots ,\{n-1,n\})\ar[d] & ([1],\ldots,\overbrace{[m]}^{j^{th}},\ldots,[1])\ar[l]\ar[d]\\
\{j-1,j\} & \ar[l][m] 
}
\]
Since the two vertical morphisms in the lower square are sent to projections onto  factors of a product, the lower square is sent to a pullback diagram under $G$. We therefore see that the exterior square is sent to a pullback if and only if the upper square is sent to a  pullback. However, the exterior square is opposite to the diagram 
\[
\xymatrix{
[n]\ar[r] & [n+m-1]\\
[1]\ar[r]_{\{0,m\}}\ar[u]^{\{j-1,j\}} & [m]\ar[u]
}
\]
in $\Delta$, which is precisely the diagram for the 2-Segal conditions when $[m]\neq [0]$, and is the diagram for the unitality condition when $[m]=[0]$. Therefore, we see that $G\in\Fun^\times (\Delta^\star,\CC)$ is in $\Fun^{alg}(\Delta^\star,\CC)$ if and only if the underlying simplicial object is unital 2-Segal. 
\end{proof}

We can summarize our results in the following theorem.

\begin{thm}\label{thm:AlgInSpan}
There is an equivalence of $\infty$-categories
\[
\SAlg(\CC)\simeq \tSeg_\Delta(\CC).
\]
\end{thm}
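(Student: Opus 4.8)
The plan is to obtain \cref{thm:AlgInSpan} as the composite of the equivalences already in hand, so that the proof reduces to verifying that the intermediate $\infty$-categories match up rather than to any fresh construction. Recall that $\SAlg(\CC)$ is, by definition, the full subcategory of $\Fun(\Theta,\CC)$ cut out by conditions (\ref{algconds:degint})--(\ref{algconds:Span}) of \cref{cor:algconds}; these are precisely the translation, via \cref{prop:Deltapullbackcond} (which rewrites the Segal-simplex requirement as the pullback condition \cref{diag:segalsimpsinHSpan}) and via \cref{lem:classE} (which packages the Cartesian-edge requirements into the class $E$), of the data defining the relevant algebra objects in $\Span_\Delta(\CC^\boxtimes)$.

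First I would pass from $\Theta$ to $\Omega$ using the equivalence $\Fun^\ast(\Theta,\CC)\simeq\Fun(\Omega,\CC)$, which absorbs condition (\ref{algconds:degint}) (vanishing on degenerate intervals), and then descend along the localization $\mathcal{L}\colon\Omega\to\Delta^\star$ at the morphisms of $E$. Under this descent, condition (\ref{algconds:ELoc}) is exactly what permits a functor on $\Omega$ to factor through $\Delta^\star$, while conditions (\ref{algconds:product}) and (\ref{algconds:Span}) are carried to conditions \ref{Deltaalgconds:prod} and \ref{Deltaalgconds:span} defining $\Fun^{alg}(\Delta^\star,\CC)$. This is the content of \cref{prop:DeltaAlgCondsinStar}, and it yields $\SAlg(\CC)\simeq\Fun^{alg}(\Delta^\star,\CC)$.

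Second I would apply \cref{prop:DeltaTwoSegal}: the right Kan extension equivalence $\Fun^\times(\Delta^\star,\CC)\simeq\Fun(\Delta^\op,\CC)$ of \cref{prop:DeltaKE} (whose proof rests on $(\Delta^\op)_{([m_1],\ldots,[m_k])/}$ being a coproduct of overcategories) combined with a pasting-law argument identifies condition \ref{Deltaalgconds:span} with the unital $2$-Segal conditions on the underlying simplicial object. Composing the two equivalences gives
\[
\SAlg(\CC)\simeq\Fun^{alg}(\Delta^\star,\CC)\simeq\tSeg_\Delta(\CC),
\]
which is the assertion.

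The genuinely hard work has all been done upstream --- in the localization statement for $\mathcal{L}\colon\Omega\to\Delta^\star$ (fed by the explicit fiberwise analysis of \cref{lem:cofinalinclusion}) and in transporting conditions (\ref{algconds:product}), (\ref{algconds:Span}) across $\mathcal{L}$ --- so at the level of \cref{thm:AlgInSpan} there is no real obstacle left. The one thing to keep track of when writing it out is bookkeeping: that the composite of the two cited equivalences is literally the expected passage from an algebra object in $\Span_\Delta(\CC^\boxtimes)$ to its underlying unital $2$-Segal simplicial object, rather than merely an abstract equivalence between the two endpoints.
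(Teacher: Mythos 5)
Your proposal is correct and follows exactly the route the paper takes: \cref{thm:AlgInSpan} is stated there as a summary of the preceding results, obtained by composing the equivalence $\SAlg(\CC)\simeq\Fun^{alg}(\Delta^\star,\CC)$ of \cref{prop:DeltaAlgCondsinStar} with the equivalence $\Fun^{alg}(\Delta^\star,\CC)\simeq\tSeg_\Delta(\CC)$ of \cref{prop:DeltaTwoSegal}. Your accounting of how conditions (\ref{algconds:degint})--(\ref{algconds:Span}) are absorbed by the passage $\Theta\to\Omega\to\Delta^\star$ and then matched with conditions \ref{Deltaalgconds:span} and \ref{Deltaalgconds:prod} is the same bookkeeping the paper performs upstream.
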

\section{Calabi-Yau algebras in Spans}

We now  extend the results of the previous section to Calabi-Yau algebras. Throughout this section we set $\Theta:\Tw(\Ass_\CY^\op)\times_{\bbGamma}\bbGamma^\amalg$. We will represent morphisms in $\Theta$ diagrammatically as 
\[
\begin{tikzcd}
 Q\arrow[Subseteq]{r}{}& S & T\arrow[l, "f"']\arrow[d, "\overline{g}"] \\
 P\arrow[Subseteq]{r}{}& S^\prime\arrow[u, "g"] & \arrow[l,"f^\prime"]T^\prime
\end{tikzcd}
\]
where $f,\,f^\prime,\,g,$ and $\overline{g}$ are morphisms in $\Ass_\CY$ (\emph{not} $\Ass_\CY^\op$). 

In general, for a morphism $\diamond\overset{f}{\leftarrow} T$ in $\Ass_\CY$, we will denote the two possible subsets of the image of $\diamond$ in $\bFin_\ast$ by $\emptyset$ and $\{1\}$. 

\subsection{Conditions on functors}

Suppose we are given a functor 
\[
G:\Theta\to \CC
\]
corresponding to a functor $\Tw(\Ass_\CY^\op)\to \CC^\times$ over $\bbGamma$. 

\begin{prop}\label{prop:LambdaPullback}
The functor $G$ defines a functor $\overline{G}: \Ass_\CY^\op\to \Span_{\bbGamma}(\CC^\times)$ if and only if for every simplex $S_0\overset{\phi_1}{\to}S_1\overset{\phi_2}{\to} S_2\to\cdots\to S_n$ in $\Ass_\CY$, and every subset $P\subset S_n^\circ$ the corresponding diagram 
\begin{equation}\label{diag:segalsimpsinLambda}
\begin{tikzcd}[column sep=-20pt]
 &[10pt] & & G(\psi_{n-1},P)\arrow[dll]\arrow[drr] & & & \\
 & G(\phi_n,P)\arrow[dr]\arrow[dl] & &  \cdots\arrow[dr]\arrow[dl] &  & G(\phi_1,\psi_{n-2}^{-1}(P))\arrow[dr]\arrow[dl] & \\
 G (S_n, P) & & G (S_{n-1},\phi_n^{-1}(P))  & \cdots & G(S_1,\psi_{n-2}^{-1}(P)) & & G (S_0,\psi_{n-1}^{-1}(P))  
\end{tikzcd}
\end{equation}
is a limit diagram in $ \CC$, where $\psi_k:=\phi_n\circ \phi_{n-1}\cdots \circ \phi_{n-k} $.
\end{prop}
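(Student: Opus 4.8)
\textit{Proof proposal.} The plan is to transport the proof of \cref{prop:Deltapullbackcond} across cyclic duality, replacing $\Delta^\amalg$ by $\bbGamma^\amalg$ and sub-intervals by subsets. First I would unwind the definition of $\Span_{\bbGamma}(\CC^\times)$ from \cref{const:TwSpan}: the functor $G$ defines $\overline{G}:\Ass_\CY^\op\to\Span_{\bbGamma}(\CC^\times)$ if and only if, for every $n$-simplex $S_0\overset{\phi_1}{\to}\cdots\overset{\phi_n}{\to}S_n$ of $\Ass_\CY$, the restriction of the adjoint $\tilde G$ to $\Tw(\Delta^n)\subset\Tw(\Ass_\CY^\op)$ corresponding to this simplex is a Segal simplex in $\CC^\times$; that is, for each $\Delta^k\subset\Delta^n$ the composite
\[
\{0,k\}\star\Tw(\EuScript{J}^k)\subset\Tw(\Delta^k)\subset\Tw(\Delta^n)\overset{\tilde G}{\to}\CC^\times
\]
is a $p$-limit diagram for the structure fibration $p:\CC^\times\to\bbGamma$.

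Next I would invoke the analogue of \cite[Lemma 10.2.13]{DKHSSI} for the Cartesian fibration $p$ (which admits relative pullbacks by \cref{cor:MonoidalCatsOfSpans}): there is a functor
\[
H:\bigl(\Delta^1\times\Tw(\Delta^k)\bigr)\times_{\bbGamma}\bbGamma^\amalg\to\CC
\]
representing a homotopy $\tilde H:\Delta^1\times\Tw(\Delta^k)\to\CC^\times$ all of whose components are $p$-Cartesian and whose restriction $\tilde G_0:=\tilde H|_{\{0\}\times\Tw(\Delta^k)}$ lands in the fiber of $p$ over (the image in $\bbGamma$ of) $S_n$. Since $\tilde G_0$ takes values in a single fiber, the $p$-limit condition for $\tilde G$ over the Segal cone is equivalent to $\tilde G_0$ being a limit diagram over the Segal cone in $\CC^\times_{S_n}$; and limits there are detected componentwise, one component per subset $P\subset S_n^\circ$, which is where the choice of $P$ in the statement enters. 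Fixing such a $P$, the corresponding component diagram in $\CC$ is the Segal-cone diagram with every subset equal to $P$, and the $p$-Cartesian components of $H$ supply a natural equivalence between it and \eqref{diag:segalsimpsinLambda}: a Cartesian edge of $\CC^\times$ lying over the pointed-set map underlying $\phi_m$ identifies $G(S_m,P)$ with $G(S_{m-1},\phi_m^{-1}(P))$, and likewise for the cone vertices, converting the constant-$P$ diagram into the one with subsets propagated by $\phi_n^{-1},\psi_1^{-1},\dots$. Ranging over all simplices of $\Ass_\CY$ and all subsets $P$ then yields the claim.

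The main obstacle I expect is establishing the cyclic analogue of \cite[Lemma 10.2.13]{DKHSSI} together with the bookkeeping around the object $\diamond$: a morphism $S\to\diamond$ in $\Ass_\CY$ has underlying pointed-set map landing in $\langle 1\rangle$ and is the datum of a cyclically ordered subset $T\subset S^\circ$, so that for such a morphism the preimage $\phi_m^{-1}(P)$ appearing in \eqref{diag:segalsimpsinLambda} must be read as the intersection with $T$ (equivalently, $P$ is $\emptyset$ or $\{1\}$ and one pulls back accordingly). Checking that the categorical pattern $\mathfrak P$ and the relative-pullback hypothesis on $p$ are compatible with this, so that $H$ exists and has the claimed Cartesian components over $\bbGamma^\amalg$, is the technical heart; once it is in place the componentwise reduction above is formal, exactly as in the proof of \cref{prop:Deltapullbackcond}.
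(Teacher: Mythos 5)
Your proposal is correct and follows essentially the same route as the paper, which simply declares the proof to be \emph{mutatis mutandis} that of \cref{prop:Deltapullbackcond} (noting only that once a vertex of the simplex equals $\diamond$ all subsequent vertices do too); you have spelled out exactly that transport, including the correct reduction to the fiber over $S_n$ and the componentwise detection of limits indexed by subsets $P\subset S_n^\circ$. The only caveat is that the ``cyclic analogue of Lemma 10.2.13'' you flag as the technical heart is not really an obstacle, since that lemma already applies to the Cartesian fibration $\CC^\times\to\bbGamma$ as used elsewhere in the paper.
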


\begin{proof}
This is, {\itshape mutatis mutandis}, the same as the proof of  \cref{prop:Deltapullbackcond}. Note that if $S_k=\diamond$, then $S_{j}=\diamond$ for all $j\geq k$. 
\end{proof}

\subsubsection{Equivalences}

Suppose that $G:\Theta\to \CC$ represents a co-Calabi-Yau algebra. This means that, for every inert morphism $\phi:S\to T$ in $\Ass\subset \Ass_\CY$, and every $P\subset T^\circ$, 
\begin{itemize}
\item For the source map $\phi\to S$ in $\Tw(\Ass_\CY^\op)$, the induced morphism 
\[
G(\phi, P)\to G(S, \phi^{-1}(P))
\]
is an equivalence
\item For the target map $\phi\to T$ in $\Ass_\CY^\op$, the induced morphism 
\[
G(\phi,P)\to G(T,P) 
\]
is an equivalence. 
\end{itemize}

\begin{lem}\label{lem:inertequivs}
Suppose $G$ represents a co-Calabi-Yau algebra object. Let $\phi:S\to T$ be a morphism in $\Ass$ viewed as an object in $\Tw(\Ass_\CY^\op)$ and let $P\subset T$. 
\begin{enumerate}
\item Let $\psi_2: T\to P$ be the inert morphism is $\Ass$ that acts as the identity on $P$ and sends all other elements to the basepoint. Then the induced morphism 
\[
G(\psi_2\circ \phi,P)\to G(\phi,P)
\] 
is an equivalence.
\item Let $\psi_1:\phi^{-1}(P)\to S$ be morphism in $\Ass$ defined via the inclusion. Then the induced morphism 
\[
G(\psi_2\circ\phi\circ \psi_1,P)\to G(\phi,P)
\]
is an equivalence.
\end{enumerate}
\end{lem}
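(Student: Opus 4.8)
The plan is to transcribe the proof of \cref{prop:pullbacksofequivs} into the present setting, the two inputs being the equivalences recorded in the bullet points above (which express that $G$ carries inert morphisms of $\Ass$ to Cartesian morphisms of $\Span_{\bbGamma}(\CC^\times)$) and the limit criterion of \cref{prop:LambdaPullback} applied to well-chosen $2$-simplices of $\Ass$. Since $\phi$, $\psi_1$, and $\psi_2$ all lie in $\Ass\subset\Ass_\CY$, every object that occurs is an object of $\Ass$ and never equals $\diamond$, so no special treatment of the terminal object is required. Recall first that for a $2$-simplex $A\xrightarrow{u}B\xrightarrow{v}C$ in $\Ass$ and a subset $I\subset C^\circ$, the limit diagram of \cref{prop:LambdaPullback} --- after deleting the two outer feet, which are sinks and hence do not affect the limit, exactly as in the reduction to $2$-simplices in the proof of \cref{prop:DeltaAlgCondsinStar} --- is equivalent to the statement that the square
\[
\begin{tikzcd}
G(v\circ u, I)\arrow[r]\arrow[d] & G(u, v^{-1}(I))\arrow[d]\\
G(v, I)\arrow[r] & G(B, v^{-1}(I))
\end{tikzcd}
\]
is cartesian.

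For part (1) I would apply this to the $2$-simplex $S\xrightarrow{\phi}T\xrightarrow{\psi_2}P$ with $I=P$. As $\psi_2$ is the identity on $P$ and sends everything else to the basepoint, $\psi_2^{-1}(P)=P$, so the square becomes
\[
\begin{tikzcd}
G(\psi_2\circ\phi, P)\arrow[r]\arrow[d] & G(\phi, P)\arrow[d]\\
G(\psi_2, P)\arrow[r] & G(T, P)
\end{tikzcd}
\]
and is cartesian. Its bottom edge $G(\psi_2,P)\to G(T,P)$ is the equivalence attached to the \emph{source} of the inert morphism $\psi_2\colon T\to P$ (first bullet point, with the subset taken to be all of $P$), so it is an equivalence; hence so is the top edge $G(\psi_2\circ\phi, P)\to G(\phi, P)$, which is the claim.

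For part (2) I would first treat $\phi\circ\psi_1\colon\phi^{-1}(P)\to T$ by applying the criterion to the $2$-simplex $\phi^{-1}(P)\xrightarrow{\psi_1}S\xrightarrow{\phi}T$ with $I=P$. Here $v^{-1}(I)=\phi^{-1}(P)$, and the resulting cartesian square is
\[
\begin{tikzcd}
G(\phi\circ\psi_1, P)\arrow[r]\arrow[d] & G(\psi_1, \phi^{-1}(P))\arrow[d]\\
G(\phi, P)\arrow[r] & G(S, \phi^{-1}(P))
\end{tikzcd}
\]
Its right edge $G(\psi_1,\phi^{-1}(P))\to G(S,\phi^{-1}(P))$ is the equivalence attached to the \emph{target} of the inert inclusion $\psi_1\colon\phi^{-1}(P)\to S$ (second bullet point, with subset $\phi^{-1}(P)$), so it is an equivalence, and therefore so is the left edge $G(\phi\circ\psi_1, P)\to G(\phi, P)$. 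Finally, applying part (1) with $\phi$ replaced by the morphism $\phi\circ\psi_1$ of $\Ass$ (and the same $\psi_2$) gives that $G(\psi_2\circ\phi\circ\psi_1, P)\to G(\phi\circ\psi_1, P)$ is an equivalence; composing it with the previous one yields the required equivalence $G(\psi_2\circ\phi\circ\psi_1, P)\to G(\phi, P)$.

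There is no real obstacle here once \cref{prop:LambdaPullback} and the inert--equivalence bullet points are available: the argument is formal. The only things demanding care are the bookkeeping of which bullet point (source vs.\ target) applies to which leg of each square, the elementary preimage identities $\psi_2^{-1}(P)=P$ and $(\phi\circ\psi_1)^{-1}(P)=\phi^{-1}(P)$, and the verification --- immediate from the construction of the Segal-cone diagram, as in the proof of \cref{prop:Deltapullbackcond} --- that the edges of the squares produced above are literally the induced morphisms named in the statement.
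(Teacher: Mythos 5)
Part (1) of your argument is correct and is essentially identical to the paper's: the pullback square for the $2$-simplex $S\xrightarrow{\phi}T\xrightarrow{\psi_2}P$ has the source map of the inert morphism $\psi_2$ as one leg, and the conclusion follows.

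Part (2) has a genuine gap at its key step. You claim that the right-hand edge $G(\psi_1,\phi^{-1}(P))\to G(S,\phi^{-1}(P))$ of your square is "the equivalence attached to the target of the inert inclusion $\psi_1$." But $\psi_1:\phi^{-1}(P)\to S$ is \emph{not} inert. In $\bFin_\ast$ (hence in $\Ass$) the inert morphisms are those whose fiber over each non-basepoint element is a singleton, i.e.\ the projections $S\to P$ onto subsets; the inclusion $\phi^{-1}(P)\hookrightarrow S$ has empty fiber over every element of $S^\circ\setminus\phi^{-1}(P)$, so it is active rather than inert. (This is consistent with the paper's usage elsewhere: the morphisms it calls inert are $\psi_2$ and the projection $\pi:S\to\phi^{-1}(P)$, never the inclusion.) The bullet-point equivalences are available only for inert morphisms, since only those are required to go to Cartesian morphisms of $\Span_{\bbGamma}(\CC^\times)$; for an active morphism such as $\psi_1$ the target map is a "multiplication leg" of a span and there is no a priori reason for it to be an equivalence. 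So your square is a genuine pullback, but neither of its known legs is an equivalence, and the argument stalls. (The morphism you need is in fact an equivalence, but proving that is essentially the content of part (2) itself, so assuming it is circular at this stage.)

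The paper's proof of part (2) avoids exactly this trap. After reducing, as you do, to a single morphism via part (1), it does not try to read off an equivalence from a leg involving $\psi_1$. Instead it exhibits the morphism $(\psi_2\circ\phi\circ\psi_1,P)\to(\psi_2\circ\phi,P)$ as admitting a one-sided inverse $\nu$ (using that $\pi\circ\psi_1=\id_{\phi^{-1}(P)}$, so a certain composite in $\Theta$ is the identity), and then shows $\nu$ is an equivalence by a pullback square for the $2$-simplex built from the genuinely inert projection $\pi:S\to\phi^{-1}(P)$, whose known-equivalence leg is $G(\pi,\phi^{-1}(P))\to G(\phi^{-1}(P),\phi^{-1}(P))$. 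Two-out-of-three then finishes the proof. To repair your argument you would need to import this retraction step (or something equivalent to it); the square you wrote down cannot do the job on its own.
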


\begin{proof}
By \cref{prop:LambdaPullback}, the diagrams
\[
\begin{tikzcd}[column sep=-10pt]
 & G(\psi_2\circ\phi,P)\arrow[dl]\arrow[dr] &[-5pt] \\
 G(\psi_2, P)\arrow[dr] & & G(\phi,P)\arrow[dl]\\
  & G(T,P) &
\end{tikzcd}
\]
is a pullback diagram. Since $\psi_2$ is inert in $\Ass$, the morphism 
\begin{align*}
G(\psi_2, P) & \to G(T,P)
\end{align*}
is an equivalence. Therefore, the morphism 
\[
G(\psi_2\circ \phi,P)\to G(\phi,P)
\] 
is an equivalence. 

We now note that the morphism $(\psi_2\circ\phi\circ \chi, P)\to (\phi,P)$ can be factored as 
\[
(\psi_2\circ\phi\circ \chi, P)\to (\psi_2\circ \phi,P)\to (\phi,P).
\] 
Since the second of these morphisms is an equivalence, we need only show that the first is as well. To do this, we write down a composite 
\[
\begin{tikzcd}
 P\arrow[Subseteq]{r}{}& P & \phi^{-1}(P)\arrow[l, "\psi_2\circ \phi\circ \psi_1"']\arrow[d, "\psi_1"] \\
 P\arrow[Subseteq]{r}{}& P\arrow[u, "\id"] & \arrow[l,"\psi_2\circ \phi"]S\arrow[d,"\pi"]\\
 P\arrow[Subseteq]{r}{}& P\arrow[u, "\id"] & \phi^{-1}(P)\arrow[l, "\psi_2\circ \phi\circ \psi_1"']
\end{tikzcd}
\] 
in $\Tw(\Ass_\CY^\op)\times_\Gamma \Gamma^\amalg$, where $\pi$ is the inert morphism projecting $S$ onto the subset $\phi^{-1}(S)$. Since the composite is the identity, it will suffice to show that the bottom square is sent to an equivalence under $G$.  

Denote by $\nu$ the morphism defined by the bottom square. By \cref{prop:LambdaPullback}, we can write down a pullback square
\[
\begin{tikzcd}[column sep=-10pt]
 & G(\psi_2\circ \phi,P)\arrow[dl, "\nu"']\arrow[dr] & \\
 G(\psi_2\circ \phi \circ \psi_1, P)\arrow[dr] & & G(\pi,\phi^{-1}(P))\arrow[dl]\\
  & G(\phi^{-1}(P),\phi^{-1}(P)) &
\end{tikzcd}
\]
The bottom right morphism is the source map of an inert morphism, and thus is an equivalence. Therefore, $\nu$ is also an equivalence.
\end{proof}

\begin{prop}\label{prop:EinCYone}
Suppose that $G$ sends the morphisms from \cref{lem:inertequivs} to equivalences. Let $\mu$ be a morphism 
\[
\begin{tikzcd}
 Q\arrow[Subseteq]{r}{}& S & T\arrow[l, "f"']\arrow[d, "\overline{g}"] \\
 P\arrow[Subseteq]{r}{}& U\arrow[u, "g"] & \arrow[l,"h"]V
\end{tikzcd}
\] 
such that $g|_{P}:P\to Q$ is an isomorphism, $P=g^{-1}(Q)$, and $\overline{g}|_{f^{-1}(Q)}:f^{-1}(Q)\to h^{-1}(P)$ is an isomorphism. Then $G(\mu)$ is an equivalence.
\end{prop}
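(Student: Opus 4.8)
The plan is to transcribe the proof of \cref{lem:classE} into the cyclic setting, with \cref{lem:inertequivs} playing the role that \cref{prop:pullbacksofequivs} plays there. As in that argument, one proceeds in two stages: first handle a restricted class of morphisms which are rigged so that composing them with a known equivalence yields an identity, and then reduce the general $\mu$ to that class.

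For the first stage I would isolate the morphisms $\nu$ in $\Theta$ whose left-hand leg is an identity, i.e.\ those of the form
\[
\begin{tikzcd}
 Q\arrow[Subseteq]{r}{}& S & W\arrow[l, "s"']\arrow[d, "\overline{g}"] \\
 Q\arrow[Subseteq]{r}{}& S\arrow[u, "\id"] & W^\prime\arrow[l,"s^\prime"']
\end{tikzcd}
\]
in which $\overline{g}\colon W\to W^\prime$ restricts to an isomorphism from $s^{-1}(Q)$ onto $(s^\prime)^{-1}(Q)$, and show that $G(\nu)$ is an equivalence. Exactly as in \cref{lem:classE}, the device is to exhibit $\nu$ as one factor of a composite $\nu^\prime\circ\nu$ which equals the identity of the source of $\nu$, where $\nu^\prime$ is obtained by inertly restricting $W^\prime$ onto $(s^\prime)^{-1}(Q)$ and is therefore one of the type-(2) morphisms of \cref{lem:inertequivs}; since $G(\nu^\prime)$ is then an equivalence, two-out-of-three makes $G(\nu)$ an equivalence. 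If one of $S,W,W^\prime$ is $\diamond$, I would observe — just as in the proof of \cref{prop:LambdaPullback} — that $\diamond$ can only occur as a terminal segment of a chain in $\Ass_\CY$, so the restriction above and its collapse to an identity still make sense.

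For the second stage, given $\mu$ as in the statement, I would factor it in the manner of the last paragraph of the proof of \cref{lem:classE}: precomposing with the inert restriction of $S$ onto $Q$ (a type-(1) morphism of \cref{lem:inertequivs}, hence sent by $G$ to an equivalence) and, where necessary, postcomposing with a type-(1) or type-(2) morphism on the $T$-side, reduces $\mu$ to a morphism of the $\nu$-shape handled in the first stage. Carrying out the parallel factorization of the comparison leg — the one built from $P=g^{-1}(Q)$ and the isomorphism $\overline{g}|_{f^{-1}(Q)}$, which is precisely where those two hypotheses enter — and then applying two-out-of-three to the resulting commuting triangle in $\Theta$ forces $G(\mu)$ to be an equivalence.

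The step I expect to be the real obstacle is combinatorial rather than conceptual: a morphism of $\Ass_\CY$ is not merely a map of finite pointed sets but comes with chosen linear orders on its fibers, and a morphism into $\diamond$ additionally records a subset together with a cyclic order, so each factorization $\mu=\nu^\prime\circ\nu$ or $\mu=\nu\circ(\text{type-}(1))$ invoked above must be checked to respect the lexicographic composition law defining $\Ass_\CY$. The three hypotheses on $\mu$ are exactly what forces the fibers over the relevant subsets to be singletons, which trivializes the order data on the pieces that matter; making this bookkeeping explicit — lengthier than in the simplicial case because of the fiber orders and the behaviour at $\diamond$ — is the one place where more than a literal ``mutatis mutandis'' is needed.
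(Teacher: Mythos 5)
Your overall strategy --- sandwiching $\mu$ between the restriction morphisms of \cref{lem:inertequivs} and concluding by two-out-of-three --- is the right one, and your ``second stage'' comes close to the paper's actual argument. But your first stage does not transcribe from \cref{lem:classE} as claimed. You propose to exhibit a morphism $\nu$ with identity left leg as one factor of a composite $\nu'\circ\nu$ equal to the identity of the source of $\nu$. Unwinding the twisted-arrow convention, such a $\nu'$ would have to supply a map $c\colon W'\to W$ with $c\circ\overline{g}=\id_W$, i.e.\ a retraction of $\overline{g}$. In \cref{lem:classE} this exists because the marked interval there is all of $\{0,k\}$ and $h$ is required to carry a subinterval of $[m']$ isomorphically onto the \emph{entire} codomain $[m]$, so the inclusion of that subinterval is a genuine section. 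Here the marked subset $Q$ is in general a proper subset of $S^\circ$, the isomorphism hypothesis only controls $\overline{g}$ on the fibers over $Q$, and $\overline{g}$ may collapse everything outside $s^{-1}(Q)$; no retraction exists, and ``inertly restricting $W'$ onto $(s')^{-1}(Q)$'' produces a map into $(s')^{-1}(Q)$, not into $W$. A second, smaller mismatch: precomposing $\mu$ with a restriction of $(f,Q)$ does not land in your $\nu$-shaped class anyway, since the source and target then carry different marked subsets ($Q$ versus $P$) and the left leg is $\mathrm{proj}\circ g$, not an identity.

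The repair is to drop the identity-composite device and promote the ``parallel factorization'' you allude to into the whole proof, which is what the paper does. Precompose $\mu$ with the type-(2) morphism $(Q\subset Q\leftarrow f^{-1}(Q))\to(Q\subset S\leftarrow T)$ of \cref{lem:inertequivs}. The resulting composite factors as the isomorphism $\bigl(g|_{P},\,\overline{g}|_{f^{-1}(Q)}\bigr)\colon (Q\subset Q\leftarrow f^{-1}(Q))\to(P\subset P\leftarrow h^{-1}(P))$ followed by the type-(2) morphism for $(h,P)$; the hypothesis $P=g^{-1}(Q)$ is exactly what makes the two descriptions of the left leg $U\to Q$ agree, since it forces $g$ to send $U\setminus P$ outside $Q$ and hence to the basepoint under either route. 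All three auxiliary morphisms are sent to equivalences, so $G(\mu)$ is one by two-out-of-three. No bookkeeping with fiber orders beyond what is already packaged into \cref{lem:inertequivs} is required, since the only maps involved are inclusions and projections of fibers.
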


\begin{proof}
Consider the diagram 
\[
\begin{tikzcd}
Q\arrow[Subseteq]{r}{} & Q & f^{-1}(Q)\arrow[l]\arrow[d,"\subset"]\\
 Q\arrow[Subseteq]{r}{}& S\arrow[u,"\on{proj}"] & T\arrow[l, "f"']\arrow[d, "\overline{g}"] \\
 P\arrow[Subseteq]{r}{}& U\arrow[u, "g"] & \arrow[l,"h"]V
\end{tikzcd}
\]
The top square is a morphism from \cref{lem:inertequivs}, and hence is sent to an equivalence. Moreover, the composite morphism can be decomposed as 
\[
\begin{tikzcd}
Q\arrow[Subseteq]{r}{} & Q & f^{-1}(P)\arrow[l]\arrow[d,"\cong"]\\
P\arrow[Subseteq]{r}{} & P\arrow[u,"\cong"] &\arrow[l] h^{-1}(P)\arrow[d,"\subset"]\\
 P\arrow[Subseteq]{r}{}& U\arrow[u, "\on{proj}"] & \arrow[l,"h"]V
\end{tikzcd}
\]
Since the lower square is sent to an equivalence by \cref{lem:inertequivs} and the upper square is an isomorphism, this composite is sent to an equivalence. Therefore, by the 2-out-of-3 property, $G(\mu)$ is an equivalence.
\end{proof}

\begin{prop}\label{prop:EinCYtwo}
Suppose that $G$ represents a co-Calabi-Yau algebra. Let $\mu$ be a morphism 
\[
\begin{tikzcd}
 \{1\}\arrow[Subseteq]{r}{}& \diamond & T\arrow[l, "f"']\arrow[d, "\overline{g}"] \\
 \{1\}\arrow[Subseteq]{r}{}& \diamond\arrow[u, "\id"] & \arrow[l,"h"]S
\end{tikzcd}
\] 
such that $\overline{g}|_{f^{-1}(\diamond)}:f^{-1}(\diamond)\to g^{-1}(\diamond)$ is an isomorphism. Then $G(\mu)$ is an equivalence.
\end{prop}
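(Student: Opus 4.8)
The plan is to exhibit $G(\mu)$ as a base change of an equivalence inside a cartesian square coming from \cref{prop:LambdaPullback}, exactly in the manner of the proof of \cref{prop:pullbacksofequivs}.

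Since the left-hand vertical arrow of the square defining $\mu$ is $\id_\diamond$, commutativity forces $f = h\circ\overline{g}$ in $\Ass_\CY$; and because $T$ and $S$ are objects of $\Ass$, the arrow $\overline{g}\colon T\to S$ is a morphism of $\Ass$. Write $S_h := h^{-1}(\{1\})\subseteq S^\circ$ for the underlying subset of $h$. Then $f = h\circ\overline{g}$ gives $f^{-1}(\{1\}) = \overline{g}^{-1}(S_h)$, and the hypothesis on $\mu$ is precisely that $\overline{g}$ restricts to an isomorphism $\overline{g}^{-1}(S_h)\xrightarrow{\ \cong\ }S_h$. Now I would apply \cref{prop:LambdaPullback} to the $2$-simplex $T\xrightarrow{\ \overline{g}\ }S\xrightarrow{\ h\ }\diamond$ of $\Ass_\CY$ together with the subset $\{1\}\subseteq\diamond^\circ$; the associated instance of \cref{diag:segalsimpsinLambda} amounts to the assertion that
\[
\begin{tikzcd}
G(f,\{1\})\arrow[r]\arrow[d] & G(h,\{1\})\arrow[d]\\
G(\overline{g},S_h)\arrow[r] & G(S,S_h)
\end{tikzcd}
\]
is a pullback square in $\CC$, in which the top horizontal arrow is the source map attached to the factorisation $f = h\circ\overline{g}$ --- i.e.\ $G(\mu)$ --- and the bottom horizontal arrow is the target map attached to $\overline{g}\colon T\to S$.

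It now suffices to prove that the bottom arrow $G(\overline{g},S_h)\to G(S,S_h)$ is an equivalence, for then $G(\mu)$, being a pullback of it, is an equivalence as well. But this target map is one of the morphisms treated in \cref{prop:EinCYone}: take the morphism of $\Theta$ whose two horizontal legs are $\id_S$, whose nontrivial vertical arrow is $\overline{g}$, and both of whose distinguished subsets are $S_h$. For such a morphism the conditions of \cref{prop:EinCYone} read: $\id_S$ restricts to the identity on $S_h$; $S_h = \id_S^{-1}(S_h)$; and $\overline{g}$ restricts to an isomorphism $\overline{g}^{-1}(S_h)\to S_h$ --- the last of which is our hypothesis. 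No copy of $\diamond$ occurs among the objects of this square, and the running assumption that $G$ represents a co-Calabi-Yau algebra guarantees the hypothesis of \cref{prop:EinCYone}; hence \cref{prop:EinCYone} applies and the bottom arrow is an equivalence.

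The reason the statement is separated from \cref{prop:EinCYone} --- and the only genuine subtlety --- is that $\mu$ itself is not an instance of \cref{prop:EinCYone}: its target object involves $\diamond$, and $\diamond$ admits no inert projections, so the decomposition used in the proof of \cref{prop:EinCYone} is not available. The manoeuvre above shifts the projection onto the honest $\Ass$-morphism $\overline{g}$, where \cref{prop:EinCYone} does apply, and then carries the conclusion back across the cartesian square. The only point demanding a little care is the identification of $G(\mu)$ with the appropriate leg of the Segal cone of $T\to S\to\diamond$, and this is forced by the conventions for the cone maps fixed at the beginning of the section.
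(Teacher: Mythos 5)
Your proof is correct, but it follows a genuinely different route from the paper's. The paper dispatches this proposition by declaring it to be, \emph{mutatis mutandis}, the proof of \cref{lem:inertequivs}(2): there one composes with an auxiliary square built from an inert projection so that the total composite is an identity, and then shows that auxiliary square is sent to an equivalence using a cartesian square from \cref{prop:LambdaPullback} whose other leg is the source or target map of an inert morphism. You instead apply \cref{prop:LambdaPullback} once, to the $2$-simplex $T\xrightarrow{\overline{g}}S\xrightarrow{h}\diamond$ with the subset $\{1\}\subset\diamond^\circ$, to exhibit $G(\mu)$ as a base change of the target map $G(\overline{g},h^{-1}(\diamond))\to G(S,h^{-1}(\diamond))$, and then recognize that target map as an instance of \cref{prop:EinCYone} (all four objects of that square lie in $\Ass$, and the hypothesis on $\overline{g}|_{f^{-1}(\diamond)}$ is exactly its third condition); there is no circularity, since \cref{prop:EinCYone} rests only on \cref{lem:inertequivs} and \cref{prop:LambdaPullback}. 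What your route buys is a clean reduction of the $\diamond$-valued case to the already-established $\Ass$-only case: in particular you never need a retraction of $\overline{g}$, whereas the splitting $\pi\circ\psi_1=\id$ that powers \cref{lem:inertequivs}(2) is not literally available here, since $\overline{g}$ is only assumed to be an isomorphism over the marked subsets; your argument therefore spells out rather more explicitly what the paper's ``mutatis mutandis'' has to mean. One cosmetic slip: the morphism of $\Theta$ you hand to \cref{prop:EinCYone} has top horizontal arrow $\overline{g}\colon T\to S$, not $\id_S$ --- only its bottom horizontal and left vertical arrows are identities --- but the hypotheses you actually verify are those of the correct square.
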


\begin{proof}
This is, {\itshape mutatis mutandis}, the same as the proof of \cref{lem:inertequivs} part (2). 
\end{proof}

\begin{defn}
We define the set $E$ of morphisms in $\Theta$ to be the set of all morphisms from \cref{prop:EinCYone} and \cref{prop:EinCYtwo}.
\end{defn}

\subsubsection{Non-degeneracy}

We now consider a morphism $\gamma$  in $\Span_\Gamma(\CC^\times)$ represented by 
\[
X\times X \overset{(\gamma_1,\gamma_2)}{\leftarrow} Y \rightarrow \ast.
\]

\begin{lem}\label{lem:nondegistwoequivs}
The morphism $\gamma$ is non-degenerate in the sense of \cref{defn:nondegenerate} if and only if $\gamma_1$ and $\gamma_2$ are equivalences. 
\end{lem}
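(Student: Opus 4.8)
The plan is to unwind \cref{defn:nondegenerate} in the $\infty$-category $\Span_\Gamma(\CC^\times)$, where the monoidal unit is the terminal object $\ast$, the tensor product is the product in $\CC$, and composition of morphisms is given by forming pullbacks of spans. The statement is essentially a translation: a span $X\times X \xleftarrow{(\gamma_1,\gamma_2)} Y \to \ast$ admits a two-sided inverse (in the sense of the two homotopies of \cref{defn:nondegenerate}) precisely when each leg $\gamma_i$ is an equivalence in $\CC$.

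First I would spell out the would-be dual $\eta:\ast \to X\times X$. Since $\ast$ is terminal, a span from $\ast$ to $X\times X$ is just an object $Z$ of $\CC$ equipped with a map $Z\to X\times X$, i.e. a pair $(\eta_1,\eta_2):Z\to X\times X$; there is no condition on the leg into $\ast$. Next I would compute the two composites appearing in \cref{defn:nondegenerate}. Using that the monoidal structure is pointwise cartesian (so $X\otimes\ast\simeq X$ etc.) and that composition is pullback of spans, the first composite
\[
X \simeq X\otimes \ast \xrightarrow{\eta\otimes\id} X\otimes X\otimes X \xrightarrow{\id\otimes\gamma} \ast\otimes X \simeq X
\]
becomes the span obtained by taking the pullback of $Z\xrightarrow{\eta_2} X \xleftarrow{\gamma_1} Y$, with outgoing leg to $X$ given by $\gamma_2$ precomposed with the projection to $Y$, and incoming leg from $X$ given by $\eta_1$ precomposed with the projection to $Z$; symmetrically for the second composite with the roles of the indices swapped. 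Asking that each of these spans be homotopic to the identity span $X \xleftarrow{\id} X \xrightarrow{\id} X$ is the content of non-degeneracy.

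The bulk of the argument is then the two implications. For the ``if'' direction, suppose $\gamma_1,\gamma_2$ are equivalences. Take $Z=X$ and $\eta_i = \gamma_i^{-1}$ (more precisely, choose homotopy inverses); then the relevant pullbacks collapse — e.g. the pullback of $X\xrightarrow{\gamma_2^{-1}} X\xleftarrow{\gamma_1} Y$ is equivalent to $Y$ via $\gamma_1$, and one checks the resulting span is equivalent to the identity span on $X$ — so both homotopies hold and $\gamma$ is non-degenerate. For the ``only if'' direction, suppose $\eta$ with underlying $(\eta_1,\eta_2):Z\to X\times X$ witnesses non-degeneracy. The homotopy identifying the first composite with $\id_X$ gives, in particular, an identification of the total space of the composite span with $X$ in a way compatible with both legs; chasing the two projections shows that the pullback $Y\times_X Z$ is equivalent to $X$ over $X$ on \emph{both} sides, which forces $\gamma_1$ (and $\eta_2$) to become equivalences after this identification, and running the same argument with the second homotopy forces $\gamma_2$ (and $\eta_1$) to be equivalences. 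Concretely: from the identity-span condition the maps $X \leftarrow Y\times_X Z \to X$ are both equivalences, and since one of them factors through $\gamma_1$ (via the projection $Y\times_X Z\to Y \xrightarrow{\gamma_1} X$) while the other factors through the projection $Y\times_X Z\to Z$, a diagram chase using the universal property of the pullback yields that $\gamma_1$ admits a section and a retraction up to homotopy, hence is an equivalence; symmetrically for $\gamma_2$.

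The main obstacle I anticipate is bookkeeping rather than conceptual: correctly identifying the composite spans in \cref{defn:nondegenerate} with the appropriate iterated pullbacks, and then extracting from ``the composite span is homotopic to the identity span'' the precise statement that certain structure maps out of a pullback are equivalences. This requires being careful that the equivalence of spans is not merely an abstract equivalence of objects but is compatible with the two leg maps, so that one can actually conclude invertibility of $\gamma_1$ and $\gamma_2$ individually rather than just of some composite. Once the correspondence between the homotopies and the pullback diagrams is pinned down, both directions reduce to the elementary fact that if a map $f:A\to B$ fits into a pullback square exhibiting $A\times_B C \to C$ and $A\times_B C\to A$ as equivalences for a suitable $C$, then $f$ is an equivalence — which I would state and use directly.
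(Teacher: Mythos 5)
Your reading of the definition and your ``if'' direction are fine (the paper's witness is even simpler: it reuses the span $\ast\leftarrow Y\xrightarrow{(\gamma_1,\gamma_2)}X\times X$ itself as $\eta$, avoiding any choice of inverses). The gap is in the ``only if'' direction, at the sentence claiming that the \emph{first} homotopy alone ``yields that $\gamma_1$ admits a section and a retraction up to homotopy.'' A single triangle identity only produces sections: identifying the composite span $X\xleftarrow{\;\gamma_1\circ p_Y\;}Y\times_XZ\xrightarrow{\;\eta_2\circ p_Z\;}X$ with the identity span says exactly that the two outer legs are homotopic equivalences, which gives $\gamma_1\circ p_Y\simeq \id$ up to an equivalence (a section of $\gamma_1$) and likewise a section of $\eta_2$; no retraction of $\gamma_1$ follows. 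This is not bookkeeping --- it genuinely fails. Take $\CC=\Set$, $X=Y=\mathbb{N}$, $\gamma_2=\id$, $\gamma_1(0)=0$ and $\gamma_1(n)=n-1$ for $n\geq1$, and $Z=\mathbb{N}_{\geq1}$ with $\eta_1$ the inclusion and $\eta_2(n)=n-1$. Then $Y\times_XZ$ (over $\gamma_2,\eta_1$) is $\{(n,n):n\geq 1\}$ and both legs of the composite are the bijection $n\mapsto n-1$, so this composite \emph{is} the identity span; yet $\gamma_1$ is not injective and admits no retraction, and $\eta_1$ is not surjective. So one triangle identity cannot certify that either leg of $\gamma$ is an equivalence, whichever way you match up the indices; only the presence of the \emph{other} identity (which fails for this $\eta$) saves the lemma.

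Consequently the two homotopies must be played off against each other rather than used separately, and this is where the paper does real work: it forms the three-fold composite of the span $\eta$ with $\gamma$ read in both orientations, obtaining an iterated pullback in which one triangle identity makes a composite of the form $\gamma_1\circ b$ an equivalence (the section), while the identification of the outer rectangle makes $b\circ\nu$ an equivalence, where $\nu$ is a base change of $\gamma_1$ along an equivalence (the retraction). Your proposal, which extracts ``$\gamma_1$ and $\eta_2$ are equivalences'' from the first homotopy and ``$\gamma_2$ and $\eta_1$ are equivalences'' from the second, never performs this combination, so the central implication is unproved. (A secondary, fixable slip: with the orientation of the first composite as you set it up --- pullback over $(\eta_2,\gamma_1)$ with legs $\eta_1\circ p_Z$ and $\gamma_2\circ p_Y$ --- the maps acquiring sections are $\eta_1$ and $\gamma_2$, not $\gamma_1$ and $\eta_2$.)
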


\begin{proof}
If $\gamma_1$ and $\gamma_2$ are equivalences, we can define a morphism 
\[
\ast \leftarrow Y \overset{(\gamma_1,\gamma_2)}{\longrightarrow} X\times X
\]
which displays the non-degeneracy of $\gamma$. 

Now suppose that $\gamma$ is non-degenerate, and let $\eta:=(\eta_1,\eta_2)$ be a morphism 
\[
\ast \leftarrow Z \overset{(\eta_1,\eta_2)}{\longrightarrow} X\times X
\]
displaying the non-degeneracy of $\gamma$. Then we have the diagram
\[
	\begin{tikzcd}[column sep=-1ex,cells={nodes={align=center,text width=4em}}]
	& & & Y\arrow[dl, "\nu"']\arrow[dr, "s"] \arrow[bend left, ddrr, "k"] & & &\\
	& & X\arrow[dr, "b"']\arrow[dl, "a"]\arrow[bend left, ddrr, "\ell"] & & Y\times_{X} Y\arrow[dr, "p"']\arrow[dl, "q"] & &\\
	& Z\arrow[dr]\arrow[dl] & & Y\arrow[dr,"\gamma_1"']\arrow[dl,"\gamma_2"] & & Y\arrow[dr,"\gamma_2"']\arrow[dl,"\gamma_1"] &\\
	X & & X & & X & & X
	\end{tikzcd}
\]
where every square is pullback. The left hand pullback must define an equivalence in $\Span(\CC)$, and therefore, the morphism $\ell$ is an equivalence. We thus see that $\gamma_1$ must have a left inverse up to homotopy. Similarly, we see that the morphism $k$ must be an equivalence. By the symmetry of the left-hand pullback square, $q\circ s$ must be an equivalence, and thus , $b\circ \nu$ is an equivalence. However, $\nu$ is a pullback of $\gamma_1$ along an equivalence, and therefore is homotopic to $\gamma_1$. Therefore, we see that $\gamma_1$ has a right inverse up to homotopy, and so, $\gamma_1$ is an equivalence. A similar argument shows that $\gamma_2$ is an equivalence. 
\end{proof}

\begin{const}\label{const:nondegequivs}
Let $G:\Theta\to \CC$ be a functor representing a trace co-algebra in $\Span_\bbGamma(\CC^\times)$. In particular, we have the object 
\[
Y:=G(\{1\}\subset \diamond \leftarrow \langle 2\rangle)
\] 
and the object 
\[
X_n:=G(\{2\}\subset \langle 2\rangle \overset{f}{\leftarrow} \langle n+1\rangle)
\]
where $f(1)=1$ and $f(i)=2$ for all $i\neq 1$. Finally, we have the object 
\[
Z_n:=G(\{1\}\subset \diamond \leftarrow \langle n\rangle
\]
By \ref{prop:LambdaPullback}, we get a pullback diagram 
\[
\begin{tikzcd}
 & \arrow[dl]\arrow[dr]Z_n & \\
 X_n\arrow[dr]&  & Y\arrow[dl] \\
  & (\langle 2\rangle, \{2\}) & 
\end{tikzcd}
\]
By \ref{lem:nondegistwoequivs}, we know that the trace is non-degenerate if and only if the bottom right morphism is an equivalence. From the structure of the pullback diagram, we see that this is equivalent to requiring that the morphism $Z_n\to Z_n$ is an equivalence for all $n$. 
\end{const}

We can summarize the conditions we have worked out in the following corollary

\begin{cor}\label{cor:CYalgconds}
A functor $G:\Theta\to \CC$ defines a Calabi-Yau co-algebra in $\Span_\Gamma(\CC^\times)$ if and only if it satisfies the following conditions:
\begin{enumerate}
\item \label{CYcond:emptysubsets} $G$ sends empty subsets to the terminal object.
\item \label{CYcond:product} $G$ sends $P\subset S\leftarrow T$ together with its projections to $\{i\}\subset S\leftarrow T$ for $i\in P$ to a product diagram.
\item \label{CYcond:Eloc} $G$ sends the morphisms in $E$ to equivalences.
\item \label{CYcond:segal} $G$ sends all diagrams of the form \cref{diag:segalsimpsinLambda} to limit diagrams. 
\item \label{CYcond:nondeg} $G$ sends the morphisms $Z_n\to X_n$ from \ref{const:nondegequivs} to equivalences. 
\end{enumerate}
\end{cor}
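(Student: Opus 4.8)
The strategy is to unpack the phrase ``$G$ defines a Calabi-Yau co-algebra in $\Span_\Gamma(\CC^\times)$'' into four separate demands and match each one with a listed condition, reusing what has already been assembled. Recall that, by the defining universal properties of $\CC^\times$ and of $\Span_\bbGamma(-)$, the datum of $G\colon\Theta\to\CC$ is the same as that of a functor $\overline G\colon N(\Ass_\CY^\op)\to\overline{\Span}_\bbGamma(\CC^\times)$ over $\bbGamma$, and $\overline G$ represents a Calabi-Yau co-algebra exactly when: (a) the adjoint $\tilde G\colon\Tw(\Ass_\CY^\op)\to\overline{\CC^\times}$ factors through the full subcategory $\CC^\times$; (b) $\overline G$ factors through $\Span_\bbGamma(\CC^\times)\subset\overline{\Span}_\bbGamma(\CC^\times)$; (c) $\overline G$ sends inert morphisms of $\Ass_\CY^\op$ to Cartesian morphisms of $\Span_\bbGamma(\CC^\times)$; and (d) $\overline G$ sends $\diamond$ to the monoidal unit and the morphism $\langle 2\rangle\to\diamond$ to a non-degenerate morphism. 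The plan is to argue (a) $\Leftrightarrow$ (\ref{CYcond:emptysubsets})+(\ref{CYcond:product}), (b) $\Leftrightarrow$ (\ref{CYcond:segal}), (c) $\Leftrightarrow$ (\ref{CYcond:Eloc}), and (d) $\Leftrightarrow$ (\ref{CYcond:nondeg}).

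For (a): by construction $\CC^\times\subset\overline{\CC^\times}$ consists of the functors $\mathcal P(S^\circ)^\op\to\CC$ exhibiting the value at $S$ as the product of the values at the singletons, so $\tilde G$ lands in $\CC^\times$ iff for each object $(P\subset S\leftarrow T)$ of $\Theta$ the value $G(P\subset S\leftarrow T)$ is the product of the $G(\{i\}\subset S\leftarrow T)$ over $i\in P$; the case $P\neq\emptyset$ is condition (\ref{CYcond:product}) and the empty case is condition (\ref{CYcond:emptysubsets}), the empty product being terminal. For (b): given (a), $\overline G$ is a functor to $\overline{\Span}_\bbGamma(\CC^\times)$, and by the definition of $\Span_\bbGamma(\CC^\times)$ it factors through the subcategory of Segal simplices iff every restriction along $\Tw(\Delta^n)\hookrightarrow\Tw(\Ass_\CY^\op)$ is Segal; this is precisely \cref{prop:LambdaPullback}, i.e.\ condition (\ref{CYcond:segal}).

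The substantive point is (c). In the forward direction, if $\overline G$ sends an inert morphism $\phi$ to a Cartesian morphism, then the adjoint diagram $\Tw(\Delta^1)\to\CC^\times$ consists entirely of Cartesian morphisms, which (reading off the source and target maps, exactly as in the passage preceding \cref{cor:algconds}) forces $G$ to invert the corresponding source- and target-component maps; \cref{lem:inertequivs} propagates this to the larger class appearing there, and \cref{prop:EinCYone} and \cref{prop:EinCYtwo} propagate it to all of $E$, so (\ref{CYcond:Eloc}) holds. Conversely, the source and target maps attached to every inert morphism of $\Ass_\CY^\op$ are themselves special cases of the morphisms in \cref{prop:EinCYone}, so condition (\ref{CYcond:Eloc}) makes the adjoint of each inert morphism a diagram of Cartesian morphisms; since $\CC^\times\to\bbGamma$ is a Cartesian fibration, this exactly says $\overline G$ carries inerts to Cartesian morphisms. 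Finally, for (d), \cref{lem:nondegistwoequivs} identifies non-degeneracy of the image of $\langle 2\rangle\to\diamond$ with the condition that both legs of the corresponding span be equivalences, and \cref{const:nondegequivs} repackages this, using the pullback squares produced by \cref{prop:LambdaPullback}, as the condition that the maps $Z_n\to X_n$ be equivalences for all $n$; the remaining clause $\overline G(\diamond)=\bbOne$ should fall out of condition (\ref{CYcond:emptysubsets}) read at the object $\diamond$, a point I return to below.

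I expect the real obstacle to be the converse half of (c): one must check that $E$, as defined, is genuinely large enough --- i.e.\ that the source/target maps of \emph{every} inert morphism of $\Ass_\CY^\op$, including the edge cases involving the object $\diamond$ and morphisms landing in it, really do occur among the morphisms of \cref{prop:EinCYone} and \cref{prop:EinCYtwo} --- and, relatedly, that every limit or pullback diagram invoked along the way (in \cref{prop:LambdaPullback} and in \cref{const:nondegequivs}) is a limit in $\CC$ itself rather than merely a fiberwise limit in $\CC^\times$. The componentwise argument from the proof of \cref{prop:Deltapullbackcond}, transported via the homotopy of \cite[Lemma 10.2.13]{DKHSSI}, is what licenses this last reduction, and transcribing it \emph{mutatis mutandis} to the $\Ass_\CY$ setting is the one genuinely technical ingredient.
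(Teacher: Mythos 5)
The paper offers no separate proof of this corollary---it is stated as a summary of \cref{prop:LambdaPullback}, \cref{lem:inertequivs}, \cref{prop:EinCYone}, \cref{prop:EinCYtwo}, \cref{lem:nondegistwoequivs}, and \cref{const:nondegequivs}---and your clause-by-clause assembly of exactly those ingredients against the definition of a Calabi-Yau (co)algebra is the intended argument, with your explicit treatment of the converse direction of (c) being a welcome addition the paper leaves implicit. Your one wobble, attributing the unit clause $\overline{G}(\diamond)\simeq\bbOne$ to condition \ref{CYcond:emptysubsets}, matches the paper's own equally loose treatment, which silently groups the object $\{1\}\subset\diamond\xleftarrow{\id}\diamond$ with the degenerate subsets when passing to $\Omega$.
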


\begin{defn}
We define $\SAlg^\CY(\CC)$ to be the full $\infty$-subcategory of $\Fun(\Theta,\CC)$ satisfying the conditions of \cref{cor:CYalgconds}.
\end{defn}

\subsection{The localization}

\begin{defn}
Let $\Lambda^\star$ be the category with objects 
\begin{itemize}
\item finite collections $\{[m_i]\}_{i\in S}$ in $\Delta$ indexed by $S\in \bFin$, and 
\item $\langle n\rangle$ in $\Lambda$, 
\end{itemize}
and morphisms given by 
\begin{enumerate}
\item a morphism $\{[m_i]\}_{i\in S}\to \{[n_j]\}_{j\in T}$ is given by
\begin{itemize}
\item  a morphism $\phi:T\to S$ in $\bFin$, with a chosen linear order on each fiber, and
\item for each $i\in S$, a morphism 
\[
\bigoplus_{j\in \phi^{-1}(i)}[n_j]\to [m_i]
\]
\end{itemize}
\item a morphism $  \langle n \rangle\to \{[m_i]\}_{i\in S}$ is given by 
\begin{itemize}
\item a cyclic order on $S$, and
\item a morphism 
\[
\bigcup\nolimits^S [m_i] \to \langle n\rangle 
\]  
in $\Lambda$. 
\end{itemize}
\item Empty homsets $\{[m_i]\}\to\langle n\rangle$. 
\end{enumerate}
Composition is defined by taking lexicographic linear and cyclic orders. It is well-defined by \cref{lem:LinAndCycOrdSum}.
\end{defn}

\begin{defn}
As in the case of algebra objects, we define a version of $\Theta$ on non-degenerate subsets. Let $\Omega$ be the full subcategory of $\Theta$ on those objects 
\[
Q\subset S\overset{f}{\longleftarrow} T
\]
such that $Q\neq \emptyset$ and $f:T\to S$ is not $\id_\diamond$. 
\end{defn}

\begin{lem}
The is an equivalence of $\infty$-categories 
\[
\Fun^\ast(\Theta, \CC) \simeq \Fun(\Omega,\CC)
\]
Where $\Fun^\ast$ denotes the full subcategory on those functors which send empty subsets to the terminal object of $\CC$. 
\end{lem}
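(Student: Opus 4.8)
The plan is to imitate the proof given for algebras (the corollary right after \cref{lem:ForgetAttracting}): collapse the ``degenerate'' locus of $\Theta$ to a point and apply \cref{lem:ForgetAttracting}. Write $\Theta^{bad}:=\Theta\setminus\Omega$ for the full subcategory on the complement of $\Omega$. Concretely $\Theta^{bad}$ consists of the objects $\emptyset\subset S\overset{f}{\leftarrow}T$ together with the single object $\xi:=(\{1\}\subset\diamond\overset{\id_\diamond}{\leftarrow}\diamond)$, and the functors in $\Fun^\ast$ are exactly those sending every object of $\Theta^{bad}$ to the terminal object of $\CC$.

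The first step is purely formal and goes through verbatim as in the algebra case: expressing $\Fun^\ast(\Theta,\CC)$ as the pullback of $\Fun(\Theta,\CC)\to\Fun(\Theta^{bad},\CC)\leftarrow\Fun(\Theta^{bad},\ast)$ and comparing it, via a natural transformation of pullback diagrams which is an isomorphism on the other three vertices, with the pullback computing $\Fun^\ast(\Theta\coprod_{\Theta^{bad}}\ast,\CC)$, one obtains an isomorphism $\Fun^\ast(\Theta,\CC)\cong\Fun^\ast(\Theta\coprod_{\Theta^{bad}}\ast,\CC)$. Since $(\Theta\coprod_{\Theta^{bad}}\ast)^\circ=\Omega$, it then suffices to check that the collapsed point $\ast$ is an attracting object of $\Theta\coprod_{\Theta^{bad}}\ast$; \cref{lem:ForgetAttracting} will then supply the equivalence $\Fun^\ast(\Theta\coprod_{\Theta^{bad}}\ast,\CC)\simeq\Fun(\Omega,\CC)$.

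The attracting condition reduces to two combinatorial checks about $\Theta$. First, every object $Q\subset S\overset{f}{\leftarrow}T$ maps into $\Theta^{bad}$, via the morphism to $\emptyset\subset S\overset{f}{\leftarrow}T$ given by $g=\id_S$, $\overline g=\id_T$ (the marking condition $\emptyset\subseteq\id^{-1}(Q)$ being vacuous), so $\Hom(a,\ast)\neq\emptyset$ for all $a$. Second, $\Theta^{bad}$ is a cosieve in $\Theta$, which yields $\Hom(\ast,a)=\emptyset$ for $a\in\Omega$: a morphism out of an object $\emptyset\subset S\overset{f}{\leftarrow}T$ has underlying $\bbGamma^\amalg$-morphism $(S,\emptyset)\to(S',Q')$, equivalently a morphism $(S',Q')\to(S,\emptyset)$ in $\bFin_\ast^\amalg$; by the description of $\bFin_\ast^\amalg$ as the Grothendieck construction of $S\mapsto\mathcal P(S^\circ)$ this requires $Q'\subseteq\psi^{-1}(\emptyset)=\emptyset$, so the target again has empty marking. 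A morphism out of $\xi$ has $\Tw$-components $g\colon S_a\to\diamond$ and $\overline g\colon\diamond\to T_a$ in $\Ass_\CY$ with $\id_\diamond=g\circ f_a\circ\overline g$; since $\Ass_\CY$ admits no morphism from $\diamond$ to an object of $\Ass$, the existence of $\overline g$ and of $f_a$ forces $T_a=S_a=\diamond$, whereupon $g,f_a,\overline g$ lie in $\Hom_{\Ass_\CY}(\diamond,\diamond)$, which has two elements, the identity and a map that is absorbing under composition; the relation then forces $f_a=\id_\diamond$, so $a\in\Theta^{bad}$.

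I expect the cosieve verification to be the only real point, and its crux to be the empty-marking case: a morphism in $\bFin_\ast^\amalg$ cannot carry a marked element to the basepoint, so a nonempty marked subset is never the target of a morphism out of an empty one. This is precisely what makes the ``forget the degenerate locus'' argument run exactly as in the algebra case, the only extra bookkeeping being the absorption of the isolated degenerate object $\xi$ into $\Theta^{bad}$.
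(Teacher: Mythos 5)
Your proposal is correct and is exactly the argument the paper intends: the paper's own proof is the one-line remark that this is, \emph{mutatis mutandis}, the attracting-object argument of \cref{lem:ForgetAttracting} and the corollary following it, i.e.\ collapse $\Theta\setminus\Omega$ to a point, verify that the collapsed object is attracting (every object maps into the degenerate locus via identities, and the degenerate locus is a cosieve because a marked element can never be sent into an empty marking and $\diamond$ admits no morphisms to objects of $\Ass$), which is precisely what you carry out in more detail than the paper does. The only point worth flagging is that you (correctly) read $\Fun^\ast$ as killing \emph{all} of $\Theta\setminus\Omega$, including the isolated object $\{1\}\subset\diamond\overset{\id}{\leftarrow}\diamond$; under the literal wording of the statement (``empty subsets'') the value there would be unconstrained while the object is still omitted from $\Omega$, so the restriction would fail to be an equivalence, and your reading is the one needed for the lemma to hold.
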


\begin{proof}
This is, {\itshape mutatis mutandis}, the same proof as that of \cref{lem:ForgetAttracting}.
\end{proof}

\begin{figure}
	\begin{tikzpicture}[decoration={
		markings,
		mark=at position 0.5 with {\arrow{>}}}
	]
	\foreach \x in {0, 3, 6, 9}{
		\draw (\x,-0.5) -- (\x, 4.5);
	};
	\path (0,1) coordinate (l2);
	\path (0,3.5) coordinate (l1);
	
	\path (3,3.5) coordinate (lm1);
	\path (3,2) coordinate (lm2);
	\path (3,1) coordinate (lm3);
	\path (3,0) coordinate (lm4);
	
	\path (6,3.5) coordinate (rm1);
	\path (6,2) coordinate (rm2);
	\path (6,1.3) coordinate (rm3);
	\path (6,0.7) coordinate (rm4);
	\path (6,0) coordinate (rm5);
	
	\path (9,4) coordinate (r11);
	\path (9,3) coordinate (r12);
	\path (9,2) coordinate (r2);
	\path (9,1.3) coordinate (r3);
	\path (9,0.7) coordinate (r4);
	
	\foreach \lab in {l1,l2,lm1,lm2,lm3,lm4,rm1,rm2,rm3,rm4,rm5,r11,r12,r2,r3,r4}{
		\draw[fill=black] (\lab) circle (0.05);
	};
	\draw[postaction={decorate}] (rm1) to (lm1); 
	\draw[postaction={decorate}] (lm1) to (l1);
	
	\draw[postaction={decorate}] (rm2) to (lm2);
	\draw[postaction={decorate}] (rm5) to (lm4);
	
	\draw[postaction={decorate}] (r2) to (rm2);
	\draw[postaction={decorate}] (r3) to (rm3);
	\draw[postaction={decorate}] (r4) to (rm4);
	\draw[postaction={decorate}] ($(1.5,0)+(rm5)$) to (rm5);
	\draw[fill=black] ($(1.5,0)+(rm5)$) circle (0.05);
	
	\coordinate (vert) at ($ (-0.7,0)+0.25*(lm2)+0.25*(lm3)+0.25*(lm4)+0.25*(l2)$);
	\draw[postaction={decorate}] (vert) to (l2); 
	\draw (lm2)--(vert) (lm3)--(vert) (lm4)--(vert); 
	
	\coordinate (vert) at ($ (-0.4,0)+0.333*(rm3)+0.333*(rm4)+0.333*(lm3)$);
	\draw[postaction={decorate}] (vert) to (lm3); 
	\draw (rm4)--(vert) (rm3)--(vert); 
	
	\coordinate (vert) at ($ (-0.4,0)+0.333*(r11)+0.333*(r12)+0.333*(rm1)$);
	\draw[postaction={decorate}] (vert) to (rm1); 
	\draw (r11)--(vert) (r12)--(vert);
	
	\draw[red, fill=red] (l1) circle (0.06) (l2) circle (0.06);
	
	\draw[red, fill=red] (lm1) circle (0.06) (lm3) circle (0.06);
	
	\path (5.8,4) node (lmint1) {0};
	\path (5.8,2.75) node (lmint2) {1};
	\path (5.8,1.6) node (lmint3) {0};
	\path (5.8,1) node (lmint4) {1};
	\path (5.8,0.3) node (lmint5) {2};
	
	\path (8.8,4.3) node (lint1) {0};
	\path (8.8,3.5) node (lint2) {1};
	\path (8.8,2.4) node (lint3) {2};
	\path (8.8,1.6) node (lint4) {3};
	\path (8.8,1) node (lint5) {4};
	\path (8.8,0) node (lint6) {5};
	
	\begin{scope}[blue]
	\draw[postaction={decorate}] (lmint1) to (lint1);
	\draw[postaction={decorate}] (lmint2) to (lint3);
	\draw[postaction={decorate}] (lmint3) to (lint4);
	\draw[postaction={decorate}] (lmint4) to (lint5);
	\draw[postaction={decorate}] (lmint5) to (lint6);
	\end{scope}
	
	\end{tikzpicture}
	\caption{A pictorial representation of a morphism $\mu$ in $\Omega$, considered as a sequence $T\overset{\overline{g}}{\to} V\overset{h}{\to} U \overset{g}{\to} S$ of morphisms in $\Ass$. The chosen subsets $Q\subset S$ and $P\subset U$ are marked in red, and the induced morphism $\mathcal{L}(\mu)$ is drawn in blue. Note that, unlike in the analogous \cref{fig:DeltaLocMorphs}, the source of $\mathcal{L}(\mu)$ is the ordinal sum  $\bigoplus_{i\in P} O(f^{-1}(i))$, owing to the presence interstitial trees with roots not in $P$.}\label{fig:LambdaLocMorphs}
\end{figure}

\begin{const}
We define a functor $\Omega\to \Lambda^\star$ as follows. Let 
\[
P\subset S\overset{f}{\leftarrow} T
\]
be an object in $\Omega$ with $f$ a morphism in $\Ass$. We send this object to the collection 
\[
\left\lbrace O\left(f^{-1}(i)\right) \right\rbrace_i\in P.
\] 
Let 
\[
\{\diamond\}\subset \diamond \overset{f}{\leftarrow} S
\]
be an object in $\Omega$. Then we send this object to 
\[
D(f^{-1}(\diamond))\in \Lambda.
\]

To define $\mathcal{L}$ on morphisms, we proceed by cases: 
\begin{enumerate}
\item Suppose we have a diagram 
\[
\begin{tikzcd}
 Q\arrow[Subseteq]{r}{}& S & T\arrow[l, "f"']\arrow[d, "\overline{g}"] \\
 P\arrow[Subseteq]{r}{}& U\arrow[u, "g"] & \arrow[l,"h"] V
\end{tikzcd}
\]
representing a morphism $\mu$ in $\Omega$, where all of the objects are in $\Ass\subset \Ass_\CY$. $\mathcal{L}(\mu)$ will be given by a morphism $\phi_\mu$ in $\Fin_\ast$ and a set of morphisms $\{\psi_i\}_{i\in Q}$ in $\Delta$. The morphism $\phi_\mu$ we take to be the restriction of $g$ to $P\subset U^\circ$. Fixing $i\in Q$, we see that $\overline{g}$ restricts to a morphism $\overline{g}_i: f^{-1}(i)\to h^{-1}(g^{-1}(i))$ of linearly ordered sets. This can be rewritten  as 
\[
\overline{g}_i:f^{-1}(i)\to\bigoplus_{j\in g^{-1}(i)} h^{-1}(j) 
\] 
It therefore induces a morphism 
\[
\bigstar_{j\in\overline{g}^{-1}(i)} O(h^{-1}(i))\to O(f^{-1}(i))
\]
We then define $\psi_i$ to be the composite
\[
\bigoplus_{j\in\overline{g}^{-1}(i)\cap P} O(h^{-1}(i))\to\bigstar_{j\in\overline{g}^{-1}(i)} O(h^{-1}(i))\to O(f^{-1}(i))
\]
See \cref{fig:LambdaLocMorphs} for a pictorial representation.
\item Suppose we have a diagram
\[
\begin{tikzcd}
 \{1\}\arrow[Subseteq]{r}{}& \diamond & T\arrow[l, "f"']\arrow[d, "\overline{g}"] \\
 \{1\}\arrow[Subseteq]{r}{}& \diamond\arrow[u, "\id"] & \arrow[l,"h"] V
\end{tikzcd}
\]
representing a morphism $\mu$ in $\Omega$ with $T, U\in \Ass$. Then $\mathcal{L}(\mu)$ will be given by a morphism $\psi:D(h^{-1}(\diamond))\to D(f^{-1}(\diamond))$. The morphism $\overline{g}$ restricts to a morphism of cyclically ordered sets 
\[
\overline{g}_\diamond:f^{-1}(\diamond)\to h^{-1}(\diamond)
\]
we therefore define $\psi$ to be $D(\overline{g}_\diamond)$.
\item Suppose we have a diagram 
\[
\begin{tikzcd}
 \{1\}\arrow[Subseteq]{r}{}& \diamond & T\arrow[l, "f"']\arrow[d, "\overline{g}"] \\
 P\arrow[Subseteq]{r}{}&U \arrow[u, "g"] & \arrow[l,"h"] V
\end{tikzcd}
\]
representing a morphism $\mu$ in $\Omega$, where all objects except $\diamond$ are in $\Ass$. The morphism $\mathcal{L}(\mu)$ will be given by a cyclic order on $P$ and a morphism $\psi: \bigcup^S O(f^{-1}(i))\to D(f^{-1}(\diamond))$. The cyclic order on $P$ is induced by the cyclic order on $g^{-1}(\diamond)\supset P$. The morphism $\overline{g}$ restricts to a morphism 
\[
\overline{g}_\diamond: f^{-1}(\diamond)\to (g\circ h)^{-1}(\diamond)
\]   
of cyclically ordered sets. Passing through  $D$ gives a morphism 
\[
D(\overline{g}_\diamond): D((g\circ h)^{-1}(\diamond))\to D(f^{-1}(\diamond)). 
\]
Choosing any linear order on $g^{-1}(\diamond)$ compatible with the cyclic order we can write $D(\overline{g}_\diamond)$ as 
\[
C(O(\bigoplus_{i\in g^{-1}(\diamond)}h^{-1}(i))) = D(K(\bigoplus_{i\in g^{-1}(\diamond)}h^{-1}(i)) \to D(f^{-1}(\diamond)
\]
We then have the canonical morphism
\[
K(\bigoplus_{i\in g^{-1}(\diamond)}O(h^{-1}(i)))\to C(\bigstar_{i\in g^{-1}(\diamond)}O(h^{-1}(i)))= C(O(\bigoplus_{i\in g^{-1}(\diamond)}h^{-1}(i)))
\]
And so we define $\psi$ to be the composite 
\[
K(\bigoplus_{i\in P}O(h^{-1}(i)))\to K(\bigoplus_{i\in g^{-1}(\diamond)}O(h^{-1}(i)))\to C(O(\bigoplus_{i\in g^{-1}(\diamond)}h^{-1}(i))) \to D(f^{-1}(\diamond) 
\]
See \cref{fig:CircleDecomp} for a pictorial representation.
\end{enumerate}
\end{const}
\begin{figure}[htb]
	\begin{tikzpicture}[decoration={
	markings,
	mark=at position 0.5 with {\arrow{>}}}
]

\path (0,0) node (c) {$\diamond$};
\draw (0,0) circle (1);
\draw (0,0) circle (3);
\draw (0,0) circle (5);
\foreach \x/\lab in {0/in1,60/in2,120/in3,180/in4,240/in5,300/in6}{
\path (\x:1) coordinate (\lab);
\draw[fill=black] (\x:1) circle (0.05);
\draw[postaction={decorate}] (\x:1) to (\x:0.05);
};

\path (0:3) coordinate (mid1) {};
\draw[fill=black] (0:3) circle (0.05);
\draw[postaction={decorate}] (0:3) to (in1.center);

\foreach \dif/\lab in {-10/mid2,10/mid3}{
\path (60+\dif:3) coordinate (\lab) {};
\draw[fill=black] (60+\dif:3) circle (0.05);
};
\coordinate (vert223) at ($ 0.33*(mid2)+0.33*(mid3)+0.1*(in2)$);
\draw[postaction={decorate}] (vert223) to (in2); 
\draw (mid2)--(vert223) (mid3)--(vert223);

\path (120:3) coordinate (mid4);
\draw[postaction={decorate}] (mid4) to (in3);
\draw[fill=black] (mid4) circle (0.05);

\foreach \dif/\lab in {-10/mid5,10/mid6}{
	\path (180+\dif:3) coordinate (\lab) {};
	\draw[fill=black] (180+\dif:3) circle (0.05);
};
\coordinate (vert456) at ($ 0.33*(mid5)+0.33*(mid6)+0.1*(in4)$);
\draw[postaction={decorate}] (vert456) to (in4); 
\draw (mid6)--(vert456) (mid5)--(vert456);

\foreach \dif/\lab in {-10/mid7,10/mid8}{
	\path (240+\dif:3) coordinate (\lab) {};
	\draw[fill=black] (240+\dif:3) circle (0.05);
};
\coordinate (vert578) at ($ 0.33*(mid8)+0.33*(mid7)+0.1*(in5)$);
\draw[postaction={decorate}] (vert578) to (in5); 
\draw (mid7)--(vert578) (mid8)--(vert578);

\draw[postaction={decorate}] (300:2)--(in6);
\draw[fill=black] (300:2) circle (0.05);

\foreach \lab/\nom/\oset in {mid1/out1/0,mid2/out2/0,mid4/out4/-10, mid4/out5/10,mid5/out6/-10,mid5/out7/10,mid6/out8/0,mid7/out9/0,mid8/out10/0}{
\pgfextractangle{\myangle}{c}{\lab};
\path (\myangle+\oset:5) coordinate (\nom);
\draw[fill=black] (\nom) circle (0.05);
};

\foreach \m/\j/\k in {mid4/out4/out5, mid5/out6/out7}{
\coordinate (vert) at ($ 0.33*(\k)+0.33*(\j)+0.1*(\m)$);
\draw[postaction={decorate}] (vert) to (\m); 
\draw (\j)--(vert) (vert)--(\k);
};
\draw[postaction={decorate}] (out1) to (mid1);
\draw[postaction={decorate}] (out2) to (mid2);
\draw[postaction={decorate}] (out8) to (mid6);
\draw[postaction={decorate}] (out9) to (mid7);
\draw[postaction={decorate}] (out10) to (mid8);
\draw[postaction={decorate}] (70:4) to (mid3);
\draw[fill=black] (70:4)  circle (0.05);

\begin{scope}[red]
\foreach \lab in {in1,in3,in5,in6}{
\draw[fill=red] (\lab) circle (0.06);
};
\end{scope}

\begin{scope}[blue]
\foreach \lab/\nom/\wor/\nam in {out1/out2/int1/0,out2/out4/int2/1,out4/out5/int3/2,out5/out6/int4/3,out6/out7/int5/4,out7/out8/int6/5,out8/out9/int7/6,out9/out10/int8/7}{
\pgfextractangle{\myangle}{c}{\nom};
\pgfextractangle{\myother}{c}{\lab};
\path (0.5*\myangle+0.5*\myother:5) node (\wor) {\nam};
};
\pgfextractangle{\myother}{c}{out10};
\path (180+0.5*\myother:5) node (int9) {8};
\end{scope}
\begin{scope}[red]
\path (-10:3) node (mint1) {0};
\path (10:3) node (mint2) {1};
\path (110:3) node (mint3) {0};
\path (130:3) node (mint4) {1};
\path (220:3) node (mint5) {0};
\path (240:3) node (mint6) {1};
\path (260:3) node (mint7) {2};
\path (300:3) node (mint8) {0};
\end{scope}

\begin{scope}[blue]
\draw[->] (mint1) to[bend left] (int9);
\draw[->] (mint2) to (int1);
\draw[->] (mint3) to[bend left] (int2);
\draw[->] (mint4) to (int4);
\draw[->] (mint5) to (int7);
\draw[->] (mint6) to (int8);
\draw[->] (mint7) to[bend right] (int9);
\draw[->] (mint8) to (int9);
\end{scope}
\end{tikzpicture}
	\caption{A morphism in $\Omega$ represented as a composite of three morphisms in $\Ass_\CY$, $T\overset{\overline{g}}{\to} V\overset{h}{\to} U\overset{g}{\to} \diamond$. The chosen subset $P\subset U$ is marked by red points. The corresponding interstice sets $I(h^{-1}(i))$ are written in red numbers, and the set $D(f^{-1}(\diamond))$ in blue numbers. The induced morphism $\mathcal{L}(\mu):\bigcup^P I(h^{-1}(i))\to D(f^{-1}(\diamond))$ is drawn in blue. Note that the unmarked points in $U$ are the reason that we do not necessarily get a morphism $C(\bigstar_{i\in P} I(h^{-1}(i))\to D(f^{-1}(\diamond))$.}\label{fig:CircleDecomp}
\end{figure}

\begin{defn}
Let $M\in \Lambda^\star$. We denote by $\Omega_M^E$ the subcategory of the weak fiber $\Omega_M$ whose morphisms are morphisms in $E$. 
\end{defn}

\begin{prop}\label{prop:LambdaWeakFibers}
For every $M$ in $\Lambda^\star$, there is an initial element in $\Omega_M^E$.
\end{prop}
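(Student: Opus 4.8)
The plan is to follow the template of the simplicial case, \cref{lem:contractibleEfibers} together with the uniqueness half of \cref{lem:cofinalinclusion}: for each $M$ I write down an explicit object $Z_M$ of the weak fibre $\Omega_M$, assembled from the duality combinatorics, and then check that every morphism of $E$ out of $Z_M$ lying over $M$ is forced on each component, hence unique. The argument splits according to the two types of object in $\Lambda^\star$.

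If $M=\{[m_i]\}_{i\in S}$ is a collection, then by the definition of $\mathcal{L}$ on objects every object of $\Omega_M$ has the form $(Q'\subseteq W\overset{h}{\leftarrow}V)$ with $W\in\Ass$ and $\mathbb{O}(h^{-1}(i))\cong[m_i]$ after identifying the indexing set $Q'$ with $S$. Using that $\mathbb{I}([m])$ is the canonical linearly ordered set of cardinality $m$ with $\mathbb{O}(\mathbb{I}([m]))\cong[m]$ (\cref{const:LinInterstice}), I take $Z_M:=(S\subseteq S^+\overset{f_M}{\leftarrow}T_M)$, where $S^{+\circ}=S$, where $T_M^\circ=\coprod_{i\in S}\mathbb{I}([m_i])$, and where $f_M$ is the morphism of $\Ass$ sending $\mathbb{I}([m_i])$ onto $i$ with its intrinsic fibrewise order; then $\mathcal{L}(Z_M)=\{\mathbb{O}(\mathbb{I}([m_i]))\}_{i\in S}\cong M$. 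Given any $Z=(Q'\subseteq W\overset{h}{\leftarrow}V)$ in $\Omega^E_M$, an $E$-morphism $Z_M\to Z$ of the type of \cref{prop:EinCYone} consists of maps $g\colon W\to S^+$ and $\bar g\colon T_M\to V$; the conditions $g^{-1}(S)=Q'$ and $g|_{Q'}$ an isomorphism force $g$ to restrict to the identifying bijection $Q'\to S$ and to collapse $W^\circ\setminus Q'$ to the basepoint, while commutativity of the square together with the requirement that $\mathcal{L}$ send the morphism to $\id_M$ forces $\bar g$ to restrict on $T_M^\circ=f_M^{-1}(S)$ to the unique order-isomorphism onto $h^{-1}(Q')$. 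All fibres of $g$ and $\bar g$ over non-basepoints are singletons, so no further data is carried, and the morphism is unique; I then check that this forced assignment really is a well-defined morphism of $\Omega^E_M$.

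If $M=\langle n\rangle\in\Lambda$, then every object of $\Omega_M$ has the form $(\{1\}\subseteq\diamond\overset{h}{\leftarrow}V)$ with $D(h^{-1}(\diamond))\cong\langle n\rangle$, i.e.\ with $h^{-1}(\diamond)$ of cardinality $n+1$. Since $\mathbb{D}$ restricts to an isomorphism $D\colon\Lambda^{\op}\to\Lambda$ (\cref{const:CyclicDuality}), I choose the cyclically ordered $(n+1)$-element set $C$ with $D(C)=\langle n\rangle$ and put $Z_M:=(\{1\}\subseteq\diamond\overset{f_M}{\leftarrow}T_M)$ with $T_M^\circ=C$ and $f_M$ the morphism of $\Ass_\CY$ selecting all of $C$ with its cyclic order, so $\mathcal{L}(Z_M)=D(C)=\langle n\rangle$. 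An $E$-morphism $Z_M\to Z$ (necessarily of the form in \cref{prop:EinCYtwo}) is the identity on $\diamond$ together with $\bar g\colon T_M\to V$, and commutativity plus the requirement that $\mathcal{L}$ send it to $\id_M$ forces $\bar g$ to restrict to the unique cyclic-order isomorphism $C=f_M^{-1}(\diamond)\xrightarrow{\ \cong\ }h^{-1}(\diamond)$ determined by $D(h^{-1}(\diamond))\cong\langle n\rangle=D(C)$ and the full faithfulness of $D$; again there is no residual data and the morphism is unique.

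The main obstacle is the order bookkeeping hidden in the phrase ``forces $\bar g$''. In the collection case I must verify that the forced $\bar g$ is compatible with the subinterval decompositions of $f_M$ and $h$ so that the square genuinely commutes and $\mathcal{L}$ carries the morphism to $\id_M$, and in the cyclic case I must trace the cyclic order on $h^{-1}(\diamond)$ through $\mathbb{D}$ and \cref{lem:LinAndCycOrdSum} to confirm that it is transported to the chosen cyclic order on $C$. This unwinding is no harder than the one already carried out for \cref{lem:cofinalinclusion}; once $Z_M$ is seen to be initial in each case, the proposition follows.
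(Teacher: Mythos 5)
Your proposal is correct and follows essentially the same route as the paper: the same candidate initial objects ($\coprod_{i}\mathbb{I}([m_i])$ over a collection, the $D$-dual cyclic set over $\diamond$), with uniqueness of the comparison morphism forced componentwise by the $E$-conditions and the full faithfulness of $O$ and $D$. The only cosmetic difference is that you realize the cyclic fiber as $D^{-1}(\langle n\rangle)$ where the paper uses $D(\langle n\rangle)$ together with the isomorphism $D^2(\langle n\rangle)\cong\langle n\rangle$.
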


\begin{proof}
We will complete the proof in two cases: 

Suppose first that $M=\{[m_i]\}_{i\in P}$. Then the weak fiber only involves morphisms in $\Ass\subset \Ass_\CY$. We define a set 
\[
T:= \coprod_{i\in P} \mathbb{I}([m_i]) 
\]
and a morphism $f_M: T\to P$ by setting $f_M(\mathbb{I}([m_i]))=i$. The canonical isomorphisms 
\[
\eta_i:O(\mathbb{I}([m_i]))\cong [m_i]
\]
equip $P\subset P \overset{f_M}{\longleftarrow} T$ with the structure of an object of $\Omega_M^E$. 
Given an element 
\[
P\subset U\overset{f}{\leftarrow} V 
\]
and an isomorphism $\phi_i: O(f^{-1}(i))\cong [m_i]$, we define a unique morphism $\mu$ in $\Omega_M^E$ given by
\[
\begin{tikzcd}
 P\arrow[Subseteq]{r}{}& P & T\arrow[l, "f_M"']\arrow[d, "\overline{g}"] \\
 P\arrow[Subseteq]{r}{}& U\arrow[u, "g"] & \arrow[l,"f"] V
\end{tikzcd}
\]
 as follows. Since this must be a morphism in $E$, we see that $g$ must map $P$ identically to $P$, and send $U^\circ \setminus P$ to the basepoint. On fibers, we consider the isomorphisms
\[
\eta_i^{-1}\circ \phi_i: O(f^{-1}(i))\to O(I[m_i])
\]
Since $O$ is fully faithful, this lifts to a unique isomorphism $I(\phi_i):I([m_i])\cong f^{-1}(i)$. We therefore see that $\overline{g}$ must be the coproduct of these morphisms if $\mu$ is to be a morphism in the weak fiber. It is immediate that this does, indeed, define a morphism in $\Omega_M^E$. 

Now suppose instead $M=\langle m\rangle$. We define $f_M: D(\langle m\rangle) \to \diamond$ to be the morphism with $f_M^{-1}(\diamond)=D(\langle m\rangle)$. Since $D$ is an equivalence,  we choose the isomorphism 
\[
\eta: D^2(\langle m \rangle)\cong \langle m\rangle
\] 
Suppose given another element 
\[
\{1\}\subset \diamond \overset{f}{\longleftarrow} T
\]
with $\phi: D(f^{-1}(\diamond))\cong \langle n\rangle$ in the weak fiber. We define a unique morphism $\mu\in \Omega_M^E$ given by
\[
\begin{tikzcd}
 \{1\}\arrow[Subseteq]{r}{}& \diamond & D(\langle m\rangle \arrow[l, "f_M"']\arrow[d, "\overline{g}"] \\
 \{1\}\arrow[Subseteq]{r}{}& \diamond\arrow[u, "g"] & \arrow[l,"f"] T
\end{tikzcd}
\]
as follows. The morphism $g$ must be the identity, so we need only define $\overline{g}$. The condition that $\mu$ be in the weak fiber implies that $\eta_i\circ D(\overline{g}|_{D(\langle m\rangle)})=\phi_i$, i.e. $D(\overline{g}|_{D(\langle m\rangle)})=\eta_i^{-1}\circ \phi_i$. However, since $D$ is fully faithful, this condition defines a unique isomorphism $D(\langle m\rangle )\cong f^{-1}(\diamond)$, determining $\overline{g}$, and thus $\mu$, uniquely. 
\end{proof}

\begin{prop}\label{prop:LambdaLaxFibOverAss}
Suppose given an object $M=\{[m_i]\}_{i\in P}$ in $\Lambda^\star$, an object
\[
Z:= \left\lbrace Q\subset S\overset{f_Z}{\leftarrow} T \right\rbrace
\]
in $\Omega$, and a morphism 
\[
(\phi, \{\gamma_i\}_{i\in Q}):\mathcal{L}(Z)\to M
\]
in $\Lambda^\star$. Then there is an element $X_{M,Z}$ in $\Omega_M^E$ and a morphism $\Phi:Z\to X_{M,Z}$ in $\Omega$ covering $(\phi,\{\gamma_i\}_{i\in Q})$ such that, for any other morphism $\Psi:Z\to X$ covering $(\phi,\{\gamma_i\}_{i\in Q})$, there is a unique morphism $\tau: X_{M,Z}\to X$ which makes the diagram 
\[
\begin{tikzcd}
 & Z\arrow[dr, "\Psi"]\arrow[dl,"\Phi"'] & \\
 X_{M,Z}\arrow[rr, "\tau"'] & & X
\end{tikzcd}
\]
commute.
\end{prop}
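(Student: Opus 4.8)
The plan is to adapt the proof of \cref{lem:cofinalinclusion} to the $\Ass_\CY$-setting. First I would note that the statement concerns only the ``linear part'' of $\Lambda^\star$: since $M=\{[m_i]\}_{i\in P}$ lies in that part and there are no morphisms in $\Lambda^\star$ from an object of $\Lambda$ into such an $M$, the existence of the given morphism $\mathcal{L}(Z)\to M$ forces $S\neq\diamond$ in $Z=\{Q\subset S\overset{f_Z}{\longleftarrow}T\}$; hence $T\neq\diamond$, and $f_Z$ — together with all four constituent maps of any morphism $Z\to X$ in $\Omega_{/M}$ — is a morphism of $\Ass\subset\Ass_\CY$. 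Thus only case (1) of the definition of $\mathcal{L}$ on morphisms is relevant, the cyclic-duality functor $\mathbb{D}$ never enters, and the argument runs parallel to that of \cref{lem:cofinalinclusion}, with $\mathbb{O}$-duality and imbrications playing the roles they did there; the one structural difference is that the index sets $P$ and $Q$ are finite sets with chosen fiber orders rather than linearly ordered sets.

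Next I would construct $X_{M,Z}$ and $\Phi$ explicitly. Write the given morphism as $(\phi,\{\gamma_i\}_{i\in Q})$ with $\phi\colon P\to Q$ in $\bFin$ carrying chosen fiber orders and $\gamma_i\colon\bigoplus_{j\in\phi^{-1}(i)}[m_j]\to O(f_Z^{-1}(i))$. For each $i\in Q$, cut $O(f_Z^{-1}(i))$ into left padding, minimal-image part, and right padding according to $\gamma_i$ — that is, factor $\gamma_i$ through its minimal interval in the sense of the discussion preceding \cref{lem:endpointambiguity} — and dualize under $\mathbb{O}$ to split $f_Z^{-1}(i)$ accordingly. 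Then take $S'$ to consist of $P$, one extra element for each end-padding interstice, and the elements of $S^\circ\setminus Q$ (the interstitial roots not in $P$, cf. \cref{fig:LambdaLocMorphs}); let $g\colon S'\to S$ be $\phi$ on $P$ and the evident inclusion on $S^\circ\setminus Q$, with fiber orders those of $\phi$. Define $f'\colon T'\to S'$ so that $(f')^{-1}(j)=\mathbb{I}([m_j])$ for $j\in P$ — using the canonical isomorphisms $\eta_j\colon O(\mathbb{I}([m_j]))\cong[m_j]$ from the proof of \cref{prop:LambdaWeakFibers} — with the fibers over the padding elements and over $S^\circ\setminus Q$ read off from the factorization, and let $\bar g\colon T\to T'$ be the $\mathbb{O}$-dual of the factorized $\gamma_i$ together with $f_Z$ on the rest. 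That this square is a morphism of $\Omega$ reduces, exactly as in \cref{lem:cofinalinclusion}, to endpoint-preservation — conditions (1) and (2) in the definition of $\Lambda^\star$ — while the monoidality of $\mathbb{O}$ (it sends ordinal sum to imbrication) identifies $\mathcal{L}$ of the square with $(\phi,\{\gamma_i\})$ on the nose. This produces $X_{M,Z}=\{P\subset S'\overset{f'}{\longleftarrow}T'\}\in\Omega_M^E$ together with $\Phi\colon Z\to X_{M,Z}$ covering $(\phi,\{\gamma_i\})$.

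For the universal property, given any other $\Psi\colon Z\to X$ in $\Omega$ covering $(\phi,\{\gamma_i\})$, I would decompose the square defining $\Psi$ along the left/minimal/right splitting of each $O(f_Z^{-1}(i))$ and along $S=Q\amalg(S^\circ\setminus Q)\amalg\{\ast\}$, exactly as in \cref{lem:cofinalinclusion}, and read off the factoring morphism $\tau\colon X_{M,Z}\to X$ componentwise. The key rigidity is that a morphism of $E$ into $X$ lying over $\id_M$ must restrict on the distinguished $M$-part to the identity of each $\mathbb{I}([m_j])$ and of $[m_j]$ — precisely as $[k]\overset{f_M}{\to}[m]$ is forced in \cref{lem:cofinalinclusion} — while $\bar g$ on the minimal subintervals is already pinned by $\gamma_i$, an analogue of \cref{lem:endpointambiguity} for outer interstices forcing the remaining interior components. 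The padding elements and the roots of $S^\circ\setminus Q$ contribute only identity components to these constraints, just as $[n_\ell]$ and $[n_r]$ do there, so $\tau$ — which moreover lies in $E$ — is determined uniquely.

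The main obstacle is purely combinatorial bookkeeping in the construction step: correctly enumerating which extra elements $S'$ must carry (one per end-padding interstice, one per root of $S^\circ\setminus Q$) and checking that $g$ is an honest morphism of $\Ass$ with fiber orders making $\mathcal{L}(\Phi)$ equal to — not merely isomorphic to — $(\phi,\{\gamma_i\})$. Once the construction is arranged so that every non-$M$ component of every map preserves endpoints, the $\star$-product manipulations and the commutativity and uniqueness verifications go through as in \cref{lem:cofinalinclusion}. Together with \cref{prop:LambdaWeakFibers} and the corresponding statement for targets $M$ in $\Lambda$, this is what is needed to invoke \cite[Lemma 3.1.1]{Walde} and conclude that $\mathcal{L}\colon\Omega\to\Lambda^\star$ is the localization of $\Omega$ at $E$.
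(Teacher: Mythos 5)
There is a genuine gap at the very first step: your claim that the existence of a morphism $\mathcal{L}(Z)\to M$ with $M=\{[m_i]\}_{i\in P}$ forces $S\neq\diamond$ is false, and it causes you to skip an entire case that the statement requires. In $\Lambda^\star$ the empty hom-sets go in the \emph{other} direction: $\Hom(\{[m_i]\},\langle n\rangle)=\emptyset$, but a morphism $\langle n\rangle\to\{[m_i]\}_{i\in S}$ does exist (it is a cyclic order on $S$ together with a map $\bigcup^S[m_i]\to\langle n\rangle$ in $\Lambda$). So if $S=\diamond$, then $\mathcal{L}(Z)=D(f_Z^{-1}(\diamond))\in\Lambda$ and morphisms $\mathcal{L}(Z)\to M$ to a collection are perfectly possible; likewise, morphisms $Z\to X$ in $\Omega$ from an object over $\diamond$ to an object over $\Ass$ exist, since $\Hom_{\Ass_\CY}(U,\diamond)\neq\emptyset$. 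Handling this case is not a formality: one must choose a linear order on $g^{-1}(\diamond)$ (equivalently on $P$) compatible with the cyclic order, observe that such a choice uniquely factors both the given morphism of $\Lambda^\star$ and any $\Psi:Z\to X$ through a ``linearized'' intermediate object $Y$ with $\diamond$ replaced by $\{1\}$, and only then reduce to the $S\in\Ass$ case; uniqueness of $\tau$ then needs a separate argument showing it is independent of the chosen linear order. Without this your proof establishes the proposition only for $Z$ with $S\in\Ass$.

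A secondary, more repairable issue is in your Case-1 bookkeeping. Because the source of $\gamma_i$ is the ordinal sum $\bigoplus_{j\in\phi^{-1}(i)}[m_j]$ rather than an imbrication, the minimal images $\gamma_i([m_j])$ and $\gamma_i([m_{j+1}])$ of consecutive blocks need not abut inside $O(f_Z^{-1}(i))$. Your enumeration of the extra elements of $S'$ (``one per end-padding interstice, one per root of $S^\circ\setminus Q$'') therefore undercounts: you also need a new element for each nonempty internal gap between consecutive blocks, with the fiber of $f'$ over it carrying the leftover interstices in their induced order; otherwise $\overline{g}$ cannot be defined on all of $T$ and the square does not commute. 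The left/minimal/right splitting that sufficed for \cref{lem:cofinalinclusion} is exactly what fails to transfer here, since in $\Delta^\star$ the source was an imbrication and consecutive blocks shared endpoints. The rest of your outline --- the rigidity of $E$-morphisms over $\id_M$, the role of \cref{lem:endpointambiguity}, and the assembly into the localization statement via \cref{prop:LambdaWeakFibers} --- is in the right spirit.
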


\begin{proof}
There are two cases to consider, corresponding to whether or not $S=\diamond$. 

\phantomsection
\label{proof:CaseOne}
{\itshape Case 1:}  First suppose $S\in \Ass$. In this case, we construct $X_{M,Z}$ as follows. Let 
\[
P\subset P \overset{f_M}{\longleftarrow} U
\]
be the object constructed in \cref{prop:LambdaLaxFibOverAss}. Then, in particular, $\phi:P\to Q\subset S$. 

For each $i\in Q$, we have a morphism 
\[
\gamma_i: \bigoplus_{j\in \phi^{-1}(i)}[m_i]\to O(f_Z^{-1}(i))
\]
For each $j\in \phi^{-1}(i)$ denote by $\gamma_i([m_j])$ the smallest subinterval of $O(f_Z^{-1}(i))$ containing the image of $[m_j]$ under $\gamma_i$. Then $\gamma_i|_{[m_j]}\to \gamma_i([m_j])$ preserves boundary, and thus corresponds to a map $\overline{g}_{j}:I(\gamma_i([m_j]))\to I([m_j])$ of linearly ordered sets. Moreover, $\overline{g}_j$ fits into a commutative diagram 
\[
\begin{tikzcd}
S & I(\gamma_i([m_j]))\arrow[l,"f_{Z}"']\arrow[d,"\overline{g}_j"]\\
P\arrow[u,"\phi"] & I([m_j])\arrow[l,"f_M"']
\end{tikzcd}
\]
in $\Ass$. We here use the identification of  $I(\gamma_i([m_j]))$ with a subset of $T$. 

Since, by definition, $U=\coprod_{j\in P} I([m_j])$, we can then write down a commutative diagram 
\begin{equation}\label{diag:CommXZM}
\begin{tikzcd}
S & \coprod_{i,j}I(\gamma_i([m_j]))\arrow[l,"f_{Z}"']\arrow[d,"\coprod_{i,j}\overline{g}_j"]\\
P\arrow[u,"\phi"] & \coprod_{j\in P} I([m_j])\arrow[l,"f_M"']
\end{tikzcd}
\end{equation}
in $\Ass$. 

For each $i\in Q$, this restricts to a diagram of ordered sets 
\[
\begin{tikzcd}
\{i\} & \coprod_{j}I(\gamma_i([m_j]))\arrow[l,"f_{Z}"']\arrow[d,"\coprod_{j}\overline{g}_j"]\\
\phi^{-1}(i)\arrow[u,"\phi"] & \coprod_{j\in\phi^{-1}(i)} I([m_j])\arrow[l,"f_M"']
\end{tikzcd}
\]
We denote $L_i:=f_Z^{-1}(i)\setminus\coprod_{j\in \phi^{-1}(i)}I(\gamma_i([m_j])$, and proceed as follows.
\begin{itemize}
\item For $p, p+1$ in $\phi^{-1}(i)$, if there is at least one $k\in L_i$ such that
\[
I(\gamma_i([m_{p}]))<k< I(\gamma_i([m_{p+1}]))
\]
 we define a new element $r_p$ and append it to $\phi^{-1}(i)$ between $p$ and $p+1$.
\item If there exists $k\in L_i$ such that 
\[
k< I(\gamma_i([m_{p}]))
\]
for all $p\in \phi^{-1}(i)$, then we append a new minimal element $r_{min}$ to $\phi^{-1}(i)$. 
\item If there exists $k\in L_i$ such that 
\[
I(\gamma_i([m_{p}]))< k
\]
for all $p\in \phi^{-1}(i)$, then we append a new maximal element $ $ to $\phi^{-1}(i)$.
\end{itemize} 
Call the resulting set $W_i\supset \phi^{-1}(i)$. We then set 
\[
R_i:= U\amalg L_i
\]
and define $f_{i}: R_i \to W_i$ to act as $f_{M}$ on $U$ and on $L_i$ to send
\begin{itemize}
\item $k\mapsto r_p$ if 
\[
I(\gamma_i([m_{p}]))<k< I(\gamma_i([m_{p+1}]))
\]
\item $k\mapsto r_{min}$ if 
\[
k< I(\gamma_i([m_{p}]))
\]
for all $p\in \phi^{-1}(i)$
\item $k\mapsto r_{max}$ if 
\[
I(\gamma_i([m_{p}]))< k
\]
for all $p\in \phi^{-1}(i)$
\end{itemize}
We make $f_i$ into a morphism in $\Ass$ by taking the linear order induced by $L_i$ on the fibers over the $r_{p}$, $r_{min}$ and $r_{max}$. We then define 
\[
\overline{g}^i: f_Z^{-1}(i)\to R_i
\]
to act as $\coprod_{j\in \phi^{-1}(i)} \overline{g}_j$ on $\coprod_{j\in \phi^{-1}(i)}  I(\gamma_i([m_j]))$ and as the identity on $L_i$. We further define $\phi_i: W_i\to \{i\}$ to send every element to $i$. We thus have a commutative diagram 
\[
\begin{tikzcd}
\{i\}\arrow[Subseteq]{r}{} &\{i\} & \coprod_{j}I(\gamma_i([m_j]))\arrow[l,"f_{Z}"']\arrow[d,"\overline{g}^i"]\\
P\cap \phi^{-1}(i)\arrow[Subseteq]{r}{} & W_i\arrow[u,"\phi_i"] & R_i\arrow[l,"f_i"']
\end{tikzcd}
\]
in $\Ass$, which covers the morphism $\gamma_i:\bigoplus_{j\in \phi^{-1}(i)} [m_j] \to O(f_Z^{-1}(i))$. Taking the coproduct over $i\in \on{Im}(\phi)$ gives us a morphism
\[
\begin{tikzcd}
\on{Im}(\phi)\arrow[Subseteq]{r}{} &\on{Im}(\phi) & f_Z^{-1}(\on{Im}(\phi))\arrow[l,"f_{Z}"']\arrow[d,"\coprod_{i}\overline{g}^i"]\\
P\cap \phi^{-1}(i)\arrow[Subseteq]{r}{} & \coprod_{i}W_i\arrow[u,"\coprod_{i}\phi_i"] & \coprod_{i}R_i\arrow[l,"\coprod_{i}f_i"']
\end{tikzcd}
\]

Finally, we set
\[
W=(\coprod_{i\in \on{Im}(\phi)}W_i) \amalg (S\setminus \on{Im}(\phi))
\]
and 
\[
R=(\coprod_{i\in \on{Im}(\phi)}R_i) \amalg (T\setminus f_{Z}^{-1}(\on{Im}(\phi)))
\]
We then define morphisms:
\begin{itemize}
\item $g:W\to S$ to act as $\coprod_{i\in \on{Im}(\phi)} \phi_i$ on $\coprod_{i\in \on{Im}(\phi)} W_i$ and as the identity otherwise.
\item $f_{M,Z}: R\to W$ to act as $f_i$ on $R_i$ and as $f_Z$ on $T\setminus f_Z^{-1}(\on{Im}(\phi))$. 
\item $\overline{g}:T\to R$ to act as $\coprod_{i\in \on{Im}(\phi)} \overline{g}^{i}$ on $f_Z^{-1}(\on{Im}(\phi))$ and the identity elsewhere. 
\end{itemize}
By construction, this defines a commutative diagram
\begin{equation}\label{diag:MorphPhi}
\begin{tikzcd}
Q\arrow[Subseteq]{r}{}  & S & T\arrow[l,"f_{Z}"']\arrow[d,"\coprod_{i}\overline{g}"]\\
P\arrow[Subseteq]{r}{} & W\arrow[u,"g"] & R\arrow[l,"f_{M,Z}"']
\end{tikzcd}
\end{equation}
in $\Ass$, covering $(\phi,\{\gamma_i\})$, and the bottom row is in $\Omega_M$. We therefore define $X_{M,Z}$ to be the bottom row, and $\Phi$ to be the morphism defined by the diagram \eqref{diag:MorphPhi}. 

To check the remaining universal property, we 
let 
\[
P\subset A\overset{f_X}{\leftarrow} B
\]
and $\beta_i: O(f_X^{-1}(i))\cong [m_i]$ be another element in $\Omega_M^E$, and let $\nu$ be a morphism 
\begin{equation*}
\begin{tikzcd}
 Q\arrow[Subseteq]{r}{}& S & T\arrow[l, "f_Z"']\arrow[d, "\overline{\rho}"] \\
 P\arrow[Subseteq]{r}{}& A\arrow[u, "\rho"] & \arrow[l,"f_{X}"] B
\end{tikzcd}
\end{equation*}
covering $(\phi, \{\gamma_i\}_{i\in Q})$.

For each $i\in \on{Im}(\phi)$, the identity on $P$ and the condition nothing be sent to the basepoint uniquely determines a map of ordered sets 
\[
\zeta_i:\rho^{-1}(i)\to W_i.
\]
Moreover, the $\zeta_i$ together with the restriction of $\rho$ to $A\setminus \rho^{-1}(\on{Im}(\phi))$ uniquely determines a map 
\[
\zeta: A\to W
\]
such that the diagram 
\[
\begin{tikzcd}
A\arrow[rr,"\rho"]\arrow[dr,"\zeta"] & &  S \\
 & W\arrow[ur, "g"] &
\end{tikzcd}
\]
commutes. Note that $\zeta|_P$ induces the identity $P\to P$. 

Moreover, for each $i\in \on{Im}(\phi)$ the isomorphisms $I(\beta_i)$ on $I([m_j])$ and restriction $\overline{\rho}_i:L_i\to f_X^{-1}(\rho^{-1}(i))$  uniquely determine a map 
\[
\overline{\zeta}_i:R_i \to f_X^{-1}(\rho^{-1}(i)).
\]
These, together with the restriction of $\overline{\rho}$ to $T\setminus f_Z^{-1}(\on{Im}(\phi))$ uniquely determine a morphism 
\[
\overline{\zeta} R\to B
\]
such that the diagram 
\[
\begin{tikzcd}
B & & T\arrow[dl, "\overline{g}"]\arrow[ll,"\overline{\rho}"'] \\
 & R\arrow[ul,"\overline{\zeta}"] & 
\end{tikzcd}
\]
commutes, and the restriction of $\overline{\zeta}$ to $f_X^{-1}(P)$ is the isomorphism $\coprod_{i}I(\beta_i)$. 

We therefore have constructed a unique morphism 
\[
(\zeta,\overline{\zeta}):X_{Z,M} \to X
\]
in $\Omega_M^E$ such that the diagram 
\[
\begin{tikzcd}
 & Z\arrow[dr, "\Psi"]\arrow[dl,"\Phi"'] & \\
 X_{M,Z}\arrow[rr, "{(\zeta,\overline{\zeta})}"'] & & X
\end{tikzcd}
\]
commutes. 

{\itshape Case 2:} Now suppose that $S=\diamond$. Then $\phi$ is completely determined by a cyclic order on $P$, and $\gamma$ is a morphism 
\[
\gamma: \bigcup\nolimits^S [m_i] \to D(f_Z^{-1}(\diamond)).
\]
We note that, given any morphism 
\[
\begin{tikzcd}
 \{1\}\arrow[Subseteq]{r}{}& \diamond & V\arrow[l, "p"']\arrow[d, "\overline{g}"] \\
 A\arrow[Subseteq]{r}{}& B\arrow[u, "g"] & \arrow[l,"\ell"]C
\end{tikzcd}
\]
a choice of linear order on $g^{-1}(\diamond)$ compatible with the cyclic order uniquely determines a factorization
\[
\begin{tikzcd}
 \{1\}\arrow[Subseteq]{r}{}& \diamond & V\arrow[l, "p"']\arrow[d, "\id_V"] \\
  \{1\}\arrow[Subseteq]{r}{}& \{1\}\arrow[u,] & V\arrow[l, "p"']\arrow[d, "\overline{g}"] \\
 A\arrow[Subseteq]{r}{}& B\arrow[u, "g"] & \arrow[l,"\ell"]C
\end{tikzcd}
\]
Similarly, given a morphism $(\psi, \eta): \langle n\rangle \to \{[n_i]\}_{i\in S}$, a choice of linear order on $S$ compatible with the cyclic order uniquely determines a factorization 
\[
\langle n\rangle \to \{[n]\} \to \{[n_i]\}.
\]
We can therefore choose a linear order on $P$ and define $Y$ to be the object 
\[
\{1\} \subset \{1\} \overset{f_Z}{\leftarrow} T.
\]
Then take $(\phi_Y,\gamma_Y)$ to be the unique morphism yielding a factorization
\[
\mathcal{L}(\phi,\gamma): D(f^{-1}_{Z}(\diamond))\to K(f^{-1}_Z(\diamond))\overset{(\phi_Y,\gamma_Y)}{\longrightarrow} \{[m_i]\}_{i\in P} 
\]
We can then construct $X_{M,Y}$ as in \hyperref[proof:CaseOne]{case 1}. It is immediate that 
\[
\begin{tikzcd}
 \{1\}\arrow[Subseteq]{r}{}& \diamond & T\arrow[l, "f_Z"']\arrow[d, "\id_T"] \\
  \{1\}\arrow[Subseteq]{r}{}& \{1\}\arrow[u,] &T\arrow[l, "f_Z"']\arrow[d, "\overline{g}"] \\
 P\arrow[Subseteq]{r}{}& W_Y\arrow[u, "g"] & \arrow[l,"f_{M,Y}"]R_Y
\end{tikzcd}
\] 
defines a morphism $\Phi$ in $\Omega$ covering $(\phi,\gamma)$. 

Now suppose given any other morphism $\Psi=(\psi,\overline{\psi}): Z\to X$ covering $(\phi, \gamma)$. A choice of linear order on $\psi^{-1}(\diamond)$ compatible with the chosen linear order on $P$ uniquely factors $\Psi$ through $Y$. We therefore get a morphism $\tau: X_{M,Y}\to X$ such that the diagram
\[
\begin{tikzcd}
 & Z\arrow[dr, "\Psi"]\arrow[dl,"\Phi"'] & \\
 X_{M,Y}\arrow[rr, "\tau"'] & & X
\end{tikzcd}
\]
 commutes. 

To see that this morphism is unique, suppose that $(\xi,\overline{\xi}), (\zeta,\overline{\zeta}): X_{M,Y}\to X$ are two such morphisms. Then, choosing a linear order on $\psi^{-1}(\diamond)$ compatible with the chose linear order on $P$ uniquely factors the diagram as
\[
\begin{tikzcd}
 & Z\arrow[d]\arrow[ddr, "\Psi"]\arrow[ddl,"\Phi"'] & \\
 & Y\arrow[dl]\arrow[dr] & \\
 X_{M,Y}\arrow[rr,shift left=0.75ex]\arrow[rr, shift right=0.75ex, "{(\xi,\overline{\xi}), (\zeta,\overline{\zeta})}"'] & & X
\end{tikzcd}
\]
But, by \hyperref[proof:CaseOne]{case 1}, there is a unique morphism making the bottom triangle commute. Therefore, $(\xi,\overline{\xi})=(\zeta,\overline{\zeta})$, proving the proposition.  
\end{proof}

\begin{prop}\label{prop:LambdaLaxFibOverLambda}
Suppose given an object $M=\langle m\rangle$ in $\Lambda^\star$, an object
\[
Z:= \left\lbrace Q\subset S\overset{f_Z}{\leftarrow} T \right\rbrace
\]
in $\Omega$, and a morphism 
\[
(\phi, \{\gamma_i\}_{i\in Q}):\mathcal{L}(Z)\to M
\]
in $\Lambda^\star$. Then there is an element $X_{M,Z}$ in $\Omega_M^E$ and a morphism $\Phi:Z\to X_{M,Z}$ in $\Omega$ covering $(\phi,\{\gamma_i\}_{i\in Q})$ such that, for any other morphism $\Psi:Z\to X$ covering $(\phi,\{\gamma_i\}_{i\in Q})$, there is a unique morphism $\tau: X_{M,Z}\to X$ which makes the diagram 
\[
\begin{tikzcd}
 & Z\arrow[dr, "\Psi"]\arrow[dl,"\Phi"'] & \\
 X_{M,Z}\arrow[rr, "\tau"'] & & X
\end{tikzcd}
\]
commute.
\end{prop}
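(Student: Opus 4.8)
The plan is to run the argument closely parallel to the proof of \cref{prop:LambdaLaxFibOverAss}, exploiting a simplification available here. Since the target $M=\langle m\rangle$ lies in $\Lambda\subset\Lambda^\star$ and $\Lambda^\star$ has \emph{empty} hom-sets from the collections $\{[n_i]\}$ into objects of $\Lambda$, the mere existence of a morphism $(\phi,\{\gamma_i\})\colon\mathcal{L}(Z)\to M$ forces $\mathcal{L}(Z)$ to be an object of $\Lambda$ as well; by the definition of $\mathcal{L}$ on objects this means $Z$ has the form $\{1\}\subset\diamond\overset{f_Z}{\leftarrow}T$ with $\mathcal{L}(Z)=\mathbb{D}(f_Z^{-1}(\diamond))$, so that $(\phi,\{\gamma_i\})$ is simply a morphism of cyclically ordered sets between $\mathbb{D}(f_Z^{-1}(\diamond))$ and $\langle m\rangle$. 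Thus we are automatically in the analogue of \emph{Case 2} of the proof of \cref{prop:LambdaLaxFibOverAss}; there is no ``$\Ass$-type'' case to treat.

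Next I would construct $X_{M,Z}$ by hand, imitating the construction of the initial object of $\Omega^E_M$ from \cref{prop:LambdaWeakFibers}. Using that $\mathbb{D}$ is fully faithful (and essentially self-inverse), transpose $\phi$ under cyclic duality to the unique morphism $\phi^\vee\colon f_Z^{-1}(\diamond)\to\mathbb{D}(\langle m\rangle)$ of cyclically ordered sets with $\mathbb{D}(\phi^\vee)$, composed with the canonical identification $\mathbb{D}\mathbb{D}(\langle m\rangle)\cong\langle m\rangle$, recovering $\phi$. Let $R$ be the pointed set whose non-basepoint elements are $\mathbb{D}(\langle m\rangle)\amalg\bigl(T^\circ\setminus f_Z^{-1}(\diamond)\bigr)$, and let $f_{M,Z}\colon R\to\diamond$ collapse $\mathbb{D}(\langle m\rangle)$ onto the non-basepoint of $\diamond$ and send the rest to the basepoint; the identification $\mathbb{D}(f_{M,Z}^{-1}(\diamond))=\mathbb{D}\mathbb{D}(\langle m\rangle)\cong\langle m\rangle$ then exhibits $X_{M,Z}:=\bigl(\{1\}\subset\diamond\overset{f_{M,Z}}{\leftarrow}R\bigr)$ as an object of $\Omega^E_M$ (the hypothesis $Z\in\Omega$ forces $f_{M,Z}\neq\id_\diamond$, so $X_{M,Z}$ really lies in $\Omega$). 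Define $\Phi\colon Z\to X_{M,Z}$ to be the identity on the copies of $\diamond$ together with the map $T\to R$ acting as $\phi^\vee$ on $f_Z^{-1}(\diamond)$ and as the identity on the complement; then $f_{M,Z}\circ\overline{g}=f_Z$ is immediate, and case (2) of the definition of $\mathcal{L}$ (where, crucially, no interstitial trees of the kind in \cref{fig:CircleDecomp} intervene, since both $Z$ and $X_{M,Z}$ are of $\diamond$-type) gives $\mathcal{L}(\Phi)=(\phi,\{\gamma_i\})$.

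For the universal property, let $\Psi\colon Z\to X$ be any morphism in $\Omega$ covering $(\phi,\{\gamma_i\})$ with $X\in\Omega^E_M$. Since $M\in\Lambda$, the object $X$ is again of $\diamond$-type, say $X=\bigl(\{1\}\subset\diamond\overset{f_X}{\leftarrow}B\bigr)$ equipped with a chosen trivialization $\mathbb{D}(f_X^{-1}(\diamond))\cong\langle m\rangle$, and by \cref{prop:EinCYtwo} (together with the description of $\Omega^E_M$ from \cref{prop:LambdaWeakFibers}) any $E$-morphism between two such objects restricts over $\diamond$ to an isomorphism of cyclically ordered sets. Hence a candidate $\tau\colon X_{M,Z}\to X$ is determined over $\diamond$ by an isomorphism $\mathbb{D}(\langle m\rangle)\to f_X^{-1}(\diamond)$, which — being required to be compatible with the two chosen trivializations — is pinned down uniquely by full faithfulness of $\mathbb{D}$, while on $T^\circ\setminus f_Z^{-1}(\diamond)$ it is forced by the relation $\Psi=\tau\circ\Phi$ and the fact that $\Phi$ is the identity there; one then checks this $\tau$ is a well-defined morphism of $\Omega$. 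This yields existence and uniqueness, and, combined with \cref{prop:LambdaWeakFibers}, \cref{prop:LambdaLaxFibOverAss}, and the criterion of \cite[Lemma~3.1.1]{Walde}, identifies $\mathcal{L}$ as the localization of $\Omega$ at $E$.

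The step I expect to be the main obstacle is precisely the coherent bookkeeping with cyclic duality: verifying that $\mathcal{L}(\Phi)=(\phi,\{\gamma_i\})$ after transposing $\phi\mapsto\phi^\vee$, and that the chosen trivialization of $\mathbb{D}(f_X^{-1}(\diamond))$ genuinely rigidifies $\tau$ rather than leaving an automorphism of $\langle m\rangle$ undetermined. Everything else is a routine adaptation of the $\diamond$-type parts of \cref{prop:LambdaWeakFibers} and of Case 2 of \cref{prop:LambdaLaxFibOverAss}, and is in fact lighter than the combinatorics of Case 1 of \cref{prop:LambdaLaxFibOverAss} because no interstitial trees with roots outside the chosen subset can occur.
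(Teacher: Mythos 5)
Your proposal is correct and follows essentially the same route as the paper's proof: observe that the existence of a morphism to $\langle m\rangle$ forces $S=\diamond$, take $X_{M,Z}$ to be $\{1\}\subset\diamond\leftarrow D(\langle m\rangle)\amalg(T\setminus f_Z^{-1}(\diamond))$ with $\Phi$ acting as the cyclic dual of the given structure map on $f_Z^{-1}(\diamond)$ and the identity elsewhere, and deduce uniqueness of $\tau$ from the initiality statement of \cref{prop:LambdaWeakFibers} over $\diamond$ together with the relation $\Psi=\tau\circ\Phi$ on the complement. The only cosmetic difference is your framing of this as ``the analogue of Case~2'' of \cref{prop:LambdaLaxFibOverAss}; no reduction via a choice of compatible linear order is needed here, and neither you nor the paper uses one.
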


\begin{proof}
We first note that $S=\diamond$, since otherwise no such morphism $(\phi,\{\gamma_i\}_{i\in Q})$ can exist. Consequently, $\phi=\id_\diamond$, and $\gamma$ is a morphism of cyclically ordered sets $\langle m\rangle \to  D(f_Z^{-1}(\diamond))$. We can therefore take $X_{Z,M}$ to be the object
\[
\{1\}\subset \diamond \overset{f_M}{\leftarrow} D(\langle m\rangle)
\]
constructed in the proof of \cref{prop:LambdaWeakFibers}. We then get a commutative diagram
\[
\begin{tikzcd}
 \{1\}\arrow[Subseteq]{r}{}& \diamond & T\arrow[l, "f_Z"']\arrow[d, "\overline{g}"] \\
 \{1\}\arrow[Subseteq]{r}{}& \diamond\arrow[u, "\id"] & \arrow[l,"f_M"] D(\langle m\rangle)\amalg (T\setminus f_Z^{-1}(\diamond))
\end{tikzcd}
\]
where $\overline{g}$ acts as $D(\gamma)$ on $f_Z^{-1}(\diamond)$ and the identity on $T\setminus f_Z^{-1}(\diamond)$. This morphism in $\Omega$ clearly covers $(\id,\gamma)$. 

Given $X\in \Omega_M^E$ and $\Psi:Z\to X$, represented by a diagram 
\[
\begin{tikzcd}
 \{1\}\arrow[Subseteq]{r}{}& \diamond & T\arrow[l, "f_Z"']\arrow[d, "\overline{\ell}"] \\
 \{1\}\arrow[Subseteq]{r}{}& \diamond\arrow[u, "\id"] & \arrow[l,"f_X"] A
\end{tikzcd}
\]
by \ref{prop:LambdaWeakFibers} that there is a unique morphism 
\[
\begin{tikzcd}
 \{1\}\arrow[Subseteq]{r}{}& \diamond & D(\langle m\rangle)\arrow[l, "f_M"']\arrow[d, "\overline{h}"] \\
 \{1\}\arrow[Subseteq]{r}{}& \diamond\arrow[u, "\id"] & \arrow[l,"f_X"] A
\end{tikzcd}
\] 
in $\Omega_M^E$.  Via the restriction of $\overline{\ell}$ to $T\setminus f^{-1}_Z(\diamond)$, this extends to a morphism 
\[
\begin{tikzcd}
 \{1\}\arrow[Subseteq]{r}{}& \diamond & D(\langle m\rangle)\amalg (T\setminus f_Z^{-1}(\diamond))\arrow[l, "f_M"']\arrow[d, "\overline{\xi}"] \\
 \{1\}\arrow[Subseteq]{r}{}& \diamond\arrow[u, "\id"] & \arrow[l,"f_X"] A
\end{tikzcd}
\]
in $\Omega_M^E$. 

Since all of the left-hand vertical morphisms are required to be identities, we only need to check that $\overline{\xi}\circ \overline{g}=\overline{\ell}$, which is true by construction. The requirement that $\overline{\xi}$ define a morphism in $\Omega_M^E$ uniquely determines $\overline{\xi}$ on $D(\langle m\rangle)$ and the requirement that $\overline{\xi}\circ \overline{g}=\overline{\ell}$ uniquely determines $\overline{\xi}$ on $T\setminus f_Z^{-1}(\diamond)$. 
\end{proof}

\begin{cor}
The functor $\mathcal{L}$ is an $\infty$-categorical localization of $\Omega$ at the morphisms of $E$. 
\end{cor}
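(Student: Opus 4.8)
The plan is to reproduce, \emph{mutatis mutandis}, the argument already used for the analogous statement about $\mathcal{L}:\Omega\to\Delta^\star$ in the previous section, which rests on \cite[Lemma 3.1.1]{Walde}. That criterion says that $\mathcal{L}$ is a localization at $E$ as soon as two conditions hold: for every $M\in\Lambda^\star$ the category $\Omega_M^E$ admits an initial object, and for every object $Z$ of the slice $\Omega_{/M}$ the slice $(\Omega_M^E)_{Z/}:=\Omega_M^E\times_{\Omega_{/M}}(\Omega_{/M})_{Z/}$ admits an initial object. (One also uses that $E$ is closed under composition, which is immediate from the descriptions in \cref{prop:EinCYone} and \cref{prop:EinCYtwo}.)

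First I would dispatch the first condition: it is precisely the content of \cref{prop:LambdaWeakFibers}, which exhibits $P\subset P\overset{f_M}{\leftarrow}U$ as an initial object of $\Omega_M^E$ when $M=\{[m_i]\}_{i\in P}$, and $\{1\}\subset\diamond\overset{f_M}{\leftarrow}D(\langle m\rangle)$ when $M=\langle m\rangle$. These are the only two shapes of object in $\Lambda^\star$, so no further cases arise. Next I would treat the second condition. An object of $(\Omega_M^E)_{Z/}$ is a morphism $Z\to X$ in $\Omega$ with $X$ in the weak fiber $\Omega_M^E$, compatible with the structure morphism $\mathcal{L}(Z)\to M$ that presents $Z$ as an object of $\Omega_{/M}$. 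Then \cref{prop:LambdaLaxFibOverAss} (in the case $M=\{[m_i]\}_{i\in P}$) and \cref{prop:LambdaLaxFibOverLambda} (in the case $M=\langle m\rangle$) together produce, for any such $Z$ and any morphism $\mathcal{L}(Z)\to M$, an object $X_{M,Z}\in\Omega_M^E$ together with a morphism $\Phi:Z\to X_{M,Z}$ covering $\mathcal{L}(Z)\to M$, through which every other $\Psi:Z\to X$ covering $\mathcal{L}(Z)\to M$ factors uniquely. This is exactly the assertion that $(X_{M,Z},\Phi)$ is an initial object of $(\Omega_M^E)_{Z/}$. The split into these two propositions matches the dichotomy $S=\diamond$ versus $S\in\Ass$ for the source of $Z$, which is forced by the target $M$: there are no morphisms in $\Lambda^\star$ from a collection $\{[m_i]\}$ to an object $\langle n\rangle$, and correspondingly none in $\Omega$ from an object with source in $\Ass$ to one with source $\diamond$.

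With both conditions verified, \cite[Lemma 3.1.1]{Walde} yields the conclusion. The genuine work has already been carried out in \cref{prop:LambdaWeakFibers}, \cref{prop:LambdaLaxFibOverAss}, and \cref{prop:LambdaLaxFibOverLambda}; the only delicate point remaining here is bookkeeping — making sure the ``covering $\mathcal{L}(Z)\to M$'' clauses in those propositions are read as initiality statements in the correct comma categories, and that the two propositions jointly exhaust exactly the cases demanded by Walde's criterion.
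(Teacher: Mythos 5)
Your proposal is correct and follows exactly the paper's argument: the paper likewise cites \cite[Lemma 3.1.1]{Walde}, using \cref{prop:LambdaWeakFibers} for the initial objects of the weak fibers $\Omega_M^E$ and \cref{prop:LambdaLaxFibOverAss} together with \cref{prop:LambdaLaxFibOverLambda} for the initiality of $(X_{M,Z},\Phi)$ in the relevant comma categories (phrased in the paper as cofinality of $\Omega_M^E\subset\Omega_{/M}$). Your reading of the two propositions as supplying initial objects of $(\Omega_M^E)_{Z/}$ is the same bookkeeping the paper performs in its $\Delta^\star$ analogue.
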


\begin{proof}
This follows again from \cite[Lemma 3.1.1]{Walde}. \cref{prop:LambdaWeakFibers} shows that the weak fibers $\Omega_M^E$ have initial objects, and \cref{prop:LambdaLaxFibOverAss} and  \cref{prop:LambdaLaxFibOverLambda}  show that the inclusions 
\[
\Omega_M^E\subset \Omega_{/M}
\]
are cofinal.
\end{proof}

We now rephrase the conditions from \cref{cor:CYalgconds} in terms of functors from $\Lambda^\star$. Note that by forgetting degerate intervals and localizing along $\mathcal{L}$, we have already dealt with conditions \ref{CYcond:emptysubsets} and \ref{CYcond:Eloc}. 

\begin{const}
Given $\langle n\rangle$ in $\Lambda^\star$, we define a morphism 
\[
\sigma_n:\langle n\rangle \to \{[1]_{(i,i+1)}\}_{(i,i+1)\in D(\langle n\rangle)} 
\]
 in $\Lambda^\star$ as follows. Take the canonical cyclic order on $D(\langle n\rangle)$, and define 
\[
\bigcup\nolimits^{D(\langle n\rangle)} [1]_{(i,i+1)}\to \langle n\rangle
\]
sending 
\begin{align*}
0\in [1]_{(i,i+1)} & \mapsto i\\
1\in [1]_{(i,i+1)} & \mapsto i+1.
\end{align*}
Note that given an object $X\in \Omega_{\langle n\rangle}$ in the fiber over $\langle n\rangle$, $\sigma_n$ is simply the image of the source morphism in $\Omega$.

Similarly, given an object $\{[m_i]\}_{i\in S}$ in $\Lambda^\star$, define two morphisms 
\begin{align*}
t_{\{m_i\}}:\{[m_i]\}_{i\in S} & \to \{[1]_i\}_{i\in S}\\
s_{\{m_i\}}: \{[m_i]\}_{i\in S} & \to \{[1]_{(j,j+1)}\}_{(j,j+1)\in \bigoplus_{i\in S} I([m_i])}
\end{align*}
in $\Lambda^\star$ as follows. We define $t_{\{m_i\}}:=(\id_S,\{f_i\})$ where $f_i: [1]_i\to [m_i]$ is given by the formula
\begin{align*}
f_i(0)& =0\\
f_i(1)& =m_i
\end{align*}
We define $s_i:=(\phi,\{g_{i}\})$, where
\[
\phi:\bigoplus_{i\in S}I([m_i])\to S
\]
sends $I([m_i])$ to $i$, and the morphism 
\[
g_i:\bigoplus_{(j,j+1)\in\bigoplus_{i\in S}I([m_i])} [1]_{(j,j+1)} \to [m_i] 
\]
is given by
\begin{align*}
g_i(0\in [1]_{(j,j+1)}) &= j\\
g_i(1\in [1]_{(j,j+1)}) &= j+1\\
\end{align*}
Note that, given an object $X\in \Omega_M$ in the fiber over $M:=\{[m_i]\}_{i\in S}$, the morphisms $s_M$ and $t_M$ are simply the images under $\mathcal{L}$ of the source and target morphisms, respectively.
\end{const}

\begin{lem}\label{lem:LambdaStarPullbacks}
Given a functor $G: \Lambda^\star\to \CC$, $G\circ \mathcal{L}$ satisfies condition \ref{CYcond:segal} if and only if the following two conditions on $G$ are satisfied:
\begin{enumerate}
\item For any $\{[m_i]\}_{i\in S}$, and any $\{[n_{(j,j+1)}]\}_{(j,j+1)\in\bigoplus_{i\in S} I([m_i])}$ the diagram
\[
\begin{tikzcd}
 & \{\bigstar_{(j,j+1)\in I([m])}[n_{(j,j+1)}]\}_{i\in S}\arrow[dl]\arrow[dr] & \\
\{[n_{(j,j+1)}]\}_{(j,j+1)\in\bigoplus_{i\in S}}\arrow[dr, "t_N"']& & \{[m_i]\}_{i\in S}\arrow[dl,"s_M"]  \\
 & \{[1]_{(j,j+1)}\}_{(j,j+1)\in \bigoplus_{i\in S} I([m_i])} &
\end{tikzcd}
\]
is sent to a pullback under $G$.
\item For $\langle n\rangle$ and any $\{[m_{(j,j+1)}\}_{(j, j+1)\in D(\langle n\rangle)}$ the diagram
\[
\begin{tikzcd}
 & C(\bigstar_{(j,j+1)\in D(\langle n\rangle)} [m_{(j,j+1)}])\arrow[dl]\arrow[dr] & \\
\{[m_{(j,j+1)}\}_{(j, j+1)\in D(\langle n\rangle)}\arrow[dr, "t_N"']& & \langle n\rangle\arrow[dl,"\sigma_M"]  \\
 & \langle n\rangle I([m_i]) &
\end{tikzcd}
\]
is sent to a pullback diagram under $G$.
\end{enumerate} 
\end{lem}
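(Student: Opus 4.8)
The plan is to follow the strategy of \cref{prop:DeltaAlgCondsinStar}, keeping track of the fact that $\Ass_\CY$ has two flavours of $2$-simplex. Recall that condition \ref{CYcond:segal} demands that for every simplex $S_0\xrightarrow{\phi_1}S_1\xrightarrow{\phi_2}\cdots\xrightarrow{\phi_n}S_n$ in $\Ass_\CY$ and every $P\subseteq S_n^\circ$ the Segal cone \cref{diag:segalsimpsinLambda} be a limit diagram. First I would argue, exactly as in \cref{prop:DeltaAlgCondsinStar} — using the dual of \cite[Proposition 4.4.2.2]{LurieHTT} to rewrite such a cone as a composite of squares, one attached to each sub-$2$-simplex $S_{i-1}\to S_i\to S_{i+1}$, and then the pasting law for pullbacks — that $G\circ\mathcal{L}$ satisfies \ref{CYcond:segal} for all simplices if and only if it sends the square attached to every $2$-simplex (with the restricted subset of $S_{i+1}^\circ$) to a pullback in $\CC$.

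Second, I would enumerate the non-degenerate $2$-simplices of $\Ass_\CY$: since $\Hom_{\Ass_\CY}(\diamond,S)=\emptyset$ for $S\in\Ass$ and $\diamond$ carries only its identity endomorphism, such a $2$-simplex $S_0\to S_1\to S_2$ either (a) lies wholly in $\Ass$, or (b) has $S_2=\diamond$ with $S_0,S_1\in\Ass$ (a simplex $S_0\to\diamond\to\diamond$ being degenerate, hence sent to a trivial pullback). Transporting each case through $\mathcal{L}$ is the heart of the matter. In case (a), the formula for $\mathcal{L}$ on objects, together with the identities $\mathbb{I}\circ\mathbb{O}=\id$ and "$\mathbb{O}$ of an ordinal sum is an imbrication", identifies the reduced square with the square displayed in condition (1): the three feet become $\{[n_{(j,j+1)}]\}$, $\{[1]_{(j,j+1)}\}$ and $\{[m_i]\}_{i\in S}$ with legs the morphisms $s_M$ and $t_N$ of the Construction immediately preceding the lemma, and the apex becomes the star-composite $\{\bigstar_{(j,j+1)\in I([m_i])}[n_{(j,j+1)}]\}_{i\in S}$ — which is exactly the object $X_{M,Z}$ produced in \cref{prop:LambdaLaxFibOverAss} for the $2$-simplex in question. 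In case (b) the same computation — now also invoking \cref{const:CyclicDuality}, \cref{const:CyclicClosures} and \cref{lem:LinAndCycOrdSum} to pass between the linear data on the fibres and the cyclic data on $f^{-1}(\diamond)$ — identifies the reduced square with the square displayed in condition (2): legs $\sigma_n$ and $t_N$, and apex the cyclic closure $C(\bigstar_{(j,j+1)\in D(\langle n\rangle)}[m_{(j,j+1)}])$.

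Finally, since $\mathcal{L}$ is an $\infty$-categorical localization at $E$ it is surjective on objects, and by the Construction preceding the lemma it carries the source and target morphisms of an object of $\Omega$ to the morphisms $s$, $t$, $\sigma$ appearing in (1) and (2); hence every diagram of the form (1), resp. (2), arises — up to equivalence — as the $\mathcal{L}$-image of a type-(a), resp. type-(b), $2$-simplex square, and conversely. Combining this with the first two steps, "$G$ satisfies (1) and (2)" is equivalent to "$G\circ\mathcal{L}$ sends every $2$-simplex square to a pullback", which is equivalent to condition \ref{CYcond:segal}. I expect the genuine obstacle to be case (b) of the second step: one must chase the duality $\mathbb{D}$, the closure $\mathbb{C}$ and the ordinal-sum/imbrication identities through the construction of $X_{M,Z}$ in \cref{prop:LambdaLaxFibOverLambda} to confirm that the reduced pullback square there really has apex $C(\bigstar[m_{(j,j+1)}])$ over the cospan $(\sigma_n,t_N)$ — the linear bookkeeping of case (a) being essentially that of \cref{prop:DeltaAlgCondsinStar}, while the passage to cyclic data in (b) is the new combinatorial input.
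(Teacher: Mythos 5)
Your proposal is correct and follows essentially the same route as the paper, whose proof is just two sentences: reduce condition \ref{CYcond:segal} to pullback squares over $2$-simplices exactly as in \cref{prop:DeltaAlgCondsinStar}, then observe that the diagrams in the lemma are the $\mathcal{L}$-images of those squares. Your case analysis of the $2$-simplices of $\Ass_\CY$ and the identification of the apexes with the star-composite (resp.\ its cyclic closure) simply makes explicit what the paper leaves implicit.
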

\begin{proof}
Using the same technique as in the proof of \cref{prop:DeltaAlgCondsinStar}, we can reduce condition \ref{CYcond:segal} to a statement about pullback squares along source and target maps. The diagrams of the lemma are then the images under $\mathcal{L}$ of  the requisite pullback diagrams. 
\end{proof}

\begin{defn}\label{defn:LambdaStarAlg}
We denote by $\Fun^{\on{alg}}(\Lambda^\star, \CC)$ the full sub-category on those functors which 
\begin{enumerate}
\item\label{LambdaStarcond:product} Send $\{[m_i\}_{i\in S}$ together with the projections to $[m_i]$ to product diagrams. 
\item\label{LambdaStarcond:Segal} Send the diagrams from \ref{lem:LambdaStarPullbacks} to pullback diagrams.
\item \label{LambdaStarcond:loc} Send the morphisms $\{[n]\}\to \langle n\rangle$ to equivalences.
\end{enumerate}
\end{defn}

\begin{cor}
There is an equivalence of $\infty$-categories 
\[
\SAlg^\CY(\CC)\simeq \Fun^{\on{Alg}}(\Lambda^\star, \CC).
\]
\end{cor}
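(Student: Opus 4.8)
The plan is to run the Calabi--Yau analogue of the argument carrying \cref{cor:algconds} to \cref{prop:DeltaAlgCondsinStar}: the substance is already contained in \cref{cor:CYalgconds}, the preceding localization corollary, \cref{lem:LambdaStarPullbacks}, and \cref{prop:LambdaWeakFibers}, so it remains only to match conditions along a short chain of equivalences. First I would peel off the two ``soft'' conditions of \cref{cor:CYalgconds}. By the lemma above identifying $\Fun^\ast(\Theta,\CC)\simeq\Fun(\Omega,\CC)$, condition \ref{CYcond:emptysubsets} is absorbed into the passage to $\Fun(\Omega,\CC)$, and $\SAlg^\CY(\CC)$ is thereby identified with the full subcategory of $\Fun(\Omega,\CC)$ spanned by the functors satisfying (the restrictions to $\Omega$ of) conditions \ref{CYcond:product}, \ref{CYcond:Eloc}, \ref{CYcond:segal}, and \ref{CYcond:nondeg}; here one checks, exactly as in the proof of \cref{prop:DeltaAlgCondsinStar}, that every diagram occurring in these conditions is supported on non-degenerate objects once the trivial empty-subset cases are discarded. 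Next, the preceding corollary exhibits $\mathcal{L}\colon\Omega\to\Lambda^\star$ as an $\infty$-categorical localization at $E$, so restriction along $\mathcal{L}$ identifies $\Fun(\Lambda^\star,\CC)$ with the full subcategory of $\Fun(\Omega,\CC)$ on the functors inverting $E$; this disposes of condition \ref{CYcond:Eloc}.

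It then remains to verify that, under descent along $\mathcal{L}$, the three surviving conditions on $G\in\Fun(\Omega,\CC)$ correspond to conditions \ref{LambdaStarcond:product}, \ref{LambdaStarcond:Segal}, and \ref{LambdaStarcond:loc} of \cref{defn:LambdaStarAlg}. For the product condition I would use that every object $\{[m_i]\}_{i\in S}$ of $\Lambda^\star$ is the image under $\mathcal{L}$ of the initial object of $\Omega_M^E$ supplied by \cref{prop:LambdaWeakFibers}, and that $\mathcal{L}$ carries the span-theoretic projections $(P\subset S'\overset{f}{\leftarrow}T)\to(\{i\}\subset S'\overset{f}{\leftarrow}T)$ to the canonical projections $\{[m_j]\}_{j\in P}\to[m_i]$ (read off the definition of $\mathcal{L}$ on morphisms); this gives the equivalence of \ref{CYcond:product} with \ref{LambdaStarcond:product}. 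For the Segal condition I would simply cite \cref{lem:LambdaStarPullbacks}, whose two output clauses are exactly \ref{LambdaStarcond:Segal}. For the non-degeneracy condition I would compute, from \cref{const:nondegequivs}, that $\mathcal{L}(Z_n)=\langle n\rangle$ (the cyclic set of interstices of the fiber $f^{-1}(\diamond)$, via \cref{const:CyclicDuality}) and $\mathcal{L}(X_n)=\{[n]\}$ (since $O$ of the linearly ordered $n$-element fiber is $[n]$, via \cref{const:LinInterstice}), and that $\mathcal{L}$ sends the canonical morphism $Z_n\to X_n$ to the morphism $\langle n\rangle\to\{[n]\}$ figuring in condition \ref{LambdaStarcond:loc}; hence inverting the $Z_n\to X_n$ descends precisely to \ref{LambdaStarcond:loc}. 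Chaining the three equivalences then yields $\SAlg^\CY(\CC)\simeq\Fun^{\on{alg}}(\Lambda^\star,\CC)$, which is the asserted equivalence.

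I expect the only real friction to lie in the last step: pinning down $\mathcal{L}(Z_n\to X_n)$ and identifying it with the distinguished morphisms of condition \ref{LambdaStarcond:loc}, which calls for unwinding the cyclic-duality and interstice constructions and for being precise about which objects of $\Theta$ the abbreviations $Z_n$ and $X_n$ denote. Everything else is bookkeeping already licensed by the earlier results, and should go through essentially verbatim from the proof of \cref{prop:DeltaAlgCondsinStar} with $\Delta^\star$ replaced by $\Lambda^\star$.
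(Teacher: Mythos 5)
Your proposal is correct and follows exactly the route the paper intends: the paper states this corollary without a written proof, relying on the preceding remark (``by forgetting degenerate intervals and localizing along $\mathcal{L}$, we have already dealt with conditions (1) and (3)''), on \cref{lem:LambdaStarPullbacks} for the Segal condition, and on matching the product and non-degeneracy conditions with \cref{defn:LambdaStarAlg} --- which is precisely the bookkeeping you carry out. Your identification of $\mathcal{L}(Z_n\to X_n)$ with the morphism $\langle n\rangle\to\{[n]\}$ is also the correct reading of condition (3) of \cref{defn:LambdaStarAlg} (which, as written in terms of $\Lambda_\Delta$, has the arrow reversed relative to $\Lambda^\star$).
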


\subsection{Extension and restriction}

\begin{defn}
We define a category $\Lambda_\Delta$ to be the Grothendieck construction of the functor 
\[
\Delta^1\overset{\{K\}}{\to} \Cat.
\]
explicitly, $\on{ob}(\Lambda_\Delta)=\on{ob}(\Lambda)\amalg \on{ob}(\Delta)$, with morphisms 
\begin{itemize}
\item $f:[n]\to [m]$ morphism in $\Delta$
\item $f:\langle n\rangle \to \langle m\rangle$ morphism in $\Lambda$
\item $f:[n]\to \langle m\rangle$ given by a morphism $f:K([n])\to \langle m\rangle$ in $\Lambda$. 
\end{itemize}

The category $(\Lambda_\Delta)^\op$ can be identified with the full subcategory of $\Lambda^\star$ on the objects $\{[m]\}$ and $\langle n\rangle$. 
\end{defn}

\begin{const}\label{const:LambdaKE}
By taking restriction and right Kan extension along the inclusion $\Lambda_\Delta^\op\subset \Lambda^\star$, we get an adjunction 
\[
\iota_\ast \Fun(\Lambda^\star, \CC) \leftrightarrow \Fun(\Lambda_\Delta^\op, \CC): \iota_!
\]
of $\infty$-categories.
\end{const}

\begin{defn}
Denote by $\Fun^\times (\Lambda^\star,\CC)$ the full $\infty$-subcategory of $\Fun (\Lambda^\star,\CC)$ on those functors which satisfy \cref{LambdaStarcond:product} from \cref{defn:LambdaStarAlg}.
\end{defn}

\begin{prop} \label{prop:LambdaKE}
The adjunction of \cref{const:LambdaKE} restricts to an equivalence of $\infty$-categories 
\[
\Fun^\times (\Lambda^\star, \CC)\simeq \Fun(\Lambda_\Delta, \CC)
\]
\end{prop}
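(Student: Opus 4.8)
The plan is to mimic the proof of \cref{prop:DeltaKE} exactly, the only new feature being the presence of the cyclic objects $\langle n\rangle$ alongside the linear objects $\{[m]\}$. Since $\iota_!$ is a right Kan extension along the inclusion of a \emph{full} subcategory $\Lambda_\Delta^\op \hookrightarrow \Lambda^\star$, the counit $\iota_\ast \iota_! \Rightarrow \id$ is automatically an equivalence, and the components of the unit $\id \Rightarrow \iota_! \iota_\ast$ are equivalences on all objects of $\Lambda_\Delta^\op$. So the entire content is: (a) check that $\iota_!$ carries $\Fun(\Lambda_\Delta,\CC)$ into $\Fun^\times(\Lambda^\star,\CC)$, and (b) check that for $G \in \Fun(\Lambda_\Delta,\CC)$ the unit component at an object $M = \{[m_i]\}_{i\in S}$ is an equivalence, which — since the unit is a natural transformation of the product cones witnessing \ref{LambdaStarcond:product} and is already an equivalence at the factors $[m_i]$ — follows formally.

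First I would compute the relevant slice category. For $M = \{[m_i]\}_{i\in S} \in \Lambda^\star$, I claim the comma category $(\Lambda_\Delta^\op)_{M/}$ decomposes as a coproduct $\coprod_{i \in S} (\Delta^\op)_{[m_i]/}$, exactly as in \cref{prop:DeltaKE}: an object is a choice of $i \in S$ together with a morphism $M \to \{[n]\}$ or $M \to \langle n\rangle$ in $\Lambda^\star$ landing in $\Lambda_\Delta^\op$, and by the definition of morphisms in $\Lambda^\star$ out of $\{[m_i]\}_{i\in S}$ such a map is the data of a map $S' \to S$ from a singleton together with a map $\bigoplus[m_j] \to [n]$ — i.e. a choice of $i$ and a map $[m_i]\to[n]$ in $\Delta$ (the cyclic target $\langle n\rangle$ cannot be hit, since there are no morphisms $\{[m_i]\}\to \langle n\rangle$ in $\Lambda^\star$). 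Morphisms between objects over different $i$'s are empty, giving the coproduct. Hence $\iota_! G(M) = \lim_{(\Lambda_\Delta^\op)_{M/}} G$ is a product $\prod_{i\in S} \iota_! G([m_i]) = \prod_{i\in S} G([m_i])$, and moreover the structure maps to the factors are precisely the ones induced by the projection morphisms $M \to [m_i]$; this is exactly condition \ref{LambdaStarcond:product}, so $\iota_! G \in \Fun^\times(\Lambda^\star,\CC)$, establishing (a).

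For the objects $\langle n\rangle \in \Lambda^\star$ there is nothing to check: $\langle n\rangle$ already lies in $\Lambda_\Delta^\op$, so the unit component there is an equivalence by the full-subcategory property. Thus the adjunction descends to $\iota_\ast : \Fun^\times(\Lambda^\star,\CC) \leftrightarrow \Fun(\Lambda_\Delta,\CC) : \iota_!$, the counit is an equivalence as noted, and the unit is an equivalence on $\Lambda_\Delta^\op$ and on each object $\{[m_i]\}_{i\in S}$ by comparing product cones — concluding the argument. The only mildly delicate point, and the one I would write out carefully, is the slice-category computation: one must confirm that the cyclic objects genuinely contribute nothing to $(\Lambda_\Delta^\op)_{M/}$ when $M$ is a tuple of linear objects, which is immediate from clause (3) (empty homsets $\{[m_i]\} \to \langle n\rangle$) in the definition of $\Lambda^\star$. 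I expect no real obstacle; this is the cyclic analogue of \cref{prop:DeltaKE} and the proof transports with only bookkeeping changes.
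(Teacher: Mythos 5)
Your proposal is correct and follows essentially the same route as the paper, which simply observes that since there are no morphisms $\{[m_i]\}_{i\in S}\to \langle n\rangle$ in $\Lambda^\star$, the slice-category computation and unit/counit argument of \cref{prop:DeltaKE} carry over \emph{mutatis mutandis}. Your expanded write-up of the comma-category decomposition $\coprod_{i\in S}(\Delta^\op)_{[m_i]/}$ and the triviality of the unit on the objects $\langle n\rangle$ is exactly the bookkeeping the paper leaves implicit.
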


\begin{proof}
Since there are no morphisms $\{[m_i]\}_{i\in S}\to \langle n\rangle$ in $\Lambda^\star$, this is, {\itshape mutatis mutandis}, the same as the proof of \ref{prop:DeltaKE}.
\end{proof}

\begin{const}\label{const:reflectiveloc}
We have a full subcategory $F:\Lambda\subset \Lambda_\Delta$. We can similarly define a functor 
\[
H:\Lambda_\Delta\to \Lambda
\] 
by acting as $K$ on $\Delta$ and as the identity on all other objects and morphisms. This defines an adjunction
\[
F: \Lambda \leftrightarrow\Lambda_\Delta: H
\]
It is easy to see that $H$ is a reflective localization at the morphisms $[n]\to \langle n\rangle$ given by isomorphisms $K([n])\cong \langle n\rangle$. 
\end{const}

\begin{prop}
There is an equivalence of $\infty$-categories 
\[
\Fun^{\on{alg}}(\Lambda^\star,\CC)\simeq \tSeg_\Lambda(\CC).
\]
\end{prop}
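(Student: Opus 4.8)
The plan is to deduce the statement by stacking together two structural results already proved above. First I would invoke \cref{prop:LambdaKE}: restriction along $\Lambda_\Delta^\op\subset\Lambda^\star$ and right Kan extension give an equivalence $\Fun^\times(\Lambda^\star,\CC)\simeq\Fun(\Lambda_\Delta^\op,\CC)$. Unwinding the definition of the Kan extension, a functor $G\in\Fun^\times(\Lambda^\star,\CC)$ satisfies condition \ref{LambdaStarcond:loc} of \cref{defn:LambdaStarAlg} exactly when the associated functor $\Lambda_\Delta^\op\to\CC$ inverts the morphisms identifying $[n]$ with $\langle n\rangle$. By \cref{const:reflectiveloc} the functor $H\colon\Lambda_\Delta\to\Lambda$ is a (reflective) localization at precisely those morphisms, so precomposition with $H^\op$ identifies $\Fun(\Lambda^\op,\CC)$, fully faithfully, with the functors on $\Lambda_\Delta^\op$ having this inversion property. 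Composing the two equivalences I obtain an equivalence between the full subcategory of $\Fun^\times(\Lambda^\star,\CC)$ cut out by conditions \ref{LambdaStarcond:product} and \ref{LambdaStarcond:loc} and the $\infty$-category $\Fun(\Lambda^\op,\CC)$ of cyclic objects; write $X$ for the cyclic object attached to such a $G$. It then remains to show that, under this correspondence, condition \ref{LambdaStarcond:Segal} holds for $G$ if and only if $X\in\tSeg_\Lambda(\CC)$, i.e.\ $X$ is a unital $2$-Segal cyclic object.

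Next I would analyze condition \ref{LambdaStarcond:Segal} by exactly the bookkeeping used for \cref{prop:DeltaTwoSegal} and the lemma following it. For part (1) of \cref{lem:LambdaStarPullbacks}, pasting the evident rectangles reduces to families $\{[m_i]\}_{i\in S}$ in which all but one entry is $[1]$, and condition \ref{LambdaStarcond:product} further reduces to $S$ a singleton. In that case, inserting the intermediate object $[m]$ and using that $G$ sends the two lower edges to projections onto product factors, the diagram becomes a pullback under $G$ precisely when
\[
\begin{tikzcd}
X(\langle n\rangle)\arrow[r] & X(\langle n+m-1\rangle)\\
X(\langle 1\rangle)\arrow[u]\arrow[r] & X(\langle m\rangle)\arrow[u]
\end{tikzcd}
\]
(with the interstitial face and coface maps) is a pullback in $\CC$ for all $n\ge 1$ and $m\ge 0$ — verbatim the computation of \cref{prop:DeltaTwoSegal} run on the underlying simplicial object $X\circ\kappa^\op$, which is therefore unital $2$-Segal. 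Part (2) of \cref{lem:LambdaStarPullbacks} is handled the same way: inserting the intermediate object $\langle m\rangle$ and using \ref{LambdaStarcond:product}, the condition becomes that the polygon-cutting squares obtained by applying $\mathbb{C}$ (\cref{const:CyclicClosures}) to the corresponding imbrications in $\bbNabla$ are pullbacks, i.e.\ the cyclic $2$-Segal conditions of \cite{DKHSSI}.

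The step I expect to be the main obstacle is closing the gap in the cyclic case: checking that part (2) of \cref{lem:LambdaStarPullbacks} imposes nothing beyond part (1), i.e.\ that for a cyclic object the polygon-cutting pullbacks are a consequence of the $2$-Segal condition on $X\circ\kappa^\op$. The clean route is the triangulation criterion \cite[Proposition 2.3.2]{DKHSSI}: the $2$-Segal condition on $X\circ\kappa^\op$ makes every triangulation map of a $(k{+}1)$-gon an equivalence, and the $\mathbb Z$-action carried by $X$ transports any chord of the polygon to a chord incident to a chosen basepoint, where the cutting square becomes one of the squares displayed above. Making the cyclic-duality bookkeeping ($\mathbb{K}$, $\mathbb{C}$, $\mathbb{D}$ and the induced linear and cyclic orders) line up exactly with the diagrams of \cref{lem:LambdaStarPullbacks} is the delicate part; granted that, condition \ref{LambdaStarcond:Segal} is equivalent to $X\in\tSeg_\Lambda(\CC)$, and the desired equivalence $\Fun^{\on{alg}}(\Lambda^\star,\CC)\simeq\tSeg_\Lambda(\CC)$ follows.
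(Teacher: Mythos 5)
Your proposal matches the paper's proof: the paper likewise combines \cref{prop:LambdaKE} with the reflective localization of \cref{const:reflectiveloc} to identify the functors satisfying conditions \ref{LambdaStarcond:product} and \ref{LambdaStarcond:loc} with cyclic objects, and then disposes of condition \ref{LambdaStarcond:Segal} by ``a similar argument to the proof of \cref{prop:DeltaTwoSegal}.'' Your elaboration of that last step --- in particular using the cyclic symmetry and the triangulation criterion to check that part (2) of \cref{lem:LambdaStarPullbacks} adds nothing beyond the 2-Segal condition on the underlying simplicial object --- fills in detail the paper leaves implicit, but it is the same argument.
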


\begin{proof}
\cref{prop:LambdaKE} and \cref{const:reflectiveloc} show us that $\Fun(\Lambda^{\op},\CC)$ is equivalent, as an $\infty$-category, to the full subcategory of $\Fun^\times (\Lambda^\star,\CC)$ satisfying \ref{LambdaStarcond:product} and \ref{LambdaStarcond:loc} from \cref{defn:LambdaStarAlg}. The relation between the 2-Segal condition and condition \ref{LambdaStarcond:Segal} from \cref{defn:LambdaStarAlg} follows from a similar argument to the proof of \cref{prop:DeltaTwoSegal}.
\end{proof}

We can then summarize our results in the following theorem:

\begin{thm}\label{thm:CYAlgsSpan}
There is an equivalence of $\infty$-categories 
\[
\SAlg^\CY(\CC)\simeq \tSeg_\Lambda(\CC).
\]
\end{thm}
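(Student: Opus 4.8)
The plan is to assemble \cref{thm:CYAlgsSpan} from the chain of equivalences established in the preceding subsections, exactly mirroring the structure of the proof of \cref{thm:AlgInSpan}. First I would recall that \cref{cor:CYalgconds} identifies $\SAlg^\CY(\CC)$ — the $\infty$-category of Calabi--Yau co-algebras in $\Span_\bbGamma(\CC^\times)$ — with the full subcategory of $\Fun(\Theta,\CC)$ cut out by conditions \ref{CYcond:emptysubsets}--\ref{CYcond:nondeg}. The strategy is to peel these conditions off one at a time: condition \ref{CYcond:emptysubsets} is handled by the equivalence $\Fun^\ast(\Theta,\CC)\simeq \Fun(\Omega,\CC)$; conditions \ref{CYcond:Eloc} (the $E$-locality) together with the localization result that $\mathcal{L}\colon\Omega\to\Lambda^\star$ is an $\infty$-categorical localization at $E$ let me replace $\Fun(\Omega,\CC)$-with-$E$-inverted by $\Fun(\Lambda^\star,\CC)$; then conditions \ref{CYcond:product} and \ref{CYcond:segal}, translated via \cref{lem:LambdaStarPullbacks} into the product and Segal-square conditions, identify the relevant subcategory with $\Fun^{\on{alg}}(\Lambda^\star,\CC)$ as in the \cref{defn:LambdaStarAlg}. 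This gives the first equivalence $\SAlg^\CY(\CC)\simeq \Fun^{\on{alg}}(\Lambda^\star,\CC)$, which is in fact already recorded as a corollary above.

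Next I would invoke the extension–restriction results: \cref{prop:LambdaKE} gives $\Fun^\times(\Lambda^\star,\CC)\simeq \Fun(\Lambda_\Delta,\CC)$ via right Kan extension along $\Lambda_\Delta^\op\subset\Lambda^\star$, and \cref{const:reflectiveloc} exhibits $H\colon\Lambda_\Delta\to\Lambda$ as a reflective localization at the morphisms $[n]\to\langle n\rangle$ coming from $K([n])\cong\langle n\rangle$. Composing, condition \ref{LambdaStarcond:loc} (sending those morphisms to equivalences) plus \ref{LambdaStarcond:product} identifies the corresponding subcategory of $\Fun^\times(\Lambda^\star,\CC)$ with $\Fun(\Lambda^\op,\CC)$. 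It then remains to match the Segal-square condition \ref{LambdaStarcond:Segal} with the $2$-Segal condition on cyclic objects: this is the content of the proposition immediately preceding the theorem, proved by the same pasting-law manipulation as in \cref{prop:DeltaTwoSegal} — expanding the defining pullback squares of \cref{lem:LambdaStarPullbacks} into the cyclic analogue of the polygon-triangulation squares and using the pasting law to reduce to the "$2$-Segal" squares, noting that the degenerate case $[m]=[0]$ recovers unitality. Putting the two halves together gives $\SAlg^\CY(\CC)\simeq\Fun^{\on{alg}}(\Lambda^\star,\CC)\simeq \tSeg_\Lambda(\CC)$.

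Concretely, the proof body would read: combine the corollary $\SAlg^\CY(\CC)\simeq\Fun^{\on{alg}}(\Lambda^\star,\CC)$ with the preceding $\Fun^{\on{alg}}(\Lambda^\star,\CC)\simeq\tSeg_\Lambda(\CC)$, and one is done. Since both inputs are already stated and proved in the excerpt, the theorem's proof is a one-line composition of equivalences. So the honest version of this plan is: \emph{the theorem is an immediate consequence of the corollary to \cref{lem:LambdaStarPullbacks} and the proposition identifying $\Fun^{\on{alg}}(\Lambda^\star,\CC)$ with $\tSeg_\Lambda(\CC)$, by transitivity of equivalence of $\infty$-categories.}

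The genuinely hard part is not the final assembly but the localization input — showing $\mathcal{L}\colon\Omega\to\Lambda^\star$ is a localization at $E$, which rests on \cref{prop:LambdaWeakFibers} (initial objects in the $E$-weak-fibers) and \cref{prop:LambdaLaxFibOverAss}, \cref{prop:LambdaLaxFibOverLambda} (cofinality of $\Omega_M^E\subset\Omega_{/M}$), feeding into \cite[Lemma 3.1.1]{Walde}. The subtlety there, absent in the simplicial case, is the bookkeeping of the "interstitial trees with roots not in $P$" visible in \cref{fig:CircleDecomp}: the source of $\mathcal{L}(\mu)$ is an ordinal sum $\bigoplus_{i\in P}\mathbb{O}(f^{-1}(i))$ rather than an imbrication, so the explicit construction of $X_{M,Z}$ must graft in the sets $L_i$ of "missing" interstices, and one must check these gluings respect the $\Ass$-structure and the compatibility with $D$, $\mathbb{O}$, $\mathbb{C}$, $\mathbb{K}$ from \cref{const:CyclicDuality}, \cref{const:CyclicClosures}. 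But since all of that machinery is already in place above, for the statement of \cref{thm:CYAlgsSpan} itself there is no remaining obstacle — only the citation of prior results.
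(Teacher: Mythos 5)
Your proposal matches the paper exactly: the theorem is stated there as a summary, obtained by composing the corollary $\SAlg^\CY(\CC)\simeq\Fun^{\on{alg}}(\Lambda^\star,\CC)$ with the immediately preceding proposition $\Fun^{\on{alg}}(\Lambda^\star,\CC)\simeq\tSeg_\Lambda(\CC)$, with all the real work residing in the localization and Kan-extension results you correctly identify as the inputs. Nothing further is needed.
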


\newpage
\printbibliography
\end{document}